\tikzset{snake it/.style={decorate, decoration=snake}}
\theoremstyle{plain}
\newtheorem{theorem}{Theorem}[section]
\crefname{theorem}{Theorem}{Theorems}
\newtheorem{proposition}[theorem]{Proposition}
\crefname{proposition}{Proposition}{Propositions}
\newtheorem{corollary}[theorem]{Corollary}
\crefname{corollary}{Corollary}{Corollaries}
\newtheorem{lemma}[theorem]{Lemma}
\crefname{lemma}{Lemma}{Lemmas}
\crefname{conjecture}{Conjecture}{Conjectures}
\crefname{problem}{Problem}{Problem}
\newtheorem{claim}[theorem]{Claim}
\crefname{claim}{Claim}{Claims}
\crefname{observation}{Observation}{Observations}
\crefname{setup}{Setup}{Setups}
\crefname{fact}{Fact}{Facts}
\crefname{algorithm}{Algorithm}{Algorithms}
\crefname{remark}{Remark}{Remarks}
\crefname{example}{Example}{Examples}
\theoremstyle{definition}
\newtheorem{definition}[theorem]{Definition}
\crefname{definition}{Definition}{Definitions}
\crefname{construction}{Construction}{Constructions}
\crefname{question}{Question}{Questions}
\numberwithin{equation}{section}
\newcommand{\red}[1]{#1_{\text{red}}}
\newcommand{\blue}[1]{#1_{\text{blue}}}
\def\eps{\varepsilon}
\renewcommand{\int}[1]{\mathop{\mkern 0mu\mathrm{int}}\nolimits(#1)}
\newcommand\ceil[1]{\left\lceil#1\right\rceil}
\newcommand\floor[1]{\left\lfloor#1\right\rfloor}
\definecolor{DarkDesaturatedBlue}{HTML}{3A3556}
\definecolor{VividOrange}{HTML}{F15918}
\definecolor{PureOrange}{HTML}{FFBA00}
\definecolor{LightGrayishPink}{HTML}{EEC5D5}
\definecolor{VerySoftBlue}{HTML}{B5AFDB}
\tikzset{snake it/.style={decorate, decoration=snake}}
\definecolor{DarkDesaturatedBlue}{HTML}{3A3556}
\definecolor{VividOrange}{HTML}{F15918}
\definecolor{PureOrange}{HTML}{FFBA00}
\definecolor{LightGrayishPink}{HTML}{EEC5D5}
\definecolor{VerySoftBlue}{HTML}{B5AFDB}
  \newcommand{\labelinthm}[1]{%
     \label{temp#1}
     \protected@write \@auxout {}{\string \newlabel{#1}{{\emph{\ref{temp#1}}}{\thepage}{\emph{\ref{temp#1}}}{temp#1}{}} }%
     %\hypertarget{temp#1}{\emph{\ref{temp#1}}}
  }
\newcounter{propcounter}
\title{Ramsey numbers of trees}
\author{Richard Montgomery\thanks{Mathematics Institute, University of Warwick, Coventry CV4 7AL, UK. richard.montgomery@warwick.ac.uk}
\and Mat\'ias Pavez-Sign\'e\thanks{Departamento de Ingenier\'ia Matem\'atica, Universidad de Chile, and Centro de Modelamiento Matem\'atico, CNRS  IRL2807, mpavez@dim.uchile.cl} \and Jun Yan\thanks{
Mathematics Institute, University of Oxford, Oxford OX2 6GG, UK. jun.yan@maths.ox.ac.uk.
\newline \vspace{-0.2cm}
\newline RM and MPS supported by the European
Research Council (ERC) under the European Union Horizon 2020 research and innovation programme (grant agreement No.\ 947978). MPS additionally supported by ANID-FONDECYT Regular grant No.\ 1241398 and by ANID Basal Grant CMM FB210005.
JY was supported by the Warwick Mathematics Institute CDT, and by funding from the UK EPSRC (Grant number: EP/W523793/1) when most of this work was done.}}
\begin{document}

	\maketitle

 \begin{abstract}
    We show that there exists a constant $c>0$ such that every $n$-vertex tree $T$ with $\Delta(T)\le cn$ has Ramsey number $R(T)=\max\{t_1+2t_2,2t_1\}-1$, where $t_1\ge t_2$ are the sizes of the bipartition classes of $T$. This improves an asymptotic result of Haxell, \L uczak, and Tingley from 2002, and shows that, though Burr's 1974 conjecture on the Ramsey numbers of trees has long been known to be false for certain `double stars', it is true for trees with up to small linear maximum degree.
 \end{abstract}

\section{Introduction}\label{sec:intro}
The Ramsey number of a graph $G$, denoted by $R(G)$, is the smallest positive integer $N$ such that every red/blue edge colouring of the complete $N$-vertex graph $K_N$ contains a monochromatic copy of $G$. The existence of $R(G)$ follows from Ramsey's foundational result in 1930~\cite{RamseyOnAP}, but determining good bounds on Ramsey numbers has since proved extremely challenging.
The most notorious and natural case is where $G$ is the complete $n$-vertex graph $K_n$. The famous upper bound by Erd\H{o}s and Szekeres~\cite{erdos1935combinatorial} in 1935 and lower bound by Erd\H{o}s~\cite{erdos1947some} in 1947 showed that the rate of growth of $R(K_n)$ is exponential in $n$. Since then, these bounds saw only modest improvements until the recent remarkable breakthrough of Campos, Griffiths, Morris, and Sahasrabudhe~\cite{campos2023} finally gave an exponential improvement to the upper bound of Erd\H{o}s and Szekeres (see also~\cite{balister2024upper,gupta2024optimizing}).

Away from complete graphs, the sparser $G$ is, the more ambitious we can reasonably be in bounding $R(G)$. For example, a classical result of Chv\'atal, R\"odl, Szemer\'edi, and Trotter~\cite{chvatal1983ramsey} from 1983 states that the Ramsey number of every $n$-vertex graph with bounded maximum degree is linear in $n$. That is, for every $\Delta$, there is some $c_\Delta$ such that any $n$-vertex graph $G$ with maximum degree at most $\Delta$ satisfies $R(G)\le c_\Delta n$.  Burr and Erd\H os~\cite{Burr1973ONTM} had conjectured in 1975 that, moreover, this should hold with maximum degree replaced by degeneracy, and this was proved by Lee~\cite{lee2017ramsey} in 2017.

There are not many graphs $G$ for which we can muster any hope of determining $R(G)$ exactly. Aside from the smallest of graphs, the main candidates are trees and cycles.
In 1967, Gerencs\'er and Gy\'arf\'as~\cite{gerencser1967ramsey} determined the Ramsey number of the $n$-vertex path $P_{n-1}$, showing that $R(P_{n-1})=\floor{3n/2}-1$. For the $n$-vertex star $K_{1,n-1}$, note that $R(K_{1,n-1})-1$ is the size of the largest graph such that both it and its complement have maximum degree at most $n-2$. Thus, as shown by Harary~\cite{Harary1972} in 1972, $R(K_{1,n-1})=2n-2$ if $n$ is even, and $R(K_{1,n-1})=2n-3$ if $n$ is odd. The Ramsey number of the $n$-vertex cycle $C_n$ is known due to independent work in the early 1970's by Bondy and Erd\H os~\cite{BONDY197346},  Faudree and Schelp~\cite{faudree1974all}, and Rosta~\cite{rosta1973ramsey}, where we have $R(C_3)=R(C_4)=6$, $R(C_n)=2n-1$ for odd $n\geq5$, and $R(C_n)=3n/2-1$ for even $n\geq6$.

For a general tree $T$, two constructions of Burr~\cite{Burr1974} in 1974 (see Figure~\ref{figure:Burr}) show that if $T$ has  bipartition classes of sizes $t_1$ and $t_2$, where $t_1\geq t_2$, then
\begin{equation}\label{intro:R}R(T)\geq \max\{t_1+2t_2,2t_1\}-1.\end{equation}
From the results quoted above, this bound is tight when $T$ is a path or a star of odd size, and Burr conjectured~\cite{Burr1974} that this bound is tight for every tree $T$ with $t_1\geq t_2\geq2$. However, this was disproved in 1979 by Grossman, Harary, and Klawe~\cite{GROSSMAN1979247} for certain trees called \emph{double stars}. For each $t_1\geq t_2\geq2$, let $S_{t_1,t_2}$ be the tree formed by joining the central vertices of the stars $K_{1,t_1-1}$ and $K_{1,t_2-1}$ with an edge, noting that $S_{t_1,t_2}$ has bipartition classes with sizes $t_1$ and $t_2$.
Grossman, Harary, and Klawe~\cite{GROSSMAN1979247} showed that if $t_1\geq3t_2-2$, then $R(S_{t_1,t_2})=2t_1$, and thus the bound at \eqref{intro:R} is off by 1 in this case.
In 1982, Erd\H os, Faudree, Rousseau, and Schelp~\cite{erdHos1982ramsey} attempted to rescue Burr's conjecture by conjecturing that the bound at \eqref{intro:R} is tight when $t_1=2t_2$. However, this was strongly disproved by Norin, Sun, and Zhao~\cite{norin2016asymptotics} in 2016, who showed in particular that $R(S_{2t,t})\ge (4.2-o(1))t$ (see also~\cite{dubo2025ramsey}), and thus the bound at \eqref{intro:R} can be off by a multiplicative factor.

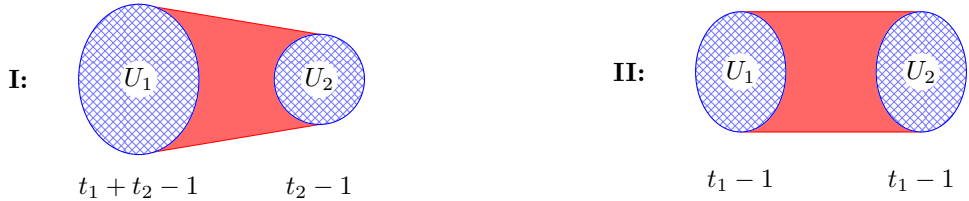
\begin{figure}[h]
\centering
\begin{minipage}{0.49\textwidth}
\centering
\begin{tikzpicture}[scale=.8]
\def\spacer{3};
\def\Ahgt{1.25};
\def\Bhgt{0.75};
\coordinate (U_1) at (0,0);
\coordinate (U_2) at (\spacer,0);

\draw[red,fill=red!60] ($(U_1)+(0,\Ahgt)$) -- ($(U_1)-(0,\Ahgt)$) -- ($(U_2)-(0,\Bhgt)$) -- ($(U_2)+(0,\Bhgt)$) -- cycle;

\draw[white,fill=white] (U_1) circle [y radius=\Ahgt cm,x radius=1cm];
\draw[white,fill=white] (U_2) circle [y radius=\Bhgt cm,x radius=0.75cm];
\draw[blue,pattern=crosshatch, pattern color=blue!50] (U_1) circle [y radius=\Ahgt cm,x radius=1cm];
\draw[blue,pattern=crosshatch, pattern color=blue!50] (U_2) circle [y radius=\Bhgt cm,x radius=0.75cm];
\draw[white,fill=white] (U_1) circle [radius=0.3cm];
\draw[white,fill=white] (U_2) circle [radius=0.3cm];

\draw (U_1) node {$U_1$};
\draw (U_2) node {$U_2$};

\draw ($(U_1)+(-2,0)$) node {\textbf{I:}};
\draw ($(U_1)+(0,-1.8)$) node {$t_1+t_2-1$};
\draw ($(U_2)+(0,-1.8)$) node {$t_2-1$};
\end{tikzpicture}\end{minipage}
\begin{minipage}{0.49\textwidth}
\centering
\begin{tikzpicture}[scale=.8]
\def\spacer{3};
\def\Ahgt{1};
\def\Bhgt{1};
\coordinate (U_1) at (0,0);
\coordinate (U_2) at (\spacer,0);

\draw[red,fill=red!60] ($(U_1)+(0,\Ahgt)$) -- ($(U_1)-(0,\Ahgt)$) -- ($(U_2)-(0,\Bhgt)$) -- ($(U_2)+(0,\Bhgt)$) -- cycle;

\draw[white,fill=white] (U_1) circle [y radius=\Ahgt cm,x radius=0.75cm];
\draw[white,fill=white] (U_2) circle [y radius=\Bhgt cm,x radius=0.75cm];
\draw[blue,pattern=crosshatch, pattern color=blue!50] (U_1) circle [y radius=\Ahgt cm,x radius=0.75cm];
\draw[blue,pattern=crosshatch, pattern color=blue!50] (U_2) circle [y radius=\Bhgt cm,x radius=0.75cm];
\draw[white,fill=white] (U_1) circle [radius=0.3cm];
\draw[white,fill=white] (U_2) circle [radius=0.3cm];
\draw (U_1) node {$U_1$};
\draw (U_2) node {$U_2$};

\draw ($(U_1)+(-1.85,0)$) node {\textbf{II:}};
\draw ($(U_1)+(0,-1.8)$) node {$t_1-1$};
\draw ($(U_2)+(0,-1.8)$) node {$t_1-1$};
\end{tikzpicture}
\end{minipage}
\caption{Burr's extremal constructions for $R(T)$ when $T$ is a tree with bipartition classes of sizes $t_1\geq t_2$. \\
\textbf{I:} Disjoint blue cliques on $U_1$ and $U_2$, with $|U_1|=t_1+t_2-1$, $|U_2|=t_2-1$, and every edge between $U_1$ and $U_2$ coloured red. Any connected blue subgraph has at most $t_1+t_2-1<|T|$ vertices, and any connected red subgraph is bipartite with fewer than $t_2$ vertices in one class.\\
\textbf{II:} Disjoint blue cliques on $U_1$ and $U_2$, with $|U_1|=|U_2|=t_1-1$, and every edge between $U_1$ and $U_2$ coloured red. Any connected blue subgraph has at most $t_1-1<|T|$ vertices, and any connected red subgraph is bipartite with fewer than $t_1$ vertices in each class. \\
Thus, in both \textbf{I} and \textbf{II} there is no monochromatic copy of $T$.}
\label{figure:Burr}
\end{figure}

All the known counterexamples to Burr's conjecture, however, have large maximum degree, and thus the bound at \eqref{intro:R} may still be tight for trees with small maximum degree.
Towards this, Haxell, \L uczak, and Tingley~\cite{haxell2002ramsey} showed in 2002 that the bound at \eqref{intro:R} is approximately tight for trees with up to small linear maximum degree. That is, they showed that, for every $\eps>0$, there exists some $c>0$ such that any $n$-vertex tree $T$ with maximum degree $\Delta(T)\leq cn$ and bipartition classes of sizes $t_1\geq t_2$ satisfies $R(T)\le(1+\eps)\max\{t_1+2t_2,2t_1\}$.

In this paper, we will show that Burr's bound at \eqref{intro:R} is tight for all trees with up to small linear maximum degree, as follows.
\begin{theorem}\label{thm:main} There exists a constant $c>0$ such that the following holds.
Any $n$-vertex tree $T$ with $\Delta(T)\le cn$ and bipartition classes of sizes $t_1\geq t_2$ satisfies $R(T)=\max\{2t_1,t_1+2t_2\}-1.$
\end{theorem}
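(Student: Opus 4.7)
The lower bound $R(T) \ge \max\{2t_1, t_1+2t_2\}-1$ follows directly from Burr's constructions (Figure~\ref{figure:Burr}), so it suffices to establish the matching upper bound. Let $N = \max\{2t_1, t_1+2t_2\}-1$ and fix an arbitrary red/blue colouring of $K_N$; the task is to find a monochromatic copy of $T$. My plan is to sharpen the regularity-based approach of Haxell, \L uczak, and Tingley with a stability analysis, so as to close the $(1+\eps)$ gap in their bound and reach the exact Ramsey threshold.

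First, I would apply Szemer\'edi's regularity lemma to the colouring to obtain a cluster partition, and form a reduced multigraph in which each pair is assigned the majority colour of its $\eps$-regular pair. This yields a 2-coloured near-complete graph on a constant number $m$ of clusters, each carrying weight roughly $N/m$.

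The heart of the argument is a structural dichotomy, applied separately in the two regimes $t_1 \ge 2t_2$ (so $N = 2t_1-1$) and $t_1 < 2t_2$ (so $N = t_1 + 2t_2 - 1$). In each regime I would prove: either the reduced graph contains a monochromatic \emph{tree-embedding structure} --- a connected matching of regular pairs (with an appropriate asymmetric refinement in the second regime) whose clusters have sufficient total weight --- or the colouring lies within $o(N)$ edit distance of Burr's extremal colouring~\textbf{II} (in the first regime) or~\textbf{I} (in the second). In the former case, I would invoke a standard tree-embedding lemma: the connected matching supports embedding the two bipartition classes of $T$ in alternation across its edges, with each vertex's neighbourhood absorbed into the opposite cluster; the hypothesis $\Delta(T) \le cn$, for $c$ small compared to the regularity parameters, ensures every neighbourhood fits cleanly.

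The main obstacle is the near-extremal case. Here the colouring essentially coincides with Burr's construction, but $K_N$ has exactly one more vertex than the lower-bound construction, so a small amount of \emph{surplus} structure must exist --- either the analogues of $U_1$ and $U_2$ are slightly oversized, or a few edges lie in the ``wrong'' colour. The proof must cleanly locate the near-extremal partition (using precisely the constraints that would prevent a monochromatic $T$ in Burr's construction) and then exploit this single-vertex surplus, together with the small maximum degree of $T$ and its bipartite structure, to embed $T$ into the majority monochromatic subgraph. Preserving the single-vertex margin through the regularity step, while navigating the two regimes in parallel (since each has its own extremal colouring and its own target embedding structure), is where the main technical work is concentrated.
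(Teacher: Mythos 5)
Your overall architecture --- a stability dichotomy via the regularity lemma, followed by a separate analysis of the near-extremal colourings --- is the same as the paper's. However, two of the steps you describe would fail as stated, and they are precisely where the real work lies.

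First, in the extremal case your plan is to ``exploit this single-vertex surplus, together with the small maximum degree of $T$ and its bipartite structure, to embed $T$ into the majority monochromatic subgraph.'' This is false in general: by the example of Koml\'os, S\"ark\"ozy and Szemer\'edi~\cite{komlos2001spanning}, one can place a red graph of maximum degree at most $cn$ inside $U_1$ so that the blue graph on $U_1$ contains no copy of a particular bounded-degree $n$-vertex tree, even when $|U_1|=n+\Theta_c(n)$ --- far more surplus than a single vertex. So the colour in which one embeds cannot be read off from which side is oversized; it must depend on the \emph{number} of wrong-coloured edges relative to the surplus, and on the structure of $T$. The paper's extremal argument splits on whether $T$ has many bare paths or many leaves, and in the latter case on whether $e(\red{G}[U_1^+])$ exceeds a threshold of order $(k+D+1)n$, where $k$ is the surplus and $D$ measures available blue degree into $U_2^+$; when the red edge count is large the tree is embedded in \emph{red} instead, using those red edges to park enough vertices of $V_2$ inside $U_1^+$. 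Your sketch has no mechanism for this trade-off.

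Second, in the stability part you propose to find ``a connected matching of regular pairs \dots\ whose clusters have sufficient total weight.'' The Haxell--\L uczak--Tingley machinery produces such a structure only in a host graph on $(1+\eps)(t_1+2t_2)$ vertices; on exactly $\max\{t_1+2t_2,2t_1\}-1$ vertices it yields a connected matching covering only about $t_1^-+t_2^-$ vertices, i.e.\ with a linear deficit relative to $|T|=n$. Making up this deficit --- by locating extra usable clusters of the right colour attached to the matching, or else concluding extremality --- is the main content of the paper's four-stage stability argument and of its battery of embedding lemmas, and is not addressed by asserting the dichotomy. A further point your sketch elides: because $t_1\neq t_2$ in general, the embedding structure must pair clusters of sizes in ratio $t_1:t_2$, and clusters change roles during the argument, forcing repeated refinement of the partition.
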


The existence of such a constant $c$ in Theorem~\ref{thm:main} answers in the positive a question asked explicitly by Stein~\cite{stein2020tree} in 2020. Our value of $c$ is very small due to the use of regularity methods, and is likely very far from optimal. It follows from the double star examples given in~\cite{norin2016asymptotics} that $c$ cannot be improved beyond $7/11+o(1)$. For a tree $T$ with large maximum degree we do not have a good conjecture for the exact value of $R(T)$, though Burr and Erd\H os~\cite{burr1976extremal} conjectured in 1976 that for any $n$-vertex tree $T$, $R(T)\leq2n-2$ when $n$ is even and $R(T)\leq2n-3$ when $n$ is odd, or in other words $R(T)\leq R(K_{1,n-1})$.
In 2011, Zhao~\cite{zhao2011proof} showed that this is true for all large even $n$, as a consequence of his resolution for large $n$ of Loebl's $n/2-n/2-n/2$ conjecture~\cite{erdHos1995discrepancy}. Burr and Erd\H os's conjecture follows directly from the Erd\H os-S\'os conjecture, and thus for large $n$ follows from the proof of the Erd\H os-S\'os conjecture for large trees announced by
 Ajtai, Koml\'os, Simonovits, and Szemer\'edi in the early 1990s (see~\cite{pokrovskiy2024hyperstability} for a discussion of this result).
A wide-ranging discussion of further results on Ramsey numbers can be found in the dynamic survey by Radziszowski~\cite{radziszowski2012small}.

To prove Theorem~\ref{thm:main} we will conduct what is known as a stability analysis. When the red/blue coloured host graph is not close to one of the extremal constructions in Figure~\ref{figure:Burr}, we will develop the work of Haxell, \L uczak, and Tingley~\cite{haxell2002ramsey}, and find a monochromatic copy of our tree $T$ using methods involving Szemer\'edi's regularity lemma. If instead the colouring is close to an extremal construction, then we will analyse the structure more closely to still find a monochromatic copy of $T$, often using randomised embeddings. We call these two parts of the proof the `stability part' and `extremal part', and both of them will be rather involved. For the stability part of the argument, we will be able to start from a certain monochromatic structure found by Haxell, \L uczak, and Tingley~\cite{haxell2002ramsey} in the reduced graph, but will still need to do much more work to cover all the non-extremal cases, with the main problem to overcome being a deficit of vertices in this initial structure. For the extremal part, proving Theorem~\ref{thm:main} when the colouring approximates either extremal construction turns out to be surprisingly delicate. For instance, looking at the first extremal construction in Figure~\ref{figure:Burr}, one might expect that a blue copy of an $n$-vertex tree $T$ with $\Delta(T)\leq cn$ would appear once $U_1$ contains one more vertex, even if a small linear proportion of the edges within $U_1$ are red. However, a famous example of Koml\'os, S\"ark\'ozy and Szemer\'edi~\cite{komlos2001spanning} shows that this is not true. As such, proving Theorem~\ref{thm:main} in the extremal part will require a careful consideration of both the structure of the tree $T$ and the presence of edges of the `wrong' colour in the extremal colouring, to decide to where, and in which colour, the tree should be embedded in different cases.

This paper is organised as follows. In Section~\ref{sec:prelim} we first give a brief overview of our proof of Theorem~\ref{thm:main}, focusing on how it can be divided into the stability part (Sections~\ref{sec:embed regularity} and~\ref{sec:stages}) and the extremal part (Sections~\ref{sec:extremearg1} and~\ref{sec:extremearg2}), then collect all the basic notations and preliminary results. Then, in Section~\ref{sec:staboutline}, we give a detailed outline of the stability part of our proof of Theorem~\ref{thm:main}. In Section~\ref{sec:embed regularity}, we prove a series of technical regularity embedding lemmas, each of which allows us to embed a monochromatic copy of the tree $T$ into a red/blue coloured reduced graph that contains a certain suitable structure. In Section~\ref{sec:stages}, we use these embedding lemmas to move through 4 stages of embedding attempts, and eventually conclude either that we can find a monochromatic copy of $T$ using regularity, or that the reduced graph and thus the original graph are both extremal, in the sense that they approximate one of the extremal constructions. Depending on which of the two extremal constructions our original graph approximates, we show in Section~\ref{sec:extremearg1} and Section~\ref{sec:extremearg2} respectively that a monochromatic copy of $T$ can still be found.

\section{Proof overview and preliminaries}\label{sec:prelim}
In this section, we begin by giving a short overview of our proof of Theorem~\ref{thm:main} in Section~\ref{sec:division}, specifically on how it divides into the stability part and the extremal part, and formalising what it means for a colouring to approximate an extremal construction. Then, we record the basic notations we use in Section~\ref{sec:notation}, and collect a series of preliminary results in Sections~\ref{sec:concentration} to~\ref{sec:regularity}.
\subsection{Division of the proof of Theorem~\ref{thm:main} into stability and extremal parts}\label{sec:division}
We start by recapping the situation in Theorem~\ref{thm:main}. Let $t_1,t_2$ be positive integers so that $n=t_1+t_2$ and $t_1\ge t_2$. Let $T$ be an $n$-vertex tree with $\Delta(T)\le cn$ and bipartition classes $V_1$ and $V_2$, such that $|V_1|=t_1$ and $|V_2|=t_2$. Note that $\Delta(T)\le cn$ implies $t_2\ge c^{-1}$. Let $N=\max\{t_1+2t_2,2t_1\}-1$ and let $G$ be a red/blue coloured complete graph on $N$ vertices. Our aim, then, is to find a monochromatic copy of $T$ in $G$.

%We say that $V_1$ and $V_2$ are the $t_1$-side and $t_2$-side of $T$, respectively.

Our proof of Theorem~\ref{thm:main} consists of two main parts, the stability part (Sections~\ref{sec:embed regularity} and~\ref{sec:stages}) and the extremal part
(Sections~\ref{sec:extremearg1} and~\ref{sec:extremearg2}). In the stability part, we show that either $G$ contains a monochromatic copy of $T$, or $G$ is close to one of the two extremal constructions in Figure~\ref{figure:Burr}.
Then in the extremal part we show that a monochromatic copy of $T$ still exists even if $G$ approximates an extremal construction. 
These parts are quite separate, and both are quite involved, so we will sketch their proofs later (in Section~\ref{sec:outline:stability} for the stability part, and in Sections~\ref{sec:outline:extcase1} and~\ref{sec:outline:extcase2} for the two different cases of the extremal part). Here, we state the main results for both parts, and put them together to prove Theorem~\ref{thm:main}. We start with the following definition of what it means for $G$ to be close to one of the extremal constructions.

\begin{definition}\label{def:extremality}
Let $0<\mu<1$ and let $G$ be a red/blue coloured complete graph. We say $G$ is \textit{Type I $(\mu,t_1,t_2)$-extremal} if with $n=t_1+t_2$, there are disjoint subsets $U_1,U_2\subset V(G)$ such that 
\begin{itemize}
    \item $|U_1|\geq(1-\mu)n$ and $|U_2|\geq(1-\mu)t_2$,
    \item for every $u\in U_1$, $\red{d}(u,U_1)\leq \mu n$, and
    \item for every $i\in[2]$ and every $u\in U_i$, $\blue{d}(u,U_{3-i})\leq \mu n$,
\end{itemize}
or with red and blue swapped. On the other hand, we say $G$ is \textit{Type II $(\mu,t_1,t_2)$-extremal} if with $n=t_1+t_2$, there are disjoint subsets $U_1,U_2\subset V(G)$ such that 
\begin{itemize}
    \item $|U_1|,|U_2|\geq (1-\mu)t_1$, and
    \item for each $i\in [2]$ and $u\in U_i$, $\red{d}(u,U_i)\leq \mu n$ and $\blue{d}(u,U_{3-i})\leq\mu n$,
\end{itemize}
or with red and blue swapped. If $G$ is either or Type I or Type II $(\mu,t_1,t_2)$-extremal, we say it is \emph{$(\mu,t_1,t_2)$-extremal}.
\end{definition}

Using this, we can now state the main result of the stability part of our proof, as follows.

\begin{theorem}\label{thm:stability} Let $1/n\ll c\ll\mu \ll 1$ and let $t_1,t_2\in \mathbb{N}$ satisfy $t_1+t_2=n$ and $t_1\geq t_2$. Let $G$ be a red/blue coloured complete graph with $\max\{t_1+2t_2,2t_1\}-1$ vertices.
Then, at least one of the following is true. 
\begin{itemize}
    \item $G$ contains a monochromatic copy of every $n$-vertex tree $T$ with $\Delta(T)\le cn$ and bipartition class sizes $t_1$ and $t_2$.
    \item $G$ is Type I $(\mu,t_1,t_2)$-extremal.
    \item $t_1\geq(2-\mu)t_2$ and $G$ is Type II $(\mu,t_1,t_2)$-extremal.
\end{itemize}
\end{theorem}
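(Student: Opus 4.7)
The plan is to carry out a stability analysis through Szemer\'edi's regularity lemma, extending and sharpening the approach of Haxell, \L uczak, and Tingley. I would begin by applying the two-coloured regularity lemma to $G$ with parameters chosen so that $c \ll \eps \ll \mu$, obtaining an equipartition into clusters for which most pairs are $\eps$-regular in each colour. Form the reduced graph $R$ on these clusters and assign each regular-pair edge the majority colour, so $R$ becomes a red/blue coloured near-complete graph on a constant number of vertices that inherits much of the structure of $G$.

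The bulk of the work takes place inside $R$. First, I would invoke a version of the Haxell--\L uczak--Tingley argument to locate a monochromatic connected matching $M$ in $R$ together with a set of clusters reachable in one colour from $M$, whose total sizes roughly match the bipartition $(t_1,t_2)$ of $T$. Given such a structure, a regularity embedding lemma (from Section~\ref{sec:embed regularity}) lets me embed $T$ monochromatically into $G$ by using $M$ as a backbone and alternating between the two bipartition classes of $T$, with the reachable set absorbing leaves and other low-depth vertices. The new work, and the reason the initial argument only gives an approximate bound, is that the matching $M$ often falls short by an additive deficit of order $\mu n/k$ (where $k$ is the cluster size) either in its size or in the reachable set on one side. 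To address this I would set up a sequence of four successively finer embedding stages: each stage either repairs a specific deficit pattern -- by appending an auxiliary monochromatic path in $R$, reassigning a cluster to the other colour class, or using a local swap between the two candidate monochromatic structures -- and invokes a tailored variant of the embedding lemma, or else upgrades the structural hypothesis on $R$ that feeds into the next stage.

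If none of the four stages succeeds in producing a monochromatic copy of $T$, the accumulated structural constraints on $R$ force it to approximate, up to a $\mu$-blow-up, one of Burr's two coloured templates in Figure~\ref{figure:Burr}. To transfer this extremality from $R$ back to $G$, I would ``clean'' each cluster by discarding vertices whose colour degrees into the prescribed sides deviate too much from the cluster-level densities; by standard regularity and concentration arguments, the union of the surviving vertices in the appropriate cluster classes yields the sets $U_1, U_2$ certifying that $G$ is Type~I or Type~II $(\mu,t_1,t_2)$-extremal. In the Type~II branch, I would verify the extra inequality $t_1 \geq (2-\mu)t_2$ by observing that $G$ has $\max\{t_1+2t_2,2t_1\}-1$ vertices and almost all of them sit inside two clusters of size $(1-\mu)t_1$, which forces $2t_1 \geq t_1+2t_2 - O(\mu n)$.

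The principal obstacle is exactly the deficit issue: the Haxell--\L uczak--Tingley matching is only a $(1+o(1))$-approximation, so closing the gap to the exact Burr bound requires tracking, at each stage, which clusters can play the role of which bipartition class of $T$, how many wrong-colour pairs within $R$ can still be tolerated, and how a small residual deficit at the matching level can be paid for by borrowing vertices from the reachable set; the embedding lemmas must be correspondingly flexible to accept whichever repaired structure emerges. A secondary difficulty is that when $t_1$ is close to $2t_2$ the two extremal templates are genuinely indistinguishable from a coloured-graph viewpoint, so the stages must fix a canonical preference and output the condition $t_1 \geq (2-\mu)t_2$ in the Type~II case to avoid spurious extremality classifications.
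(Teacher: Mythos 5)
Your proposal follows essentially the same route as the paper: apply the coloured regularity lemma, start from the Haxell--\L uczak--Tingley structure in the reduced graph (Theorem~\ref{thm:hltstart}), run a four-stage process that either repairs the vertex deficit in that structure via tailored regularity embedding lemmas or accumulates structural constraints forcing the reduced graph to approximate one of Burr's templates, and finally transfer extremality from the reduced graph back to $G$ (as in Lemma~\ref{lemma:regtoext}), with the same counting argument yielding $t_1\geq(2-\mu)t_2$ in the Type~II case. Be aware, though, that your plan is only the architectural skeleton: essentially all of the difficulty lives in the unstated content --- the precise intermediate situations the four stages pass through, the cascading maximum-matching argument, and the family of embedding lemmas that must each cope with a specific deficit pattern --- so the proposal as written does not yet constitute a proof.
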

 
Theorem~\ref{thm:stability} will be proved in Section~\ref{sec:stages}, and reduces the proof of Theorem~\ref{thm:main} to the following two results, which find a monochromatic copy of $T$ even when $G$ is close to one of the extremal constructions. We will prove them in Sections~\ref{sec:extremearg1} and~\ref{sec:extremearg2}, respectively.

\begin{theorem}\label{theorem:extremal:case1}
Let $1/n\ll c\ll\mu \ll 1$ and let $t_1,t_2\in \mathbb{N}$ satisfy $t_1+t_2=n$ and $t_1\geq t_2$. If $G$ is a Type I $(\mu,t_1,t_2)$-extremal graph with $\max\{t_1+2t_2,2t_1\}-1$ vertices, then $G$ contains a monochromatic copy of every $n$-vertex tree $T$ with $\Delta(T)\le cn$ and bipartition class sizes $t_1$ and $t_2$.
\end{theorem}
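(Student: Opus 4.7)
My plan is to choose between two candidate embeddings of $T$, both suggested by the extremal configuration. By Definition~\ref{def:extremality}, after possibly swapping colors we may assume $U_1$ spans an almost-blue clique with $|U_1|\ge(1-\mu)n$, that $|U_2|\ge(1-\mu)t_2$, and that the bipartite graph between $U_1$ and $U_2$ is almost all red. Set $W=V(G)\setminus(U_1\cup U_2)$, and partition (up to a small atypical set) $W=W_R\sqcup W_B$, where $W_R=\{w\in W:\red{d}(w,U_1)\ge |U_1|/2\}$ plays the role of $U_2$ for red embeddings and $W_B$ the role of $U_1$ for blue embeddings. Since $|U_1|+|U_2|+|W|=N=\max\{2t_1,t_1+2t_2\}-1$, a simple counting argument gives the main dichotomy: either $|U_2|+|W_R|\ge t_2$, enabling a red embedding, or $|U_1|+|W_B|\ge n$, enabling a blue embedding.

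In the red case, I would place $V_1\to U_1$ and $V_2\to U_2\cup W_R$. The red bipartite graph between these sides has minimum degree within $O(\mu n)$ of its side-size on both sides, so since $\Delta(T)\le cn\ll\mu n$ a greedy BFS embedding from a carefully chosen root succeeds. In the regime $t_1>2t_2$, where $|W|$ can be linear in $n$ and $|U_1|$ may fall short of $t_1$, I would first push a short initial piece of $V_1$ into $W$-vertices of high red-degree to $U_2$; such vertices must exist by a pigeonhole on the red-degree distribution of $W$, and the bipartiteness of $T$ ensures that no red-sparse pair within $U_1$ is ever forced to host a tree edge.

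In the blue case one embeds $T$ into $H:=U_1\cup W_B$, which has $|H|\ge n$ and is almost a blue clique. The central difficulty here is the Komlós--Sárközy--Szemerédi example, which shows that an almost-blue graph on exactly $n$ vertices need not contain every tree of maximum degree at most $cn$. To overcome this I would dichotomise on $T$: \emph{(i)} when $T$ has a linear number of leaves, peel them off to obtain a pruned tree $T'$, embed $T'$ into $H$ using the comfortable slack $|H|-|T'|\gg cn$, and then reattach the leaves greedily to unused blue-neighbors of their embedded parents; \emph{(ii)} when $T$ has few leaves and is structurally more path-like, find long blue paths in $U_1$ and glue them along the structure of $T$, using a small pre-reserved set of \emph{clean} vertices inside $U_1$ (vertices whose red degree into $U_1$ is essentially zero) to reroute around any red edge encountered during the embedding.

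The main obstacle is the blue case, specifically the Komlós--Sárközy--Szemerédi obstruction, which prevents any black-box spanning-tree embedding lemma from being applied and forces a careful coupling of the global structure of $T$ with the local distribution of red edges inside $U_1$, via an absorbing/reservoir strategy. A secondary subtlety arises in the $t_1>2t_2$ regime, where $|W|$ can be linear in $n$ and must be classified by both red-degrees to $U_1$ and to $U_2$, so that relocated vertices of $V_1$ do not create forbidden monochromatic pairs; this refinement of the $W_R/W_B$ split is what makes the red embedding uniformly work.
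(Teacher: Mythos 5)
There is a genuine gap, and it sits exactly where you flagged the main obstacle. Your overall strategy decides the colour of the embedding purely by a vertex count: red if $|U_2|+|W_R|\ge t_2$, blue if $|U_1|+|W_B|\ge n$. But in the many-leaves case this dichotomy is not enough, and your blue embedding does not survive the Koml\'os--S\'ark\"ozy--Szemer\'edi obstruction that you yourself cite. Concretely, suppose $k:=|U_1|+|W_B|-n=0$ and $T$ has a small set of vertices meeting every edge (so all leaves hang off few parents). After you embed the pruned tree $T'$, the set of unused vertices of $H$ has size exactly the number of removed leaves, and a red graph inside $U_1$ with only $O(n)$ edges (far below the $\mu n^2$ allowed by Definition~\ref{def:extremality}) can leave some unused vertices with no blue neighbour among the embedded parents; then the greedy reattachment fails, Hall's condition fails, and you have no fallback, because in this situation the red side is genuinely deficient ($|U_2|+|W_R|<t_2$). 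This is why the paper's case split in Section~\ref{sec:extremearg1} is not by $k$ alone but by comparing $e(\red{G}[U_1^+\setminus X])$ with $10^7(k+D+1)n$ (with $D$ measuring blue degrees from $U_1^+$ into $U_2^+$): when there are few internal red edges, a randomised embedding plus a count of ``bad'' vertices makes the leaf-attachment Hall condition work (Lemma~\ref{lemma:caseIB1embedding}, also using $D$ to park some leaves in $U_2^+$); when there are many internal red edges, those very edges are used to embed a small subtree of $T$ containing at least $k+D+1$ vertices of $V_2$ \emph{in red inside} $U_1^+$, which compensates for $|U_2^+|<t_2$ and lets the rest of $T$ go into the red bipartite graph (Proposition~\ref{prop:treeforIB2} and Lemma~\ref{lemma:caseIB2embedding}). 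Your proposal contains neither mechanism, and your ``reservoir of clean vertices'' (red degree essentially zero inside $U_1$) need not exist at all under Definition~\ref{def:extremality}, so the absorbing idea in your case (ii) also has no foundation; fortunately that case (few leaves, many bare paths) is the easy one and is handled by a Hall argument on length-$2$ connecting paths (Lemmas~\ref{proposition:bare-paths} and~\ref{proposition:bipartite:bare-paths}), much as you sketch.

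A secondary point: the regime $t_1>2t_2$, which you treat by relocating part of $V_1$ into $W$-vertices of ``high red degree to $U_2$'', is unsupported — extremality gives no control on edges between $W$ and $U_2$, so the pigeonhole you invoke has nothing to act on. The paper sidesteps this entirely by the reduction at the end of Section~\ref{sec:division} (and of the proof of Theorem~\ref{theorem:extremal:case1}): if $t_1\ge 2t_2+2$, attach new leaves to leaves of $T$ in $V_1$ to get a tree $T'$ with $t_1'\le 2t_2'+1$ and the same Ramsey threshold, so one may assume $t_1\le 2t_2+1$ throughout. I would adopt that reduction rather than trying to handle large $|W|$ directly.
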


\begin{theorem}\label{theorem:extremal:case2}
Let $1/n\ll c\ll\mu \ll 1$ and let $t_1,t_2\in \mathbb{N}$ satisfy $t_1+t_2=n$ and $t_1\geq(2-\mu)t_2$. If $G$ is a Type II $(\mu,t_1,t_2)$-extremal graph with $\max\{t_1+2t_2,2t_1\}-1$ vertices, then $G$ contains a monochromatic copy of every $n$-vertex tree $T$ with $\Delta(T)\le cn$ and bipartition class sizes $t_1$ and $t_2$.
\end{theorem}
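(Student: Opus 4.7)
The plan is to work directly with the Type II extremal structure, using the disjoint $U_1, U_2$ from Definition~\ref{def:extremality}, and to conduct a careful case analysis deciding in each case whether to embed $T$ in red or in blue. By swapping the two colours if necessary, we may assume that the ``majority'' colour inside each $U_i$ is blue while the ``majority'' colour between $U_1$ and $U_2$ is red. Since $t_1 \ge (2-\mu)t_2$, we have $N = \max\{2t_1, t_1+2t_2\} - 1 < 2t_1$, so setting $W = V(G) \setminus (U_1 \cup U_2)$ we have $|W| \le N - 2(1-\mu)t_1 = O(\mu n)$.

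The first step is a cleanup: iteratively move into $W$ any vertex in $U_i$ with more than $\sqrt{\mu}\, n$ red edges inside $U_i$ or more than $\sqrt{\mu}\, n$ blue edges to $U_{3-i}$. A standard potential argument bounds the number of removed vertices by $O(\mu n)$, leaving cleaned-up sets with $|U_i| \ge (1-O(\mu))t_1$ and every $u \in U_i$ red to at most $\sqrt{\mu}\, n$ vertices of $U_i$ and blue to at most $\sqrt{\mu}\, n$ vertices of $U_{3-i}$. The first main case is when some $|U_i| \ge t_1$. Since $|U_{3-i}| \ge (1-\mu)t_1 \ge t_2$ (using $t_1 \ge (2-\mu)t_2$) and the bipartite graph $G[U_1,U_2]$ is almost complete in red while $\Delta(T) \le cn \ll t_2$, a standard tree-embedding argument of Haxell--\L uczak--Tingley type (greedy embedding with leaves handled last, together with an absorber for the few ``red-missing'' edges) produces a red copy of $T$ with $V_1 \mapsto U_i$ and $V_2 \mapsto U_{3-i}$.

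In the remaining case, $|U_1|, |U_2| \le t_1 - 1$ and hence $|W| \ge 1$. We classify each $w \in W$ by the colours of its edges to $U_1$ and $U_2$: call $w$ \emph{side-$i$ compatible} if $w$ is red to most of $U_{3-i}$, and write $W_{\mathrm{bad}}$ for the set of $w$ whose edges to both $U_1$ and $U_2$ are predominantly blue. Let $W_{12}$ denote the set of $w$ that is compatible with both sides. A short counting argument (summing the two possible side sizes) shows that if $|W_{12}| > |W_{\mathrm{bad}}|$ then we can distribute $W$ so that the ``side-$i$'' set reaches size at least $t_1$ for some $i$, and the previous red-embedding argument goes through. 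So the delicate sub-case is $|W_{\mathrm{bad}}| \ge |W_{12}|$, i.e., many vertices of $W$ form blue ``bridges'' between $U_1$ and $U_2$.

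The main obstacle is this last sub-case, which sits essentially on Burr's Construction~II in Figure~\ref{figure:Burr}. Here a pure red embedding fails because both sides of the bipartite red graph fall below $t_1$; and a pure blue embedding fails because the blue graph is essentially two cliques of size about $t_1 - 1$ joined only through $W_{\mathrm{bad}}$, while $n = t_1+t_2$ cannot be split into two parts each of size at most $t_1-1$ except in trivial cases. To overcome this we exploit both the structure of $T$ (using that $\Delta(T)\le cn$ forces $T$ to have either many leaves or a long bare path, by standard tree-decomposition lemmas) and the ``wrong-coloured'' edges that the cleanup thresholds force to exist inside some $U_i$ or across $U_1$-$U_2$. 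Either we embed $T$ in blue by routing a bare path of $T$ through one or more $W_{\mathrm{bad}}$ bridge vertices, placing the two halves of $T$ into the near-blue-cliques with the correct sizes; or we embed $T$ in red, using a single internal red edge inside some $U_i$ (or a single blue cross edge) to let one vertex of $T$ ``cross sides'' and make up the one-vertex deficit. Making the right choice between these two strategies, and handling the small exceptional subconfigurations (in particular those witnessing the Komlós--Sárközy--Szemerédi obstruction to embedding arbitrary trees into near-cliques with a few wrong edges), is the delicate core of the proof and requires a further subdivision based on the fine structure of $T$ near its centre.
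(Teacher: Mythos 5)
There is a genuine gap, and it is in your ``first main case''. You claim that once some $|U_i|\ge t_1$ (so $|U_{3-i}|\ge(1-\mu)t_1\ge t_2$), a standard greedy embedding with leaves last, plus an unspecified ``absorber for the few red-missing edges'', gives a red copy of $T$ with $V_1\mapsto U_i$, $V_2\mapsto U_{3-i}$. This is exactly where the Koml\'os--S\'ark\"ozy--Szemer\'edi obstruction lives, and it cannot be waved away: the slack on the $U_i$ side (which must absorb the $\ge t_1-t_2$ leaves of $V_1$ guaranteed by Lemma~\ref{lemma:leaves:V1}) can be zero, while the tree may have all its $V_1$-leaves attached to a bounded set of parents in $V_2$. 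A sparse ``blue'' bipartite graph between $U_1$ and $U_2$ of density $\Theta(\mu)$, placed adversarially (e.g.\ binomially, as in~\cite{komlos2001spanning}), ensures that for \emph{every} placement of those few parents in $U_{3-i}$ there are $\Theta_{c,\mu}(n)$ vertices of $U_i$ with no red edge to any of them; with zero slack and only $O(1/c)$ non-leaf hub vertices of $V_1$ available to cover them, Hall's condition for the leaves fails and no red embedding of this form exists. In that situation the tree must instead be embedded in \emph{blue}, and deciding between the two colours is the heart of the matter: the paper does it via the sparse-cut dichotomy (Proposition~\ref{prop:splitwithsparsecut} together with Lemma~\ref{lem:caseIIB} and Lemma~\ref{lem:caseIIC}) --- first try to embed $T[A,B]$ across the cut in blue, and only when that fails does one obtain the structural information (all but a set $U_A$ of at most $2cn$ vertices have blue degree at most $\sqrt n$ into $U_2\setminus U_B$) that makes the red embedding, with its careful covering of exceptional vertices, go through. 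Your proposal has no such mechanism in the $|U_i|\ge t_1$ regime; you only invoke the Koml\'os--S\'ark\"ozy--Szemer\'edi difficulty in your final sub-case (both sides below $t_1$, sitting on Construction~II of Figure~\ref{figure:Burr}), but the difficulty is already fully present, and is in fact hardest, when one side has size exactly $t_1$.

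A secondary point: your trichotomy of the exceptional set $W$ and the closing sketch are also too coarse at the boundary $|U_1^+|=t_1-1$. There a single ``crossing'' red edge is not enough on its own --- the remainder of the tree still has to be embedded across the red bipartition, so the same dichotomy as above is needed again, and the paper additionally requires robustly many ($\ge 10^6 n$) red edges inside a side, falling back to a blue embedding through the bridge vertex (via the machinery of Lemma~\ref{lemma:caseIB1embedding}) when there are fewer. Likewise, routing $T$ in blue through bridge vertices needs two bridges in general, since a single cut vertex of $T$ may split it into three parts of size about $n/3$, none of which pairs into a part of size at most $|U_i^+|\approx t_1-1\approx 2n/3$; the paper's Proposition~\ref{prop:splittreewith2vertices} is designed precisely for this. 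These are fixable, but the first gap is structural: without a criterion telling you, in the $|U_i|\ge t_1$ case, when to switch from red to blue, the proof strategy does not go through.
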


Given Theorems~\ref{thm:stability}--\ref{theorem:extremal:case2}, Theorem~\ref{thm:main} follows essentially immediately, but we will conclude this overview by formally making this deduction, as follows.

\begin{proof}[Proof of Theorem~\ref{thm:main}] Let $\mu$ satisfy $c\ll \mu \ll 1$. If $G$ is not $(\mu,t_1,t_2)$-extremal, then $G$ contains a monochromatic copy of $T$ by Theorem~\ref{thm:stability}. If $G$ is Type I $(\mu,t_1,t_2)$-extremal, then $G$ contains a monochromatic copy of $T$ by Theorem~\ref{theorem:extremal:case1}, while if $t_1\geq(2-\mu)t_2$ and $G$ is Type II $(\mu,t_1,t_2)$-extremal, then $G$ contains a monochromatic copy of $T$ by Theorem~\ref{theorem:extremal:case2}.
\end{proof}

Finally, we remark that in these proofs, we can often assume that $t_1\leq 2t_2+1$. Indeed, if $t_1\geq 2t_2+2$, then we can take $2/c$ vertices in $V_1$ with degree 1, which are guaranteed to exist by Lemma~\ref{lemma:leaves:V1}, and attach $\lfloor (t_1-2t_2)/2\rfloor$ new leaves to them, with none of them receiving more than $cn$ new leaves. Let $T'$ be the new tree obtained in this way, and note that the bipartition classes of $T'$ have sizes $t_1'=t_1$ and $t_2'=\floor{t_1/2}$, satisfying $t_1'\leq2t_2'+1$ and $\max\{t'_1+2t'_2,2t'_1\}-1=2t_1'-1=2t_1-1=\max\{t_1+2t_2,2t_1\}-1$. Thus, the number of vertices in $G$ remains unchanged, and it is clear that if $G$ contains a monochromatic copy of $T'$, then $G$ also contains a monochromatic copy of $T$.
%%%%%%%%%%%%%%%%%%%%%%%%%%%%%%%%%%%%%%%%%%%%%%%%%%%%%%%%%%%%%%%%%%%%%%%
%%%%%%%%%%%%%%%%%%%%%%%%%%%%%%%%%%%%%%%%%%%%%%%%%%%%%%%%%%%%%%%%%%%%%%%
%%%%%%%%%%%%%%%%%%%%%%%%%%%%%%%%%%%%%%%%%%%%%%%%%%%%%%%%%%%%%%%%%%%%%%%
%%%%%%%%%%%%%%%%%%%%%%%%%%%%%%%%%%%%%%%%%%%%%%%%%%%%%%%%%%%%%%%%%%%%%%%

\subsection{Notation}\label{sec:notation}
For a positive integer $n\in\mathbb N$, we write $[n]=\{1,\dots,n\}$  and $[n]_0=[n]\cup\{0\}$. We will use the standard hierarchy notation, that is, for $a,b\in (0,1]$, we will use $a\ll b$ to mean that there exists a non-decreasing function $f:(0,1]\to (0,1]$ such that if $a\le f(b)$ then the following statement holds. For $a,b\geq1$, we write $a\ll b$ if $1/b\ll1/a$. Hierarchies with more constants are defined in a similar way. For simplicity we will sometimes ignore floor and ceiling signs when doing so does not affect the argument.

Given a graph $G$, we use $V(G)$ and $E(G)$ to denote the set of vertices and edges of $G$, respectively, and write $|G|=|V(G)|$ and $e(G)=|E(G)|$. For not necessarily disjoint subsets $A,B\subset V(G)$, we denote the number of edges in $G$ with one endpoint in $A$ and one in $B$ by $e(A,B)$. For a subset $S\subset V(G)$, we use $G[S]$ to denote the graph with vertex set $S$ and all the edges from $G$ with both endpoints in $S$, and we write $G-S$ for the graph $G[V(G)\setminus S]$. Given two disjoint subsets $S,S'\subset V(G)$, we use $G[S,S']$ to denote the bipartite graph with parts $S$ and $S'$, and all edges of the form $ss'\in E(G)$ with $s\in S$ and $s'\in S'$.

For a vertex $v\in V(G)$, the set of neighbours of $v$ is denoted by $N(v)$, and $d(v)=|N(v)|$ denotes the degree of $v$. The maximum degree and the minimum degree of $G$ are denoted by $\Delta(G)$ and $\delta(G)$, respectively. Given a subset $S\subset V(G)$, its external neighbourhood is $N(S)=(\cup_{s\in S}N(s))\setminus S$. For a vertex $v\in V(G)$ and subsets $S,U\subset V(G)$, we write $N(v,S)=N(v)\cap S$, $d(v,S)=|N(v,S)|$, and $N(U,S)=N(U)\cap S$. When working with more than one graph, we add subscripts to denote which graph we are working with. For example, $d_G(x)$ refers to the degree of $x$ in the graph $G$.
%the set of neighbours of $S$ is $N'(S)=\cup_{s\in S}N(s)$ and 

Say $G$ is a red/blue coloured graph if every edge in $E(G)$ is coloured with either red or blue. We let $\red{G}$ and $\blue{G}$ denote the graphs spanned by the red edges and the blue edges, respectively. For brevity, we write $\red{d}(x)$ instead of $d_{\red{G}}(x)$ and $\blue{d}(x)$ instead of $d_{\blue{G}}(x)$, and use similar notations for the red and blue neighbourhoods of a vertex or a set of vertices.

For $\mu\in[0,1]$, a graph $G$ is \textit{$\mu$-almost complete} if $\delta(G)\geq(1-\mu)|G|$, and is \textit{$\mu$-almost empty} if $\Delta(G)\leq\mu|G|$. A bipartite graph $H$ with bipartition classes $A,B$ is \textit{$\mu$-almost complete} if $d(a)\geq(1-\mu)|B|$ for every $a\in A$ and $d(b)\geq(1-\mu)|A|$ for every $b\in B$, and is \textit{$\mu$-almost empty} if $d(a)\leq\mu|B|$ for every $a\in A$ and $d(b)\leq\mu|A|$ for every $b\in B$.

%%%%%%%%%%%%%%%%%%%%%%%%%%%%%%%%%%%%%%%%%%%%%%%%%%%%%%%%%%%%%%%%%%%%%%%
%%%%%%%%%%%%%%%%%%%%%%%%%%%%%%%%%%%%%%%%%%%%%%%%%%%%%%%%%%%%%%%%%%%%%%%
%%%%%%%%%%%%%%%%%%%%%%%%%%%%%%%%%%%%%%%%%%%%%%%%%%%%%%%%%%%%%%%%%%%%%%%
%%%%%%%%%%%%%%%%%%%%%%%%%%%%%%%%%%%%%%%%%%%%%%%%%%%%%%%%%%%%%%%%%%%%%%%

\subsection{Concentration results}\label{sec:concentration}
%Most of our methods will use randomness in one form or another. To this end, we collect
We will need the following well-known concentration results.
\begin{lemma}[Chernoff's Bound~{\cite[Corollary 2.3, Theorem 2.10]{JLR2000}}]\label{lemma:chernoff}
Let $X$ be either a binomial random variable or a hypergeometric random variable. Then, for all $0<\eps\le 3/2$,
\[\mathbb{P}\left(\big|X-\mathbb E[X]\big|\ge \eps\mathbb E[X]\right)\le 2\exp(-\eps^2\mathbb{E}[X]/3).\]
\end{lemma}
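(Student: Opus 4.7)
The plan is to apply the classical Chernoff--Hoeffding moment generating function technique, handling the upper and lower tails of the binomial case separately, and then reducing the hypergeometric case to the binomial case via Hoeffding's comparison theorem for sampling without replacement.

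For the binomial case, write $X = \sum_{i=1}^{n} X_i$ with $X_i$ independent Bernoullis and let $\mu = \mathbb{E}[X]$. For the upper tail, I would fix $t > 0$, apply Markov's inequality to $e^{tX}$, and use independence together with $1+x \le e^x$ to get
\[
\mathbb{P}(X \ge (1+\eps)\mu)
\le e^{-t(1+\eps)\mu}\,\mathbb{E}[e^{tX}]
\le \exp\bigl(\mu(e^t-1) - t(1+\eps)\mu\bigr).
\]
Optimising at $t = \ln(1+\eps)$ yields the classical estimate $\mathbb{P}(X \ge (1+\eps)\mu) \le \exp\bigl(\mu(\eps - (1+\eps)\ln(1+\eps))\bigr)$. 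I would then verify analytically that $g(\eps) := (1+\eps)\ln(1+\eps) - \eps - \eps^2/3 \ge 0$ on $(0, 3/2]$, using $g'(\eps) = \ln(1+\eps) - 2\eps/3$ to locate the sign changes of $g'$ and reducing to checking the endpoint value $g(3/2) = \tfrac{5}{2}\ln(\tfrac{5}{2}) - \tfrac{9}{4} > 0$. The lower tail is analogous with $t < 0$ and in fact yields the stronger bound $e^{-\eps^2\mu/2}$, so taking a union bound produces the factor of $2$ in the statement.

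For the hypergeometric case, I would invoke Hoeffding's theorem on sampling without replacement: if $X$ is hypergeometric with mean $\mu$ arising from $n$ draws, then $\mathbb{E}[\varphi(X)] \le \mathbb{E}[\varphi(Y)]$ for every convex function $\varphi$, where $Y \sim \mathrm{Bin}(n, \mu/n)$. Applying this with $\varphi(x) = e^{tx}$ for both signs of $t$ reduces the moment generating function estimate to the binomial one just established, and the rest of the argument proceeds identically.

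The main technical obstacle I anticipate is justifying the constant $3$ across the whole range $0 < \eps \le 3/2$, as opposed to the more familiar regime $0 < \eps \le 1$, where $g(\eps) \ge 0$ drops out immediately from the Taylor series $(1+\eps)\ln(1+\eps) = \eps + \tfrac{\eps^2}{2} - \tfrac{\eps^3}{6} + \cdots$. Near the upper end of the range the naive Taylor argument fails, and one instead has to analyse the sign pattern of $g'$ (positive on a middle region, negative near $3/2$) and conclude by evaluating $g$ at the endpoint to rule out an interior dip below zero.
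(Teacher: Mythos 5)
The paper does not prove this lemma; it is quoted directly from the cited reference (Janson--\L uczak--Ruci\'nski, Corollary 2.3 and Theorem 2.10), so there is no in-paper argument to compare against. Your reconstruction is correct and is essentially the standard proof from that source: the exponential-moment bound with the optimal $t=\ln(1+\eps)$, the verification that $(1+\eps)\ln(1+\eps)-\eps\ge\eps^2/3$ on $(0,3/2]$ via the sign pattern of $g'$ and the endpoint check $\tfrac52\ln\tfrac52>\tfrac94$, and Hoeffding's convex-ordering comparison to transfer the moment generating function bound from the binomial to the hypergeometric case. All steps, including the handling of the range $\eps\le 3/2$ beyond where the naive Taylor expansion suffices, are sound.
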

\begin{lemma}[Azuma's Inequality~{\cite[Lemma 4.2]{wormald1999differential}}]\label{lemma:azuma}
Let $X_1,\ldots, X_m$ be a sequence of random variables such that for each $i\in[m]$, there exist constants $a_i\in\mathbb{R}$ and $c_i>0$ with $|X_i-a_i|\leq c_i$.
\begin{itemize}
    \item If $\mathbb{E}[X_i\mid X_1,\ldots,X_{i-1}]\geq a_i$ for every $i\in[m]$, then for every $t>0$, \[\textstyle\mathbb{P}\left(\sum_{i=1}^m(X_i-a_i)\leq-t\right)\leq\exp\left(-\frac{t^2}{2\sum_{i=1}^mc_i^2}\right).\]
    \item If $\mathbb{E}[X_i\mid X_1,\ldots,X_{i-1}]\leq a_i$ for every $i\in[m]$, then for every $t>0$, \[\textstyle \mathbb{P}\left(\sum_{i=1}^m(X_i-a_i)\geq t\right)\leq\exp\left(-\frac{t^2}{2\sum_{i=1}^mc_i^2}\right).\]
\end{itemize}
\end{lemma}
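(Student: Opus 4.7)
The plan is to prove this by the standard exponential moment (Chernoff--type) argument that underlies Azuma--Hoeffding. The second bullet reduces immediately to the first by applying the first part to the sequence $(-X_i)$ with targets $(-a_i)$ and bounds $(c_i)$: the hypothesis $\mathbb{E}[X_i\mid\mathcal{F}_{i-1}]\le a_i$ becomes $\mathbb{E}[-X_i\mid\mathcal{F}_{i-1}]\ge -a_i$, and the event $\sum(X_i-a_i)\ge t$ becomes $\sum(-X_i-(-a_i))\le -t$. So I would focus on the first bullet.

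For the first bullet, let $\mathcal{F}_i:=\sigma(X_1,\ldots,X_i)$, set $Y_i:=X_i-a_i$ so that $|Y_i|\le c_i$, and let $\mu_i:=\mathbb{E}[Y_i\mid \mathcal{F}_{i-1}]\ge 0$. The core step is the conditional moment generating function bound
\[ \mathbb{E}\bigl[e^{-\lambda Y_i}\,\big|\,\mathcal{F}_{i-1}\bigr] \;\le\; e^{\lambda^2 c_i^2/2} \qquad \text{for every } \lambda>0, \]
which I would prove by writing $Y_i=\mu_i+(Y_i-\mu_i)$, discarding the factor $e^{-\lambda\mu_i}\le 1$ (using $\mu_i\ge 0$), and then applying Hoeffding's lemma to the conditionally centred random variable $Y_i-\mu_i$, whose conditional support lies in the interval $[-c_i-\mu_i,\,c_i-\mu_i]$ of length at most $2c_i$.

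Having established the conditional MGF bound, I would iterate it via the tower property to obtain $\mathbb{E}[\exp(-\lambda\sum_{i=1}^m Y_i)]\le\exp(\lambda^2\sum_{i=1}^m c_i^2/2)$, and then apply Markov's inequality to $\exp(-\lambda\sum_i Y_i)$, optimising $\lambda=t/\sum_i c_i^2$ to obtain the stated tail bound.

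The only real subtlety is the discarding step $e^{-\lambda\mu_i}\le 1$, which is precisely where the one-sided hypothesis $\mu_i\ge 0$ is used; this is also why the second bullet does not follow from the first by merely replacing $t$ with $-t$, and has to be obtained via the sign flip described above. Beyond this, the proof is a textbook Azuma--Hoeffding computation with no serious obstacles; the conditional Hoeffding lemma itself follows from its unconditional counterpart by a pointwise-in-$\omega$ convexity argument on $x\mapsto e^{-\lambda x}$.
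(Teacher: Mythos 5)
Your argument is correct: it is the standard Azuma--Hoeffding proof (one-sided conditional MGF bound via Hoeffding's lemma applied to the conditionally centred increments, tower property, Markov, optimise $\lambda=t/\sum_i c_i^2$), and the reduction of the second bullet to the first by negation is handled properly. The paper itself gives no proof of this lemma --- it is quoted as a known result from Wormald --- so there is nothing to compare against beyond noting that your write-up is the textbook derivation and is sound.
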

\begin{lemma}[McDiarmid's Inequality~{\cite[Lemma 1.2]{McDiarmid_1989}}]\label{lemma:mcdiarmid}
Let $X_1,\ldots,X_m$ be independent random variables taking values in a set $\Omega$. Let $c_1,\ldots,c_m\geq 0$ and suppose $f:\Omega^m\to\mathbb{R}$ is a function such that for every $i\in[m]$ and every $x_1,\ldots,x_m,x_i'\in\Omega$, we have $\Big|f(x_1,\ldots,x_i,\ldots,x_m)-f(x_1,\ldots,x_i',\ldots,x_m)\Big|\leq c_i$.
Then, for all $t>0$,
\[\mathbb{P}\left(\Big|f(X_1,\ldots,X_m)-\mathbb{E}[f(X_1,\ldots,X_m)]\Big|\geq t\right)\leq2\exp\left(\frac{-2t^2}{\sum_{i=1}^mc_i^2}\right).\]
\end{lemma}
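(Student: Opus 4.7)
The plan is to derive McDiarmid's inequality directly from Azuma's inequality (Lemma~\ref{lemma:azuma}) by introducing the Doob martingale associated to $f(X_1,\ldots,X_m)$ and verifying that its increments are bounded by the given constants $c_i$.

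First, I would define $Y_i = \mathbb{E}[f(X_1,\ldots,X_m) \mid X_1,\ldots,X_i]$ for $i \in [m]_0$, so that $Y_0 = \mathbb{E}[f(X_1,\ldots,X_m)]$ and $Y_m = f(X_1,\ldots,X_m)$. By the tower property, $(Y_i)$ is a martingale with respect to the natural filtration, and its increments $D_i := Y_i - Y_{i-1}$ satisfy $\mathbb{E}[D_i \mid X_1,\ldots,X_{i-1}] = 0$. The critical step is to establish $|D_i| \le c_i$ almost surely. Using the independence of $X_1,\ldots,X_m$, one can write
$$Y_i = g_i(X_1,\ldots,X_i), \qquad g_i(x_1,\ldots,x_i) = \mathbb{E}[f(x_1,\ldots,x_i,X_{i+1},\ldots,X_m)],$$
and similarly $Y_{i-1} = \mathbb{E}_{X_i'}[g_i(X_1,\ldots,X_{i-1},X_i')]$, where $X_i'$ is an independent copy of $X_i$. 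The bounded differences hypothesis yields
$$\bigl|g_i(X_1,\ldots,X_{i-1},x) - g_i(X_1,\ldots,X_{i-1},x')\bigr| \le c_i$$
for all $x, x' \in \Omega$, and averaging over $x'$ gives $|D_i| \le c_i$ almost surely.

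Next, I would apply Lemma~\ref{lemma:azuma} to the sequence $(D_i)_{i=1}^m$ with constants $a_i = 0$. Since $\mathbb{E}[D_i \mid X_1,\ldots,X_{i-1}] = 0$ and $|D_i - 0| \le c_i$, both hypotheses of Azuma's inequality are satisfied. The first part, applied to the telescoping sum $\sum_{i=1}^m D_i = Y_m - Y_0$, yields
$$\mathbb{P}\Big(f(X_1,\ldots,X_m) - \mathbb{E}[f(X_1,\ldots,X_m)] \le -t\Big) \le \exp\Big(-\tfrac{t^2}{2\sum_{i=1}^m c_i^2}\Big),$$
and the second part gives the matching upper-tail bound; summing these two one-sided estimates and using the union bound produces the claimed two-sided inequality with the factor of $2$.

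The only genuinely delicate step is the almost-sure bound $|D_i| \le c_i$, which crucially uses the independence of $X_1,\ldots,X_m$ to rewrite the two conditional expectations appearing in $D_i$ as averages over a fresh copy of $X_i$. Without independence, the bounded differences condition on $f$ alone would not suffice to control the martingale increments, and the whole approach would collapse; everything else is routine bookkeeping once this step is in place.
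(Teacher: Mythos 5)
The paper does not prove this lemma at all --- it is quoted verbatim from McDiarmid's survey --- so there is no in-paper argument to compare against. Your proposal is the standard textbook derivation via the Doob martingale, and the construction of the martingale and the bound $|D_i|\leq c_i$ via independence are correct. However, there is a genuine gap in the final step: the constant in the exponent does not come out right. Applying the paper's Lemma~\ref{lemma:azuma} to the increments $D_i$ with $a_i=0$ and $|D_i|\leq c_i$ yields the one-sided bounds $\exp\bigl(-t^2/(2\sum_{i=1}^m c_i^2)\bigr)$, and hence a two-sided bound of $2\exp\bigl(-t^2/(2\sum_{i=1}^m c_i^2)\bigr)$. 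The lemma you are asked to prove asserts $2\exp\bigl(-2t^2/\sum_{i=1}^m c_i^2\bigr)$, which is stronger by a factor of $4$ in the exponent. The sharper constant requires more than $|D_i|\leq c_i$: one must observe that, conditionally on $X_1,\ldots,X_{i-1}$, the increment $D_i=g_i(X_1,\ldots,X_{i-1},X_i)-\mathbb{E}_{x'}[g_i(X_1,\ldots,X_{i-1},x')]$ takes values in an interval of \emph{length} at most $c_i$ (since $g_i(X_1,\ldots,X_{i-1},x)$ varies over an interval of length at most $c_i$ as $x$ ranges over $\Omega$), and then invoke the one-sided Hoeffding lemma $\mathbb{E}[e^{\lambda D_i}\mid X_1,\ldots,X_{i-1}]\leq e^{\lambda^2 c_i^2/8}$ for increments confined to an interval of length $c_i$. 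The cruder containment $D_i\in[-c_i,c_i]$ is an interval of length $2c_i$, which is exactly where your factor of $4$ is lost. So as written your argument proves a correct but strictly weaker inequality than the one stated; it does not follow from the paper's version of Azuma alone. (For what it is worth, the weaker constant would suffice for every application of Lemma~\ref{lemma:mcdiarmid} in this paper, since the exponent is always super-logarithmic in $n$ there.)

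A second, much smaller point: the paper's Lemma~\ref{lemma:azuma} conditions on the previous terms of the sequence being summed, i.e.\ on $D_1,\ldots,D_{i-1}$, whereas you verify $\mathbb{E}[D_i\mid X_1,\ldots,X_{i-1}]=0$. Since $\sigma(D_1,\ldots,D_{i-1})\subset\sigma(X_1,\ldots,X_{i-1})$, the tower property transfers your condition to the one required, but this bridging sentence should be said explicitly.
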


%%%%%%%%%%%%%%%%%%%%%%%%%%%%%%%%%%%%%%%%%%%%%%%%%%%%%%%%%%%%%%%%%%%%%%%
%%%%%%%%%%%%%%%%%%%%%%%%%%%%%%%%%%%%%%%%%%%%%%%%%%%%%%%%%%%%%%%%%%%%%%%
\subsection{Matchings in bipartite graphs}
In many of our later tree embedding arguments, we will first embed all but a small set of vertices in $T$ with degrees 1 or 2. To finish the embedding, the following well-known Hall's matching theorem and its generalisation are useful.
\begin{lemma}[Hall's matching theorem {\cite[Theorem 1]{hall1935representatives}}]\label{lemma:Hall}
Let $G$ be a bipartite graph with bipartition classes $A$ and $B$. If $|N(S)|\ge |S|$ for every $S\subset A$, then $G$ contains a matching covering all vertices in $A$.
\end{lemma}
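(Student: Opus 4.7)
The plan is to proceed by induction on $|A|$, which is the classical approach. The base case $|A|=0$ is vacuous, taking the empty matching. For the inductive step, I would split on whether Hall's condition is everywhere strict or attained with equality on some proper nonempty subset.

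In the \emph{slack case}, suppose that $|N(S)|\geq |S|+1$ for every nonempty proper subset $S\subsetneq A$. Then I would pick any $a\in A$, any neighbour $b\in N(\{a\})$ (which exists because $|N(\{a\})|\geq 1$), delete both vertices, and verify that Hall's condition still holds in the bipartite graph $G'$ between $A\setminus\{a\}$ and $B\setminus\{b\}$: for any $S'\subseteq A\setminus\{a\}$, the neighbourhood can lose at most the single vertex $b$, so by the slack assumption $|N_{G'}(S')|\geq |N_G(S')|-1\geq |S'|$. The induction hypothesis then gives a matching covering $A\setminus\{a\}$, to which I append $ab$.

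In the \emph{tight case}, suppose there is a nonempty proper subset $S\subsetneq A$ with $|N(S)|=|S|$. I would apply induction twice. First, to the bipartite graph on $S\cup N(S)$: Hall's condition transfers directly because for every $S'\subseteq S$, $N_G(S')\subseteq N(S)$ and so the neighbourhood is unchanged. This produces a matching $M_1$ covering $S$. Second, to the bipartite graph on $(A\setminus S)\cup (B\setminus N(S))$: for any $T\subseteq A\setminus S$, I would use the identity $N_G(T\cup S)=N(S)\sqcup \bigl(N_G(T)\setminus N(S)\bigr)$, together with the overall Hall condition $|N_G(T\cup S)|\geq |T|+|S|$, to conclude that $|N_G(T)\setminus N(S)|\geq |T|$. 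Induction yields a matching $M_2$ covering $A\setminus S$, and $M_1\cup M_2$ is the required matching since the two matchings live on disjoint vertex sets.

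The only real point of care is verifying that Hall's condition survives each reduction; once the two cases are correctly set up, the verification is direct and the induction closes. There is no substantive obstacle here beyond bookkeeping.
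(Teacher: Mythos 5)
Your proof is correct. Note that the paper does not prove this lemma at all: it is quoted as a classical result with a citation to Hall's original 1935 paper, so there is no in-paper argument to compare against. What you have written is the standard Halmos--Vaughan induction on $|A|$, splitting into the slack case (remove a matched pair $ab$ and check Hall survives because every proper nonempty subset had one unit of surplus) and the tight case (split along a critical set $S$ with $|N(S)|=|S|$, apply induction separately to $G[S\cup N(S)]$ and to the bipartite graph between $A\setminus S$ and $B\setminus N(S)$, using $|N_G(T)\setminus N(S)|\geq |N_G(T\cup S)|-|N(S)|\geq |T|$). Both inductive applications are legitimate since $|S|<|A|$ and $|A\setminus S|<|A|$, the two matchings are vertex-disjoint by construction, and the slack-case existence of a neighbour of $a$ follows from Hall's condition applied to $\{a\}$ itself. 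The argument is complete; no gaps.
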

\begin{lemma}[{\cite[Corollary 11]{bollobás1998modern}}]\label{lemma:hallmatching}
Let $G$ be a bipartite graph with bipartition classes $A$ and $B$, and let $(f_a)_{a\in A}$ be a tuple of non-negative integers indexed by elements of $A$. Suppose that $|N(S)|\geq\sum_{a\in S}f_a$ for every $S\subset A$. Then, there exists a collection of vertex-disjoint stars $(S_a)_{a\in A}$ in $G$, such that for each $a\in A$, $S_a$ is centred at $a$ and has exactly $f_a$ leaves.
\end{lemma}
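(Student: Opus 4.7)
The plan is to reduce the statement to ordinary Hall's matching theorem (Lemma~\ref{lemma:Hall}) via a standard vertex-duplication trick. First I would construct an auxiliary bipartite graph $G'$ with parts $A':=\{(a,j): a\in A,\, 1\le j\le f_a\}$ and $B$, in which each copy $(a,j)$ is joined to exactly the vertices of $N_G(a)\subset B$. Vertices $a$ with $f_a=0$ simply contribute no copies to $A'$, so the degenerate weights are absorbed into the construction rather than treated separately.

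Next I would verify Hall's condition for $G'$. Given $S'\subset A'$, let $S\subset A$ be the set of those $a\in A$ having at least one copy in $S'$. By construction $N_{G'}(S')=N_G(S)$, while $|S'|\le\sum_{a\in S}f_a$, so the hypothesis yields
\[
|N_{G'}(S')|=|N_G(S)|\ge\sum_{a\in S}f_a\ge|S'|.
\]
Hall's theorem applied to $G'$ thus produces a matching $M$ saturating $A'$.

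Finally, I would translate $M$ back into the desired stars. For each $a\in A$ with $f_a\ge1$, let $S_a$ be the star centred at $a$ whose leaves are the $f_a$ vertices of $B$ matched by $M$ to the copies $(a,1),\ldots,(a,f_a)$; for $a$ with $f_a=0$, take $S_a$ to be the trivial star consisting of the single vertex $a$. Each leaf of $S_a$ lies in $N_G(a)$ by the definition of $G'$, so every edge of $S_a$ is genuinely an edge of $G$, and vertex-disjointness of the collection $(S_a)_{a\in A}$ is immediate from $M$ being a matching together with the centres being pairwise distinct. There is no real obstacle here: the only points requiring care are keeping the duplication bookkeeping straight and handling the degenerate case $f_a=0$, for which a single-vertex star suffices.
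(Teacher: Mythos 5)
The paper does not prove this lemma; it is quoted directly from Bollob\'as (Corollary 11 of \emph{Modern Graph Theory}), so there is no in-paper argument to compare against. Your vertex-duplication reduction to Hall's theorem (Lemma~\ref{lemma:Hall}) is the standard proof of exactly this statement and is correct as written, including the verification of Hall's condition for the auxiliary graph and the handling of the degenerate case $f_a=0$.
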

The conditions in Lemma~\ref{lemma:Hall} and Lemma~\ref{lemma:hallmatching} will both be referred as \textit{Hall's matching condition}.
%%%%%%%%%%%%%%%%%%%%%%%%%%%%%%%%%%%%%%%%%%%%%%%%%%%%%%%%%%%%%%%%%%%%%%%
%%%%%%%%%%%%%%%%%%%%%%%%%%%%%%%%%%%%%%%%%%%%%%%%%%%%%%%%%%%%%%%%%%%%%%%

\subsection{Trees}
We now record several useful results on tree embeddings and tree decompositions.
\begin{lemma}\label{lemma:leaves:V1}If $T$ is an $n$-vertex tree with bipartition classes $V_1$ and $V_2$ such that $|V_1|=t_1$, $|V_2|=t_2$, and $t_1\ge t_2$, then $T$ contains at least $t_1-t_2+1$ leaves in $V_1$.
\end{lemma}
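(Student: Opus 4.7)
The plan is a straightforward degree-count exploiting the bipartiteness of $T$. Let $\ell_1$ denote the number of leaves of $T$ that lie in $V_1$; the goal is to show $\ell_1 \geq t_1 - t_2 + 1$.

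First I would use the fact that every edge of $T$ has exactly one endpoint in $V_1$ and one in $V_2$, so counting edges by endpoints gives
\[
\sum_{v \in V_1} d_T(v) \;=\; e(T) \;=\; n-1 \;=\; t_1 + t_2 - 1.
\]
Next I would bound the same sum from below using the structure of $V_1$: each of the $\ell_1$ leaves in $V_1$ contributes exactly $1$, and each of the remaining $t_1 - \ell_1$ non-leaf vertices of $V_1$ contributes at least $2$ (a non-leaf vertex of a tree has degree $\geq 2$). Thus
\[
\sum_{v \in V_1} d_T(v) \;\geq\; \ell_1 + 2(t_1 - \ell_1) \;=\; 2t_1 - \ell_1.
\]
Combining the two displays yields $2t_1 - \ell_1 \leq t_1 + t_2 - 1$, i.e.\ $\ell_1 \geq t_1 - t_2 + 1$, as required.

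There is no real obstacle in this argument beyond recognising that bipartiteness forces the degree sum on the $V_1$-side to equal $n-1$ rather than being bounded only by $2(n-1)$; the hypothesis $t_1 \geq t_2$ is used only to guarantee the bound is non-trivial (i.e.\ gives at least one leaf). It is worth noting in passing that the inequality is tight, as witnessed for instance by a double star $S_{t_1, t_2}$ or by a caterpillar whose spine alternates between $V_1$ and $V_2$ with the appropriate number of pendant leaves attached in $V_1$.
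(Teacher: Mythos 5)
Your proof is correct and is essentially identical to the paper's: both count $\sum_{v\in V_1}d_T(v)=e(T)=n-1$ via bipartiteness and bound the sum below by $\ell_1+2(t_1-\ell_1)$ to get $\ell_1\geq t_1-t_2+1$. No further comment is needed.
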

\begin{proof}
Let $L$ be the set of leaves of $T$ in $V_1$. Then
	\[n-1=e(V_1,V_2)=\sum_{v\in V_1}d(v)\ge |L|+2|V_1\setminus L|=2t_1-|L|,\]
	from which it follows that $|L|\geq2t_1-n+1=t_1-t_2+1$.
\end{proof}

A path $P$ in a tree $T$ is a \textit{bare path} if all of its internal vertices have degree 2 in $T$. By the following well-known result, every tree has either many leaves or many bare paths.
\begin{lemma}[{\cite[Lemma 2.1]{Krivelevichtrees}}]\label{lemma:paths-leaf}Let $k,\ell, n\in\mathbb N$ and let $T$ be an $n$-vertex tree with at most $\ell$ leaves. Then $T$ contains a collection of at least $\frac{n}{k+1}-(2\ell-2)$ vertex-disjoint bare paths, each of length $k$.
\end{lemma}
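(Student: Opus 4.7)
The plan is to decompose $E(T)$ into maximal ``internally bare'' paths---paths whose internal vertices have degree $2$ in $T$---and then within each such maximal piece, pack as many vertex-disjoint sub-paths of length $k$ as possible. First I would set $L$ to be the set of leaves of $T$ and $B$ to be the set of branching vertices (those of degree $\geq 3$). Since $|L|\leq \ell$ and every non-leaf, non-branching vertex has degree exactly $2$, a one-line handshake calculation using $\sum_{v\in V(T)}d(v)=2(n-1)$ will yield $|B|\leq \ell-2$.

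Next, the maximal internally bare paths have both endpoints in $L\cup B$, and correspond bijectively to the edges of the tree $T'$ obtained from $T$ by suppressing all degree-$2$ vertices. Since $T'$ has $|L|+|B|\leq 2\ell-2$ vertices, it has at most $p:=2\ell-3$ edges, so $T$ contains at most $p$ maximal bare paths; denoting their edge-lengths by $\ell_1,\dots,\ell_p$ (padding with zero-length entries if fewer than $p$ exist), we have $\sum_i \ell_i=n-1$.

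From the maximal bare path of length $\ell_i$, which has $\ell_i+1$ vertices, I would then greedily slice off vertex-disjoint length-$k$ sub-paths, obtaining at least $\lfloor (\ell_i+1)/(k+1)\rfloor \geq (\ell_i+1)/(k+1)-1$ of them. Each is a genuine bare path of $T$, since its $k-1$ internal vertices lie strictly inside the maximal bare path and hence have degree $2$ in $T$. Summing over $i$ and combining $\sum_i\ell_i=n-1$ with $p\leq 2\ell-3$ then delivers a total of at least $n/(k+1)-(2\ell-2)$ such sub-paths, which is the desired bound.

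There is no real obstacle here; the argument is purely combinatorial bookkeeping. The only mild care needed is in the final estimate, where the $p$ lost to flooring and the $n-1$ versus $n$ discrepancy are comfortably absorbed into the $(2\ell-2)$ allowance using the bound $p\leq 2\ell-3$.
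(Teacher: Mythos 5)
The paper offers no proof of this lemma -- it is quoted directly from Krivelevich -- so your argument has to stand on its own. Most of it does: the handshake bound $|B|\leq \ell-2$, the identification of maximal bare paths with the edges of the suppressed tree $T'$, the bounds $p\leq 2\ell-3$ and $\sum_i \ell_i = n-1$, and the final arithmetic are all correct, \emph{provided} you really obtain $\lfloor(\ell_i+1)/(k+1)\rfloor$ usable sub-paths from the $i$-th maximal piece. The gap is vertex-disjointness \emph{across} different maximal bare paths. These pieces are edge-disjoint but not vertex-disjoint: every branching vertex $v$ is a shared endpoint of all $\deg_{T'}(v)\geq 3$ maximal paths incident to it. Indeed, the pieces together have $\sum_i(\ell_i+1)=n-1+p$ vertex-slots but only $n$ vertices, so when your greedy slicings use the endpoints, sub-paths from different pieces collide at branch vertices. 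This is not absorbed by your slack: for $k=1$ and a tree consisting of two branch vertices joined by a path of odd length $m$, each with two further pendant paths of length $m$, your count gives $5(m+1)/2=(n+4)/2$ ``disjoint'' edges in an $n$-vertex tree, exceeding the size of any matching. Repairing this by discarding conflicting sub-paths can cost up to $\sum_{v\in B}(\deg_{T'}(v)-1)$, which can be of order $\ell$, far more than the $O(1)$ slack your final estimate leaves.

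The fix is short but is a genuinely missing idea: root $T'$ and, when slicing the maximal path corresponding to an edge $uv$ of $T'$ with $u$ the parent, use only the $\ell_i$ vertices other than $u$, obtaining $\lfloor \ell_i/(k+1)\rfloor$ sub-paths. Each vertex of $T'$ is then available to at most one piece (the one leading to its parent), so the full collection is vertex-disjoint, and
\[
\sum_i\left\lfloor \frac{\ell_i}{k+1}\right\rfloor\;\geq\;\frac{n-1}{k+1}-p\;\geq\;\frac{n-1}{k+1}-(2\ell-3)\;\geq\;\frac{n}{k+1}-(2\ell-2),
\]
the last step using $\tfrac{1}{k+1}\leq 1$. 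With this modification your argument is complete.
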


In many of our tree embeddings, we will first divide the tree into two parts that are then embedded with different methods and different aims. For this, we use the following definition.
\begin{definition}For a tree $T$, we say that subgraphs $T_1,T_2$ of $T$ form a \textit{decomposition} of $T$ if they are edge-disjoint subforests of $T$ such that $E(T)=E(T_1)\cup E(T_2)$.%Vertices appearing in both of these subforests are called the \textit{connectors} of this tree decomposition.
\end{definition}
%\begin{definition}We say that a tree $T$ is \textit{divided} into subtrees $T_1$ and $T_2$ if they  share only one common vertex and $T=T_1\cup T_2$.\end{definition}
We will use the following result to decompose a tree into two subtrees so that each subtree in the decomposition contains a large proportion of a set chosen in advance.

\begin{lemma}[{\cite[Proposition 3.19]{montgomery2019spanning}}]\label{lemma:divide:vertices}Let $T$ be a tree and let $Q\subset V(T)$. Then, $T$ has a decomposition into subtrees $T_1$ and $T_2$ with a unique common vertex such that $|Q\cap V(T_1)|\ge |Q|/3$ and $|Q\cap V(T_2)|\ge |Q|/3$.\end{lemma}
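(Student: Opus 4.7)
The plan is to weight $V(T)$ by $w(u) := \mathbf{1}_{u \in Q}$, with total weight $W := |Q|$, and to build the decomposition by choosing a single vertex $v \in V(T)$ together with a partition $(A, B)$ of the components of $T - v$, then setting $T_1 := T[\{v\} \cup \bigcup_{C \in A} V(C)]$ and $T_2 := T[\{v\} \cup \bigcup_{C \in B} V(C)]$. This automatically gives an edge-disjoint decomposition of $T$ into subtrees intersecting exactly in $\{v\}$. Using $w(A) + w(B) = W - \mathbf{1}_{v \in Q}$, the twin requirements $|Q \cap V(T_i)| \ge W/3$ collapse into the single range condition $W/3 - \mathbf{1}_{v \in Q} \le w(A) \le 2W/3$.

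First, I would locate $v$ as a \emph{weighted centroid}: a vertex such that every component of $T - v$ has $w$-weight at most $W/2$. Existence is standard; one orients each edge toward whichever side of its removal strictly exceeds weight $W/2$ (leaving it unoriented if neither does), notes that every vertex has at most one outgoing oriented edge (otherwise two disjoint components of $T - v$ would each exceed $W/2$), and then follows oriented out-edges from any starting vertex to reach a sink in the acyclic orientation. This sink is the desired $v$.

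With $v$ in hand, I would select $A$ greedily. Let $q_i := w(C_i) \le W/2$ be the component weights. If some $q_i \ge W/3$, I take $A = \{C_i\}$, giving $W/3 \le w(A) = q_i \le W/2 \le 2W/3$ as required. Otherwise every $q_i < W/3$, and I add components into $A$ in arbitrary order until $w(A) \ge W/3$; the last component added has weight $< W/3$, so $w(A) < 2W/3$ is automatic.

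The main obstacle I anticipate is conceptual rather than technical: recognising that the two-sided target $|Q \cap V(T_1)|, |Q \cap V(T_2)| \ge W/3$ can be reformulated as a one-sided range condition on $w(A)$ after $v$ is fixed, and that the weighted centroid bound $q_i \le W/2$ is exactly what makes the greedy argument compatible with both ends of this range. A brief verification is also needed for the small cases $W \in \{0, 1\}$; in particular when $W = 1$ the weighted centroid must coincide with the unique element of $Q$ itself (since any component of $T - v$ has integer weight bounded by $W/2 < 1$, hence zero), after which both $T_i$ trivially contain it.
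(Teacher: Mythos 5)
Your proof is correct. Note that the paper does not actually prove this lemma --- it is imported by citation from \cite[Proposition 3.19]{montgomery2019spanning} --- so there is no in-paper argument to compare against; your write-up supplies a valid self-contained proof. The structure (weighted centroid $v$ with every component of $T-v$ of $Q$-weight at most $|Q|/2$, followed by greedily bundling components into $A$, with the two-sided requirement correctly collapsed to $|Q|/3-\mathbf{1}_{v\in Q}\le w(A)\le 2|Q|/3$) is the standard route to this statement, and all the steps check out: the orientation argument does produce a centroid, the case $q_i\ge |Q|/3$ lands in $[|Q|/3,|Q|/2]$, and in the remaining case the greedy overshoots $|Q|/3$ by less than $|Q|/3$. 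Two points you leave implicit but which are immediate: for $W=|Q|\ge 2$ the greedy can indeed reach weight $W/3$, since the total component weight is $W-\mathbf{1}_{v\in Q}\ge W-1\ge W/3$ (this is exactly why the separate treatment of $W\in\{0,1\}$ is needed, as you do); and in degenerate situations (e.g.\ the centroid is a leaf, or all components end up in $A$) one of the two subtrees may be the single vertex $v$, which the paper's definition of a decomposition permits, and which your centroid bound forces only when $W\le 2$, where the conclusion holds trivially because $v\in Q$.
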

The following almost immediate corollary is obtained by taking $Q=V(T)$ in Lemma~\ref{lemma:divide:vertices}.
%\begin{corollary}\label{lemma:splittree-old}Every $n$-vertex tree $T$ decomposes into subtrees $T_1$ and $T_2$ with a unique connector such that $\frac n3\leq|T_1|\leq|T_2|\leq\frac{2n}3$.
%\end{corollary}
\begin{corollary}\label{cor:splittree} Every $n$-vertex tree $T$ decomposes into subtrees $T_1$ and $T_2$ with a unique common vertex such that $\ceil{n/3}\leq|T_1|\leq|T_2|\leq\ceil{2n/3}$.
\end{corollary}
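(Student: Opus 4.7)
The plan is to apply Lemma~\ref{lemma:divide:vertices} with $Q=V(T)$, exactly as the corollary suggests. This produces a decomposition of $T$ into subtrees $T_1,T_2$ sharing a unique common vertex and satisfying $|V(T_i)|\ge n/3$ for each $i\in\{1,2\}$; since $|V(T_i)|$ is a non-negative integer, this upgrades to $|V(T_i)|\ge\lceil n/3\rceil$.

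For the upper bound on $|T_2|$, I would use the elementary identity
\[|V(T_1)|+|V(T_2)|=n+1,\]
which holds because $T_1,T_2$ are connected subgraphs that partition $E(T)$, share exactly one vertex, and (for $n\ge 2$) together cover $V(T)$, since each vertex of $T$ is incident to some edge and each edge belongs to exactly one $T_i$. Relabelling so that $|T_1|\le|T_2|$, the lower bound from the lemma gives $|T_1|\ge\lceil n/3\rceil$, and the identity then gives $|T_2|=n+1-|T_1|\le n+1-\lceil n/3\rceil$. A short case check on $n\bmod 3$ confirms that the right-hand side equals $\lceil 2n/3\rceil$ whenever $n\not\equiv 0\pmod 3$, finishing those cases.

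The only real obstacle is the residual case $n\equiv 0\pmod 3$: writing $n=3k$, the above only yields $|T_2|\le 2k+1$, which is one more than the target $\lceil 2n/3\rceil=2k$. To close this gap one must use a slightly sharper consequence of the proof of Lemma~\ref{lemma:divide:vertices}, namely that the centroid-based construction in fact forces $|V(T_i)|\ge n/3+1$ for at least one $i\in\{1,2\}$ whenever $3\mid n$; alternatively, if the initial decomposition is the degenerate $(k,2k+1)$, one can re-apply Lemma~\ref{lemma:divide:vertices} to $T_2$ and merge one of the resulting pieces with $T_1$ through the common vertex to rebalance. Apart from this rounding subtlety, the corollary is an immediate consequence of the lemma.
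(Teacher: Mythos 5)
Your first two steps match the paper exactly: apply Lemma~\ref{lemma:divide:vertices} with $Q=V(T)$, use $|T_1|+|T_2|=n+1$, and observe that this already gives the corollary when $n\not\equiv 0\pmod 3$, leaving only the degenerate case $n=3k$ with sizes $(k,2k+1)$. The gap is in how you close that case. Your first suggestion -- that the construction behind Lemma~\ref{lemma:divide:vertices} ``in fact forces'' the smaller piece to have at least $n/3+1$ vertices when $3\mid n$ -- is an unverified strengthening of a lemma whose proof is only cited, not reproduced; as stated the lemma does permit the output $(k,2k+1)$ (for instance $n=3$ with $T_1$ a single vertex, or a spider split at a non-central vertex), and in the case $3\mid n$ the strengthened statement is essentially equivalent to the corollary you are trying to prove, so it cannot simply be asserted.

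Your second suggestion (re-apply the lemma to $T_2$ and merge the piece containing the common vertex $v$ with $T_1$) can fail. Applying the lemma to $T_2$ only guarantees that each new piece has at least $\lceil(2k+1)/3\rceil$ vertices, so the piece $S$ containing $v$ may have as many as roughly $4k/3+1$ vertices; merging it with $T_1$ then produces a subtree on $k+|S|-1>2k$ vertices, overshooting $\ceil{2n/3}$, and you cannot merge $T_1$ with the other piece instead, since (unless $v$ is the new common vertex) they share no vertex and their union is not a tree. The paper closes the case directly and elementarily: with $v$ the common vertex of the degenerate pair, if $d_T(v,T_2)=1$ it replaces $(T_1,T_2)$ by $(T_1+vv',T_2-v)$ where $v'$ is the unique neighbour of $v$ in $T_2$; otherwise it transfers the smallest component $S$ of $T_2-v$ (which satisfies $1\leq|S|\leq k$) from $T_2$ to $T_1$, yielding sizes $k+|S|$ and $2k+1-|S|$, both in $[k+1,2k]$. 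Some such argument is needed to make your proof complete.
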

\begin{proof}
Apply Lemma~\ref{lemma:divide:vertices} with $Q=V(T)$, we get a decomposition of $T$ into subtrees $T_1$ and $T_2$ with a unique common vertex $v$ such that $\ceil{n/3}\leq|T_1|\leq|T_2|\leq n-\ceil{n/3}+1$. If $n$ is congruent to 1 or 2 modulo 3, then $n-\ceil{n/3}+1=\ceil{2n/3}$, so we are done. If $n=3k$ for some integer $k\geq1$, then the only situation where the result does not follow immediately is when $|T_1|=k$ and $|T_2|=2k+1$. Assume that this holds. 

If $d_T(v,T_2)=1$, let $v'$ be the unique neighbour of $v$ in $T_2$, then $T_1+vv'$ and $T_2-v$ are subtrees decomposing $T$ with a unique common vertex $v'$, and contains $k+1$ and $2k$ vertices, respectively, as required. If $d_T(v,T_2)\geq2$, let $S$ be the smallest component in $T_2-v$, so $1\leq|S|\leq k$. Then $V(T_1)\cup S$ and $V(T_2)\setminus S$ induce two subtrees decomposing $T$ with a unique common vertex $v$, and contain $k+1\leq k+|S|\leq 2k$ and $k+1\leq 2k+1-|S|\leq 2k$ vertices, respectively, finishing the proof.  
\end{proof}
%By iterating Lemma~\ref{lemma:divide:vertices}, we can obtain a small subtree that still contains many vertices in $Q$.

%\subsubsection{Finding good vertex cuts}
The following results show that we can cut a tree into smaller subtrees using few vertices.
\begin{lemma}\label{lemma:treehalfcomponent}For every $n$-vertex tree $T$, there exists a vertex $v\in T$ so that each component of $T-v$ has size at most $n/2$.
\end{lemma}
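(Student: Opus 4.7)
The plan is to exhibit such a vertex $v$ as a centroid of $T$, obtained by minimising a natural functional. For each $w\in V(T)$, let $f(w)$ denote the maximum size of a component of $T-w$. Since $V(T)$ is finite, we can choose $v\in V(T)$ minimising $f(v)$, and I claim that $f(v)\le n/2$.

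Suppose for contradiction that $f(v)>n/2$. Then $T-v$ has some component $C$ with $|C|>n/2$. Because the components of $T-v$ have sizes summing to $n-1$, this $C$ is the unique component of $T-v$ of size greater than $n/2$, and in particular $|C|=f(v)$. Let $v'$ be the unique neighbour of $v$ in $C$ (unique because $C$ is a subtree of $T-v$ and any neighbour of $v$ in $C$ would create a cycle in $T$ together with a second such neighbour, as $v$ is attached to $C$ only via edges incident to vertices of $C$). I aim to show $f(v')<f(v)$, contradicting the minimality of $v$.

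For this, I would analyse the components of $T-v'$. Removing the edge $vv'$ from $T$ splits $T$ into two subtrees, and an easy check (using that $v'$ is the only neighbour of $v$ in $C$) identifies these two subtrees as $T[V(C)]$ and $T[V(T)\setminus V(C)]$. Hence the component of $T-v'$ containing $v$ is exactly $T[V(T)\setminus V(C)]$, which has size $n-|C|<n/2$, while every other component of $T-v'$ is a component of $C-v'$ and so has size at most $|C|-1$. Therefore
\[
f(v')\le \max\bigl(n-|C|,\,|C|-1\bigr)=|C|-1<f(v),
\]
where the equality uses $|C|>n/2$, giving $n-|C|<n/2\le |C|-1$. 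This is the desired contradiction, completing the proof.

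The argument is short and the only thing to be a little careful about is identifying the component of $T-v'$ containing $v$; I do not anticipate any real obstacle. An alternative iterative formulation (start anywhere and repeatedly move into the oversized component) would also work, but proving termination essentially reduces to the same monotonicity of $f$, so the minimiser-based presentation above seems cleanest.
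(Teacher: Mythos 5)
Your proof is correct, but it follows a different route from the paper. The paper roots $T$ at an arbitrary vertex and takes $v$ to be a vertex of maximal depth whose subtree of descendants $T'$ has size at least $n/2$: maximality of the depth forces every component of $T'-v$ to have size less than $n/2$, and the single remaining component $T-T'$ has size at most $n/2$ because $|T'|\ge n/2$. You instead run the classical centroid argument: minimise $f(w)=\max\{|C|:C\text{ a component of }T-w\}$ and show that if $f(v)>n/2$ then shifting to the unique neighbour $v'$ of $v$ inside the oversized component $C$ strictly decreases $f$, contradicting minimality. Your identification of the components of $T-v'$ (the component containing $v$ is exactly $T[V(T)\setminus V(C)]$, all others lie inside $C-v'$) is the key step and is argued correctly. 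The paper's proof is a one-pass construction that immediately exhibits $v$, which is slightly shorter to write; yours is an exchange argument that in addition shows any minimiser of $f$ works and directly underwrites the iterative "walk into the big component" algorithm you mention. One cosmetic point: your justification of the displayed equality via "$n-|C|<n/2\le|C|-1$" is not quite right for odd $n$ (e.g.\ $n=5$, $|C|=3$ gives $|C|-1=2<n/2$); the correct reason is integrality, $|C|>n/2$ implies $2|C|\ge n+1$, so $n-|C|\le|C|-1$ — and in any case you only need $\max(n-|C|,|C|-1)<|C|=f(v)$, which holds since both terms are strictly less than $|C|$. This does not affect the validity of the argument.
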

\begin{proof}Choose an arbitrary vertex as the root of $T$. Let $v$ be a vertex at a maximal distance from the root subject to the condition that the tree $T'$ induced by $v$ and all of its descendents has size at least $n/2$. By the choice of $v$, each component of $T'-v$ has size less than $n/2$. The only component of $T-v$ that is not a component of $T'-v$ is $T-T'$, which has size at most $n/2$ as $|T'|\geq n/2$.
\end{proof}

\begin{lemma}[{\cite[Proposition 4.1]{besomi2019degree}}]\label{lemma:trees:cutset}
Let $1/n\ll \xi\ll1$ and let $T$ be an $n$-vertex tree. Then, there exists a subset $X\subset V(T)$ with $|X|\le2\xi^{-1}$, such that every component of $T-X$ has size at most $\xi n$.
\end{lemma}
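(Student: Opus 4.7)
The plan is to build $X$ by a greedy procedure which, at each step, removes one carefully chosen vertex from an oversized component of $T-X$ in such a way that at least $\xi n$ vertices are permanently split off into pieces that are guaranteed to remain small. A potential-function argument then caps the number of rounds at $\xi^{-1}$, which is comfortably within the allowed $2\xi^{-1}$.

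Concretely, I would maintain a set $X\subseteq V(T)$ (initially empty) and the forest $F=T-X$, and iterate the following while $F$ has some component $C$ with $|C|>\xi n$. Root $C$ at an arbitrary vertex $r_C$, and for $v\in V(C)$ let $T_v$ denote the subtree of $C$ rooted at $v$ in this rooting. Since $|T_{r_C}|=|C|>\xi n$, the set of vertices $v\in V(C)$ with $|T_v|\ge\xi n$ is non-empty, so pick one such $v$ at maximum depth. By maximality, every child $u$ of $v$ in $C$ satisfies $|T_u|<\xi n$. Add $v$ to $X$ and continue.

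To bound the number of iterations, define the potential $\Phi(F)=\sum_{C\text{ component of }F,\,|C|>\xi n}|C|$, which starts at $\Phi=n$. Removing $v$ breaks $C$ into the children-subtrees $T_u$ (each of size strictly less than $\xi n$, by the choice of $v$) together with a remainder $C\setminus T_v$ of size $|C|-|T_v|$. Whether this remainder is small or still larger than $\xi n$, a direct check yields $\Delta\Phi\le -|T_v|\le -\xi n$. Hence the procedure terminates after at most $\xi^{-1}$ rounds, so $|X|\le\xi^{-1}\le 2\xi^{-1}$, and by the stopping condition every component of $T-X$ has size at most $\xi n$.

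The only delicate point is verifying that both subcases for the remainder $C\setminus T_v$ contribute the full $-|T_v|$ drop to $\Phi$; this is a one-line computation but hinges on selecting $v$ as a \emph{deepest heavy vertex}, rather than, for instance, a centroid provided by Lemma~\ref{lemma:treehalfcomponent} (which would only halve the component per step and yield a weaker $O(\xi^{-1}\log\xi^{-1})$ bound). Everything else is routine.
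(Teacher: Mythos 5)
Your argument is correct, and it is worth noting that the paper itself offers no proof of this lemma: it is quoted verbatim from \cite[Proposition 4.1]{besomi2019degree}, so there is no in-paper argument to compare against. Your greedy scheme is sound: rooting an oversized component $C$, choosing a deepest vertex $v$ with $|T_v|\ge\xi n$ guarantees every child subtree of $v$ has size below $\xi n$, the remainder $C\setminus T_v$ is a single component (it contains the root, and no path from a non-descendant of $v$ to the root passes through $v$), and the case check you flag does go through: if the remainder exceeds $\xi n$ the potential drops by exactly $|T_v|\ge\xi n$, and otherwise it drops by $|C|\ge|T_v|\ge\xi n$, so $\Phi$ decreases by at least $\xi n$ per round and the process stops after at most $\xi^{-1}$ rounds. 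The usual proof of the cited proposition (and of similar cut lemmas, e.g.\ in \cite{Krivelevichtrees}) instead repeatedly cuts off an entire minimal subtree of size at least $\xi n/2$ and places its root in $X$, which splits off at least $\xi n/2$ vertices per step and hence yields the stated $2\xi^{-1}$ bound; your variant, which deletes only the deepest heavy vertex and charges the drop to a potential counting oversized components, is a genuinely different bookkeeping and in fact gives the slightly stronger bound $|X|\le\xi^{-1}$, which of course suffices here.
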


%\subsubsection{Greedy tree embeddings}
The following two results state that trees can be greedily embedded into graphs with large minimum degrees, and will be used throughout the paper without any further reference.
\begin{lemma}\label{lemma:greedy}
Let $T$ be an $n$-vertex tree containing a vertex $t$. If $G$ is a graph with $\delta(G)\ge n-1$, then, for any vertex $v\in G$, there is a copy of $T$ in $G$ with $t$ copied to $v$.
\end{lemma}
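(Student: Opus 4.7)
The plan is a direct greedy embedding. I would order the vertices of $T$ as $t=u_1,u_2,\ldots,u_n$ so that, for each $i\ge 2$, the vertex $u_i$ has exactly one neighbour $p(u_i)$ among $\{u_1,\ldots,u_{i-1}\}$. Such an ordering exists in any tree: root $T$ at $t$ and take, for instance, a breadth-first ordering, in which every non-root vertex has a unique earlier neighbour (its parent).

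Next, I would build an embedding $\phi\colon V(T)\to V(G)$ inductively. First set $\phi(u_1)=v$. Then, for $i=2,3,\ldots,n$ in turn, pick
\[\phi(u_i)\in N_G(\phi(p(u_i)))\setminus\{\phi(u_1),\ldots,\phi(u_{i-1})\}\]
arbitrarily. The key point is that this choice set is non-empty at every step: by hypothesis, $\phi(p(u_i))$ has at least $n-1$ neighbours in $G$, and at most $i-2$ of the previously embedded vertices could lie in $N_G(\phi(p(u_i)))$ (namely the images of $u_1,\ldots,u_{i-1}$ other than $\phi(p(u_i))$ itself, which is not its own neighbour). Hence the set from which we pick has size at least $(n-1)-(i-2)=n-i+1\ge 1$ for all $i\le n$.

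Since the edges of $T$ are precisely the pairs $\{u_i,p(u_i)\}$ for $i\ge 2$, and each such pair is mapped to an edge of $G$ by construction, while $\phi$ is injective because at each step we explicitly avoid previously used vertices, the map $\phi$ is an embedding of $T$ into $G$ with $\phi(t)=v$. There is no substantive obstacle here; the only thing to verify carefully is the counting in the previous paragraph, which is why the bound $\delta(G)\ge n-1$ (rather than a weaker minimum degree condition) is exactly what is needed to sustain the greedy step at $i=n$.
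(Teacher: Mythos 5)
Your greedy embedding is correct, and the careful count at the last step (using that $\phi(p(u_i))$ is itself not a neighbour of $\phi(p(u_i))$, giving $n-i+1\ge 1$ available choices) is exactly the point that makes $\delta(G)\ge n-1$ suffice. The paper states this lemma without proof as a standard fact, and your argument is precisely the intended standard greedy proof.
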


\begin{lemma}\label{lemma:bipartitegreedy}
Let $T$ be a tree with bipartition classes $V_1$ and $V_2$ of sizes $t_1$ and $t_2$, respectively. Suppose that $G$ is a bipartite graph with bipartition classes $U_1$ and $U_2$, such that
\begin{itemize}
    \item every vertex in $U_1$ has at least $t_2$ neighbours in $U_2$, and
    \item every vertex in $U_2$ has at least $t_1$ neighbours in $U_1$.
\end{itemize}
Then, for any $i\in[2]$ and any vertices $t\in V_i$ and $u\in U_i$, there exists a copy of $T$ in $G$ such that $V_1$ is copied to $U_1$, $V_2$ is copied to $U_2$, and $t$ is copied to $u$.
\end{lemma}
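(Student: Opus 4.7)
The plan is to prove this by a straightforward greedy embedding using a BFS (equivalently, any order extending the tree order rooted at $t$) of $V(T)$. Root $T$ at $t$ and list its vertices as $t=v_1,v_2,\dots,v_n$ so that for every $j\ge 2$ the parent $p(v_j)$ of $v_j$ appears earlier in the list. Embed $v_1:=t$ to $u_1:=u$, which respects the bipartite requirement since $t\in V_i$ and $u\in U_i$ by hypothesis.

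Now I would proceed by induction on $j\ge 2$, maintaining the invariant that after step $j-1$ we have an embedding $\varphi$ of $v_1,\dots,v_{j-1}$ into $V(G)$ that respects adjacencies and sends $V_\ell\cap\{v_1,\dots,v_{j-1}\}$ into $U_\ell$ for each $\ell\in[2]$. For the inductive step, suppose $v_j\in V_\ell$, so $p(v_j)\in V_{3-\ell}$ and, by the invariant, $\varphi(p(v_j))\in U_{3-\ell}$. We need to extend $\varphi$ by mapping $v_j$ to some unused neighbour of $\varphi(p(v_j))$ in $U_\ell$. The hypothesis on $G$ guarantees that $\varphi(p(v_j))$ has at least $t_\ell$ neighbours in $U_\ell$, while the number of vertices of $U_\ell$ already used by $\varphi$ is $|V_\ell\cap\{v_1,\dots,v_{j-1}\}|\le t_\ell-1$ (since $v_j\in V_\ell$ has not yet been embedded). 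Hence at least one suitable unused neighbour exists; pick any such vertex and set $\varphi(v_j)$ to be it. This preserves the invariant and completes the induction.

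At the end of the process $\varphi$ is an injection from $V(T)$ to $V(G)$ that maps edges of $T$ to edges of $G$, with $V_1\subset U_1$, $V_2\subset U_2$, and $\varphi(t)=u$, giving the required copy of $T$. There is no real obstacle: the only thing to check is the bookkeeping that at each step the number of used vertices in the relevant side stays strictly below the degree bound, which is immediate from $|V_\ell|=t_\ell$ and the fact that $v_j$ itself has not yet been placed.
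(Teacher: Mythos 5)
Your greedy argument is correct: the bookkeeping $|V_\ell\cap\{v_1,\dots,v_{j-1}\}|\le t_\ell-1$ versus the degree bound $t_\ell$ is exactly what is needed, and the parity/invariant maintenance is sound. The paper states this lemma without proof as a standard greedy embedding fact, and your proof is precisely the intended argument, so there is nothing to add.
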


%%%%%%%%%%%%%%%%%%%%%%%%%%%%%%%%%%%%%%%%%%%%%%%%%%%%%%%%%%%%%%%%%%%%%%%
%%%%%%%%%%%%%%%%%%%%%%%%%%%%%%%%%%%%%%%%%%%%%%%%%%%%%%%%%%%%%%%%%%%%%%%
%%%%%%%%%%%%%%%%%%%%%%%%%%%%%%%%%%%%%%%%%%%%%%%%%%%%%%%%%%%%%%%%%%%%%%%
%%%%%%%%%%%%%%%%%%%%%%%%%%%%%%%%%%%%%%%%%%%%%%%%%%%%%%%%%%%%%%%%%%%%%%%

\subsection{Szemerédi's regularity lemma}\label{sec:regularity}
Let $G$ be a bipartite graph with bipartition classes $A$ and $B$. For sets $X\subset A$ and $Y\subset B$, the \textit{density} between $X$ and $Y$ is defined as
\[d(X,Y)=\frac{e(X,Y)}{|X||Y|}.\]
We say $G$ is \textit{$\varepsilon$-regular} if for every $X\subset A$ and every $Y\subset B$ with $|X|\ge \varepsilon |A|$ and $|Y|\ge \varepsilon |B|$, we have $|d(X,Y)-d(A,B)|\le\varepsilon$. Furthermore, we say $G$ is \textit{$(\varepsilon,d)$-regular} if $G$ is $\varepsilon$-regular and $d(A,B)\ge d$. %If $G$ has a red/blue colouring and $X\subset A$, $Y\subset B$, then $\blue{d}(X,Y)$ and $\red{d}(X,Y)$ denote the densities of the graphs $\blue{G}[X,Y]$ and $\red{G}[X,Y]$, respectively. 
%We say that a vertex $v$ is \textit{$(\eps,d)$-typical} to a set $Y$ if $d(v,Y)\geq(d-\eps)|Y|$.
The following results are standard.
\begin{lemma}\label{lemma:regularity:1}
Let $\eps\leq1/4$, and let $G$ be a bipartite graph with bipartition classes $A$ and $B$ that is $(\varepsilon,d)$-regular. Suppose $X\subset A$ and $Y\subset B$ satisfy $|X|\ge\sqrt\varepsilon |A|$ and $|Y|\ge\sqrt\varepsilon|B|$, then $G[X,Y]$ is $(\sqrt\varepsilon,d-\eps)$-regular.
\end{lemma}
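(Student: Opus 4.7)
The plan is to verify the two properties required for $G[X,Y]$ to be $(\sqrt{\varepsilon},d-\varepsilon)$-regular separately: the density lower bound $d(X,Y)\ge d-\varepsilon$, and the regularity condition that densities on large subsets do not deviate from $d(X,Y)$ by more than $\sqrt{\varepsilon}$. Both facts follow from applying the hypothesis that $G$ is $\varepsilon$-regular, using that the assumption $\varepsilon\le 1/4$ gives $\sqrt{\varepsilon}\ge\varepsilon$ and $2\varepsilon\le\sqrt{\varepsilon}$.

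For the density lower bound, I would observe that $|X|\ge\sqrt{\varepsilon}|A|\ge\varepsilon|A|$ and $|Y|\ge\sqrt{\varepsilon}|B|\ge\varepsilon|B|$, so $\varepsilon$-regularity of $G$ applied directly to the pair $(X,Y)$ yields $|d(X,Y)-d(A,B)|\le\varepsilon$, and hence $d(X,Y)\ge d(A,B)-\varepsilon\ge d-\varepsilon$.

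For the regularity condition, take arbitrary $X'\subset X$ and $Y'\subset Y$ with $|X'|\ge\sqrt{\varepsilon}|X|$ and $|Y'|\ge\sqrt{\varepsilon}|Y|$. Then $|X'|\ge\sqrt{\varepsilon}\cdot\sqrt{\varepsilon}|A|=\varepsilon|A|$ and similarly $|Y'|\ge\varepsilon|B|$. Applying $\varepsilon$-regularity of $G$ to both $(X',Y')$ and $(X,Y)$ gives $|d(X',Y')-d(A,B)|\le\varepsilon$ and $|d(X,Y)-d(A,B)|\le\varepsilon$, so the triangle inequality yields $|d(X',Y')-d(X,Y)|\le 2\varepsilon\le\sqrt{\varepsilon}$, where the last step uses $\varepsilon\le 1/4$.

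There is no real obstacle here: this is the standard ``slicing'' or ``restriction'' property of regular pairs, and the only care required is to check that each application of $\varepsilon$-regularity has the required hypothesis ($|X'|,|Y'|\ge\varepsilon$ times the ambient part sizes) and that the constant $\sqrt{\varepsilon}$ is large enough to absorb the factor of $2$ coming from the triangle inequality, both of which hold under $\varepsilon\le 1/4$.
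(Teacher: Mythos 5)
Your proof is correct and is exactly the standard slicing argument; the paper states this lemma without proof (labelling it as standard), and your argument — bounding $d(X,Y)$ via one application of $\varepsilon$-regularity, and controlling $|d(X',Y')-d(X,Y)|$ by two applications plus the triangle inequality, with $\eps\le 1/4$ ensuring $2\eps\le\sqrt{\eps}$ — is precisely the intended one.
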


\begin{lemma}\label{lemma:regularity:3}
Let $G$ be a bipartite graph with bipartition classes $A$ and $B$ that is $(\varepsilon,d)$-regular. Suppose $Y\subset B$ satisfies $|Y|\ge\varepsilon|B|$, then there are less than $\varepsilon|A|$ vertices $v\in A$ for which $d(v,Y)<(d-\varepsilon)|Y|$.
\end{lemma}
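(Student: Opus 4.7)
The plan is to prove this by a standard contradiction argument using the definition of $\varepsilon$-regularity directly. Let $X = \{v \in A : d(v, Y) < (d - \varepsilon)|Y|\}$ and suppose, for contradiction, that $|X| \geq \varepsilon|A|$.

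First I would verify that $X$ and $Y$ satisfy the size hypothesis in the definition of $\varepsilon$-regularity: we have $|X| \geq \varepsilon|A|$ by assumption, and $|Y| \geq \varepsilon|B|$ is given. Therefore $|d(X, Y) - d(A, B)| \leq \varepsilon$, which gives
\[
d(X, Y) \geq d(A, B) - \varepsilon \geq d - \varepsilon,
\]
using $d(A,B) \geq d$ from $(\varepsilon, d)$-regularity.

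On the other hand, by the defining property of $X$, every $v \in X$ satisfies $d(v, Y) < (d - \varepsilon)|Y|$, so
\[
e(X, Y) = \sum_{v \in X} d(v, Y) < (d - \varepsilon)|Y| \cdot |X|,
\]
which gives $d(X, Y) = e(X, Y) / (|X| \cdot |Y|) < d - \varepsilon$. This contradicts the lower bound $d(X,Y) \geq d - \varepsilon$ derived from regularity, so $|X| < \varepsilon|A|$.

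There is no real obstacle here — the argument is a direct application of the definitions and is one of the standard ``inheritance'' consequences of $\varepsilon$-regularity. The only thing to be careful about is handling the two inequalities $d(A,B) \geq d$ (from $(\varepsilon,d)$-regularity) and the $\varepsilon$-regularity inequality $|d(X,Y) - d(A,B)| \leq \varepsilon$ in the right direction, so that they combine to a strict lower bound on $d(X,Y)$ that contradicts the strict upper bound coming from the definition of $X$.
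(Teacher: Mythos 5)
Your argument is correct: the paper states this lemma without proof (as a standard consequence of the definition of $(\varepsilon,d)$-regularity), and your contradiction argument — taking the exceptional set $X$, applying the regularity condition to the pair $(X,Y)$, and contrasting the resulting lower bound $d(X,Y)\geq d-\varepsilon$ with the upper bound $d(X,Y)<d-\varepsilon$ coming from the definition of $X$ — is exactly the standard proof intended. Nothing is missing.
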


%\begin{lemma}Suppose $G$ is a graph with disjoint sets $\bar V, V_1,\ldots, V_r\subset V(G)$ such that $G[\bar V,V_i]$ is $(\varepsilon ,d)$-regular for each $i\in [r]$. Then, $G[\bar V, \cup_{i\in [r]}V_i]$ is $((r+1)\varepsilon,d)$-regular.\end{lemma}

\begin{lemma}\label{lemma:regularity:2} Let $G$ be a graph containing disjoint subsets $V_0, V_1,\ldots, V_r\subset V(G)$, such that $G[V_0,V_i]$ is $(\varepsilon,d)$-regular for each $i\in [r]$. Let $U_i\subset V_i$ have size $|U_i|\ge \varepsilon |V_i|$ for each $i\in[r]$. Then, there are less than $\sqrt{\varepsilon}|V_0|$ vertices $v\in V_0$ such that $d(v,U_i)<(d-\varepsilon)|U_i|$ for at least $\sqrt\varepsilon r$ indices $i\in [r]$.
\end{lemma}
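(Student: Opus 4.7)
The plan is to prove this via a straightforward double counting argument built on top of \cref{lemma:regularity:3}. The idea is that for each individual index $i \in [r]$, the regularity of $G[V_0,V_i]$ combined with the fact that $|U_i| \geq \varepsilon|V_i|$ forces only a tiny fraction of vertices of $V_0$ to have abnormally low degree into $U_i$; summing these "bad events" over $i$ and observing that a vertex flagged in the conclusion must participate in many of them yields the claim.

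First, for each $i \in [r]$, I would apply \cref{lemma:regularity:3} to the set $U_i \subseteq V_i$ (which satisfies $|U_i| \geq \varepsilon|V_i|$ by hypothesis), to obtain the set
\[
B_i = \{\, v \in V_0 : d(v, U_i) < (d-\varepsilon)|U_i| \,\}
\]
of size strictly less than $\varepsilon|V_0|$. Let $B \subseteq V_0$ be the set of vertices $v$ appearing in $B_i$ for at least $\sqrt{\varepsilon}\, r$ indices $i \in [r]$; this is exactly the set of vertices flagged by the conclusion. Double counting the set of pairs $\{(v,i) : v \in B_i\}$ once by $v$ and once by $i$ gives
\[
|B| \cdot \sqrt{\varepsilon}\, r \;\leq\; \sum_{v \in B} |\{ i \in [r] : v \in B_i \}| \;\leq\; \sum_{i=1}^r |B_i| \;<\; r \cdot \varepsilon |V_0|,
\]
and dividing through by $\sqrt{\varepsilon}\, r$ yields $|B| < \sqrt{\varepsilon}|V_0|$, as required.

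There is no real obstacle here beyond making sure the strict inequality from \cref{lemma:regularity:3} is carried through to the final conclusion (the sum $\sum_i |B_i|$ is strict provided $r \geq 1$, which is the only nontrivial case), and that the counting correctly handles vertices appearing in many $B_i$'s. The argument is purely a weighted union bound and requires no further structural input from the graph.
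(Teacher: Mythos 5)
Your proof is correct: applying Lemma~\ref{lemma:regularity:3} to each pair $(V_0,U_i)$ and then double counting the incidences $\{(v,i):v\in B_i\}$ is exactly the standard argument, and the strictness and division by $\sqrt{\varepsilon}\,r$ are handled properly (the only degenerate cases, $r=0$ or $\varepsilon=0$, are implicitly excluded). The paper itself states Lemma~\ref{lemma:regularity:2} without proof as a standard fact, so there is nothing to diverge from; your write-up supplies the expected routine justification.
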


The following colourful variant of Szemer\'edi's Regularity Lemma is well-known, and is the starting point of the stability part of our proof. 
\begin{theorem}[Coloured Regularity Lemma{~\cite[Theorem~1.18]{komlos1995szemeredi}}]\label{theorem:regularity}
Let $1/k_2\ll 1/k_1\ll \varepsilon$. Every red/blue coloured graph $G$ on $n\ge k_1$ vertices contains disjoint subsets $V_1,\ldots,V_k\subset V(G)$ with $k_1\le k\le k_2$ that satisfy the following.
\begin{enumerate}[label=\upshape (\roman{enumi})]
    \item\label{colreg:1} $|V(G)\setminus(V_1\cup\cdots\cup V_k)|\le \varepsilon n$.
    \item\label{colreg:2} $|V_1|=\cdots=|V_k|$.
    \item\label{colreg:3} For all but at most $\varepsilon k^2$ indices $1\le i<j\le k$, both $\red{G}[V_i,V_j]$ and $\blue{G}[V_i,V_j]$ are $\varepsilon$-regular.
\end{enumerate}\end{theorem}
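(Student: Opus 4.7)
The plan is to extend the standard proof of Szemer\'edi's Regularity Lemma via the mean-square-density potential function argument, running it for both colours simultaneously. For any equitable partition $\mathcal{P}=\{V_1,\ldots,V_k\}$ of $V(G)$ and any graph $H$ on $V(G)$, define the \emph{index} $q_H(\mathcal{P}) = \sum_{i<j} \frac{|V_i||V_j|}{n^2}\, d_H(V_i,V_j)^2$, and set $q(\mathcal{P}) = q_{\red{G}}(\mathcal{P}) + q_{\blue{G}}(\mathcal{P})$, so that $0 \leq q(\mathcal{P}) \leq 1$. I would start from a fixed equitable partition into, say, $k_1$ parts and then iterate: while more than $\varepsilon k^2$ pairs $(V_i,V_j)$ fail to be $\varepsilon$-regular in at least one of $\red{G}$ and $\blue{G}$, pass to a common refinement that partitions each such bad pair using a witnessing subpair $(X,Y)$.

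The key input is the standard defect Cauchy-Schwarz inequality: refining an $\varepsilon$-irregular pair $(V_i, V_j)$ of $\red{G}$ using its witnessing subpair increases $q_{\red{G}}$ by at least $\varepsilon^5 \cdot |V_i||V_j|/n^2$, and analogously for $\blue{G}$. Taking a simultaneous common refinement over all colour-irregular pairs, and using the fact that refinements never decrease either $q_{\red{G}}$ or $q_{\blue{G}}$, the total increment in $q$ is at least $\varepsilon^5 \cdot (\varepsilon k^2) \cdot (1/k^2) = \varepsilon^6$. Since $q$ is bounded above by $1$, this process must terminate in at most $\varepsilon^{-6}$ iterations, yielding a partition with $k \leq k_2$ parts (for some $k_2$ bounded by a tower function in $1/\varepsilon$) that is $\varepsilon$-regular in both colours on all but $\varepsilon k^2$ pairs.

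To recover the equitable form requested in the statement, namely $|V_1|=\cdots=|V_k|$ with the excess absorbed into an exceptional set of size at most $\varepsilon n$, I would perform the usual equitization step: choose a common size $s$ of order $n/k$, subdivide each part into pieces of size $s$, and pool the leftover vertices into a remainder of size at most $\varepsilon n$. Regularity is preserved under such restrictions by Lemma~\ref{lemma:regularity:1} at the cost of a constant factor in the parameter, which can be absorbed by running the entire argument with a slightly smaller $\varepsilon' \ll \varepsilon$ in place of $\varepsilon$ throughout.

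The main obstacle is careful parameter bookkeeping: one must ensure the common refinement used to handle both colours simultaneously still realises the claimed potential-function increment, and that the equitization step does not destroy too many regular pairs. Both are routine: the common refinement only makes each $q_H$ at least as large as refining individually for that single colour, and equitization contributes only an $O(\varepsilon)$ additive loss to the fraction of irregular pairs. The relations $1/k_2 \ll 1/k_1 \ll \varepsilon$ then follow by choosing $k_1$ much larger than $1/\varepsilon$ and taking $k_2$ as the resulting tower bound.
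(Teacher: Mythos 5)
The paper does not actually prove this statement: it is quoted as a known result and cited to the Koml\'os--Simonovits survey, so there is no in-paper proof to compare against. Your strategy---running the mean-square-density (index) increment argument simultaneously for $\red{G}$ and $\blue{G}$ with the potential $q=q_{\red{G}}+q_{\blue{G}}\le 1$---is exactly the standard proof of the cited coloured regularity lemma, so the route is the right one.

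As written, though, there is a genuine gap in the termination argument. Your loop condition counts irregular pairs (more than $\varepsilon k^2$ of them), and your per-iteration increment $\varepsilon^5\cdot(\varepsilon k^2)\cdot(1/k^2)=\varepsilon^6$ silently assumes that every part has size about $n/k$, i.e.\ that the current partition is near-equitable. But you only equitize once, at the very end; after the first common refinement the parts have very unequal sizes, and $\varepsilon k^2$ irregular pairs can then carry total weight $\sum_{\text{irregular}}|V_i||V_j|/n^2$ that is arbitrarily small (parts may have size $O(1)$), so the index need not increase by any function of $\varepsilon$ alone and the claimed bound of $\varepsilon^{-6}$ iterations does not follow. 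The standard fix is precisely the step you defer and call routine: either re-equitize after every refinement (cut each part into pieces of a prescribed common size, pool leftovers into an exceptional class, and verify that the index does not drop appreciably and that the exceptional class stays below $\varepsilon n$ over all iterations), or run the iteration with irregularity measured by weight, $\sum_{\text{irregular}}|V_i||V_j|\le\varepsilon n^2$, and convert to the equal-size, pair-count formulation only at the end. The same issue resurfaces in your final equitization: subdividing parts into equal pieces translates the \emph{weighted} irregularity of the old partition into the pair-count bound $\varepsilon k^2$ for the new one, so a count-based guarantee on a non-equitable partition is not sufficient there either. With equitability (or weighted irregularity) maintained throughout, as in Szemer\'edi's original scheme and in the cited survey, your argument is correct.
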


For technical reasons, we sometimes require the sets $V_i$ to have different sizes, but do not necessarily need them to cover all but $\eps n$ vertices in $G$. As this is a minor point, we do not introduce more notation and instead use the standard term \textit{$\eps$-regular partition} under the following more relaxed definition. 
\begin{definition}\label{def:regpartition}
Let $1/n\ll\eps\ll d\leq1$, and let $G$ be a red/blue coloured graph on $n$ vertices. An \textit{$\eps$-regular partition} in $G$ is a collection of disjoint subsets $V_1,\ldots,V_k\subset V(G)$, such that for all but at most $\varepsilon k^2$ pairs of indices $1\le i<j\le k$, both $\red{G}[V_i,V_j]$ and $\blue{G}[V_i,V_j]$ are $\varepsilon$-regular. Each set $V_i$ is called a \textit{cluster}. 

Given an $\eps$-regular partition $V_1\cup\cdots\cup V_k$ in $G$, its corresponding \textit{$(\eps,d)$-reduced graph} $R$ is a red/blue coloured graph with vertex set $[k]$, such that for each $\ast\in\{\text{red},\text{blue}\}$ and any distinct $i,j\in[k]$, there is an $ij$ edge of colour $\ast$ in $R$ if and only if $G_\ast[V_i,V_j]$ is $(\eps,d)$-regular. 
\end{definition}

Note that if $V_1,\ldots,V_k$ form an $\eps$-regular partition in a red/blue coloured complete graph and $\eps\ll d\leq 1/2$, then for all but at most $\eps k^2$ pairs of indices $1\leq i<j\leq k$, there is either a red edge $ij$ or a blue edge $ij$ (or both) in the corresponding $(\eps,d)$-reduced graph $R$.

Finally, we prove the following refinement result that will be used later.
\begin{lemma}\label{lemma:regularity:refine}
Let $1/k,1/m\ll\eps\ll\eta\ll\alpha\ll d\leq1$. Suppose $G$ is a graph containing disjoint subsets $V_1,\ldots,V_k\subset V(G)$, each of size $m$. Let $R$ be a graph on $[k]$ such that for every $ij\in E(R)$,  $G[V_i,V_j]$ is $(\eps,d)$-regular. Suppose there exists a partition $[k]=I_1\cup I_2$, with $|I_1|=k_1,|I_2|=k_2$, and $k_1,k_2\geq\alpha k$, such that $R[I_1,I_2]$ is $\eta$-almost complete. Then, there exist two collections of disjoint sets $\{U_i:i\in J\}$ and $\{W_i:i\in J\}$ such that 
\begin{itemize}
    \item $|U_i|=|U_j|$ and $|W_i|=|W_j|$ for any $i,j\in J$,
    \item $\sum_{i\in J}|U_i|\geq(1-\alpha)\sum_{i\in I_1}|V_i|$, $\sum_{i\in J}|W_i|\geq(1-\alpha)\sum_{i\in I_2}|V_i|$, and
    \item $G[U_i,W_i]$ is $(\sqrt\eps,d-\eps)$-regular for every $i\in J$.
\end{itemize}
\end{lemma}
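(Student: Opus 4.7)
The plan is to find a bipartite multi-matching (an $f$-factor) in $R[I_1, I_2]$ that pairs equal-sized subsets from the clusters of $I_1$ with equal-sized subsets from the clusters of $I_2$, and then apply Lemma~\ref{lemma:regularity:1} to each such pair.

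First, I would assume without loss of generality that $k_1 \geq k_2$ and clean up $R[I_1, I_2]$ by setting $I_1^* := \{i \in I_1 : d_R(i, I_2) \geq (1 - \sqrt\eta)k_2\}$ and $I_2^*$ symmetrically. Since $R[I_1, I_2]$ has at most $\eta k_1 k_2$ non-edges, $|I_1 \setminus I_1^*| \leq \sqrt\eta k_1$ and $|I_2 \setminus I_2^*| \leq \sqrt\eta k_2$; moreover every vertex of $R[I_1^*, I_2^*]$ is non-adjacent to at most a $2\sqrt\eta$-fraction of the other side. Write $k_i^* := |I_i^*|$.

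Second, I would pick piece sizes $u := \lfloor \alpha m / 10 \rfloor$ and $w := \lfloor u k_2^* / k_1^* \rfloor$. Using $k_2^*/k_1^* \geq \alpha/2$ (which follows from $k_2 \geq \alpha k \geq \alpha k_1$ and negligible cleanup losses) and $\sqrt\eps \ll \alpha^2$ from the hierarchy, both $u$ and $w$ lie in $[\sqrt\eps m, \alpha m / 10]$. For each $i \in I_1^*$ partition $V_i$ into $p := \lfloor m/u \rfloor$ disjoint sub-clusters of size $u$, using at least $(1-\alpha/10)m$ of $V_i$; analogously partition each $V_j$ with $j \in I_2^*$ into $\lfloor m/w \rfloor \geq p k_1^*/k_2^*$ sub-clusters of size $w$. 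Then find a bipartite $f$-factor in $R[I_1^*, I_2^*]$ with $f(i) = p$ for every $i \in I_1^*$ and $f(j) \in \{\lfloor k_1^* p / k_2^* \rfloor, \lceil k_1^* p / k_2^* \rceil\}$ for every $j \in I_2^*$, chosen so that $\sum_j f(j) = k_1^* p$. The Hall-type condition $|S|/k_1^* \leq |N_R(S) \cap I_2^*|/k_2^*$ for $S \subseteq I_1^*$ (and the dual condition) is immediate from the $2\sqrt\eta$-almost completeness: for $|S| \leq (1-2\sqrt\eta)k_1^*$ a single vertex of $S$ already has neighbourhood of size at least $(1-2\sqrt\eta)k_2^*$, while for $|S| > (1-2\sqrt\eta)k_1^*$ the dual degree condition forces $N_R(S) \cap I_2^* = I_2^*$.

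For each edge $ij$ in the $f$-factor (counted with multiplicity) I would take a fresh size-$u$ sub-cluster of $V_i$ and a fresh size-$w$ sub-cluster of $V_j$ to form a pair $(U, W)$. By Lemma~\ref{lemma:regularity:1}, each $G[U, W]$ is $(\sqrt\eps, d - \eps)$-regular. The uniform-size conditions hold by construction, and a direct computation gives $\sum |U_i| \geq k_1^*(1 - \alpha/10)m \geq (1-\alpha) k_1 m$ and $\sum |W_i| \geq (1-\alpha) k_2 m$, since cleanup loses at most a $\sqrt\eta$-fraction and intra-cluster rounding at most an $\alpha/10$-fraction on each side. The main obstacle is coordinating the constraints on $u$ and $w$ simultaneously: they must be large enough ($\geq \sqrt\eps m$) for Lemma~\ref{lemma:regularity:1} to apply, small enough ($\leq \alpha m / 10$) so that intra-cluster rounding loss is negligible, and in the ratio $u : w \approx k_1^* : k_2^*$ so that the bipartite $f$-factor is nearly saturating on both sides. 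The hierarchy $1/k, 1/m \ll \eps \ll \eta \ll \alpha \ll d$, together with $k_2 \geq \alpha k$, provides just enough room.
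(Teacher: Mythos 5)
Your proposal is correct and follows essentially the same strategy as the paper: cut each cluster into equal-sized sub-clusters with the two sizes in a ratio that equalises the total piece counts on the two sides, use the almost-completeness of $R[I_1,I_2]$ to pair pieces across the bipartition, and invoke Lemma~\ref{lemma:regularity:1}; the paper organises the pairing as a greedy near-perfect matching on the pieces themselves (with piece sizes $\gamma k_1m/(k_1+k_2)$ and $\gamma k_2m/(k_1+k_2)$), whereas you use an $f$-factor on the cluster indices, which is a routine flow argument. One small remark: the paper's notion of $\eta$-almost complete is a minimum-degree condition on every vertex, so your initial cleanup step producing $I_1^*,I_2^*$ is unnecessary (though harmless).
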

\begin{proof}
Let $\eta\ll\gamma\ll\alpha$. For each $i\in I_1$, pick a largest  collection of disjoint subsets of $V_i$ of size $\gamma k_1m/(k_1+k_2)$. For each $i\in I_2$, pick a largest  collection of disjoint subsets of $V_i$ of size $\gamma k_2m/(k_1+k_2)$. Let $\{U_i:i\in J_1\}$ and $\{W_i:i\in J_2\}$ be the collections of refined subsets coming from $\{V_i:i\in I_1\}$ and $\{V_i:i\in I_2\}$, respectively. Note that at most a $\gamma$-proportion of vertices are lost from each $V_i$ in this refinement process. Let $R'$ be a graph with vertex set $J_1\cup J_2$, such that $ij\in E(R')$ if $G[U_i,W_j]$ is $(\sqrt\eps,d-\eps)$-regular. Then, as $R[I_1,I_2]$ is $\eta$-almost complete, $R'[J_1,J_2]$ is $\eta$-almost complete as well by Lemma~\ref{lemma:regularity:1}. Therefore, we can greedily find a matching $M$ of size $(1-\eta)\min\{|J_1|,|J_2|\}$ in $R'[J_1,J_2]$. To finish, observe that $\sum_{i\in J_1\cap V(M)}|U_i|\geq(1-\eta)(1-\gamma)\sum_{i\in I_1}|V_i|\geq(1-\alpha)\sum_{i\in I_1}|V_i|$, and similarly $\sum_{i\in J_2\cap V(M)}|U_i|\geq(1-\alpha)\sum_{i\in I_2}|V_i|$. 
\end{proof}

%\newpage

\section{Outline of the proof of Theorem~\ref{thm:stability}: Stability}\label{sec:staboutline}\label{sec:outline:stability}
\noindent\textbf{Simplifications for the discussion.}
The main technical tool for the stability part of the proof of Theorem~\ref{thm:main}, i.e.\ the proof of Theorem~\ref{thm:stability}, is Szemer\'edi's regularity lemma. The following outline of the proof of Theorem~\ref{thm:stability} assumes a working knowledge of the regularity lemma and simple embeddings using it, and further surpresses two technical details that we will explain momentarily. Readers less familiar with regularity techniques may find it useful to start with Section~\ref{sec:regularity}, and readers finding this outline too scant in detail may find it valuable instead as a blueprint when reading the formal proofs in Section~\ref{sec:embedregularity} and Section~\ref{sec:stages}.

There are two main technicalities that we will suppress in the following outline. Most notably, due to the imbalance in the sizes $t_1$ and $t_2$ of the bipartition classes of the tree $T$, we will sometimes work with regularity partitions whose clusters have different sizes. In several cases, clusters will have two different sizes that are in the ratio $t_1:t_2$. Moreover, in many of our more intricate arguments, the same cluster may change its role throughout the proof, having vertices from either the larger or the smaller side of the bipartition embedded into it. To facilitate this, we sometimes need to refine the regularity partition that we started with, partitioning all clusters into smaller clusters of suitable sizes, before pairing them up again to form regular pairs with the right size ratio (see e.g. Lemma~\ref{lemma:regularity:refine}). In this outline, we will skim over this aspect of the proof. That is, we will only work with a fixed regularity partition here and not worry about the technicalities regarding cluster sizes and refinements. The crucial point we focus on in the outline here is the number of vertices in the original graph that are covered by a certain set of clusters, where for any set $I\subset [k]$, we say that $I$ \emph{covers} the vertices $\cup_{i\in I}V_i$.

The other technicality is one common to many uses of the regularity lemma: we will need to have many constants of decreasing sizes in some hierarchy. To avoid this burden here, we will informally use $m^+$ or $m^-$ to denote a number equal to $m+\alpha n$ or $m-\alpha n$, respectively, for some small and suitable constant $\alpha>0$. In particular, $0^+$ will represent $\alpha n$ for some $\alpha>0$. The constants $\alpha$ involved in different instances of these notations are all different and will be chosen later carefully in the formal proofs. To give a rough idea of the relation of parameters, we can expect these $\alpha$ to satisfy $\eps\ll\alpha\ll 1$, where $\eps$ is the regularity parameter.

\medskip

\noindent\textbf{Set-up in the proof of Theorem~\ref{thm:stability}.}
In Theorem~\ref{thm:stability}, we have an $n$-vertex tree $T$ satisfying $\Delta(T)\leq cn$ that has bipartition classes $U_1$ and $U_2$ with sizes $t_1$ and $t_2$ respectively, where $t_1\geq t_2$. We also have a red/blue coloured complete graph $G$ with $\max\{t_1+2t_2,2t_1\}-1$ vertices, and wish to either find a monochromatic copy of $T$ in $G$ or show that the colouring of $G$ is close to one of the two extremal constructions, where this proximity is controlled with the parameter $\mu$. As mentioned in Section~\ref{sec:division}, by adding leaves to the $t_2$-side of the tree if necessary, we may assume that $t_1\leq 2t_2+1$. In fact, as will be justified in Section~\ref{sec:stabmain}, we can even assume that $t_1\leq 2t_2$ in the proof of Theorem~\ref{thm:stability}, which we will do from now on. In particular then, the graph $G$ has $t_1+2t_2-1$ vertices.

\medskip

\noindent\textbf{Stages, situations and embedding methods.} Let $\eps$ be a suitable regularity parameter satisfying $1/n\ll c\ll\eps\ll\mu\ll1$. We begin by applying a result (Theorem~\ref{thm:hltstart}) of Haxell, \L uczak, and Tingley~\cite{haxell2002ramsey} to find an $\eps$-regular partition in $V(G)$ that contains a certain monochromatic structure in the reduced graph (see the top left of Figure~\ref{fig:stages}). Having found this, we say we have an \textbf{A-situation}. We then work through a sequence of 4 stages. At each stage, we either find a monochromatic copy of the tree, or deduce that $G$ must be close to an extremal construction, or find more useful structure in the reduced graph. If the last of these is true at the end of a stage, we reach another named situation (see Figure~\ref{fig:stages}), and if we reach the end of these 4 stages, then we have an \textbf{E-situation} (see the right of Figure~\ref{fig:stages}), which will imply that $G$ is close to an extremal construction. %This outline, and the different situations, is depicted in Figure~\ref{fig:stages}.

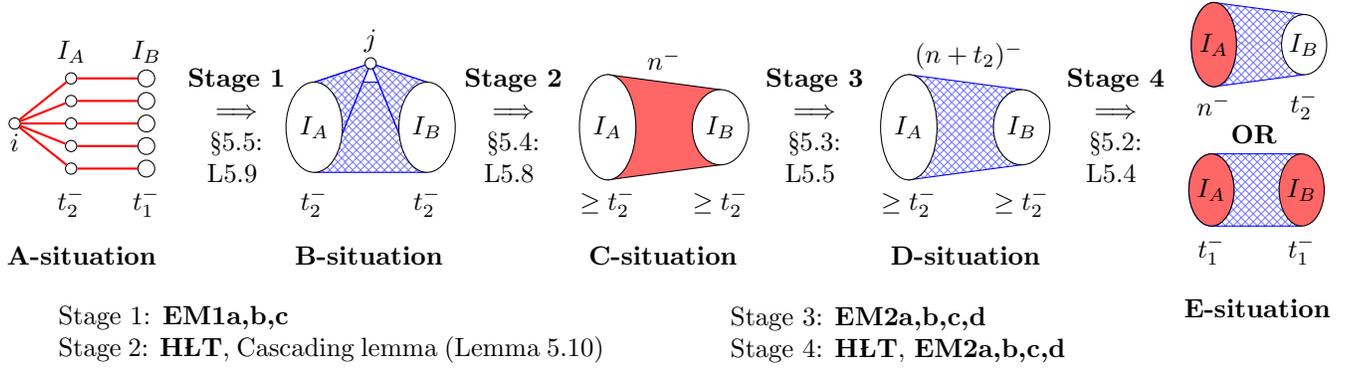
\begin{figure}[p]
\hspace{-0.1\textwidth}\makebox[1.2\textwidth][c]{\begin{tikzpicture}[scale=.5,main1/.style = {circle,draw,fill=none,inner sep=1.5}, main2/.style = {circle,draw,fill=none,inner sep=2.25}]
\def\spacer{2};
\def\spacerr{1.5};
\def\Ahgt{1.2};
\def\Bhgt{1.2};
\coordinate (A) at (0,0);
\coordinate (B) at (\spacer,0);

%\draw[red,fill=red!60] ($(A)+(0,\Ahgt)$) -- ($(A)-(0,\Ahgt)$) -- ($(B)-(0,\Bhgt)$) -- ($(B)+(0,\Bhgt)$) -- cycle;

\foreach \n/\offf in {0/2,1/1,2/0,3/-1,4/-2}
{
\node[main1] (A\n) at ($0.5*\offf*(0,\Ahgt)$) {};
\node[main2] (B\n) at ($0.5*\offf*(0,\Ahgt)+(\spacer,0)$) {};
}
\node[main1] (V) at ($(-\spacerr,0)$) {};

\foreach \n in {0,1,2,3,4}
{
\draw [thick,red] (V) -- (A\n);
\draw [thick,red] (A\n) -- (B\n);
%\draw [fill] (A\n) [radius=0.075cm] circle ;
%\draw [fill] (B\n) [radius=0.1cm] circle ;
}

%\draw [fill] (V) [radius=0.05cm] circle ;

\draw ($(A4)-(0,0.9)$) node {$t_2^-$};
\draw ($(B4)-(0,0.9)$) node {$t_1^-$};

\draw ($(V)-(0,0.5)$) node {$i$};

\draw ($(A0)+(0,0.7)$) node {$I_A$};
\draw ($(B0)+(0,0.7)$) node {$I_B$};

%\draw (10.5,2) node {Type II};
\end{tikzpicture}
\begin{minipage}{1.3cm}
\begin{center}
\textbf{Stage 1}

$\implies$

\textsection\ref{sec:stage1}: L\ref{lem:stage1}
\vspace{2.5cm}
\end{center}
\end{minipage}%$\implies$\hspace{0.25cm}
\begin{tikzpicture}[scale=.5,main1/.style = {circle,draw,fill=white,inner sep=1.5}]
\def\spacer{3};
\def\Ahgt{1.2};
\def\Bhgt{1.2};
\def\ver{0.5};
\def\wid{0.75};

\coordinate (A) at (0,0);
\coordinate (B) at (\spacer,0);

\coordinate (V) at ($(0.5*\spacer,\Ahgt+\ver)$) {};

\draw[blue,pattern=crosshatch, pattern color=blue!50] ($(A)+(0,\Ahgt)$) -- ($(A)-(0,\Ahgt)$) -- ($(B)-(0,\Bhgt)$) -- ($(B)+(0,\Bhgt)$) -- cycle;

\draw[blue,fill=white] (V) -- ($(A)+(0,\Ahgt)$) -- ($(A)+(\wid,0)-(0.3,0.9)$) -- (V);
\draw[blue,fill=white] (V) -- ($(B)-(\wid,0)-(-0.3,0.9)$) -- ($(B)+(0,\Bhgt)$) -- cycle;%[fill=red!60]
\draw[blue,pattern=crosshatch, pattern color=blue!50] (V) -- ($(A)+(0,\Ahgt)$) -- ($(A)+(\wid,0)-(0.3,0.9)$) -- (V);
\draw[blue,pattern=crosshatch, pattern color=blue!50] (V) -- ($(B)-(\wid,0)-(-0.3,0.9)$) -- ($(B)+(0,\Bhgt)$) -- cycle;%[fill=red!60]

\draw[fill=white] (A) circle [y radius=\Ahgt cm,x radius=0.75cm];
\draw[fill=white] (B) circle [y radius=\Bhgt cm,x radius=0.75cm];

\draw (A) node {$I_A$};
\draw (B) node {$I_B$};

\draw ($(A)+(0,-2)$) node {$t_2^-$};
\draw ($(B)+(0,-2)$) node {$t_2^-$};

%\draw [fill] (V) [radius=0.05cm] circle ;
\node[main1] (V1) at ($(0.5*\spacer,\Ahgt+\ver)$) {};

\draw ($(V)+(0,0.6)$) node {$j$};

%\draw (10.5,2) node {Type II};
\end{tikzpicture}\begin{minipage}{1.3cm}
\begin{center}
\textbf{Stage 2}

$\implies$

\textsection\ref{sec:stage2}: L\ref{lem:stage2}
\vspace{2.5cm}
\end{center}
\end{minipage}
\begin{tikzpicture}[scale=.5]
\def\spacer{3};
\def\Ahgt{1.4};
\def\Bhgt{1.0};
\coordinate (A) at (0,0);
\coordinate (B) at (\spacer,0);

\draw[fill=red!60] ($(A)+(0,\Ahgt)$) -- ($(A)-(0,\Ahgt)$) -- ($(B)-(0,\Bhgt)$) -- ($(B)+(0,\Bhgt)$) -- cycle;

\draw [fill=white] (A) circle [y radius=\Ahgt cm,x radius=0.75cm];
\draw [fill=white] (B) circle [y radius=\Bhgt cm,x radius=0.75cm];

\draw (A) node {$I_A$};
\draw (B) node {$I_B$};

\draw ($(A)+(0,-2)$) node {$\geq t_2^-$};
\draw ($(B)+(0,-2)$) node {$\geq t_2^-$};

\draw ($0.5*(A)+0.5*(B)+(0,\Ahgt)+(0,0.4)$) node {$n^-$};

%\draw (10.5,2) node {Type II};
\end{tikzpicture}\begin{minipage}{1.3cm}
\begin{center}
\textbf{Stage 3}

$\implies$

\textsection\ref{sec:stage3}: L\ref{lem:stage3}
\vspace{2.5cm}
\end{center}
\end{minipage}
\begin{tikzpicture}[scale=.5]
\def\spacer{3};
\def\Ahgt{1.4};
\def\Bhgt{1.0};
\coordinate (A) at (0,0);
\coordinate (B) at (\spacer,0);

\draw[blue,pattern=crosshatch, pattern color=blue!50] ($(A)+(0,\Ahgt)$) -- ($(A)-(0,\Ahgt)$) -- ($(B)-(0,\Bhgt)$) -- ($(B)+(0,\Bhgt)$) -- cycle;

\draw [fill=white] (A) circle [y radius=\Ahgt cm,x radius=0.75cm];
\draw [fill=white] (B) circle [y radius=\Bhgt cm,x radius=0.75cm];

\draw (A) node {$I_A$};
\draw (B) node {$I_B$};

\draw ($(A)+(0,-2)$) node {$\geq t_2^-$};
\draw ($(B)+(0,-2)$) node {$\geq t_2^-$};

\draw ($0.5*(A)+0.5*(B)+(0,\Ahgt)+(0.1,0.5)$) node {$(n+t_2)^-$};

%\draw (10.5,2) node {Type II};
\end{tikzpicture}\begin{minipage}{1.3cm}
\begin{center}
\textbf{Stage 4}

$\implies$

\textsection\ref{sec:stage4}: L\ref{lem:stage4}
\vspace{2.5cm}
\end{center}
\end{minipage}\;\;
\begin{minipage}{1.75cm}

\vspace{-1.75cm}

\begin{tikzpicture}[scale=.4]
\def\spacer{3};
\def\Ahgt{1.4};
\def\Bhgt{1.0};
\coordinate (A) at (0,0);
\coordinate (B) at (\spacer,0);

\draw[blue,pattern=crosshatch, pattern color=blue!50] ($(A)+(0,\Ahgt)$) -- ($(A)-(0,\Ahgt)$) -- ($(B)-(0,\Bhgt)$) -- ($(B)+(0,\Bhgt)$) -- cycle;

\draw [blue,fill=white] (A) circle [y radius=\Ahgt cm,x radius=0.75cm];%[fill=blue!50]
\draw [fill=red!60] (A) circle [y radius=\Ahgt cm,x radius=0.75cm];%[fill=blue!50]
\draw [fill=white] (B) circle [y radius=\Bhgt cm,x radius=0.75cm];

\draw [red!60,fill=red!60] (A) circle [radius=0.5cm];
%\draw [blue!20,fill=blue!20] (A) circle [radius=0.45cm];
\draw (A) node {$I_A$};
\draw (B) node {$I_B$};

\draw ($(A)+(0,-2)$) node {$n^-$};
\draw ($(B)+(0,-2)$) node {$t_2^-$};

%\draw ($0.5*(A)+0.5*(B)+(0,\Ahgt)+(0,0.2)$) node {$(n+t_2)^-$};

%\draw (10.5,2) node {Type II};
\end{tikzpicture}

\vspace{-1cm}

\begin{center}
\textbf{OR}
    \end{center}

\vspace{-0.25cm}

\begin{tikzpicture}[scale=.4]
\def\spacer{3};
\def\Ahgt{1.2};
\def\Bhgt{1.2};
\coordinate (A) at (0,0);
\coordinate (B) at (\spacer,0);

\draw[blue,pattern=crosshatch, pattern color=blue!50] ($(A)+(0,\Ahgt)$) -- ($(A)-(0,\Ahgt)$) -- ($(B)-(0,\Bhgt)$) -- ($(B)+(0,\Bhgt)$) -- cycle;

\draw [blue,fill=white] (A) circle [y radius=\Ahgt cm,x radius=0.75cm];%[fill=blue!50]
\draw [fill=red!60] (A) circle [y radius=\Ahgt cm,x radius=0.75cm];%[fill=blue!50]
\draw [fill=red!60] (B) circle [y radius=\Bhgt cm,x radius=0.75cm];

\draw [red!60,fill=red!60] (A) circle [radius=0.5cm];
%\draw [blue!20,fill=blue!20] (A) circle [radius=0.45cm];
\draw (A) node {$I_A$};
\draw (B) node {$I_B$};

\draw ($(A)+(0,-2)$) node {$t_1^-$};
\draw ($(B)+(0,-2)$) node {$t_1^-$};

%\draw ($0.5*(A)+0.5*(B)+(0,\Ahgt)+(0,0.2)$) node {$(n+t_2)^-$};

%\draw (10.5,2) node {Type II};
\end{tikzpicture}
\end{minipage}
}

\vspace{-1.8cm}

\hspace{-0.8cm}\begin{tikzpicture}
\def\spacerrrr{3.74cm};
\foreach \n/\lab/\exxx in {0/A/0,1/B/0.08,2/C/0.25,3/D/0.53}
{
\draw ($\n*(\spacerrrr,0)+(\exxx,0)$) node {$\text{\bfseries \lab-situation}$};
}
\foreach \n/\lab/\exxx in {4/E/0.67}
{
\draw ($\n*(\spacerrrr,0)+(\exxx,-0.7)$) node {$\text{\bfseries \lab-situation}$};
}
\end{tikzpicture}

\vspace{-0.3cm}

\begin{minipage}{0.45\textwidth}
Stage 1: \textbf{EM1a,b,c}

Stage 2: \textbf{H\L T}, Cascading lemma (Lemma~\ref{lemma:improvemaximummatching})
\end{minipage}\;\;\;\;\;\;\;\;\;\;\;\;\;\;\;\;
\begin{minipage}{0.35\textwidth}
Stage 3: \textbf{EM2a,b,c,d}

Stage 4: \textbf{H\L T}, \textbf{EM2a,b,c,d}
\end{minipage}
\caption{The different situations we find in the reduced graph, and the stages we use to work through them, along with the sections they are carried out in and the corresponding lemmas. Which embedding methods are used at which stages is recorded underneath. The numbers $n^-$ and $(n+t_2)^-$ on top refer to the total number of vertices in $G$ covered by $I_A\cup I_B$, while the numbers beneath refer to the number of vertices of $G$ covered by $I_A$ or $I_B$ as appropriate. The shaded areas indicate that the corresponding edges in the reduced graph are mostly that colour.}\label{fig:stages}
\end{figure}

%there are enough vertices to embed

\begin{figure}[p]
\input{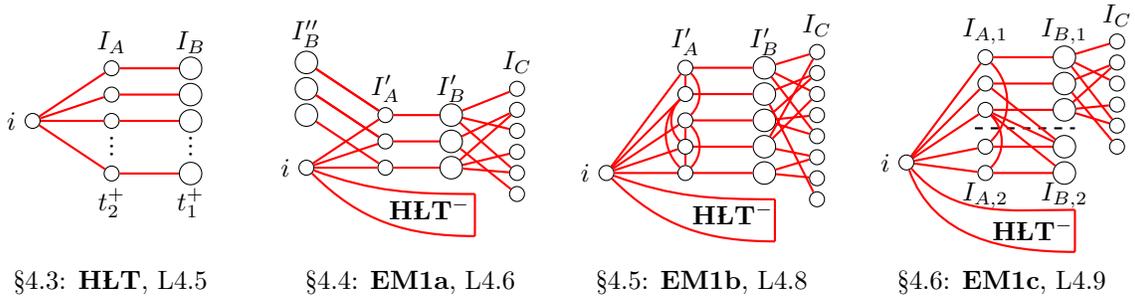}
\caption{The structure in the reduced graph required for embedding methods \textbf{H\L T} and \textbf{EM1a-c}, along with their corresponding sections, names, and lemmas. On the left, $t_2^+$ and $t_1^+$ refer to the number of vertices in $G$ covered by $I_A$ and $I_B$. In each other structure, \textbf{H\L T$^-$} refers to the same structure as \textbf{H\L T} attached to $i$ but covering $t_2^-$ and $t_1^-$ vertices, and comprises the majority of the required structure, while the remaining structure pictured covers $0^+$ vertices in $G$. In \textbf{EM1a-c}, each vertex in $I_B'$ or $I_{B,1}$ has some red neighbours in $I_C$. In \textbf{EM1b} there are some red edges within $I_{A}'$, while in \textbf{EM1c} there are sets $I_{A,2}$ and $I_{B,2}$ matched together in red and every vertex in these sets has some red neighbours in $I_{A,1}$.}\label{fig:embeddings:1}
\end{figure}

\begin{figure}[p]
\input{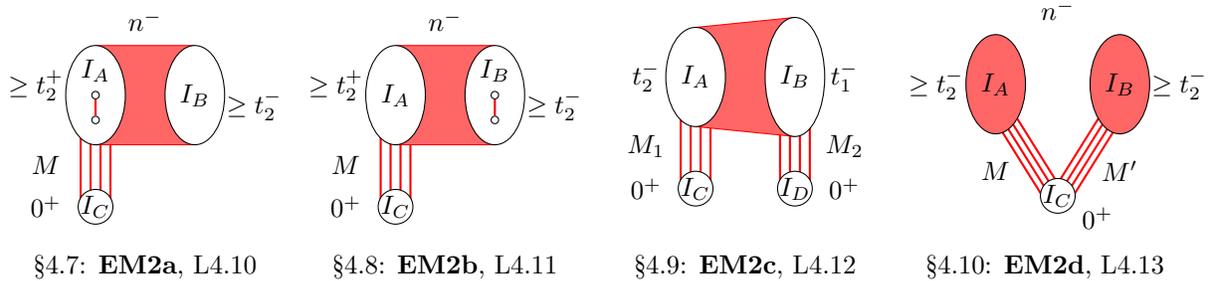}
\caption{The structure in the reduced graph required for \textbf{EM2a-d}, along the corresponding sections, names, and lemmas. In all cases $I_A$ and $I_B$ together cover $n^-$ vertices, and lower bounds or sizes for the number of vertices covered by $I_A$ and by $I_B$ are given in each case. In \textbf{EM2a-c}, almost all of the edges in $R$ between $I_A$ and $I_B$ are red. In \textbf{EM2a} and \textbf{EM2b}, there is a red edge in $R[I_A]$ and $R[I_B]$, respectively. In \textbf{EM2d} almost all edges in $R$ within $I_A$ and within $I_B$ are red.}\label{fig:embeddings:2}
\end{figure}

At each stage, our deductions will often say that if there is a certain structure in the reduced graph $R$, then we can find a monochromatic copy of the tree $T$ in $G$ using a named embedding method. These embedding methods include one due to Haxell, \L uczak, and Tingley~\cite{haxell2002ramsey} that we will refer to as \textbf{H\L T}, as well as a series of new ones denoted by \textbf{EM1a-c} and \textbf{EM2a-d} that we will prove in Section~\ref{sec:embedregularity}. The structure required for each of these methods is depicted in either Figure~\ref{fig:embeddings:1} or Figure~\ref{fig:embeddings:2}, marked with relevant references to the corresponding lemmas and the sections they are proved in. Formal definitions of the required structures can be found in the relevant sections, though informal descriptions of these structures are provided in the captions. Which embedding methods are used for which stages are noted in Figure~\ref{fig:stages}. To give a rough idea of how these embedding lemmas are proved, we will discuss and sketch a proof of simplest embedding method \textbf{H\L T} below, and briefly relate it to the other methods. All of these proofs use the same general framework relying on a technical embedding lemma using regularity proved in Section~\ref{sec:maintechnicalembedding}.

\medskip

\noindent\textbf{Embedding method H\L T.} As shown by Haxell, \L uczak, and Tingley~\cite{haxell2002ramsey}, if the left-most structure in Figure~\ref{fig:embeddings:1} can be found in the reduced graph $R$ in red, say, then we can find a red copy of $T$ in $G$. More precisely, the structure is defined as follows. There is an $\eps$-regular partition $V_1\cup\cdots\cup V_{2k+1}$ in $G$ with a corresponding $(\eps,d)$-reduced graph $R$, an index $i\in [2k+1]$ and a partition $[2k+1]\setminus\{i\}=I_A\cup I_B$, such that $ia$ is an edge in $\red{R}$ for each $a\in I_A$, and there is a perfect matching $M$ in $\red{R}$ between $I_A$ and $I_B$. Furthermore, $I_A$ covers $t_2^+$ vertices of $G$ while $I_B$ covers $t_1^+$ vertices of $G$, and the ratio between the sizes of the clusters indexed by $I_B$ and by $I_A$ is around $t_1:t_2$.

Observe that the structure described here is a bipartite subgraph of the reduced graph, so if we are to embed the tree $T$ into the $\eps$-regular pairs corresponding to edges in this structure, it is necessary that $I_A$ covers $t_2^+$ vertices of $G$ so that there is enough room for vertices in $U_2$ to be embedded among them, and similarly that $I_B$ covers $t_1^+$ vertices of $G$ for the embedding of vertices in $U_1$. To prove their approximate version of Theorem~\ref{thm:main} in~\cite{haxell2002ramsey}, Haxell, \L uczak, and Tingley started with a red/blue coloured complete graph on $(t_1+2t_2)^+$ vertices, and showed that its reduced graph will always contain, in red or blue, the structure required to apply \textbf{H\L T}. However, as our graph $G$ has only $t_1+2t_2-1$ vertices, we cannot find this structure in full. Instead, we use their result to show that we can find a slightly scaled down version of \textbf{H\L T} with the corresponding sets $I_A$ and $I_B$ covering $t_2^-$ and $t_1^-$ vertices respectively (see Theorem~\ref{thm:hltstart}). This is the \textbf{A-situation} depicted in Figure~\ref{fig:stages}, and we denote this structure by \textbf{H\L T$^-$}.

Roughly speaking, in each of our other embedding methods we will be able to embed most of the tree relatively easily, but need to somehow make up for a lack of vertices in both $I_A$ and $I_B$ in the \textbf{H\L T$^-$} structure. This can be seen in the structures required for \textbf{EM1a-c} depicted in Figure~\ref{fig:embeddings:1}, where part of the structure in each case is \textbf{H\L T$^{-}$}, and we need to find some additional structure to complete the embedding.
A major driver of the complexity in our embedding is that the tree $T$ could have a linear maximum degree, so the diameter of $T$ could be as small as 5. This means that structures in the reduced graph with larger diameters are often not very useful for us.

As an example, we now give a sketch of how to embed $T$ into the structure \textbf{H\L T} described above. We start by finding a constant-sized set of vertices $X\subset V(T)$ such that $T-X$ has only small components (see Lemma~\ref{lemma:trees:cutset} and Lemma~\ref{lem:maintreedecomp}). These components could be linear-sized, but must be much smaller compared to the regularity clusters. Let $i_1=i$. Remove an arbitrary edge from $M$, let $i_2$ be the endpoint of this edge in $I_A$, and note that $i_1i_2$ is an edge in $\red{R}$. This small modification is depicted in Figure~\ref{fig:HLTproof}.

Our aim is to embed $T$ into $\red{G}$ so that vertices in $X\cap U_2$ are embedded into $V_{i_2}$, vertices in $(X\cap U_1)\cup N_T(X\cap U_2)$ are embedded into $V_{i_1}$, and for each component $K$ of $T-X$, we can assign to it some edge $ab$ in $M$ with $a\in I_A$ and $b\in I_B$, such that all vertices in $K$ not mentioned so far are embedded into either $V_a$ or $V_b$ depending on if they are in $U_2$ or $U_1$, respectively. %Note that here every edge in $T$ is to be embedded between a pair of clusters with a red edge between them in $R$, as there are no edges connecting the components of $T-X$. %, and we aim to be embed $N_T(X\cap U_2)$ into $i$. 
As $X$ is constant-sized, by choosing the maximum degree parameter $c$ to be small enough, $N_T(X\cap U_2)$ will be a linear-sized set small enough to be embedded along with $X\cap U_1$ into the regularity cluster $V_{i_1}$. We are only embedding $X\cap U_2$ into the cluster $V_{i_2}$, so there is plenty of room there. To make sure that we have enough room for the rest of the embedding, for each component of $T-X$ we will decide which clusters to embed it into by picking an edge $ab$ in $M$ independently and uniformly at random. As each component of $T-X$ is small, with high probability this will distribute the components of $T-X$ across the edges of $M$ without too many vertices assigned to any one edge. This ensures we have enough room in each cluster, so standard regularity techniques now apply to find a red copy of $T$.

\medskip

\noindent\textbf{Overview of the 4 stages in the proof of Theorem~\ref{thm:stability}.}
%Having taken the reduced graph $R$ for the graph $G$, as described above, 
We start the 4-stage proof of Theorem~\ref{thm:stability} by applying the aforementioned Haxell, \L uczak, and Tingley~\cite{haxell2002ramsey} result (see Theorem~\ref{thm:hltstart}) to our red/blue coloured complete graph $G$ to find a monochromatic \textbf{H\L T$^-$} structure in the reduced graph $R$. This is the \textbf{A-situation} depicted in the left of Figure~\ref{fig:stages}, and we can now proceed to \textbf{Stage 1}. 

\vspace{-0.2cm}

\begin{center}
\rule{5cm}{0.2pt}\;\;\textbf{Stage 1}\;\;\rule{5cm}{0.2pt}
\end{center}

\vspace{-0.3cm}

Given an \textbf{A-situation} in $R$, say in red, $I_A$ and $I_B$ each does not cover enough vertices to let us use the embedding method \textbf{H\L T}. Let $I_C=V(R)\setminus (I_A\cup I_B\cup \{i\})$, so that $I_C$ covers $(n+t_2)^--t_2^--t_1^-=t_2^-$ vertices. If almost all of the edges in $R$ between $I_B$ and $I_C$ are blue, then we would have a \textbf{C-situation} in blue with $I_C$ in place of $I_A$, and can skip ahead to \textbf{Stage 3}.
Suppose, then, that there are at least some red edges between $I_B$ and $I_C$. In particular, if $I_{B,1}$ is the set of vertices in $I_B$ with at least some red neighbours in $I_C$, then $I_{B,1}$ is non-empty. 
Let $I_{A,1}$ be the set of vertices matched with $I_{B,1}$ by the matching $M$.
Then, take $I_{A,3}$ to be the set of vertices in $I_A\setminus I_{A,1}$ with at least some red neighbours in $I_C$, and let $I_{B,3}$ be the set of vertices matched with $I_{A,3}$ by $M$. Finally, let $I_{A,2}=I_A\setminus (I_{A,1}\cup I_{A,3})$ and $I_{B,2}=I_B\setminus (I_{B,1}\cup I_{B,3})$. See the left of Figure~\ref{fig:stage12} for a depiction of these vertex sets.

\begin{figure}
\input{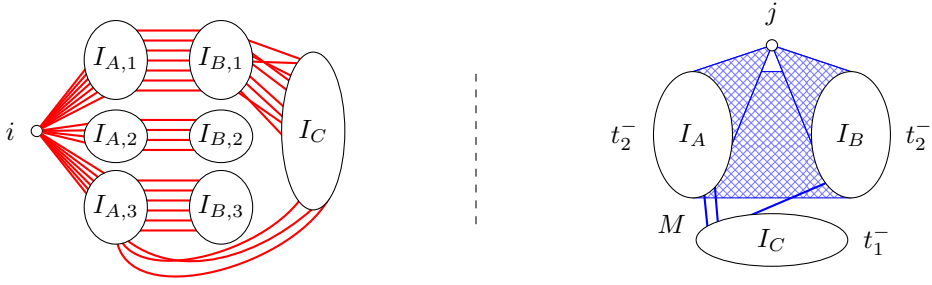}

\vspace{-0.5cm}

\caption{On the left, the main structure in Stage 1. On the right, the main structure in Stage 2.}\label{fig:stage12}
\end{figure}

Suppose there is no copy of $T$ in red, and thus the structure required to use any of \textbf{EM1a-c} does not exist in red. We will be able to show, then, that \textbf{i)} the edges between $I_{A,1}$ and $I_C$ are almost all blue, \textbf{ii)} the edges between $I_{A,1}$ and $I_{B,3}$ are almost all blue, \textbf{iii)} most of the edges in $I_{A,1}$ are blue, and \textbf{iv)} for most of the edges $i_Ai_B\in M[I_{A,2},I_{B,2}]$, one of $i_A$ or $i_B$ will have mostly blue neighbours in $I_{A,1}$. These deductions are commented on below,
but assuming \textbf{i)}--\textbf{iv)} hold, we find a \textbf{B-stituation} as follows.

From \textbf{iv)}, we can find a subset $I_{AB,2}\subset I_{A,2}\cup I_{B,2}$ containing a vertex in almost every edge in $M[I_{A,2},I_{B,2}]$, so that the edges between $I_{AB,2}$ and $I_{A,1}$ are mostly blue. Combined with \textbf{ii)} and \textbf{iii)}, the edges between $I_D:=I_{AB,2}\cup I_{B,3}\cup I_{A,1}$ and $I_{A,1}$ are mostly blue. Furthermore, since $I_D$ contains a vertex in almost every edge in $M$, and every cluster indexed by $I_{B}$ contains more vertices than one indexed by $I_{A}$, we see that $I_D$ covers at least close to the same number of vertices as $I_A$ does, and thus $I_D$ covers at least $t_2^-$ vertices. Moreover, as $I_{A,1}$ is non-empty and \textbf{i)} holds, the edges from $I_C$ to $I_D$ are almost all blue, and we can select some $j\in I_{A,1}$ with almost all blue edges to $I_C\cup I_D$. Thus, $j$, $I_C$ and $I_D\setminus\{j\}$ give the structure required for a \textbf{B-situation} in blue. 

We finish this discussion of \textbf{Stage 1} by commenting briefly on the four deductions mentioned above and the embedding methods required for them, using the same labels as in Figure~\ref{fig:embeddings:1} where possible.

\medskip

\noindent\textbf{i) $R[I_{A,1},I_C]$ is almost all blue.} If this does not hold, then let $I'_{A,1}$ be a small set of vertices in $I_{A,1}$ with some red neighbours in $I_C$, and let $I'_{B,1}$ be the vertices matched with $I'_{A,1}$ by $M$. We can then find a perfect red matching between $I_{A,1}'$ and some $I_{B,1}''\subset I_C$, and set $I'_C=I_C\setminus I''_{B,1}$ to obtain the structure required for \textbf{EM1a}.

\medskip

\noindent\textbf{ii) $R[I_{A,1},I_{B,3}]$ is almost all blue.} If this does not hold, then, similar to \textbf{i)}, let $I'_{A,1}$ be a small set of vertices in $I_{A,1}$ with some red neighbours in $I_{B,3}$, let $I'_{B,1}$ be the vertices matched with $I_{A,1}'$ by $M$, and find a perfect red matching between $I_{A,1}'$ and some $I''_{B,1}\subset I_{B,3}$. Unlike \textbf{i)}, using $I''_{B,1}$ for the structure in \textbf{EM1a} will `orphan' the vertices in $I_{A,3}$ matched to $I''_{B,1}$ by $M$, say those in $I'_{A,3}$. However, from the definition of $I_{A,3}$, we can find a perfect red matching between $I'_{A,3}$ and some $I'_{B,3}\subset I_C$, which can be used to replace $I''_{B,1}$ to complete the structure required for \textbf{EM1a}.

%\textbf{Deduction II. $I_{A,1}$ has mostly blue edges to $I_{B,3}$.}

\medskip

\noindent\textbf{iii) $R[I_{A,1}]$ is almost all blue.} If this does not hold, then we have the structure required for \textbf{EM1b} -- some red edges within a set $I_{A,1}$ whose neighbours under $M$ (i.e., the vertices in $I_{B,1}$) all have some red neighbours in $I_C$.

\medskip

\noindent\textbf{iv) For most edges in $M[I_{A,2},I_{B,2}]$, one endpoint has mostly blue edges to $I_{A,1}$.}
If this does not hold, then we have the structure required for \textbf{EM1c} -- a small red submatching of $M[I_{A,2},I_{B,2}]$ whose vertices all have some neighbours within $I_{A,1}$.

%\medskip

%\noindent \textbf{Stage 2.}

\vspace{-0.2cm}

\begin{center}
\rule{5cm}{0.2pt}\;\;\textbf{Stage 2}\;\;\rule{5cm}{0.2pt}
\end{center}

\vspace{-0.3cm}

Suppose, now, we have a blue \textbf{B-situation} in $R$: a vertex $j$ with blue edges to almost every vertex in two disjoint sets $I_A$ and $I_B$, with both $I_A$ and $I_B$ covering $t_2^-$ vertices of $G$, and almost every edge between them being blue. Let $M$ be a maximum blue matching between $I_A\cup I_B$ and $I_C$ (see the right of Figure~\ref{fig:stage12}). If $I_C\cap V(M)$ covers more than $(t_1-t_2)^+$ vertices in $G$, then using this matching and the \textbf{B-situation} structure, we can find the structure required to embed $T$ in blue using \textbf{H\L T}, where we use $j$ as the vertex $i$ in the \textbf{H\L T} structure and use that $I_A\cup I_B\cup (I_C\cap V(M))$ covers $2t_2^-+(t_1-t_2)^+=n^+$ vertices in $G$.

Therefore, we can assume that $I_C\cap V(M)$ covers at most $(t_1-t_2)^+$ vertices, so $I_C\setminus V(M)$ covers at least $(n+t_2)^--2t_2^--(t_1-t_2)^+=t_2^-$ vertices. If almost all edges between a subset $I\subset I_A\cup I_B$ and $I_C\setminus V(M)$ are red, with $I\cup(I_C\setminus V(M))$ covering $n^-$ vertices in $G$, then we have a red \textbf{C-situation}. Unfortunately, the maximality of $M$ only immediately gives that almost all edges between $(I_A\cup I_B)\setminus V(M)$ and $I_C\setminus V(M)$ are red, and they might cover only $(n+t_2)^--2(t_1-t_2)^+=(4t_2-t_1)^-$ vertices together, which could be as small as $2t_2^-$ if $t_1\approx 2t_2$.

To combat this, we exploit the maximality of the matching $M$ in a more sophisticated way using what we call a `cascading argument' (see Lemma~\ref{lemma:improvemaximummatching}). Note that any blue edge between some $c\in I_C\setminus V(M)$ and $(I_A\cup I_B)\cap V(M)$ would allow us to exchange that edge into the matching $M$ to create a matching $M'$ that has the same intersection with $I_A\cup I_B$ as $M$, but whose intersection with $I_C$ includes $c$ and omits some vertex $c'\in I_C\cap V(M)$. The maximality of $M$ then implies the edges between $c'$ and $(I_A\cup I_B)\setminus V(M)$ are mostly red. Iterating such an argument will eventually allow us to find two large subsets $X$ and $Y$ with $(I_A\cup I_B)\setminus V(M)\subset X\subset I_A\cup I_B$ and $I_C\setminus V(M)\subset Y\subset I_C$, such that the edges between $X$ and $Y$ are mostly red, $X$ and $Y$ both cover at least $t_2^-$ vertices in $G$, and $X\cup Y$ cover at least $n^-$ vertices in $G$. This gives a red \textbf{C-situation}.

%%%%%%%%%%%%%%%%%%%%%%%%%%%%%%%%%%%%%%%%%%%%%%%%%%%%%%%%%%%%%%%%%%%%%%%%%%%%%%%%%%%%%%%%%

\vspace{-0.2cm}

\begin{center}
\rule{5cm}{0.2pt}\;\;\textbf{Stage 3}\;\;\rule{5cm}{0.2pt}
\end{center}

\vspace{-0.3cm}

Suppose then we have a red \textbf{C-situation} in $R$, which consists of two disjoint vertex sets $I_A$ and $I_B$, each covering at least $t_2^-$ vertices and together covering $n^-$ vertices, such that $R[I_A,I_B]$ is mostly red. Let $I_C=V(R)\setminus (I_A\cup I_B)$, which covers $(n+t_2)^--n^-=t_2^-$ vertices. In the rest of \textbf{Stage 3} we will use two different sequences of deductions (\textbf{Claim A} and \textbf{Claim B}) several times. Before continuing then, we state roughly what they are and summarise the arguments for them.

\begin{figure}[h]
\input{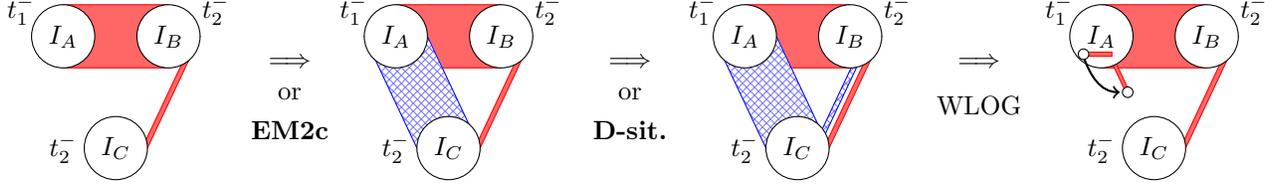}

\vspace{-0.5cm}
\caption{The deductions for \textbf{Claim A}, before finally \textbf{EM2c} is applied to get a red copy of $T$.}\label{fig:claimA}
\end{figure}

\noindent\textbf{Claim A:} \emph{If $I_A$ and $I_B$ cover $t_1^-$ and $t_2^-$ vertices respectively, and there are some red edges between $I_B$ and $I_C$, then we can either find a monochromatic copy of $T$ or reach a \emph{\textbf{D-situation}}.}

\noindent\textbf{Argument for Claim A:} If there are some red edges between $I_A$ and $I_C$ then \textbf{EM2c} applies. Thus, we can assume $R[I_A,I_C]$ is almost all blue. If $R[I_B,I_C]$ is mostly red, then we have a \textbf{D-situation} in red using $I_A\cup I_C$ and $I_B$, so there must be some blue edges between $I_B$ and $I_C$, as well as some red edges as part of the assumption. If there are some red edges in $I_A$, then we can find a small red matching in $I_A$ and move one side of this matching out of $I_A$ to get the structure required for \textbf{EM2c} in red. Finally, if there are some blue edges in $I_A$ then we can similarly apply \textbf{EM2c} in blue.

\begin{figure}[h]
\input{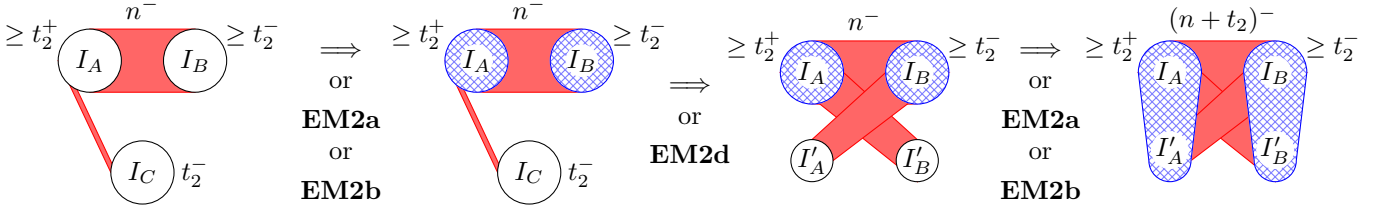}

\vspace{-0.5cm}
\caption{The deductions for \textbf{Claim B}, before finally either \textbf{EM2d} applies or we have a \textbf{D-situation}.}\label{fig:claimB}
\end{figure}

\noindent\textbf{Claim B:} \emph{Suppose $I_A$ and $I_B$ cover at least $t_2^+$ and $t_2^-$ vertices respectively, and $n^-$ vertices in total. If there are some red edges between $I_A$ and $I_C$, then we can either find a monochromatic copy of $T$ or reach a \emph{\textbf{D-situation}}.}

\noindent\textbf{Argument for Claim B:} If there are some red edges in $R[I_A]$ or $R[I_B]$, then \textbf{EM2a} or \textbf{EM2b} applies respectively, so assume that $R[I_A]$ and $R[I_B]$ are both mostly blue. If some vertices in $I_C$ have some blue neighbours in both $I_A$ and $I_B$, then we can use \textbf{EM2d}. Thus, we can partition most of $I_C$ into $I_A'\cup I_B'$, such that $R[I_A,I_B']$ and $R[I_A',I_B]$ are both mostly red.

Like above, if there are some red edges in $R[I_A\cup I_A']$ or $R[I_B\cup I_B']$, then \textbf{EM2a} or \textbf{EM2b} applies respectively, so we can assume that both $R[I_A\cup I_A']$ and $R[I_B\cup I_B']$ are mostly blue. If there is some vertex in $I_A'$ with some blue neighbours in $I_B\cup I_B'$, or if there is some vertex in $I_B'$ with some blue neighbours in $I_A\cup I_A'$, then we can apply \textbf{EM2d}. Thus, we can assume that $R[I_A\cup I_A',I_B\cup I_B']$ is mostly red, which gives a \textbf{D-situation}.

\medskip

\noindent\textbf{Stage 3 using Claim A and B.} Using these two claims, we can now carry out \textbf{Stage 3}.
Note first that if $R[I_A\cup I_B,I_C]$ is mostly blue, then they form a \textbf{D-situation}, so assume that there are some red edges either between $I_A$ and $I_C$ or between $I_B$ and $I_C$.
%, that $\red{G}[A\cup B,C]$ has significant density.
If both $I_A$ and $I_B$ cover at least $t_2^+$ vertices, then we can use \textbf{Claim B} to find a monochromatic copy of $T$ or reach a \textbf{D-situation}.

Thus, we can assume without loss of generality that $I_B$ covers at most $t_2^+$ vertices, so $I_A$ covers at least $t_1^-$ vertices. If there are some red edges between $I_B$ and $I_C$, then we can use \textbf{Claim A} to find a monochromatic copy of $T$ or reach a \textbf{D-situation}. Otherwise, there must be some red edges between $I_A$ and $I_C$. If $t_1\leq t_2^+$, then we can apply \textbf{Claim A} with $I_A$ and $I_B$ swapped as $t_1\approx t_2$, while if $t_1\geq t_2^+$, then we can apply \textbf{Claim B}.

\vspace{-0.2cm}

\begin{center}
\rule{5cm}{0.2pt}\;\;\textbf{Stage 4}\;\;\rule{5cm}{0.2pt}
\end{center}

\vspace{-0.3cm}

Suppose finally that we have a blue \textbf{D-situation} in $R$, which consists of two disjoint vertex sets $I_A$ and $I_B$, each covering at least $t_2^-$ vertices and together covering $(n+t_2)^-$ vertices, such that $R[I_A,I_B]$ is mostly blue. First, suppose in addition that both $I_A$ and $I_B$ cover at least $t_2^+$ vertices. Then, if there is a blue edge in either $R[I_A]$ or $R[I_B]$, we can take some vertices out of $I_B$ to form $I_C$, which allows us to apply \textbf{EM2a} or \textbf{EM2b}, respectively, to find a blue copy of $T$. Thus, we can assume that $R[I_A]$ and $R[I_B]$ are both mostly red. If the larger of $I_A$ and $I_B$, which we can assume is $I_A$, covers at least $t_1^+$ vertices, then we can take some vertices out of $I_A$ to form $I_D$ and some vertices out of $I_B$ to form $I_C$, so that we can apply \textbf{EM2c} to get a blue copy of $T$. If $I_A$ covers at most $t_1^+$ vertices, then we have $(n+t_2)^-\leq 2t_1^+$, and so $t_1\approx2t_2$ and both $I_A$ and $I_B$ must cover at least $t_1^-$ vertices in $G$. This gives an \textbf{E-situation}, and will imply that $G$ is close to a Type II extremal construction.

Now suppose that the smaller of $I_A$ and $I_B$, which we can assume is $I_B$, covers at most $t_2^+$ vertices. Then, $I_A$ covers $(n+t_2)^--t_2^+=n^-$ vertices in $G$. If there are some blue edges in $R[I_A]$, then we can use them to take some vertices out of $I_A$ to form $I_C$, and then apply \textbf{EM2a}. Thus, we can assume that $R[I_A]$ is mostly red. If $I_A$ covers at least $n^+$ vertices, then we can easily find the structure required to apply \textbf{EM2c} in $R[I_A]$. Therefore, we can assume that $I_A$ covers at most $n^+$ vertices, and so we have an \textbf{E-situation} that will imply that $G$ is close to a Type I extremal construction.

%%%%%%%%%%%%%%%%%%%%%%%%%%%%%%%%%%%%%%%%%%%%%%%%%%%%%%%%%%%%%%%%%%%%%%%%%%%%%%%%%%%%%%%%%%%%%%%%%%%%%%%%%

%\newpage
\section{Embedding methods for the proof of Theorem~\ref{thm:stability}: Stability}\label{sec:embed regularity}\label{sec:embedregularity}
%In order to avoid undue repetition, we will prove a general embedding result. For each embedding method, we will embed
In this section, we prove a series of embedding lemmas using regularity, each of which says that if a certain structure exists in the reduced graph $R$, then we can embed $T$ into $G$. In Section~\ref{sec:newtreedecomp}, we prove a tree decomposition lemma phrased in terms of graph homomorphisms, which cuts the tree $T$ into small pieces by removing very few vertices. Then, in Section~\ref{sec:maintechnicalembedding}, we prove our main technical lemma, Lemma~\ref{lem:maintechnicalembedding}. Roughly speaking, it says that given a suitable structure in the reduced graph and an appropriate assignment of each small piece of the tree $T$ to a part of the structure, we can find a copy of $T$ by embedding each piece between a randomly chosen regular pair within the part it is assigned to. This is then applied to prove embedding methods \textbf{H\L T} in Section~\ref{sec:HLT}, \textbf{EM1a-c} in Sections~\ref{sec:EM1a}--\ref{sec:EM1c}, and \textbf{EM2a-d} in Sections~\ref{sec:EM2a}--\ref{sec:EM3}. In each application, the structure in the reduced graph provided by the assumption is transformed into a substructure of the one used in Lemma~\ref{lem:maintechnicalembedding}, then we find a proper assignment of each piece of the tree $T$ to a part of this structure, so that on average no cluster has too many vertices assigned to it. 

As mentioned at the end of Section~\ref{sec:division}, by adding leaves to the $t_2$-side of the tree if necessary, we can assume that $t_1\leq 2t_2+1$. In fact, as we will show later in Section~\ref{sec:stages} when we prove Theorem~\ref{thm:stability}, it can even be assumed that $t_1\leq 2t_2$. As such, all of the embedding methods we prove below in this section will have the assumption that $t_2\leq t_1\leq 2t_2$.  
%%%%%%%%%%%%%%%%%%%%%%%%%%%%%%%%%%%%%%%%%%%%%%%%%%%%%%%%%%%%%%%%%%%%%%%%%%%%%%%%%%%%%%%%%%%%%%%%%
%%%%%%%%%%%%%%%%%%%%%%%%%%%%%%%%%%%%%%%%%%%%%%%%%%%%%%%%%%%%%%%%%%%%%%%%%%%%%%%%%%%%%%%%%%%%%%%%%
%%%%%%%%%%%%%%%%%%%%%%%%%%%%%%%%%%%%%%%%%%%%%%%%%%%%%%%%%%%%%%%%%%%%%%%%%%%%%%%%%%%%%%%%%%%%%%%%%
%%%%%%%%%%%%%%%%%%%%%%%%%%%%%%%%%%%%%%%%%%%%%%%%%%%%%%%%%%%%%%%%%%%%%%%%%%%%%%%%%%%%%%%%%%%%%%%%%

\subsection{Tree decomposition}\label{sec:newtreedecomp}
In this subsection, we prove the following lemma phrased in terms of graph homomorphisms that cuts the tree $T$ into small pieces by removing very few vertices. Recall that for graphs $H_1$ and $H_2$, a function $\phi:H_1\to H_2$ is a \textit{graph homomorphism} if for any edge $uv$ in $H_1$, $\phi(u)\phi(v)$ is also an edge in $H_2$.

%\begin{figure}
%\begin{center}
%\end{center}
%\caption{The auxiliary graph $S$ used in the statement of Lemma~\ref{lem:maintreedecomp}}\label{fig:S}
%\end{figure}

\begin{lemma}\label{lem:maintreedecomp}
Let $1/n\ll c\ll \xi$. Let $S$ be the following graph:

\vspace{-0.2cm}

\begin{center}
\begin{tikzpicture}[scale=.7,main1/.style = {circle,draw,fill=none,inner sep=1.5}, main2/.style = {circle,draw,fill=none,inner sep=2.25}]
\def\spacer{2};
\def\spacerr{1.5};
\def\Ahgt{1.2};
\def\Bhgt{1.2};

\foreach \n in {1,...,8}
{
\node[main1] (A\n) at (\n*\spacerr,0) {};
}
\foreach \n/\m in {1/2,2/3,3/4,4/5,5/6,6/7,7/8}
{
\draw [thick,red] (A\n) -- (A\m);
}
\foreach \where/\what in {1/Y_3,2/X_2,3/Y_1,4/X_0,5/Y_0,6/X_1,7/Y_2,8/X_3}
{
\draw ($(A\where)+(0,0.5)$) node {$\what$};
}
\draw ($(A1)-(1.5,0)$) node {$S:$};
\end{tikzpicture}

\end{center}
 % in Figure~\ref{fig:S}.

\vspace{-0.3cm}

\noindent Let $T$ be an $n$-vertex tree. Then, there is a homomorphism $\phi:T\to S$ such that each component of $T-\phi^{-1}(X_0\cup Y_0)$ has size at most $\xi n$ and $|\phi^{-1}(X_0\cup Y_0\cup X_1\cup Y_1)|\leq \xi n$.
\end{lemma}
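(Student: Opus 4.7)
The plan is to apply Lemma~\ref{lemma:trees:cutset} with parameter $\xi/4$ to obtain a set $X\subset V(T)$ with $|X|\le 8/\xi$ and every component of $T-X$ of size at most $\xi n/4$. I will use $X$ as the ``main cut'' by setting $\phi(v)=X_0$ for $v\in X\cap V_1$ and $\phi(v)=Y_0$ for $v\in X\cap V_2$, map boundary vertices in $\partial X:=N_T(X)\setminus X$ to $X_1$ or $Y_1$ according to their bipartition class, and map each vertex $v$ at distance $\geq 2$ from $X$ into one of the two ``arms'' $\{X_2,Y_3\}$ or $\{Y_2,X_3\}$ of $S$ based on its bipartition class together with the class of its closest $X$-vertex (which determines the arm via the side of the centre $X_0Y_0$ that $v$ ``belongs to''). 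An edge $uv\in E(T)$ with $u,v$ at distance $d$ and $d+1$ from $X$ then lands on the edge of $S$ at positions $(d,d+1)$ of the chosen arm.

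The core obstruction to this scheme is the absence of the edge $X_1Y_1$ in $S$: an edge $uv\in E(T)$ with $u\in V_1\cap\partial X$ and $v\in V_2\cap\partial X$ would force $\phi(u)\phi(v)=X_1Y_1$. I will call such edges \emph{problem edges}. Each problem edge is witnessed by a unique path $x_1-u-v-x_2$ in $T$ with $x_1,x_2\in X$, so problem edges correspond bijectively to pairs of vertices in $X$ at distance exactly $3$ in $T$; hence there are at most $\binom{|X|}{2}=O(1/\xi^2)$ of them. An analogous obstruction arises for vertices $v$ at distance $2$ from $X$ whose two ``closest $X$-vertex candidates'' lie on different sides of the bipartition, forcing $v$ to sit on both arms simultaneously; these also number at most $O(1/\xi^2)$ via the same distance-based correspondence.

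The hardest part will be resolving these obstructions by promoting offending vertices into the main cut $\phi^{-1}(X_0\cup Y_0)$ so that they now map to $X_0$ or $Y_0$ by bipartition class, while controlling the cascade this causes (as new vertices join the cut, previously interior vertices become boundary vertices, potentially creating new problem edges). The key point, exploited via $\Delta(T)\leq cn$ with $c\ll\xi$, is that at each iteration the new problem edges are again parametrised by distance-$3$ pairs in the current cut, so the number added at iteration $k+1$ is at most $\binom{|Z_0^{(k)}|}{2}$; starting from $|Z_0^{(0)}|=|X|=O(1/\xi)$ and applying this bound repeatedly, the process can be shown to stabilise after constantly many iterations at a cut $Z_0$ of size bounded by a constant $C(\xi)$ depending only on $\xi$. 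Once the cut stabilises, we have $|Z_0\cup\partial Z_0|\le|Z_0|(1+\Delta(T))\le C(\xi)(1+cn)\le\xi n$ provided $c\le \xi/(2C(\xi))$, which is ensured by the hierarchy $c\ll\xi$. Verifying that the resulting $\phi$ is a homomorphism reduces to checking each edge of $T$ lands on an edge of $S$, which follows case-by-case from the distance-parity assignment together with the absence of residual obstructions, and the two conclusions of the lemma follow directly from $T-Z_0$ inheriting components of size at most $\xi n/4\le \xi n$ and from $\phi^{-1}(X_0\cup Y_0\cup X_1\cup Y_1)\subseteq Z_0\cup \partial Z_0$.
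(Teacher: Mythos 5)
Your overall strategy (cut with Lemma~\ref{lemma:trees:cutset}, assign the rest of the tree to the arms of $S$ by bipartition class and proximity to the cut, then repair the $X_1Y_1$-obstructions by absorbing offending vertices into $\phi^{-1}(X_0\cup Y_0)$) is the right one and matches the spirit of the paper's proof. However, there is a genuine gap at the crux of your argument: you assert that the iterative promotion process ``can be shown to stabilise after constantly many iterations at a cut $Z_0$ of size bounded by a constant $C(\xi)$,'' but you give no argument for termination. The bound you do justify --- at most $\binom{|Z_0^{(k)}|}{2}$ new problem edges at step $k+1$ --- controls the growth \emph{per iteration}, not the \emph{number} of iterations, and these are different things: if even one new problem edge appears at every round, the cut keeps growing and $C(\xi)$ is never reached. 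New problem edges genuinely can appear after a promotion (a newly promoted $V_2$-vertex $w$ together with an old $V_1$-cut-vertex $x'$ at distance $3$, via a path $w-c-d-x'$ with $c,d$ outside the cut, creates the new problem edge $cd$), and chaining such configurations produces cascades whose length is not obviously $O(1)$; bounding it requires a structural argument about which vertices you promote, which is exactly what is missing.

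The paper avoids this entirely by making the repair a single, deterministic round: it roots $T$ at a cut vertex, orders vertices parent-before-child with each component of $T-Z$ consecutive, lets the \emph{entry vertex} of each component fix that component's arm, and then promotes into the centre \emph{all} parents of $Z$-vertices of the wrong bipartition class within that component (the set $B_2$, resp.\ $B_1$), together with sending their neighbours ($C_1$, resp.\ $C_2$) to $X_1$, resp.\ $Y_1$ --- not merely the endpoints of visible problem edges. After this one round, the boundary of the enlarged cut inside each component lies entirely in a single bipartition class, so no $X_1Y_1$-type edge can survive and no cascade occurs; the size bound $|A|+|B|+|C|+|Z|\leq\xi n$ then follows from $|Z|\leq 2\xi^{-1}$ and $\Delta(T)\leq cn$. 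To repair your proof you should either adopt this one-shot promotion rule (and verify directly, as the paper does, that the resulting map is a homomorphism), or supply an actual termination proof for your iteration; as written, the stabilisation claim is unsupported and the proof is incomplete.
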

\begin{proof}
Let the bipartition classes of $T$ be $V_1$ and $V_2$. By Lemma~\ref{lemma:trees:cutset}, there exists $Z\subset V(T)$ such that each component of $T-Z$ has size at most $\xi n$ and $|Z|\leq 2\xi^{-1}$. It follows that $T-Z$ contains at most $|Z|\cdot\Delta(T)\leq2cn\xi^{-1}\leq\xi n/10$ components. Arbitrarily pick $t_1\in Z$, view $T$ as being rooted at $t_1$, and extend $t_1$ to an ordering $t_1, t_2,\ldots, t_n$ of the vertices of $T$, such that for each $2\leq i\leq n$, $t_i$ has a unique neighbour, namely its parent in $T$, to its left in this ordering, and vertices in the same component of $T-Z$ appear consecutively. 

Let $A\subset V(T-Z)$ be the set consisting of each vertex in a component of $T-Z$ that appears first in the ordering. Let $B$ be the set of parents of vertices in $Z$, and let $C$ be the set of parents and children of vertices in $B$. Note that the sets $A,B,C$ could overlap. For each $\ast\in\{A,B,C,Z\}$ and $j\in[2]$, let $\ast_j=\ast\cap V_j$.

We now define a homomorphism $\phi:T\to S$, so that the following conditions are maintained. 
\stepcounter{propcounter}
\begin{enumerate}[label =\textbf{\Alph{propcounter}\arabic{enumi}}]
    \item\label{hom1} $\phi(t)=X_0$ for each $t\in Z_1$ and $\phi(t)=Y_0$ for each $t\in Z_2$.
    \item\label{hom2} $\phi(t)\in\{X_0,X_1\}$ for each $t\in B_1$ and $\phi(t)\in\{Y_0,Y_1\}$ for each $t\in B_2$.
    \item\label{hom3} $\phi^{-1}(\{X_0,X_1,Y_0,Y_1\})\subset A\cup B\cup C\cup Z$. 
\end{enumerate}
To initialise, let $\phi(t_1)=X_0$ if $t_1\in Z_1$ and $\phi(t_1)=Y_0$ if $t_1\in Z_2$. Suppose we have just finished defining $\phi$ for a vertex in $Z$ or a component in $T-Z$. 

Suppose first that the next vertex in the ordering is a vertex $t_i\in Z$. If $t_i\in Z_1$, then we can define $\phi(t_i)=X_0$ as the image of its parent in $T$ under $\phi$ is adjacent to $X_0$ in $S$ by~\ref{hom2}. Similarly, if $t_i\in Z_2$, then we can safely define $\phi(t_1)=Y_0$.

Now suppose the next vertex in the ordering is a vertex $t_i\in A$ within a component $K$ of $T-Z$. Assume that $t_i\in A_1$, then $\phi$ sends its parent to $Y_0$ by~\ref{hom1}, so we can define $\phi(t_i)=X_1$. Note that~\ref{hom2} is also maintained if $t_i$ happens to be in $B_1$. Define $\phi$ on the remaining vertices $t\in K$ as follows.
\begin{center}
\begin{tabular}{|c|c|c|c|c|c|c|c|c|}
    \hline$t\in$ & $A_1$ & $B_1$ & $B_2$ & $C_1$ & $V_1\setminus(A_1\cup B_1\cup C_1)$ & $V_2\setminus B_2$ \\\hline
    $\phi(t)=$ & $X_1$ & $X_1$ & $Y_0$ & $X_1$ & $X_3$ & $Y_2$ \\\hline
\end{tabular}
\end{center}
One can check that this defines a valid homomorphism using the definitions of $A,B,C$. For example, if $t\in B_2\cap K$, then $\phi(t)$ is defined to be $Y_0$. The parent and children of $t$ are in $C_1$ from definition, and are sent by $\phi$ to $X_1$, which is valid.  Moreover, both~\ref{hom2} and~\ref{hom3} are maintained. The case when $t_i\in A_2$ is symmetric so is omitted. 

Therefore, we can define a homomorphism $\phi:T\to S$ satisfying~\ref{hom1}--\ref{hom3}. Every component of $T-\phi^{-1}(\{X_0,Y_0\})$ has size at most $\xi n$ as it is contained in a component of $T-Z$. Finally, by~\ref{hom3}, $|\phi^{-1}(\{X_0,X_1,Y_0,Y_1\})|\leq|A|+|B|+|C|+|Z|\leq\xi n/10+2\xi^{-1}+2\xi^{-1}\cdot\Delta(T)+2\xi^{-1}\leq\xi n$, as required. 
\end{proof}

%%%%%%%%%%%%%%%%%%%%%%%%%%%%%%%%%%%%%%%%%%%%%%%%%%%%%%%%%%%%%%%%%%%%%%%%%%%%%%%%%%%%%%%%%%%%%%%%%
%%%%%%%%%%%%%%%%%%%%%%%%%%%%%%%%%%%%%%%%%%%%%%%%%%%%%%%%%%%%%%%%%%%%%%%%%%%%%%%%%%%%%%%%%%%%%%%%%
%%%%%%%%%%%%%%%%%%%%%%%%%%%%%%%%%%%%%%%%%%%%%%%%%%%%%%%%%%%%%%%%%%%%%%%%%%%%%%%%%%%%%%%%%%%%%%%%%
%%%%%%%%%%%%%%%%%%%%%%%%%%%%%%%%%%%%%%%%%%%%%%%%%%%%%%%%%%%%%%%%%%%%%%%%%%%%%%%%%%%%%%%%%%%%%%%%%

\subsection{Main technical embedding lemma}\label{sec:maintechnicalembedding}
In this subsection, we prove our main technical embedding lemma. Roughly speaking, it says that given a reduced graph structure $R$ and a tree $T$ cut into many small pieces, if each piece can be assigned appropriately into a part of $R$ (represented below as a homomorphism $\varphi:T\to R'$), so that on average no cluster has too many vertices assigned to it (see~\ref{prop:maintech2}), then we can find a copy of $T$ in $G$. 
\begin{figure}
\begin{center}
\begin{tikzpicture}[scale=.7,main1/.style = {circle,draw,fill=none,inner sep=1.5}, main2/.style = {circle,draw,fill=none,inner sep=2.25}]
\def\spacer{2};
\def\spacerr{1.5};
\def\Ahgt{1.2};
\def\Bhgt{1.2};

\coordinate (B) at (\spacer,0);

\node[main1] (V1) at ($(-\spacerr,1.35)$) {};
\node[main1] (V2) at ($(-\spacerr,-0.1)$) {};
\node[main1] (V3) at ($(-\spacerr,-1.45)$) {};

\coordinate (A0) at (0,2.75);
\coordinate (A1) at (0,1.35);
\coordinate (A2) at (0,-0.05);
\coordinate (A3) at (0,-1.45);
\foreach \n in {0,1,2,3}
{
\coordinate (B\n) at ($(A\n)+(\spacer,0)$);
\coordinate (C\n) at ($(B\n)+(\spacer,0)$);
}

\foreach \n in {1,...,4}
{
\coordinate (C0\n) at ($(A0)+(0,0.5)-\n*(0,0.19)$);
\coordinate (D0\n) at ($(B0)+(0,0.5)-\n*(0,0.19)$);
\coordinate (E0\n) at ($(C0)+(0,0.5)-\n*(0,0.19)$);
\draw [thick,red] (C0\n) -- (D0\n);
\draw [thick,red] (E0\n) -- (D0\n);
}

\foreach \n in {1,...,7}
{
\coordinate (C1\n) at ($(A1)+(0,0.75)-\n*(0,0.19)$);
\coordinate (D1\n) at ($(B1)+(0,0.75)-\n*(0,0.19)$);
\coordinate (E1\n) at ($(C1)+(0,0.75)-\n*(0,0.19)$);
\draw [thick,red] (C1\n) -- (D1\n);
\draw [thick,red] (E1\n) -- (D1\n);
}

\foreach \n in {1,...,4}
{
\coordinate (C2\n) at ($(A2)+(0,0.5)-\n*(0,0.19)$);
\coordinate (D2\n) at ($(B2)+(0,0.5)-\n*(0,0.19)$);
\coordinate (E2\n) at ($(C2)+(0,0.5)-\n*(0,0.19)$);
\draw [thick,red] (C2\n) -- (D2\n);
\draw [thick,red] (E2\n) -- (D2\n);
}

\foreach \n in {1,...,6}
{
\coordinate (C3\n) at ($(A3)+(0,0.7)-\n*(0,0.19)$);
\coordinate (D3\n) at ($(B3)+(0,0.7)-\n*(0,0.19)$);
\coordinate (E3\n) at ($(C3)+(0,0.7)-\n*(0,0.19)$);
\draw [thick,red] (C3\n) -- (D3\n);
\draw [thick,red] (E3\n) -- (D3\n);

}

\draw [thick,red] (V1) -- (V2) -- (V3);

\foreach \n in {1,...,4}
{
\draw [thick,red] (V1) -- (C0\n);
}
\foreach \n in {1,...,7}
{
\draw [thick,red] (V1) -- (C1\n);
}
\foreach \n in {1,...,4}
{
\draw [thick,red] (V2) -- (C2\n);
}
\foreach \n in {1,...,6}
{
\draw [thick,red] (V3) -- (C3\n);
}

%\draw [fill] (V) [radius=0.05cm] circle ;
%\draw ($(A4)-(0,0.9)$) node {$t_2^-$};
%\draw ($(B4)-(0,0.9)$) node {$t_1^-$};

\draw ($(V1)-(0.5,0)$) node {$i_1$};
\draw ($(V2)-(0.5,0)$) node {$i_2$};
\draw ($(V3)-(0.5,0)$) node {$i_3$};

\draw ($(V2)-(1.5,0)$) node {$R:$};

\draw [fill=white] (A0) circle [x radius=0.6cm,y radius=0.5cm];
\draw [fill=white] (A1) circle [x radius=0.6cm,y radius=0.75cm];
\draw [fill=white] (A2) circle [x radius=0.6cm,y radius=0.5cm];
\draw [fill=white] (A3) circle [x radius=0.6cm,y radius=0.7cm];
\draw [fill=white] (B0) circle [x radius=0.6cm,y radius=0.5cm];
\draw [fill=white] (B1) circle [x radius=0.6cm,y radius=0.75cm];
\draw [fill=white] (B2) circle [x radius=0.6cm,y radius=0.5cm];
\draw [fill=white] (B3) circle [x radius=0.6cm,y radius=0.7cm];
\draw [fill=white] (C0) circle [x radius=0.6cm,y radius=0.5cm];
\draw [fill=white] (C1) circle [x radius=0.6cm,y radius=0.75cm];
\draw [fill=white] (C2) circle [x radius=0.6cm,y radius=0.5cm];
\draw [fill=white] (C3) circle [x radius=0.6cm,y radius=0.7cm];

\draw (A0) node { $I_{0,1}$};
\draw (B0) node { $I_{0,2}$};
\draw (C0) node { $I_{0,3}$};
\draw (A1) node { $I_{1,1}$};
\draw (B1) node { $I_{1,2}$};
\draw (C1) node { $I_{1,3}$};
\draw (A2) node { $I_{2,1}$};
\draw (B2) node { $I_{2,2}$};
\draw (C2) node { $I_{2,3}$};
\draw (A3) node { $I_{3,1}$};
\draw (B3) node { $I_{3,2}$};
\draw (C3) node { $I_{3,3}$};

%\draw (10.5,2) node {Type II};
\end{tikzpicture}\hspace{1.5cm}
\begin{tikzpicture}[scale=.7,main1/.style = {circle,draw,fill=none,inner sep=1.5}, main2/.style = {circle,draw,fill=none,inner sep=2.25}]
\def\spacer{1.25};
\def\spacerr{1.25};
\def\Ahgt{1.2};
\def\Bhgt{1.2};

%\draw [fill=white,white] (0,1.35) circle [x radius=0.6cm,y radius=0.75cm];
%\draw [fill=white,white] (0,-1.45) circle [x radius=0.6cm,y radius=0.7cm];

\coordinate (B) at (\spacer,0);

\node[main1] (V1) at ($(-\spacerr,1.35)$) {};
\node[main1] (V2) at ($(-\spacerr,-0.1)$) {};
\node[main1] (V3) at ($(-\spacerr,-1.45)$) {};

\node[main1] (A1) at (0,1.35) {};
\node[main1] (A2) at (0,-0.1) {};
\node[main1] (A3) at (0,-1.45) {};
\foreach \n in {1,2,3}
{
\node[main1] (B\n) at ($(A\n)+(\spacer,0)$) {};
\node[main1] (C\n) at ($(B\n)+(\spacer,0)$) {};
}

\node[main1] (A0) at ($(A1)+(0,1.35)$) {};
\node[main1] (B0) at ($(B1)+(0,1.35)$) {};
\node[main1] (C0) at ($(B0)+(\spacer,0)$) {};
\draw [thick,red] (V1) -- (A0) -- (B0) -- (C0);

\foreach \n in {1,2,3}
{
\draw [thick,red] (V\n) -- (A\n) -- (B\n) -- (C\n);
}

\draw [thick,red] (V1) -- (V2) -- (V3);

\draw ($(V1)-(0.5,0)$) node {$i_1$};
\draw ($(V2)-(0.5,0)$) node {$i_2$};
\draw ($(V3)-(0.5,0)$) node {$i_3$};
\foreach \where/\what in {A1/I_{1,1},B1/I_{1,2},C1/I_{1,3},A2/I_{2,1},B2/I_{2,2},C2/I_{2,3},A3/I_{3,1},B3/I_{3,2},C3/I_{3,3},A0/I_{0,1},B0/I_{0,2},C0/I_{0,3}}
{
\draw ($(\where)+(-0.1,0.5)$) node {$\what$};
}

\draw ($(-\spacerr,0.625)-(1.8,0)$) node {$R':$};
%\draw (10.5,2) node {Type II};
\end{tikzpicture}

\begin{comment}
\begin{tikzpicture}[scale=.7,main1/.style = {circle,draw,fill=none,inner sep=1.5}, main2/.style = {circle,draw,fill=none,inner sep=2.25}]
\def\spacer{2};
\def\spacerr{1.5};
\def\Ahgt{1.2};
\def\Bhgt{1.2};

\draw [fill=white,white] (0,1.35) circle [x radius=0.6cm,y radius=0.75cm];
\draw [fill=white,white] (0,-1.45) circle [x radius=0.6cm,y radius=0.7cm];

\coordinate (B) at (\spacer,0);

\node[main1] (V1) at ($(-\spacerr,1.35)$) {};
\node[main1] (V2) at ($(-\spacerr,-0.1)$) {};
\node[main1] (V3) at ($(-\spacerr,-1.45)$) {};

\node[main1] (A1) at (0,1.35) {};
\node[main1] (A2) at (0,-0.1) {};
\node[main1] (A3) at (0,-1.45) {};
\foreach \n in {1,2,3}
{
\node[main1] (B\n) at ($(A\n)+(\spacer,0)$) {};
\node[main1] (C\n) at ($(B\n)+(\spacer,0)$) {};
}

\foreach \n in {1,2,3}
{
\draw [thick,red] (V\n) -- (A\n) -- (B\n) -- (C\n);
}

\draw [thick,red] (V1) -- (V2) -- (V3);

\draw ($(V1)-(0.5,0)$) node {$i_1$};
\draw ($(V2)-(0.5,0)$) node {$i_2$};
\draw ($(V3)-(0.5,0)$) node {$i_3$};
\foreach \where/\what in {A1/I_{1,1},B1/I_{1,2},C1/I_{1,3},A2/I_{2,1},B2/I_{2,2},C2/I_{2,3},A3/I_{3,1},B3/I_{3,2},C3/I_{3,3}}
{
\draw ($(\where)+(0,0.5)$) node {$\what$};
}

\draw ($(V2)-(1.5,0)$) node {$R':$};

%\draw (10.5,2) node {Type II};
\end{tikzpicture}
\end{comment}
\end{center}
\caption{Auxiliary graphs $R$ and $R'$ used in the statement of Lemma~\ref{lem:maintechnicalembedding}}\label{fig:mainembedstatement}
\end{figure}
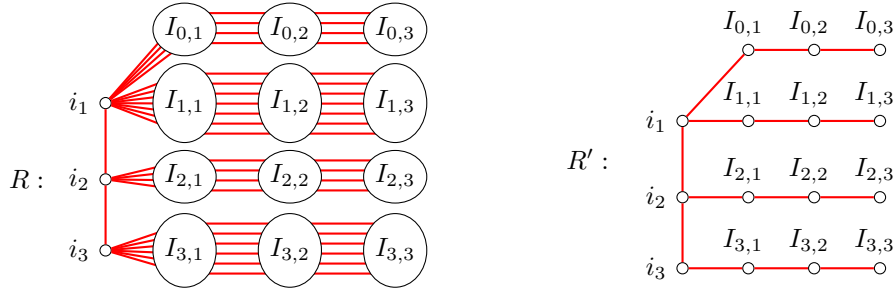

\begin{lemma}\label{lem:maintechnicalembedding} Let $0<1/n\ll c\ll \xi \ll 1/k\ll \eps\ll \alpha\ll d\leq1$. Let $i_1,i_2,i_3\in [k]$ be distinct and let $I_{0,1}, I_{0,2}, I_{0,3}$, $I_{1,1}, I_{1,2}, I_{1,3}$, $I_{2,1}, I_{2,2}$, $I_{2,3}, I_{3,1}, I_{3,2},I_{3,3}$ partition $[k]\setminus \{i_1,i_2,i_3\}$ such that $|I_{0,1}|=|I_{0,2}|=|I_{0,3}|$, $|I_{1,1}|=|I_{1,2}|=|I_{1,3}|$, $|I_{2,1}|=|I_{2,2}|=|I_{2,3}|$, and $|I_{3,1}|=|I_{3,2}|=|I_{3,3}|$.

As depicted in Figure~\ref{fig:mainembedstatement}, let $R$ be a graph with vertex set $[k]$ and edge set consisting of the edges in
\[
\{i_1i_2,i_2i_3\}\cup\{i_1i:i\in I_{0,1}\}\cup \{i_ai:a\in [3],i\in I_{a,1}\},
\]
along with a perfect matching between each of the eight pairs of vertex sets $(I_{a,1},I_{a,2})$ and $(I_{a,2},I_{a,3})$ for every $a\in[3]_0$. Let $R'$ be the graph with vertex set $\{i_1,i_2,i_3\}\cup\{I_{a,b}:a\in[3]_0,b\in[3]\}$ and edge set
\[
\{i_1i_2,i_2i_3,i_1I_{0,1},i_1I_{1,1},i_2I_{2,1},i_3I_{3,1}\}\cup\{I_{a,1}I_{a,2}:a\in[3]_0\}\cup\{I_{a,2}I_{a,3}:a\in[3]_0\}.
\]

Let $G$ be a graph on at most $2n$ vertices with a vertex partition $V_1\cup V_2\cup\cdots\cup V_k$, such that for each $ij\in E(R)$, $G[V_i,V_j]$ is $(\eps,d)$-regular, and for each $I\in V(R')\setminus\{i_1,i_2,i_3\}$, the sets $V_i$ with $i\in I$ all have the same size. Assume also that $n/10k\leq|V_i|\leq\eps n$ for each $i\in \{i_1,i_2,i_3\}$.

Let $T$ be an $n$-vertex tree with $\Delta(T)\leq cn$. Suppose $\varphi:T\to R'$ is a homomorphism such that the following hold.
\stepcounter{propcounter}
\begin{enumerate}[label = \emph{\textbf{\Alph{propcounter}\arabic{enumi}}}]
\item $\varphi^{-1}(\{i_1,i_2,i_3\})\neq \emptyset$, and every component of $T-\varphi^{-1}(\{i_1,i_2,i_3\})$ has size at most $\xi n$.\labelinthm{prop:maintech1}
\item For each $I\in V(R')\setminus\{i_1,i_2,i_3\}$ with $\varphi^{-1}(I)\neq \emptyset$, $|\varphi^{-1}(I)|+\alpha n \leq \sum_{i\in I}|V_i|$.\labelinthm{prop:maintech2}
\item For each $i\in \{i_1,i_2,i_3\}$, $|\varphi^{-1}(i)|\leq\xi n$.\labelinthm{prop:maintech3}
\end{enumerate}
Then, $G$  contains a copy of $T$.% in which $\varphi$.
\end{lemma}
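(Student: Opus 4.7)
The plan is to randomly assign each component of $T-S$, where $S = \varphi^{-1}(\{i_1,i_2,i_3\})$, to a triple of clusters in $G$ coming from the matchings in $R$, and then embed $T$ in two stages: first the skeleton $S$ into $V_{i_1} \cup V_{i_2} \cup V_{i_3}$, and then each component $K$ into its assigned triple using standard regularity-based tree embedding.

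First, since $R' - \{i_1,i_2,i_3\}$ is the disjoint union of the four three-vertex paths $I_{a,1} - I_{a,2} - I_{a,3}$ for $a \in [3]_0$, and $\varphi$ is a homomorphism, each component $K$ of $T-S$ is mapped by $\varphi$ entirely into one such path; I call the corresponding chain $a_K$. Composing the two perfect matchings in chain $a$ yields triples of clusters $(A^a_j, B^a_j, C^a_j)$ for $j \in [|I_{a,1}|]$, with $(A^a_j, B^a_j)$ and $(B^a_j, C^a_j)$ both $(\eps,d)$-regular in $G$. I will independently and uniformly assign each component $K$ to the triple indexed by some $j_K \in [|I_{a_K,1}|]$. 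For each $(a,b,j)$, the load $N_{a,b,j} := \sum_{K : a_K = a,\, j_K = j} |K \cap \varphi^{-1}(I_{a,b})|$ is a sum of independent random variables each bounded by $\xi n$, with mean $|\varphi^{-1}(I_{a,b})|/|I_{a,b}|$ and total variance $O(\xi n^2 / |I_{a,b}|)$. Bernstein's inequality combined with a union bound over all $O(k^2)$ such events will show that with positive probability every load satisfies $N_{a,b,j} \le |V_{i_{a,b,j}}| - \alpha n/(2|I_{a,b}|)$, invoking~\ref{prop:maintech2}. I then fix such an outcome.

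With this load condition in hand, I embed $T$ in two stages. The skeleton $S$, which has $|S| \le 3\xi n$ by~\ref{prop:maintech3} and whose edges in $T$ map to $\{i_1i_2, i_2i_3\} \subset E(R)$, is embedded into $V_{i_1} \cup V_{i_2} \cup V_{i_3}$ via standard regularity-based tree embedding in the regular pairs $(V_{i_1}, V_{i_2})$ and $(V_{i_2}, V_{i_3})$; since $|S|$ is tiny compared to the minimum cluster size $n/10k$, there is ample room. Crucially, I would use Lemma~\ref{lemma:regularity:2} to insist that each skeleton image $x_u$ for $u \in \varphi^{-1}(i_a)$ has at least $(d-\eps)|V_i|$ neighbors in $V_i$ for all but a tiny fraction of clusters $i$ in the set $I_{a,1}$ (and $I_{0,1}$, when $a=1$). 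Then for each component $K$, I embed $K$ into its assigned triple $(V_{A^{a_K}_{j_K}}, V_{B^{a_K}_{j_K}}, V_{C^{a_K}_{j_K}})$ by a greedy BFS starting from an attachment vertex, using the regularity of the two pairs and maintaining pools of ``good'' candidates via Lemma~\ref{lemma:regularity:3}; the load bound and the slack $\alpha n/(2|I_{a,b}|)$ per cluster ensure that greedy extension never gets stuck.

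The main obstacle will be ensuring that the attachment vertices of each component $K$ land in the intersection of $V_{A^{a_K}_{j_K}}$ and the neighborhood of the already-embedded skeleton vertex to which they attach. Since $\Delta(T) \le cn$ with $c \ll \xi \ll 1/k$, each skeleton vertex $u$ has at most $cn$ attachment neighbors in $T-S$, and a second Bernstein-type concentration shows that with high probability these neighbors are distributed so that only $O(cn/|I_{a_K,1}|) \ll |V_i|$ of them demand the same cluster $V_i$; combined with the large-neighborhood guarantee on $x_u$, this gives enough flexibility to place them. A secondary complication is that a single component $K$ may attach to $S$ through several vertices, so the BFS embedding of $K$ must be rooted at one attachment vertex and then respect the other attachment constraints as internal conditions --- but each such constraint is again only a subset-intersection condition within a regular pair $(V_{i_a}, V_{A^{a_K}_{j_K}})$, so standard regularity inheritance should suffice to complete the embedding.
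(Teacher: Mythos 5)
Your overall architecture (classify each component of $T-\varphi^{-1}(\{i_1,i_2,i_3\})$ into one of the four chains, assign components to matched triples of clusters at random, control the per-cluster loads by a martingale/Bernstein bound using~\ref{prop:maintech2}, then embed greedily using regularity) is the same as the paper's, and the load-concentration step is fine. The gap is in how the random assignment interacts with the attachment of components to already-embedded skeleton vertices. You fix the assignment $j_K$ independently and uniformly \emph{before} any embedding, and you only guarantee that each skeleton image is good (has $(d-\eps)|V_i|$ neighbours) for \emph{all but a small fraction} of the clusters $i\in I_{a,1}$ (this is all Lemma~\ref{lemma:regularity:2} can give). But a tree with $\Delta(T)\le cn$ can have $\Theta(n)$ components attached to the skeleton, so with probability $1-o(1)$ some component is assigned to a cluster that is exceptional for the skeleton vertex it must attach to, and then its attachment vertex has essentially no admissible image; a union bound over components does not close this, and an expectation bound only limits the number of failures rather than eliminating them. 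The paper avoids exactly this by making the choice of cluster for each component \emph{adaptively at embedding time}, uniformly over the set of indices that are currently good for the already-embedded parent (property~{\bf B3} in its proof), which is still a near-uniform distribution over $(1-\alpha/100)k_a$ indices, so Azuma's inequality (Claim~\ref{clm:Zajib}) still gives the load bound.

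A second, related omission is a reservation mechanism. Even when the assigned cluster is good for the parent $\psi(s)$, your clusters are filled to within $\alpha n/(2|I_{a,b}|)$ of capacity by other components embedded earlier, and nothing prevents those earlier embeddings from consuming all of $N(\psi(s))\cap V_i$: a neighbourhood of size $(d-\eps)|V_i|$ with $d<1$ can be entirely swallowed by the used part of $V_i$, so the residual slack of size $\alpha n/(2|I_{a,b}|)$ need not meet $N(\psi(s))$ at all. Your remark that only $O(cn/|I_{a_K,1}|)$ attachment vertices demand the same cluster bounds the demand, not the depletion. The paper handles this by reserving, for each embedded skeleton vertex and each good cluster, a set $W_{i,j}$ of $d\alpha n/8k_a$ of its unused neighbours that later non-children are forbidden to use (property~{\bf B3}), and this reservation is exactly what makes the adaptive choice available when a descending component arrives. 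The same two issues resurface in your treatment of components with several skeleton attachments: embedding the whole skeleton first forces multi-constraint extensions whose feasibility again depends on the specific (pre-chosen) skeleton images being good for the specific (pre-chosen) clusters, which is the unproven step. So the proposal as written does not go through; it needs either the paper's adaptive choice plus reservation, or some equivalent resampling/repair argument that you have not supplied.
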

\begin{proof} 
Since $\varphi$ is a homomorphism, the image of every component of $T-\varphi^{-1}(\{i_1,i_2,i_3\})$ under $\varphi$ is entirely contained in $I_a:=I_{a,1}\cup I_{a,2}\cup I_{a,3}$ for some $a\in[3]_0$. For each $a\in[3]_0$ then, let $r_a$ be the number of components of $T-\varphi^{-1}(\{i_1,i_2,i_3\})$ whose images are contained in $I_{a,1}\cup I_{a,2}\cup I_{a,3}$, and label these components as $T_{a,1},\ldots,T_{a,r_a}$.
%Let $r_1,r_2$ be the number of components of $T-\varphi^{-1}(\{i,j,i'\})$, and let $T_1,\ldots,T_r$ be these components.
Let $x_1\in \varphi^{-1}(\{i_1,i_2,i_3\})$, view $T$ as being rooted at $x_1$, and extend it to an ordering $x_1,\ldots, x_{n}$ of $V(T)$ so that $T[x_1,\ldots, x_j]$ is a tree for each $j\in [n]$, and vertices in the same component of $T-\varphi^{-1}(\{i_1,i_2,i_3\})$ appear consecutively. Moreover, we can ensure that for every $x\in\varphi^{-1}(\{i_1,i_2,i_3\})$, the components of $T-\varphi^{-1}(\{i_1,i_2,i_3\})$ that directly descends from $x$ appear right after $x$ in this ordering.
For each $a\in [3]_0$ and $\ell\in [r_a]$, let $p_{a,\ell}$ be the smallest index such that $x_{p_{a,\ell}}$ is a
vertex in $T_{a,\ell}$. By relabelling if necessary, assume that $p_{a,1}<p_{a,2}<\cdots<p_{a,r_a}$ for each $a\in [3]_0$. % and let $X_{\textrm{roots}}=\{x_{p_a}:a\in [r]\}$.

We now provide a random algorithm that, with positive probability, produces an assignment function $\sigma:V(T)\to V(R)$ consistent with $\varphi$ that guides an embedding $\psi:T\to G$. To initialise, let $\sigma(x)=i$ for every $x\in \varphi^{-1}(i)$ and $i\in \{i_1,i_2,i_3\}$, and let $\psi$ be the empty function. Now, for each $s\in[n]$ in turn, if $s=p_{a,\ell}$ for some $a\in [3]_0$ and $\ell\in [r_a]$, then we extend the definition of $\sigma$ to include all vertices in $T_{a,\ell}$ in a random manner defined below, while we do nothing to $\sigma$ otherwise. Then, if possible, we extend $\psi$ by embedding $x_s$ into $G$ so that the following properties hold, otherwise we stop this process. For notational convenience, let $i_0=i_1$, and let $k_a=|I_{a,1}|$ for every $a\in[3]_0$.
\stepcounter{propcounter}
\begin{enumerate}[label = {\textbf{\Alph{propcounter}\arabic{enumi}}}]
\item $\psi(x_j)\in V_{\sigma(x_j)}$ for each $j\in[s]$.\label{prop:forreg1}
%\item For each $xy\in E(T[\{x_1,\ldots,x_s\}])$, $\psi(x)\psi(y)$
\item For every $j\in[s]$ and $j'>j$ satisfying $j'\notin \{p_{a,\ell}:a\in [3]_0,\ell\in [r_a]\}$ and $x_jx_{j'}\in E(T)$,
we have $d_G(\psi(x_j),V_{\sigma(x_{j'})}\setminus \psi(\{x_1,\ldots,x_{j-1}\}))\geq d|V_{\sigma(x_{j'})}|/4$ if $\sigma(x_{j'})\in\{i_1,i_2,i_3\}$, and $d_G(\psi(x_j),V_{\sigma(x_{j'})}\setminus \psi(\{x_1,\ldots,x_{j-1}\}))\geq d\alpha n/4k_a$ if $\sigma(x_{j'})\in I_a$ for some $a\in[3]_0$.\label{prop:forreg2}
%\item If $\varphi(x_a)\in V(R)\setminus\{i,j,i'\}$ and $b\in N_{R}(\varphi(x_a))$, then $d(\psi(x_a),V_{b}\setminus \psi(\{x_1,\ldots,x_s\})\geq \alpha n/10k$.
%\item If $\varphi(x_a)\in \{i,j,i'\}$ and $b\in \{i,j,i'\}\cap N_R(\varphi(x_a))$, then $d(\psi(x_a),V_{b}\setminus \psi(\{x_1,\ldots,x_s\})\geq \alpha n/10k$.
\item For every $a\in[3]_0$ and $j\in[s]$, if $\varphi(x_j)=i_a$, then for all but at most $\alpha k_a/100$ values of $i\in I_{a,1}$, there exists a set $W_{i,j}\subset N(\psi(x_j),V_{i}\setminus \psi(\{x_1,\ldots,x_{j-1}\}))$ with size $d\alpha n/8k_a$, such that if $j<j'\leq s$ and $x_{j'}$ is a vertex in a component that directly descends from $x_j$, then $\psi(x_{j'})$ avoids these sets $W_{i,j}$ unless $x_{j'}\in N_T(x_j)$.\label{prop:forreg3} %Additionally, if $\varphi(x_j)=i_1$, then the same is also true with respect to $I_{0,1}$.
\end{enumerate}
First, we show how $\sigma$ is randomly extended when $s=p_{a,\ell}$ for some $a\in [3]_0$ and $\ell\in [r_a]$. Note that $s>1$ as $x_1\in \varphi^{-1}(\{i_1,i_2,i_3\})$. Let $1\leq s'<s$ be the unique index satisfying $x_{s'}x_s\in E(T)$, and observe that as $T_{a,\ell}$ is a component of $T-\varphi^{-1}(\{i_1,i_2,i_3\})$, we have $\varphi(x_{s})=I_{a,1}$, and thus $\sigma(x_{s'})=i_a$. By \ref{prop:forreg3}, there exists $I_{a,1,\ell}\subset I_{a,1}$ with $|I_{a,1,\ell}|\geq(1-\alpha/100)k_a$, such that for every $i\in I_{a,1,\ell}$, there exists $W_{i,s'}\subset N(\psi(x_{s'}),V_{i}\setminus \psi(\{x_1,\ldots,x_{s'-1}\}))$ with size $d\alpha n/8k_a$.
Pick $i_{a,1,\ell}\in I_{a,1,\ell}$ uniformly at random. Let $i_{a,2,\ell}\in I_{a,2}$ and $i_{a,3,\ell}\in I_{a,3}$ be such that $i_{a,1,\ell}i_{a,2,\ell},i_{a,2,\ell}i_{a,3,\ell}\in E(R)$.
For each $b\in [3]$ and each $x\in V(T_{a,\ell})\cap \varphi^{-1}(I_{a,b})$, set $\sigma(x)=i_{a,b,\ell}$. This extends the definition of $\sigma$ to include vertices in $V(T_{a,b})$.
%and then show that the algorithm above will embed $T$ in $G$ with strictly positive probability.

Now, for each $a\in [3]_0$, $b\in [3]$, $i\in I_{a,b}$ and $\ell\in [r_a]$, if $i_{a,b,\ell}=i$, then let
\[Z_{a,b,i,\ell}=|V(T_{a,\ell})\cap \varphi^{-1}(I_{a,b})|,\]
otherwise, including if the process above stops early, let $Z_{a,b,i,\ell}=0$.

\begin{claim}\label{clm:Zajib} For each $a\in [3]_0$, $b\in [3]$, and $i\in I_{a,b}$, with probability at least $1-1/20k$, we have
\[
\sum_{\ell\in [r_a]}Z_{a,b,i,\ell}\leq \frac{|\varphi^{-1}(I_{a,b})|}{k_a}+\frac{\alpha n}{2k_a}.
\]
\end{claim}
\begin{proof}[Proof of Claim~\ref{clm:Zajib}] Fix $a\in [3]_0$, $b\in [3]$, and $i\in I_{a,b}$. For each $\ell\in [r_a]$, we have 
\[\mathbb{E}(Z_{a,b,i,\ell}\mid Z_{a,b,i,1},\ldots,Z_{a,b,i,\ell-1})\leq\frac{|V(T_{a,\ell})\cap \varphi^{-1}(I_{a,b})|}{|I_{a,1,\ell}|}\leq\frac{|V(T_{a,\ell})\cap \varphi^{-1}(I_{a,b})|}{(1-\alpha/100)k_a},\] 
and
\[\frac{1}{(1-\alpha/100)k_a}\sum_{\ell\in [r_a]}|V(T_{a,\ell})\cap \varphi^{-1}(I_{a,b})|
\leq\frac{|\varphi^{-1}(I_{a,b})|}{k_a}+\frac{\alpha n}{4k_a}.\]
Furthermore, as $|T_{a,\ell}|\leq \xi n$ for each $\ell\in [r_a]$, we have $Z_{a,b,i,\ell}\leq |T_{a,\ell}|\leq\xi n$. Thus, using $\sum_{\ell\in [r_a]}|T_{a,\ell}|\leq n$, we have
\[\sum_{\ell\in [r_a]}|T_{a,\ell}|^2\leq\frac n{\xi n}\cdot(\xi n)^2= \xi n^2.\]
Therefore, applying Lemma~\ref{lemma:azuma} with $t=\alpha n/4k_a$, we have that
\begin{align*}
\mathbb{P}\left(\sum_{\ell\in [r_a]}Z_{a,b,i,\ell}>\frac{|\varphi^{-1}(I_{a,b})|}{k_a}+\frac{\alpha n}{2k_a}\right)&
\leq\exp\left(-\frac{(\alpha n/4k_a)^2 }{2\xi n^2}\right)\leq\exp\left(-\frac{\alpha^2}{32\xi k^2}\right)\leq \frac{1}{20k},
\end{align*}
as required, where we have used that $k_a\leq k$ and $\xi\ll 1/k,\alpha\ll1$.
\renewcommand{\qedsymbol}{$\boxdot$}
\end{proof}
\renewcommand{\qedsymbol}{$\square$}

%Let $k_i=|I_{i,1}|$ for each $i\in [3]$.

\begin{claim}\label{clm:extendingpossible}
Suppose for some $0\leq s<n$, $\psi(x_1),\ldots,\psi(x_{s})$ satisfy \emph{\ref{prop:forreg1}}--\emph{\ref{prop:forreg3}}, and for every $a\in [3]_0$, $b\in [3]$ and $i\in I_{a,b}$, we have
\begin{equation}\label{eqn:nottoomuch}
\sum_{\ell\in [r_a]:p_{a,\ell}\leq s}Z_{a,b,i,\ell}\leq \frac{|\varphi^{-1}(I_{a,b})|}{k_a}+\frac{\alpha n}{2k_a}.
\end{equation}
Then, we can extend the embedding $\psi$ to include $x_{s+1}$ so that \emph{\ref{prop:forreg1}}--\emph{\ref{prop:forreg3}} still hold with $s+1$ in place of $s$.
\end{claim}
\begin{proof}[Proof of Claim~\ref{clm:extendingpossible}]
Let $s'\in[s]$ be such that $x_{s'}x_{s+1}\in E(T)$.

If $s+1=p_{a,\ell}$ for some $a\in [3]_0$ and $\ell\in [r_a]$, then from above we have $\sigma(x_{s+1})=i$ for some $i\in I_{a,1}$, and by~\ref{prop:forreg3} there exists $W_{i,s'}\subset N(\psi(x_{s'}),V_{i}\setminus \psi(\{x_1,\ldots,x_{s'-1}\}))$ with size $d\alpha n/8k_a$. Using~\ref{prop:forreg3}, $\Delta(T)\leq cn$, and that components directly descending from $x_{s'}$ appear right after $x_{s'}$ in the ordering, we see that at most $cn$ vertices in $W_{i,s'}$ have been used, thus the set $Y_{s+1}:=W_{i,s'}\setminus\psi(\{x_1,\ldots,x_{s}\})$ has size at least $d\alpha n/10k_a$. 

If $s+1\not\in\{p_{a,\ell}:a\in [3]_0, \ell\in [r_a]\}$, then $\sigma(x_{s'})\not\in\{i_1,i_2,i_3\}$. If $\sigma(x_{s+1})=i_a$ for some $a\in[3]$, then by~\ref{prop:forreg2}, we have $d_G(\psi(x_{s'}),V_{i_a}\setminus\psi(\{x_1,\ldots,x_{s'-1}\}))\geq d|V_{i_a}|/4$. By~\ref{prop:maintech3}, $Y_{s+1}:=N_G(\psi(x_{s'}),V_{i_a}\setminus\psi(\{x_1,\ldots,x_{s}\}))$ has size at least $d|V_{i_a}|/4-\xi n\geq d|V_{i_a}|/5$. If instead $\sigma(x_{s+1})=i\in I_a$ for some $a\in[3]_0$, then $x_{s'}$ and $x_s$ are in the same component of $T-\varphi^{-1}(\{i_1,i_2,i_3\})$. Say this component is directly descended from $x_{s''}\in\varphi^{-1}(\{i_1,i_2,i_3\})$. By~\ref{prop:forreg2}, we have $d_G(\psi(x_{s'}),V_i\setminus\psi(\{x_1,\ldots,x_{s'-1}\}))\geq d\alpha n/4k_a$. Since vertices in the same component appear consecutively in the ordering, by~\ref{prop:maintech1}, at most $\xi n$ vertices are embedded between $x_{s'}$ and $x_s$. Thus, the set $Y_{s+1}:=N_G(\psi(x_{s'}),V_i\setminus(\psi(\{x_1,\ldots,x_{s}\})\cup W_{i,s''}))$ has size at least $d\alpha n/4k_a-\xi n-d\alpha n/8k_a\geq d\alpha n/10k_a$.

We now embed $x_{s+1}$ to a suitable vertex in $Y_{s+1}$ by splitting into the following two cases, depending on whether $\sigma(x_{s+1})\in\{i_1, i_2, i_3\}$.
%: \textbf{i)} $\sigma(x_{s+1})\in \{i_1,i_2,i_3\}$ or \textbf{ii)} $\sigma(x_{s+1})\in V(R)\setminus \{i_1,i_2,i_3\}$.

\medskip

\noindent \textbf{Case I.} $\sigma(x_{s+1})=i_a$ for some $a\in [3]$. For every $i\in N_R(i_a)\setminus \{i_1,i_2,i_3\}\subset I_{a,1}$, by~\ref{prop:forreg1}, \eqref{eqn:nottoomuch} and~\ref{prop:maintech2}, we have
\[
|V_{i}\cap \psi(\{x_1,\ldots,x_s\})|\leq \frac{|\varphi^{-1}(I_{a,1})|}{k_a}+\frac{\alpha n}{2k_a}
\leq \frac{\sum_{i'\in I_{a,1}}|V_{i'}|}{k_a}-\frac{\alpha n}{2k_a}.
\]
Since $|V_{i}|=\frac{1}{k_a}\sum_{i'\in I_{a,1}}|V_{i'}|$, we have $|V_{i}\setminus \psi(\{x_1,\ldots,x_s\})|\geq \alpha n/2k_a\gg\eps|V_i|$. Since $i_ai\in E(R)$, $G[V_{i_a},V_i]$ is $(\eps,d)$-regular, so by Lemma~\ref{lemma:regularity:2}, for all but at most $\sqrt\eps|V_{i_a}|$ vertices $y\in Y_{s+1}$, 
\[d(y,V_{i}\setminus \psi(\{x_1,\ldots,x_s\}))\geq d|V_{i}\setminus \psi(\{x_1,\ldots,x_s\})|/2\geq d\alpha n/4k_a>d\alpha n/8k_a\]
for all but at most $\sqrt\eps k_a\leq\alpha|I_{a,1}|/100$ indices $i\in I_{a,1}$. Similarly, when $a=1$, for all but at most $\sqrt\eps|V_{i_1}|$ vertices $y\in Y_{s+1}$, $d(y,V_{i}\setminus \psi(\{x_1,\ldots,x_s\}))\geq d\alpha n/8k_0$ for all but at most $\alpha|I_{0,1}|/100$ indices $i\in I_{0,1}$.

Furthermore, for each $i\in N_R(i_a)\cap \{i_1,i_2,i_3\}$, by using~\ref{prop:maintech3} and Lemma~\ref{lemma:regularity:3} instead of~\ref{prop:maintech2} and Lemma~\ref{lemma:regularity:2}, we have that for all but at most $\eps|V_{i_a}|$ vertices $y\in Y_{s+1}$, $d(y,V_{i}\setminus \psi(\{x_1,\ldots,x_s\}))\geq d|V_i|/4$.
As $|Y_{s+1}|\geq d|V_{i_a}|/5\geq10\sqrt\eps|V_{i_a}|$, we can pick $\psi(x_{s+1})\in Y_{s+1}$ such that all of the above hold, so~\ref{prop:forreg1}--\ref{prop:forreg3} hold with $s+1$ in place of $s$, as required.

\medskip

\noindent \textbf{Case II.} $\sigma(x_{s+1})\in I_a$ for some $a\in[3]_0$. Similar to \textbf{Case I}, we can deduce from Lemma~\ref{lemma:regularity:3}, \ref{prop:forreg1}, \eqref{eqn:nottoomuch}, and either~\ref{prop:maintech2} or~\ref{prop:maintech3} that for every $i\in N_R(\sigma(x_{s+1}),I_a)$, all but at most $\eps |V_{\sigma(x_{s+1})}|$ vertices $y\in Y_{s+1}$ satisfy $d(y,V_{i}\setminus \psi(\{x_1,\ldots,x_s\}))\geq d\alpha n/4k_a$, and for every $i\in N_R(\sigma(x_{s+1}),\{i_1,i_2,i_3\})$, all but at most $\eps |V_{\sigma(x_{s+1})}|$ vertices $y\in Y_{s+1}$ satisfy $d(y,V_{i}\setminus \psi(\{x_1,\ldots,x_s\}))\geq d|V_i|/4$. 
Then, using that $d_R(\sigma(x_{s+1}))\leq 2$, and $|Y_{s+1}|\geq d\alpha n/10k_a\geq20\eps n/k_a\geq10\eps|V_{\sigma(x_{s+1})}|$, we can pick $\psi(x_{s+1})\in Y_{s+1}$ such that all of the above hold, so~\ref{prop:forreg1}--\ref{prop:forreg3} hold with $s+1$ in place of $s$, as required.
\renewcommand{\qedsymbol}{$\boxdot$}
\end{proof}
\renewcommand{\qedsymbol}{$\square$}

Finally, note that by a union bound over all $a\in[3]_0, b\in[3]$, and $i\in I_{a,b}$, Claim~\ref{clm:Zajib} and Claim~\ref{clm:extendingpossible} combine to show that the process above embeds $T$ into $G$ with strictly positive probability, and thus $G$ contains a copy of $T$.
\end{proof}

%%%%%%%%%%%%%%%%%%%%%%%%%%%%%%%%%%%%%%%%%%%%%%%%%%%%%%%%%%%%%%%%%%%%%%%%
%%%%%%%%%%%%%%%%%%%%%%%%%%%%%%%%%%%%%%%%%%%%%%%%%%%%%%%%%%%%%%%%%%%%%%%%
%%%%%%%%%%%%%%%%%%%%%%%%%%%%%%%%%%%%%%%%%%%%%%%%%%%%%%%%%%%%%%%%%%%%%%%%

\subsection{Embedding method H\L T}\label{sec:HLT}
%In this subsection, we prove a series of lemmas that embed almost spanning trees into different reduced graph structures.

The following result appeared in the work of Haxell, \L uczak, and Tingley~\cite{haxell2002ramsey}. For completion and to illustrate our method, we include a proof using our framework.% for completion. %We include a proof here to illustrate how most of our embedding proofs work.

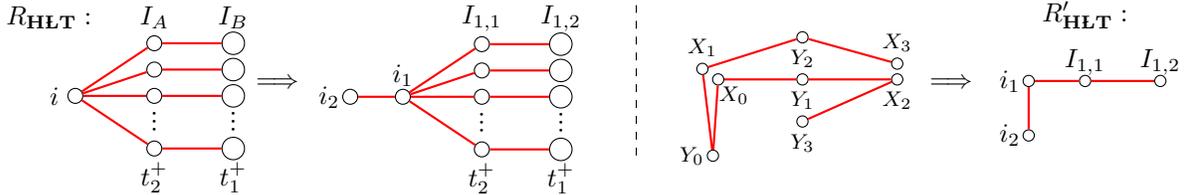
\begin{figure}[h]
\begin{center}
\begin{minipage}{0.20\textwidth}\centering
\begin{tikzpicture}[scale=0.7,main1/.style = {circle,draw,fill=none,inner sep=2}, main2/.style = {circle,draw,fill=none,inner sep=3}]
\node[main1] (A0) at (0.5,0) {};
\node[main1] (A1) at (2,1) {};
\node[main1] (A2) at (2,0.5) {};
\node[main1] (A3) at (2,0) {};
\node[draw=none,fill=none] at (2,-0.65) {$\cdot$};
\node[draw=none,fill=none] at (2,-0.5) {$\cdot$};
\node[draw=none,fill=none] at (2,-0.35) {$\cdot$};

\node[draw=none,fill=none] at (3.5,-0.65) {$\cdot$};
\node[draw=none,fill=none] at (3.5,-0.5) {$\cdot$};
\node[draw=none,fill=none] at (3.5,-0.35) {$\cdot$};
\node[main1] (A4) at (2,-1) {};
\node[main2] (B1) at (3.5,1) {};
\node[main2] (B2) at (3.5,0.5) {};
\node[main2] (B3) at (3.5,0) {};
\node[main2] (B4) at (3.5,-1) {};

\draw ($(A0)+(-0.5,1.5)$) node {$R_{\text{{\bfseries H\L T}}}:$};

\draw[red,thick] (A0) -- (A1);
\draw[red,thick] (A0) -- (A2);
\draw[red,thick] (A0) -- (A3);
\draw[red,thick] (A0) -- (A4);

\foreach \n in {1,2,3,4}
\draw[red,thick] (A\n) -- (B\n);

\node[draw=none,fill=none] at (2,-1.55) {$t_2^+$};
\node[draw=none,fill=none] at (3.5,-1.55) {$t_1^+$};

\draw ($(A0)-(0.4,0)$) node {$i$};

\draw ($(A1)+(0,0.5)$) node {$I_A$};
\draw ($(B1)+(0,0.5)$) node {$I_B$};
\end{tikzpicture}
\end{minipage}
%%%%%%%%%%%%%%%%%%%%%%%%%%%%%%%%%%%%%%%%%%%%%%%%%%%%%%%%%%%%%%
\begin{minipage}{0.05cm}
$\implies$
\end{minipage}\;\;\;\;\;
%%%%%%%%%%%%%%%%%%%%%%%%%%%%%%%%%%%%%%%%%%%%%%%%%%%%%%%%%%%%%%
\begin{minipage}{0.25\textwidth}\centering
\begin{tikzpicture}[scale=0.7,main1/.style = {circle,draw,fill=none,inner sep=2}, main2/.style = {circle,draw,fill=none,inner sep=3}]
\node[main1] (J0) at (-0.5,0) {};
\node[main1] (A0) at (0.5,0) {};
\node[main1] (A1) at (2,1) {};
\node[main1] (A2) at (2,0.5) {};
\node[main1] (A3) at (2,0) {};
\node[draw=none,fill=none] at (2,-0.65) {$\cdot$};
\node[draw=none,fill=none] at (2,-0.5) {$\cdot$};
\node[draw=none,fill=none] at (2,-0.35) {$\cdot$};
\node[main1] (A4) at (2,-1) {};
\node[main2] (B1) at (3.5,1) {};
\node[main2] (B2) at (3.5,0.5) {};
\node[main2] (B3) at (3.5,0) {};
\node[draw=none,fill=none] at (3.5,-0.65) {$\cdot$};
\node[draw=none,fill=none] at (3.5,-0.5) {$\cdot$};
\node[draw=none,fill=none] at (3.5,-0.35) {$\cdot$};
\node[main2] (B4) at (3.5,-1) {};

\draw[red,thick] (A0) -- (J0);
\draw[red,thick] (A0) -- (A1);
\draw[red,thick] (A0) -- (A2);
\draw[red,thick] (A0) -- (A3);
\draw[red,thick] (A0) -- (A4);

\foreach \n in {1,2,3,4}
\draw[red,thick] (A\n) -- (B\n);

\node[draw=none,fill=none] at (2,-1.55) {$t_2^+$};
\node[draw=none,fill=none] at (3.5,-1.55) {$t_1^+$};

\draw ($(A0)+(0,0.4)$) node {$i_1$};

\draw ($(J0)-(0.4,0)$) node {$i_2$};

\draw ($(A1)+(0,0.5)$) node {$I_{1,1}$};
\draw ($(B1)+(0,0.5)$) node {$I_{1,2}$};
\end{tikzpicture}
\end{minipage}\;\;
%%%%%%%%%%%%%%%%%%%%%%%%%%%%%%%%%%%%%%%%%%%%%%%%%%%%%%%%%%%%%%
\begin{minipage}{0.3cm}
\begin{center}\vspace{-0.5cm}
\begin{tikzpicture}
\draw [dashed] (0,0) -- (0,2);
\end{tikzpicture}
\end{center}
\end{minipage}\;\;
%%%%%%%%%%%%%%%%%%%%%%%%%%%%%%%%%%%%%%%%%%%%%%%%%%%%%%%%%%%%%\hspace{1.5cm}
\begin{minipage}{0.20\textwidth}\centering
\begin{tikzpicture}[scale=.7,main1/.style = {circle,draw,fill=none,inner sep=1.5}, main2/.style = {circle,draw,fill=none,inner sep=2.25}]
\def\spacer{2};
\def\spacerr{1.5};
\def\Ahgt{1.2};
\def\Bhgt{1.2};

\coordinate (V1) at ($(-\spacerr,1.35)$);
\coordinate (V2) at ($(-\spacerr,-0.1)$);
\coordinate (A1) at ($(0,1.35)+(0.2,0)$);
\coordinate (B1) at ($(A1)+(\spacer,0)-(0.2,0)$);

\node[main1] (X1) at ($(V1)+(-0.2,0.2)$) {};
\node[main1] (X0) at ($(V1)+(0.2,-0.2)+(-0.1,0.2)$) {};
\node[main1] (Y0) at ($(V2)$) {};
\node[main1] (Y1) at ($(A1)+(0,0)$) {};
\node[main1] (Y2) at ($(A1)+(0,0.8)$) {};
\node[main1] (Y3) at ($(A1)+(0,-0.8)$) {};
\node[main1] (X2) at ($(B1)+(0,0)$) {};
\node[main1] (X3) at ($(B1)+(0,0.3)$) {};

\draw [thick,red] (Y3) -- (X2) -- (Y1) -- (X0) -- (Y0) -- (X1) -- (Y2) -- (X3);

\foreach \where/\what in {X1/X_1,X3/X_3}
{
\draw ($(\where)+(0,0.4)$) node {\footnotesize $\what$};
}
\foreach \where/\what in {X2/X_2,Y3/Y_3,Y2/Y_2,Y1/Y_1}
{
\draw ($(\where)-(0,0.4)$) node {\footnotesize $\what$};
}
\draw ($(Y0)-(0.4,0)$) node {\footnotesize $Y_0$};
\draw ($(X0)+(0.3,-0.3)$) node {\footnotesize $X_0$};

%\draw (10.5,2) node {Type II};
\end{tikzpicture}
\end{minipage}
%%%%%%%%%%%%%%%%%%%%%%%%%%%%%%%%%%%%%%%%%%%%%%%%%%%%%%%%%%%%%%
\begin{minipage}{0.3cm}
$\implies$
\end{minipage}\;\;\;\;\;
%%%%%%%%%%%%%%%%%%%%%%%%%%%%%%%%%%%%%%%%%%%%%%%%%%%%%%%%%%%%%\hspace{1.5cm}
\begin{minipage}{0.15\textwidth}\centering
\begin{tikzpicture}[scale=.5,main1/.style = {circle,draw,fill=none,inner sep=1.5}, main2/.style = {circle,draw,fill=none,inner sep=2.25}]
\def\spacer{2};
\def\spacerr{1.5};
\def\Ahgt{1.2};
\def\Bhgt{1.2};

\node[main1] (V1) at ($(-\spacerr,1.35)$) {};
\node[main1] (V2) at ($(-\spacerr,-0.1)$) {};
\node[circle,draw=white,fill=none,inner sep=1.5] (V3) at ($(-\spacerr,-1.8)$) {};

\node[main1] (A1) at (0,1.35) {};
\foreach \n in {1}
{
\node[main1] (B\n) at ($(A\n)+(\spacer,0)$) {};
}

\foreach \n in {1}
{
\draw [thick,red] (V\n) -- (A\n) -- (B\n);
}
\draw [thick,red] (V1) -- (V2);

\draw ($(V1)-(0.5,0)$) node {$i_1$};
\draw ($(V2)-(0.5,0)$) node {$i_2$};
\foreach \where/\what in {A1/I_{1,1},B1/I_{1,2}}
{
\draw ($(\where)+(0,0.5)$) node {$\what$};
}

\draw ($(A1)+(0,1.75)$) node {$R'_{\text{{\bfseries H\L T}}}:$};
%\draw (10.5,2) node {Type II};
\end{tikzpicture}
\end{minipage}
\end{center}

\vspace{-0.2cm}

\caption{On the left, the slight refinement of the initial reduced graph $R_{\text{\bfseries H\L T}}$ used in the proof of Lemma~\ref{lemma:hlt}. On the right, a depiction of the rule of the embedding used at \eqref{eq:hltembedding}, condensing $S$ from Lemma~\ref{lem:maintreedecomp} into a subgraph $R'_{\text{{\bfseries H\L T}}}$ of $R'$ from Figure~\ref{fig:mainembedstatement}.}\label{fig:HLTproof}
\end{figure}

\begin{lemma}[\textbf{H\L T}]\label{lemma:hlt}
Let $1/n\ll c\ll 1/k\ll\varepsilon\ll\alpha\ll d\leq 1$. Let $T$ be an $n$-vertex tree with $\Delta(T)\leq cn$ and bipartition classes of sizes $t_1$ and $t_2$ satisfying $t_2\le t_1\le 2t_2$. Let $G$ be a graph on at most $2n$ vertices with a partition $V(G)=V_1\cup\cdots\cup V_{2k+1}$. Let $R_{\text{\emph{\bfseries H\L T}}}$ be a graph with vertex set $[2k+1]$, such that if $ij\in E(R_{\text{\emph{\bfseries H\L T}}})$ then $G[V_i,V_j]$ is $(\eps,d)$-regular. Let $i\in [2k+1]$ and suppose there is a partition $[2k+1]\setminus \{i\}=I_A\cup I_B$, with $|I_A|=|I_B|=k$, such that the following hold for some $m_A,m_B$ (see Figure~\ref{fig:HLTproof}). % and $|V_i|\geq n/10k$.
\stepcounter{propcounter}
\begin{enumerate}[label = \emph{\textbf{\Alph{propcounter}\arabic{enumi}}}]
    \item $|V_a|=m_A$ for each $a\in I_A$, $|V_b|=m_B$ for each $b\in I_B$, and $|V_i|\geq n/10k$.\labelinthm{prop:hlt:1}
    \item $km_A\geq t_2+\alpha n$ and $km_B\geq t_1+\alpha n$.\labelinthm{prop:hlt:2}
    \item In $R_{\text{\emph{\bfseries H\L T}}}$, $i$ is adjacent to each vertex in $I_A$, and there is a perfect matching $M$ between $I_A$ and $I_B$.\labelinthm{prop:hlt:3}
\end{enumerate}
Then, there is a copy of $T$ in $G$.
\end{lemma}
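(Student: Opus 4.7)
Apply Lemma~\ref{lem:maintreedecomp} to $T$ with a parameter $\xi$ satisfying $c \ll \xi \ll 1/k$ to obtain a homomorphism $\phi : T \to S$ such that every component of $T - \phi^{-1}(X_0 \cup Y_0)$ has at most $\xi n$ vertices and $|\phi^{-1}(X_0 \cup Y_0 \cup X_1 \cup Y_1)| \leq \xi n$. The plan is to deduce the lemma from Lemma~\ref{lem:maintechnicalembedding}.

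To prepare the structure required by Lemma~\ref{lem:maintechnicalembedding}, pick any edge $i_2 i_2' \in M$ with $i_2 \in I_A$ and $i_2' \in I_B$, and set $i_1 := i$ and $i_3 := i_2'$. For each $a \in I_A \setminus \{i_2\}$, split $V_a$ into two halves $V_a^{(L)}, V_a^{(R)}$ of size $m_A/2$; by Lemma~\ref{lemma:regularity:1}, each half forms a $(\sqrt\eps, d-\eps)$-regular pair with $V_{M(a)}$ and with $V_i$. Let $I_{1,1} := \{V_a^{(L)} : a \in I_A \setminus \{i_2\}\}$, $I_{1,2} := I_B \setminus \{i_2'\}$, and $I_{1,3} := \{V_a^{(R)} : a \in I_A \setminus \{i_2\}\}$, each of size $k-1$, and set $I_{a,b} := \emptyset$ for $a \in \{0,2,3\}$. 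If necessary, replace $V_i$ by a suitable subset so that $|V_{i_1}|$ lies in the size range required by Lemma~\ref{lem:maintechnicalembedding}. The adjacencies $i_1 i_2$, $i_2 i_3$, and $i_1$--$I_{1,1}$, together with the two matchings of row $1$, are then all provided by the star at $i$ and the $M$-edges (combined with the halvings from Lemma~\ref{lemma:regularity:1}).

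Define the homomorphism $\varphi := \psi \circ \phi : T \to R'$, where $\psi : S \to R'$ maps $Y_0 \mapsto i_1$, $X_0, X_1 \mapsto I_{1,1}$, $Y_1, Y_2, Y_3 \mapsto I_{1,2}$, and routes each vertex sent by $\phi$ to $X_2$ or $X_3$ to either $I_{1,1}$ or $I_{1,3}$, chosen on a per-vertex basis. This per-vertex flexibility is valid because the $T$-neighbours of every such vertex are mapped by $\varphi$ to $I_{1,2}$, which is adjacent in $R'$ to both $I_{1,1}$ and $I_{1,3}$. Distribute $\phi^{-1}(X_2 \cup X_3)$ so that $I_{1,1}$ and $I_{1,3}$ each receive roughly half of these vertices.

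The main obstacle is verifying condition~\ref{prop:maintech2} of Lemma~\ref{lem:maintechnicalembedding}. Apply that lemma with its $\alpha$-parameter set to $\alpha/100$, where $\alpha$ is as in the current lemma. Since $m_A \leq 2n/k \ll \alpha n$, the capacity $(k-1)m_A/2 \geq t_2/2 + \alpha n /4$ of $I_{1,1}$ comfortably accommodates its load of at most $t_2/2 + \xi n$, and similarly for $I_{1,3}$. The capacity $(k-1)m_B$ of $I_{1,2}$ exceeds its load of at most $t_1$ by $\Omega(\alpha n)$. Conditions~\ref{prop:maintech1} and~\ref{prop:maintech3} follow directly from the properties of $\phi$ given by Lemma~\ref{lem:maintreedecomp}. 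Invoking Lemma~\ref{lem:maintechnicalembedding} yields the desired copy of $T$ in $G$.
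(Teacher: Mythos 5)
There is a genuine gap, and it concerns condition \ref{prop:maintech1} of Lemma~\ref{lem:maintechnicalembedding}. In your homomorphism only $\phi^{-1}(Y_0)$ is sent to a special vertex ($i_1$), while $\phi^{-1}(X_0)$ (together with $X_1$) is routed into the row set $I_{1,1}$. Hence $\varphi^{-1}(\{i_1,i_2,i_3\})=\phi^{-1}(Y_0)$, and \ref{prop:maintech1} demands that every component of $T-\phi^{-1}(Y_0)$ has size at most $\xi n$. But Lemma~\ref{lem:maintreedecomp} only guarantees this after deleting $\phi^{-1}(X_0\cup Y_0)$; deleting the $Y_0$-preimage alone can leave components of linear size, and $\phi^{-1}(Y_0)$ can even be empty (e.g.\ for a spider whose single cut vertex lies in the class mapped to the $X$-side, violating $\varphi^{-1}(\{i_1,i_2,i_3\})\neq\emptyset$ outright). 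This is not a formality you can wave away: the proof of Lemma~\ref{lem:maintechnicalembedding} assigns each component of $T-\varphi^{-1}(\{i_1,i_2,i_3\})$ at random to a single matching edge, so these components must be much smaller than a cluster, i.e.\ of size $o(n/k)$. The paper's proof avoids exactly this by setting $\varphi(v)=i_1$ for $\phi(v)\in\{X_0,X_1\}$ and $\varphi(v)=i_2$ for $\phi(v)=Y_0$, so that $\varphi^{-1}(\{i_1,i_2\})\supseteq\phi^{-1}(X_0\cup Y_0)$; this is precisely the role of the extra vertex $i$ (whose cluster has size at least $n/10k\gg\xi n$, making \ref{prop:maintech3} compatible with absorbing $\phi^{-1}(X_0\cup X_1)$) in the \textbf{H\L T} structure.

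A secondary issue: you never fix which bipartition class of $T$ corresponds to the $X$-side of $S$. Your capacity check (load at most $t_2/2+\xi n$ on $I_{1,1}$, at most $t_1$ on $I_{1,2}$) implicitly assumes the $X$-class has size $t_2$; under the opposite labelling the load on $I_{1,1}\cup I_{1,3}$ would be about $t_1$, exceeding the total capacity $(k-1)m_A\approx t_2+\alpha n$ when $t_1$ is close to $2t_2$, so \ref{prop:maintech2} would fail. Thus under one labelling \ref{prop:maintech2} breaks and under the other \ref{prop:maintech1} breaks; the relabelling must be chosen as in the paper ($X$-side of size $t_1$) and combined with the assignment $X_0,X_1\mapsto i_1$, $Y_0\mapsto i_2$, $Y_1,Y_2,Y_3\mapsto I_{1,1}$, $X_2,X_3\mapsto I_{1,2}$. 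The remaining ingredients of your write-up (shrinking $V_i$ via Lemma~\ref{lemma:regularity:1}, halving the $I_A$-clusters to manufacture $I_{1,3}$, and the per-vertex routing of $X_2,X_3$) are legitimate but unnecessary once the correct assignment is used.
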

\begin{proof} Let $\xi$ satisfy $c\ll\xi\ll 1/k$. Let $S$ be the graph defined in Lemma~\ref{lem:maintreedecomp}. Using this lemma, we can take a homomorphism $\phi:T\to S$ such that each component of $T-\phi^{-1}(X_0\cup Y_0)$ has size at most $\xi n$ and $|\phi^{-1}(X_0\cup Y_0\cup X_1\cup Y_1)|\leq \xi n$. Assume, by relabelling if necessary, that $|\phi^{-1}(X_0\cup X_1\cup X_2\cup X_3)|=t_1$ and $|\phi^{-1}(Y_0\cup Y_1\cup Y_2\cup Y_3)|=t_2$.

Let $i_1=i$. Pick some $i_2\in I_A$ and suppose it is matched with $i_3\in I_B$ by $M$. Let $I_{1,1}=I_A\setminus \{i_2\}$ and $I_{1,2}=I_B\setminus \{i_3\}$ (see Figure~\ref{fig:HLTproof}). %Let $I_{1,3}=\emptyset$, $I_{2,3}=I_{2,2}=I_{2,3}=\emptyset$ and $I_{3,1}=I_{3,2}=I_{3,3}=\emptyset$. 
Let $R'_{\text{\bfseries H\L T}}$ be the graph on the right in Figure~\ref{fig:HLTproof}, and note that it is a subgraph of $R'$ in Figure~\ref{fig:mainembedstatement}.
Then, for each $v\in V(T)$, as depicted in Figure~\ref{fig:HLTproof}, let
\begin{equation*}\label{eq:hltembedding}
\varphi(v)=\left\{\begin{array}{ll}
i_1&\text{\;\; if }\phi(v)\in \{X_0,X_1\}\\
i_2 &\text{\;\; if }\phi(v)=Y_0\\
I_{1,1} &\text{\;\; if }\phi(v)\in \{Y_1,Y_2,Y_3\}\\
I_{1,2}&\text{\;\; if }\phi(v)\in \{X_2,X_3\}
\end{array},\right.
\end{equation*}
so that $\varphi$ is a homomorphism from $T$ to $R'_{\text{\bfseries H\L T}}$, and thus to $R'$, with $|\varphi^{-1}(\{i_1,i_2\})|\leq \xi n$, $|\varphi^{-1}(I_{1,1})|\leq t_2$, and $|\varphi^{-1}(I_{1,2})|\leq t_1$.

We will now check the conditions required for an application of Lemma~\ref{lem:maintechnicalembedding}. First,~\ref{prop:maintech1} holds as every component of $T-\varphi^{-1}(\{i_1,i_2,i_3\})$ is contained in a component of $T-\phi^{-1}(X_0\cup Y_0)$, which has size at most $\xi n$.
Next, using \ref{prop:hlt:1}, \ref{prop:hlt:2}, and $1/n\ll 1/k\ll\alpha$, we have
\[|\varphi^{-1}(I_{1,1})|+\alpha n/2\leq t_2+\alpha n/2\leq(k-1)m_A=\sum_{j\in I_{1,1}}|V_j|,\]
and similarly $|\varphi^{-1}(I_{1,2})|+\alpha n/2\leq \sum_{j\in I_{1,2}}|V_j|$, so \ref{prop:maintech2} holds as $\varphi^{-1}(I)=\emptyset$ for each $I\in V(R')\setminus\{i_1,i_2,I_{1,1},I_{1,2}\}$.
Finally, for each $j\in \{i_1,i_2,i_3\}$, $|\varphi^{-1}(j)|\leq |\phi^{-1}(X_0\cup Y_0\cup X_1\cup Y_1)|\leq \xi n$, so~\ref{prop:maintech3} holds. Therefore, we can apply Lemma~\ref{lem:maintechnicalembedding} to find a copy of $T$ in $G$, as required.
\end{proof}

\subsection{EM1a Embedding Method}\label{sec:EM1a}
\begin{figure}[h]
\input{figures/EM1a/pfEM1a}

\vspace{-0.3cm}

\caption{On the left, the initial reduced graph $R_{\text{\bfseries EM1a}}$ transformed into the subgraph used to embed the tree in Lemma~\ref{lemma:em1a}. On the right, the auxiliary graph $R_{\text{\bfseries EM1a}}'$ used when applying Lemma~\ref{lem:maintechnicalembedding}.}\label{fig:EM1aproof}
\end{figure}

%\textbf{Notes for proof:} First we refine the structure to get additional vertices $j$ and $i'$, so $i'$ connects to all of $I_4'$ (with rest going into \textbf{H\L T$^-$}) from $I'_{B,1}$ as per the figure. Embed $X$ into $\{i,j,i'\}$, using $i'$ if necessary to fill space in $I_4'$.

\begin{lemma}[\textbf{EM1a}]\label{lemma:em1a}
Let $1/n\ll 1/m\ll c\ll1/k\ll\eps\ll\alpha\ll d\leq1$. Let $T$ be an $n$-vertex tree with $\Delta(T)\le cn$ and bipartition classes of sizes $t_1$ and $t_2$ satisfying $t_2\leq t_1\leq 2t_2$. Let $G$ be a graph with with a partition $V(G)=V_0\cup V_1\cup\cdots\cup V_k$. Let $R_{\text{\emph{\bfseries EM1a}}}$ be a graph with vertex set $[k]_0$, such that if $ij\in E(R_{\text{\emph{\bfseries EM1a}}})$ then $G[V_i,V_j]$ is $(\eps,d)$-regular. Suppose there is a partition $[k]=I_A\cup I_A'\cup I_B\cup I_B'\cup I_B''\cup I_C$, such that the following properties hold (see Figure~\ref{fig:EM1aproof}).
 \stepcounter{propcounter}
 \begin{enumerate}[label = \emph{\textbf{\Alph{propcounter}\arabic{enumi}}}]
    \item\labelinthm{lemma:reg:em1:1} $|V_i|=m$ for all $i\in\{0\}\cup I_A\cup I_A'\cup I_C$ and $|V_i|=t_1m/t_2$ for all $i\in I_B\cup I_B'\cup I_B''$.
    \item\labelinthm{lemma:reg:em1:2} $|\cup_{i\in I_A\cup I_A'}V_i|\geq(1-\alpha^2) t_2, |\cup_{i\in I_B\cup I_B'}V_i|\geq(1-\alpha^2)t_1$, $|\cup_{i\in I_A'}V_i|=\alpha t_2$, $|\cup_{i\in I_B'}V_i|=|\cup_{i\in I_B''}V_i|=\alpha t_1$, and $|\cup_{i\in I_C}V_i|\geq\frac23t_2$.
    \item\labelinthm{lemma:reg:em1:3} In $R_{\text{\emph{\bfseries EM1a}}}$, 0 is adjacent to every vertex in $I_A\cup I_A'$.
    \item\labelinthm{lemma:reg:em1:4} In $R_{\text{\emph{\bfseries EM1a}}}$, there is a perfect matching between $I_A$ and $I_B$, a perfect matching between $I_A'$ and $I_B'$, and a perfect matching between $I_A'$ and $I_B''$.
    \item\labelinthm{lemma:reg:em1:5} In $R_{\text{\emph{\bfseries EM1a}}}$, every vertex in $I_B'$ is adjacent to at least $10\alpha|I_C|$ vertices in $I_C$.
\end{enumerate}
Then, $G$ contains a copy of $T$.
\end{lemma}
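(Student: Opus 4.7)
The plan is to apply the main technical embedding lemma, Lemma~\ref{lem:maintechnicalembedding}, after finding additional red matching structure in $R_{\text{\bfseries EM1a}}$ and designing an appropriate homomorphism $\varphi:T\to R'$. First, using \ref{lemma:reg:em1:5} together with Hall's theorem (Lemma~\ref{lemma:Hall}), I would find a red perfect matching $M^*$ in $R_{\text{\bfseries EM1a}}$ between $I_B'$ and some $J\subseteq I_C$ of equal size; this is possible because every vertex of $I_B'$ has at least $10\alpha|I_C|\gg|I_B'|$ red neighbors in $I_C$ by \ref{lemma:reg:em1:1}--\ref{lemma:reg:em1:2}, so Hall's condition is easily verified. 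Combined with the $I_A'\leftrightarrow I_B'$ matching from \ref{lemma:reg:em1:4}, this yields a red ``bridge'' from the cluster $0$ through $I_A'$ and $I_B'$ into $I_C$.

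Next, I would set up the data for Lemma~\ref{lem:maintechnicalembedding}. Pick $i_1=0$, any $i_2\in I_A'$, and let $i_3\in I_B'$ be the $I_A'\leftrightarrow I_B'$-partner of $i_2$, so that $i_1i_2\in E(R_{\text{\bfseries EM1a}})$ by \ref{lemma:reg:em1:3} and $i_2i_3\in E(R_{\text{\bfseries EM1a}})$ by \ref{lemma:reg:em1:4}. Following the structure of $R'_{\text{\bfseries EM1a}}$ in Figure~\ref{fig:EM1aproof}, I would set $I_{1,1}=I_A$ with its match $I_{1,2}\subseteq I_B$ (the $\text{H\L T}^-$ block), and $I_{0,1}=I_A'\setminus\{i_2\}$, $I_{0,2}=I_B'\setminus\{i_3\}$ matched with $I_{0,1}$, and $I_{0,3}\subseteq J$ the image of $I_{0,2}$ under $M^*$ (the extension chain through $I_C$). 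Any remaining clusters in $I_B''$ and in $I_C\setminus J$ can be packed into the groups $I_{3,1},I_{3,2},I_{3,3}$ attached to $i_3$ using the $I_A'\leftrightarrow I_B''$ matching together with a further refinement via Lemma~\ref{lemma:regularity:refine} if needed to obtain equal-sized groups with the required matchings. All unused groups are left empty and carry no constraint.

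The homomorphism $\varphi:T\to R'$ is then constructed by first applying Lemma~\ref{lem:maintreedecomp} to $T$ with some $c\ll\xi\ll 1/k$ to obtain $\phi:T\to S$, and folding the $8$-vertex path $S$ into $R'_{\text{\bfseries EM1a}}$ so that the central positions $X_0,Y_0$ land in $\{i_1,i_2,i_3\}$ while the intermediate positions $X_1,Y_1$ land in the adjacent cluster-groups; each component of $T-\phi^{-1}(X_0\cup Y_0)$ is then routed into either the $\text{H\L T}^-$ block $I_{1,1}$--$I_{1,2}$ or into the extension chain $I_{0,1}$--$I_{0,2}$--$I_{0,3}$. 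The main obstacle will be verifying the balance condition \ref{prop:maintech2}, namely $|\varphi^{-1}(I_{a,b})|+\alpha n\leq\sum_{i\in I_{a,b}}|V_i|$ for every used group. By \ref{lemma:reg:em1:1}--\ref{lemma:reg:em1:2} the $\text{H\L T}^-$ block has capacity only $|I_A|\cdot m\approx(1-\alpha-\alpha^2)t_2$ on the small side and $|I_B|\cdot t_1m/t_2\approx(1-\alpha-\alpha^2)t_1$ on the big side, so a shortfall of order $\alpha n$ must be absorbed by the extension chain, which contributes roughly $\alpha t_2$ small-side capacity (through $I_A'$ and $J\subseteq I_C$) and $\alpha t_1$ big-side capacity (through $I_B'$). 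Routing components of $T-\phi^{-1}(X_0\cup Y_0)$ into the chain in proportion to its capacity, so that vertices from the large bipartition class of $T$ go to big clusters and vertices from the small class go to small clusters, makes the balance condition hold. Once $\varphi$ satisfies \ref{prop:maintech1}--\ref{prop:maintech3}, Lemma~\ref{lem:maintechnicalembedding} produces the desired copy of $T$ in $G$.
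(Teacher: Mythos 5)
Your high-level plan (feed Lemma~\ref{lem:maintechnicalembedding} with the \textbf{H\L T}$^-$ block plus an extension chain running $I_A'\to I_B'\to I_C$) matches the paper's strategy, but the concrete construction has two genuine gaps. First, a plain Hall matching between $I_B'$ and $I_C$ is not enough: in the auxiliary graph $R'$ the group $I_{3,1}$ must consist of clusters that are \emph{all} regular with the single cluster $i_3$, and the hypotheses give no control over which matched $I_C$-clusters a given vertex of $I_B'$ sees (nor any adjacency from $i_3\in I_B'$ into $I_B''$ or into $I_C\setminus J$). This is exactly what the paper's Claim~\ref{claim:backandforth} supplies via a double-counting ``back and forth'': a matching $M$ from a subset $Z_B\subset I_B'$ into $Z_C\subset I_C$ together with a distinguished vertex $i_3\in I_B'$ adjacent to \emph{every} vertex of $Z_C$. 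Your proposal to ``pack the remaining clusters in $I_B''$ and $I_C\setminus J$ into $I_{3,1},I_{3,2},I_{3,3}$ attached to $i_3$'' has no justification: none of the required edges ($i_3$ to $I_{3,1}$, matchings $I_{3,1}$--$I_{3,2}$--$I_{3,3}$) are available from \ref{lemma:reg:em1:3}--\ref{lemma:reg:em1:5}, and Lemma~\ref{lemma:regularity:refine} needs an almost-complete reduced bipartite graph, which you do not have there.

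Second, the balance condition \ref{prop:maintech2} fails for your allocation in worst cases, because only components hanging off $\phi^{-1}(X_0)$ (mapped to $i_1$) can be routed into your $I_0$-chain, while components hanging off $\phi^{-1}(Y_0)$ (mapped to $i_2$) can only leave the main block through $i_2i_1$ or $i_2i_3$. Take $|\cup_{i\in I_A\cup I_A'}V_i|=(1-\alpha^2)t_2$ and $|\cup_{i\in I_B\cup I_B'}V_i|=(1-\alpha^2)t_1$ exactly, and a tree in which essentially all components attach to the $Y_0$-side (so $|\phi^{-1}(Y_3)|,|\phi^{-1}(X_2)|\approx 0$). Then your main block $I_{1,1}=I_A$, $I_{1,2}\subset I_B$ has small-side capacity only about $(1-\alpha-\alpha^2)t_2$ and big-side capacity about $(1-\alpha-\alpha^2)t_1$, an overflow of order $\alpha n$ must be diverted, yet the only accessible diversion for these components is $X_1\mapsto i_1$, $Y_2\mapsto I_{0,1}$, $X_3\mapsto I_{0,2}$, and $I_{0,1}=I_A'\setminus\{i_2\}$ has capacity only about $\alpha t_2$, strictly less than the roughly $(\alpha+\alpha^2)t_2$ overflow; $I_{0,3}$ can only receive $Y_3$-vertices, of which there are none. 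The paper avoids this by keeping $I_A'$ (minus small carved slivers) and $I_B''$ inside the main block, so the main-block deficit is only of order $\alpha^2 n$, and by building \emph{two} side chains — one attached to $i_1$ (through slivers of $I_A'$, part of $Z_B$, part of $Z_C$) for $J_X$-components and one attached to $i_3$ (through $Z_C$ and $Z_B$) for $J_Y$-components — into which a random $2\alpha^2$-fraction of each type is diverted, with capacities checked by McDiarmid's inequality. Without the distinguished vertex $i_3$ and this two-chain, rebalanced allocation, your argument does not go through.
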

\begin{proof}
We begin with a claim that will also be used later in the proofs of Lemma~\ref{lemma:em1b} and Lemma~\ref{lemma:em1c}.
\begin{claim}\label{claim:backandforth}
For any $Z\subset I_B'$ with $\alpha^{-2}\ll|Z|\leq5\alpha|I_C|$, there exists $z\in Z$ and $Z'\subset Z\setminus\{z\}$ with $|Z'|=5\alpha|Z|$, and a matching $M$ in $R_{\mathbf{EM1a}}[Z',I_C]$ covering $Z'$, with $z$ adjacent to every $i\in I_C\cap V(M)$.
\end{claim}
\begin{proof}[Proof of Claim~\ref{claim:backandforth}]
By~\ref{lemma:reg:em1:5}, $e(R_{\text{\bfseries EM1a}}[Z,I_C])\geq10\alpha |Z||I_C|$, so there exists a set $I_C'\subset I_C$ with size $5\alpha|I_C|$ such that every $i\in I_C'$ has at least $5\alpha|Z|+1$ neighbours in $Z$, as otherwise \[e(R_{\text{\bfseries EM1a}}[Z,I_C])<5\alpha|I_C|\cdot|Z|+(1-5\alpha)|I_C|\cdot(5\alpha |Z|+1)<10\alpha|Z||I_C|,\] a contradiction. Then, since $e(R_{\text{\bfseries EM1a}}[Z,I_C'])\geq5\alpha|Z||I_C'|$, by averaging, there exists $z\in Z$ with at least $5\alpha|I_C'|$ neighbours in $I_C'$. Let $I_C''$ be a set of $5\alpha|I_C'|$ neighbours of $z$ in $I_C'$. Greedily, and using $5\alpha|Z|\leq|I_C''|$, we can find a matching $M$ between $I_C''$ and $Z\setminus\{z\}$ with size $5\alpha|Z|$, which proves the claim.
\renewcommand{\qedsymbol}{$\boxdot$}
\end{proof}
\renewcommand{\qedsymbol}{$\square$}
Let $i_1=0$. Since $|I_B'|=\alpha t_2/m<10\alpha t_2/3m\leq5\alpha|I_C|$, we can apply Claim~\ref{claim:backandforth} to find $Z_B\subset I_B'$ with $|Z_B|=5\alpha|I_B'|=5\alpha^2t_2/m$, $i_3\in I_B'\setminus Z_B$, and a perfect matching $M$ between $Z_B$ and some $Z_C\subset I_C$, such that $i_3$ is adjacent to all $i\in Z_C$. Suppose that in the matchings given by~\ref{lemma:reg:em1:4},  $i_3$ is matched with $i_2\in I_A'$, and $i_2$ is matched with $i_2'\in I_B''$.

Let $V_i'=V_i$ for all $i\in I_B\cup I_B''\cup Z_B\cup Z_C\cup\{i_1,i_2,i_3\}$. For every $i\in(I_A\cup I_A')\setminus\{i_2\}$, let $V_i'\subset V_i$ have size $(1-\alpha^2/2)|V_i|$. Let $I_{1,1}=(I_A\cup I_A')\setminus\{i_2\}$ and $I_{1,2}=(I_B\cup I_B'')\setminus\{i_2'\}$. Partition $Z_B$ as evenly as possible into two sets $I_{0,2}$ and $I_{3,2}$, and say they are matched by $M$ with subsets $I_{0,3}$ and $I_{3,1}$ of $Z_C$, respectively. Note that $i_3$ is adjacent to every vertex in $I_{3,1}$. Finally, take a new index set $I_{0,1}$ with size $|I_{0,2}|$, say $I_{0,2}$ is matched with $I_{0,1}'\subset I_A'$ in the matching given by~\ref{lemma:reg:em1:4}, and relabel the collection $\{V_i\setminus V_i':i\in I_{0,1}'\}$ as $\{V_j':j\in I_{0,1}\}$. Note that by Lemma~\ref{lemma:regularity:1}, if $ij$ is an edge in the graph depicted in the middle of Figure~\ref{fig:EM1aproof}, then $G[V_i',V_j']$ is $(\sqrt\eps,d-\eps)$-regular.

Let $\xi$ satisfy $c\ll\xi\ll 1/k$. Let $S$ be the graph defined in Lemma~\ref{lem:maintreedecomp}. Using that lemma, take a homomorphism $\phi:T\to S$ such that each component of $T-\phi^{-1}(X_0\cup Y_0)$ has size at most $\xi n$ and $|\phi^{-1}(X_0\cup Y_0\cup X_1\cup Y_1)|\leq \xi n$. Without loss of generality, say $|\phi^{-1}(X_0\cup X_1\cup X_2\cup X_3)|=t_1$ and $|\phi^{-1}(Y_0\cup Y_1\cup Y_2\cup Y_3)|=t_2$. Let the components of $T-\phi^{-1}(X_0\cup Y_0)$ be $\{K_j:j\in J\}$, and note that each of these component has neighbours in exactly one of $\phi^{-1}(X_0)$ and $\phi^{-1}(Y_0)$. Thus, we can partition $J$ as $J_X\cup J_Y$, such that $N_T(K_j)\subset\phi^{-1}(X_0)$ for each $j\in J_X$, and $N_T(K_j)\subset\phi^{-1}(Y_0)$ for each $j\in J_Y$.

Let $J_X'\subset J_X$ and $J_Y'\subset J_Y$ both be random sets with each element being included independently with probability $2\alpha^2$. Then, by Lemma~\ref{lemma:mcdiarmid}, with positive probability we have both of the following, so fix such a choice of $J_X', J_Y'$.   
\begin{equation}\label{em1a:eq:1}
\sum_{j\in J_X'\cup J_Y'}|K_j\cap\phi^{-1}(Y_1\cup Y_2\cup Y_3)|=2\alpha^2 t_2\pm\alpha^2 n/100.
\end{equation}
\begin{equation}\label{em1a:eq:2}
\sum_{j\in J_X'\cup J_Y'}|K_j\cap\phi^{-1}(X_1\cup X_2\cup X_3)|=2\alpha^2 t_1\pm\alpha^2 n/100.
\end{equation}

Let $R_{\text{\bfseries EM1a}}'$ be the graph on the right of Figure~\ref{fig:EM1aproof}. Define a homomorphism $\varphi:T\to R_{\text{\bfseries EM1a}}'$ as follows. Let $\varphi(v)=i_1$ for every $v\in\phi^{-1}(X_0)$, and let $\varphi(v)=i_2$ for every $v\in\phi^{-1}(Y_0)$. For every $K_j$ with $j\in J_X'$, define $\varphi$ on $K_j$ by composing $\phi$ with the function sending $Y_1,X_2,Y_3$ to $I_{0,1},I_{0,2},I_{0,3}$, respectively; while if $j\in J_X\setminus J_X'$, define $\varphi$ on $K_j$ by composing $\phi$ with the function sending $Y_1,X_2,Y_3$ to $I_{1,1},I_{1,2},I_{1,1}$, respectively. For every $K_j$ with $j\in J_Y'$, define $\varphi$ on $K_j$ by composing $\phi$ with the function sending $X_1,Y_2,X_3$ to $i_3,I_{3,1},I_{3,2}$, respectively; while for every $K_j$ with $j\in J_Y\setminus J_Y'$, define $\varphi$ on $K_j$ by composing $\phi$ with the function sending $X_1,Y_2,X_3$ to $i_1,I_{1,1},I_{1,2}$, respectively.

Since $|\phi^{-1}(X_0\cup Y_0\cup X_1\cup Y_1)|\leq\xi n$ and $\xi\ll1/k\ll\alpha$, we have $|\varphi^{-1}(I)|\leq\xi n$ for each $I\in\{i_1,i_2,i_3\}$, and $|\varphi^{-1}(I_{0,1})|+\alpha^4n/100\leq\sum_{i\in I_{0,1}}|V_i'|$. From definition, $\sum_{i\in I_{0,2}}|V_i'|, \sum_{i\in I_{3,2}}|V_i'|\geq2.4\alpha^2t_1$ and $\sum_{i\in I_{0,3}}|V_i'|, \sum_{i\in I_{3,1}}|V_i'|\geq2.4\alpha^2t_2$. Thus, by~\eqref{em1a:eq:1},~\eqref{em1a:eq:2}, and~\ref{lemma:reg:em1:2}, we have $|\varphi^{-1}(I)|+\alpha^3n/100\leq\sum_{i\in I}|V_i'|$ for each $I\in\{I_{1,1},I_{1,2}, I_{0,2}, I_{0,3}, I_{3,1}, I_{3,2}\}$. Therefore, we can apply Lemma~\ref{lem:maintechnicalembedding} to find a copy of $T$ in $G$. 
\end{proof}

%%%%%%%%%%%%%%%%%%%%%%%%%%%%%%%%%%%%%%%%%%%%%%%%%%%%%%%%%%%%%%%%%%%%%%%%
%%%%%%%%%%%%%%%%%%%%%%%%%%%%%%%%%%%%%%%%%%%%%%%%%%%%%%%%%%%%%%%%%%%%%%%%
%%%%%%%%%%%%%%%%%%%%%%%%%%%%%%%%%%%%%%%%%%%%%%%%%%%%%%%%%%%%%%%%%%%%%%%%

\subsection{EM1b Embedding Method}\label{sec:EM1b}

\begin{figure}[h]
\begin{center}
\hspace{-0.7cm}\begin{minipage}{0.2\textwidth}
\begin{center}
\begin{tikzpicture}[scale=0.7,main1/.style = {circle,draw,fill=none,inner sep=2}, main2/.style = {circle,draw,fill=none,inner sep=3}, main3/.style = {circle,draw,fill=none,inner sep=1.5}]

\node[main1] (A0) at (0.5,0) {};

\foreach \n in {1,...,5}
{
\node[main1] (A\n) at ($(2,2.5-0.5*\n)$) {};
\node[main2] (B\n) at ($(3.5,2.5-0.5*\n)$) {};
}

\foreach \y in {1,...,8}
{
\node[main1] (D\y) at ($(4.5,1.9+0.8-\y*0.4)$) {};
}

\coordinate (Ax) at (2,-1) {};
\coordinate (Bx) at (3.7,-0.5) {};
\coordinate (By) at (3.7,-1.3) {};

\draw[red,thick] (Bx) -- (By);
\draw[red,thick] (A0) to[out=-20,in=180] (Bx);
\draw[red,thick] (A0) to[out=-45,in=180] (By);
\draw ($0.25*(A5)+0.25*(B5)+0.25*(Ax)+0.25*(Bx)-(-0.1,0.4)$) node {\textbf{H\L T$^-$}};
%\draw ($0.5*(A1)+0.5*(B1)+(0,1.75)$)  node {$R_{\text{\bfseries EM1b}}:$};%node {$R_{\text{{\bfseries H\L T}}}$:};($0.25*(A5)+0.25*(B5)+0.25*(Ax)+0.25*(Bx)-(-0.1,1.9)$)
%\draw ($0.25*(A5)+0.25*(B5)+0.25*(Ax)+0.25*(Bx)-(-0.1,1.75)-(0,0.2)$) node {$R_{\text{\bfseries EM1b}}$};

\foreach \x/\y in {1/1,1/3,1/5,2/2,2/8,3/5,3/2,3/4,4/1,4/3,5/6,5/7,4/6}
{
\draw[red,thick] (B\x) -- (D\y);
}

\foreach \x/\y in {1/2,3/4,4/5}
{
\draw[red,thick] (A\x) -- (A\y);
}

\draw[red,thick] (A1) to[out=-45,in=45] (A3);
\draw[red,thick] (A3) to[out=-45,in=45] (A5);
\draw[red,thick] (A2) to[out=-135,in=135] (A4);
\draw[red,thick] (A2) to[out=-135,in=135] (A5);

\foreach \n in {1,...,5}
{
\draw[red,thick] (A\n) -- (B\n);
\draw[red,thick] (A0) -- (A\n);
}

%\draw ($0.5*(A1)+0.5*(C1)+(0,0.4)$) node {$M'$};
%\draw ($(A4)-(0,0.5)$) node {$t_2^-$};
%\draw ($(B4)-(0,0.55)$) node {$t_1^+$};

\draw ($(A0)+(0,0.5)$) node {$0$};

\draw ($(A1)+(0,0.55)$) node {$I_A'$};
\draw ($(B1)+(0,0.55)$) node {$I_B'$};

\draw ($(D1)+(0,0.5)$) node {$I_C$};

\draw ($(A1)+(0,-3.6)$) node {$I_A$};
\draw ($(B1)+(0,-3.6)$) node {$I_B$};
%\draw ($(D23)+(0.1,0)-(0,0.5)$) node {$0^+$};

\end{tikzpicture}
\end{center}
\end{minipage}\hspace{-0.3cm}
\begin{minipage}{1.2cm}
\begin{center}
$\implies$
\end{center}
\end{minipage}\hspace{-0.3cm}
%%%%%%%%%%%%%%%%%%%%%%%%%%%%%%%%%%%%%%%%%%%%%%%%%%%%%%%%%%%%%%%%%%%%%%%%%%%%%%%%%
%%%%%%%%%%%%%%%%%%%%%%%%%%%%%%%%%%%%%%%%%%%%%%%%%%%%%%%%%%%%%%%%%%%%%%%%%%%%%%%%%
\begin{minipage}{0.2\textwidth}
\begin{center}
\begin{tikzpicture}[scale=0.7,main1/.style = {circle,draw,fill=none,inner sep=2}, main2/.style = {circle,draw,fill=none,inner sep=3}, main3/.style = {circle,draw,fill=none,inner sep=1.5}]

\node[main1] (A0) at (0.5,0) {};
\node[main1] (J0) at (0.5,1) {};
\draw [red,thick] (A0) -- (J0);
\draw ($(J0)+(0,0.5)$) node {$i_2$};
\foreach \n in {2,...,5}
{
\node[main1] (A\n) at ($(2,2.5-0.5*\n)$) {};
\node[main2] (B\n) at ($(3.25,2.5-0.5*\n)$) {};
\node[main1] (D\n) at ($(4.5,2.5-0.5*\n)$) {};
}

\coordinate (Ax) at (2,-1) {};
\coordinate (Bx) at (3.7,-0.5) {};
\coordinate (By) at (3.7,-1.3) {};

\draw[red,thick] (Bx) -- (By);
\draw[red,thick] (A0) to[out=-20,in=180] (Bx);
\draw[red,thick] (A0) to[out=-45,in=180] (By);
\draw ($0.25*(A5)+0.25*(B5)+0.25*(Ax)+0.25*(Bx)-(-0.1,0.4)$) node {\textbf{H\L T$^-$}};
%\draw ($0.5*(A1)+0.5*(B1)+(0,-4)$) node {\textbf{I} \& \textbf{II}};

\foreach \x/\y in {5/5,2/2,3/3,4/4}
{
\draw[red,thick] (B\x) -- (D\y);
}

\foreach \x/\y in {2/3,4/5}
{
\draw[red,thick] (A\x) -- (A\y);
}

\foreach \n in {2,...,5}
{
\draw[red,thick] (A\n) -- (B\n);
\draw[red,thick] (A0) -- (A\n);
}

%\draw ($0.5*(A1)+0.5*(C1)+(0,0.4)$) node {$M'$};
%\draw ($(A4)-(0,0.5)$) node {$t_2^-$};
%\draw ($(B4)-(0,0.55)$) node {$t_1^+$};

\draw ($(A0)-(0,0.5)$) node {$i_1$};

\draw ($(A2)+(0,0.5)$) node {$Z_A$};
\draw ($(B2)+(0,0.5)$) node {$Z_B$};
\draw ($(D2)+(0,0.5)$) node {$Z_C$};
%\draw ($(D23)+(0.1,0)-(0,0.5)$) node {$0^+$};

\end{tikzpicture}
\end{center}
\end{minipage}\hspace{-0.3cm}
\begin{minipage}{1.2cm}
\begin{center}
\textbf{and}
\end{center}
\end{minipage}\hspace{-0.5cm}
%%%%%%%%%%%%%%%%%%%%%%%%%%%%%%%%%%%%%%%%%%%%%%%%%%%%%%%%%%%%%%%%%%%%%%%%%%%%%%%%%
%%%%%%%%%%%%%%%%%%%%%%%%%%%%%%%%%%%%%%%%%%%%%%%%%%%%%%%%%%%%%%%%%%%%%%%%%%%%%%%%%
\begin{minipage}{0.2\textwidth}
\begin{center}
\begin{tikzpicture}[scale=0.7,main1/.style = {circle,draw,fill=none,inner sep=2}, main2/.style = {circle,draw,fill=none,inner sep=3}, main3/.style = {circle,draw,fill=none,inner sep=1.5}]

\node[main1] (A0) at (0.5,0) {};

\foreach \n in {3,...,5}
{
\node[main1] (A\n) at ($(2,2.5-0.5*\n)$) {};
\node[main2] (B\n) at ($(3.25,2.5-0.5*\n)$) {};
\node[main1] (D\n) at ($(4.5,2.5-0.5*\n)$) {};
}

\coordinate (Ax) at (2,-1) {};
\coordinate (Bx) at (3.7,-0.5) {};
\coordinate (By) at (3.7,-1.3) {};

\draw[red,thick] (Bx) -- (By);
\draw[red,thick] (A0) to[out=-20,in=180] (Bx);
\draw[red,thick] (A0) to[out=-45,in=180] (By);
\draw ($0.25*(A5)+0.25*(B5)+0.25*(Ax)+0.25*(Bx)-(-0.1,0.4)$) node {\textbf{H\L T$^-$}};
%\draw ($0.5*(A1)+0.5*(B1)+(0,-4)$) node {\textbf{III}};

\foreach \n in {3,4,5}
{
\draw[red,thick] (A\n) -- (B\n);
\draw[red,thick] (A0) -- (A\n);
\draw[red,thick] (B\n) -- (D\n);
}

\draw ($(A0)-(0,0.5)$) node {$i_1$};

\draw ($(A3)+(0,0.5)$) node {$Z_A$};
\draw ($(B3)+(0,0.5)$) node {$Z_B$};
\draw ($(D3)+(0,0.5)$) node {$Z_C$};
%\draw ($(D23)+(0.1,0)-(0,0.5)$) node {$0^+$};

\node[main1] (J0) at ($(A0)+(0,1)$) {};
\draw ($(J0)+(0,0.5)$) node {$i_2$};

\draw[red,thick] (A0) -- (J0);

\foreach \n in {3,4,5}
{
\draw[red,thick] (A\n) -- (J0);
}
\end{tikzpicture}
\end{center}
\end{minipage}\hspace{-0.3cm}
\begin{minipage}{1.2cm}
\begin{center}
\textbf{and}
\end{center}
\end{minipage}\hspace{-0.5cm}
%%%%%%%%%%%%%%%%%%%%%%%%%%%%%%%%%%%%%%%%%%%%%%%%%%%%%%%%%%%%%%%%%%%%%%%%%%%%%%%%%
%%%%%%%%%%%%%%%%%%%%%%%%%%%%%%%%%%%%%%%%%%%%%%%%%%%%%%%%%%%%%%%%%%%%%%%%%%%%%%%%%
\begin{minipage}{0.2\textwidth}
\begin{center}
\begin{tikzpicture}[scale=0.7,main1/.style = {circle,draw,fill=none,inner sep=2}, main2/.style = {circle,draw,fill=none,inner sep=3}, main3/.style = {circle,draw,fill=none,inner sep=1.5}]

\node[main1] (A0) at (0.5,0) {};

\foreach \n in {2,...,5}
{
\node[main1] (A\n) at ($(2,2.5-0.5*\n)$) {};
\node[main2] (B\n) at ($(3.25,2.5-0.5*\n)$) {};
}
\foreach \n in {2,4}
{
\node[main1] (D\n) at ($(4.5,2.5-0.5*\n)$) {};
}

\coordinate (Ax) at (2,-1) {};
\coordinate (Bx) at (3.7,-0.5) {};
\coordinate (By) at (3.7,-1.3) {};

\draw[red,thick] (Bx) -- (By);
\draw[red,thick] (A0) to[out=-20,in=180] (Bx);
\draw[red,thick] (A0) to[out=-45,in=180] (By);
\draw ($0.25*(A5)+0.25*(B5)+0.25*(Ax)+0.25*(Bx)-(-0.1,0.4)$) node {\textbf{H\L T$^-$}};
%\draw ($0.5*(A1)+0.5*(B1)+(0,-4)$) node {\textbf{IV}};

\foreach \x/\y in {2/2,4/4}
{
\draw[red,thick] (B\x) -- (D\y);
}

\foreach \x/\y in {2/3,4/5}
{
\draw[red,thick] (A\x) -- (A\y);
}

\foreach \n in {2,...,5}
{
\draw[red,thick] (A\n) -- (B\n);
\draw[red,thick] (A0) -- (A\n);
}

%\draw ($0.5*(A1)+0.5*(C1)+(0,0.4)$) node {$M'$};
%\draw ($(A4)-(0,0.5)$) node {$t_2^-$};
%\draw ($(B4)-(0,0.55)$) node {$t_1^+$};

\node[main1] (J0) at ($(B2)+(0,1.25)$) {};
\node[main1] (I1) at ($0.5*(B3)+0.5*(B4)+(2.5,0)$) {};

\foreach \x in {2,4}
{
\draw[red,thick] (I1) -- (D\x);
}

\draw[red,thick] (A0) to[out=70,in=180] (J0);
\draw[red,thick] (J0) to[out=0,in=125] (I1);

\draw ($(A0)-(0,0.5)$) node {$i_1$};
\draw ($(J0)+(0,0.5)$) node {$i_2$};
\draw ($(I1)+(0,-0.5)$) node {$i_3$};
%\draw ($(A0)-(0.4,0)$) node {$i$};

\draw ($(A2)+(0,0.5)$) node {$Z_A$};
\draw ($(B2)+(0,0.5)$) node {$Z_B$};
\draw ($(D2)+(0,0.5)$) node {$Z_C$};
%\draw ($(D23)+(0.1,0)-(0,0.5)$) node {$0^+$};

\end{tikzpicture}
\end{center}
\end{minipage}
\end{center}

\vspace{-0.75cm}
\begin{center}
\hspace{-0.4cm}
\begin{tikzpicture}
\draw (0,0) node {$R_{\text{\bfseries EM1b}}$};
\draw (4.1,0) node {\textbf{I} \& \textbf{II}};
\draw (7.9,0) node {\textbf{III}};
\draw (12.15,0) node {\textbf{IV}};
\end{tikzpicture}
\end{center}
\vspace{-0.3cm}

\caption{The initial reduced graph $R_{\text{\bfseries EM1b}}$ in Lemma~\ref{lemma:em1b} on the left, and the three substructures within that we use to embed the tree in \textbf{Cases I} \& \textbf{II}, \textbf{Case III}, and \textbf{Case IV}, respectively.}\label{fig:EM1bproof}
\end{figure}
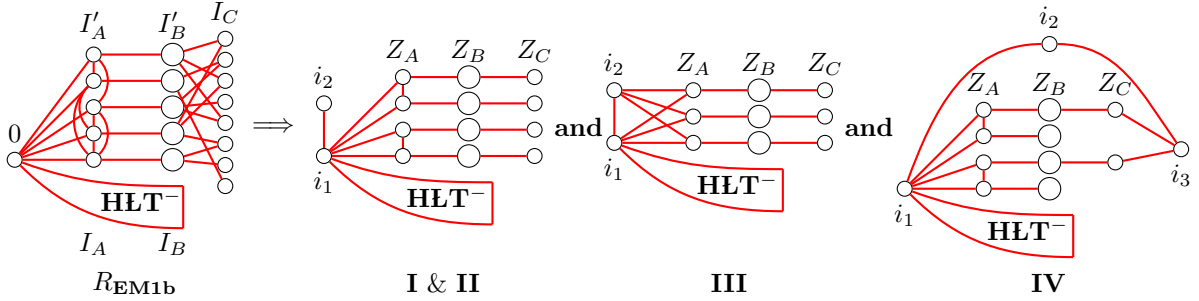

%\textbf{Notes for proof:} We have three refinements that we use in different cases. If there is $t_2$-weight off the $t_1$-side, we use the first. Otherwise, and if there is some $t_1$-weight off the $t_1$-side, we use the second. Otherwise, and if $t_1\geq t_2^+$ we use the third. Otherwise, using that $t_1=t_2$, we use the first with the sides switched.

\begin{lemma}[\textbf{EM1b}]\label{lemma:em1b}
Let $1/n\ll 1/m\ll c\ll1/k\ll\eps\ll\gamma\ll\alpha\ll d\leq1$. Let $T$ be an $n$-vertex tree with $\Delta(T)\le cn$ and bipartition classes of sizes $t_1$ and $t_2$ satisfying $t_2\leq t_1\leq 2t_2$. Let $G$ be a graph with with a partition $V(G)=V_0\cup V_1\cup\cdots\cup V_k$. Let $R_{\text{\emph{\bfseries EM1b}}}$ be a graph with vertex set $[k]_0$, such that if $ij\in E(R_{\text{\emph{\bfseries EM1b}}})$ then $G[V_i,V_j]$ is $(\eps,d)$-regular. Suppose there is a partition $[k]=I_A\cup I_A'\cup I_B\cup I_B'\cup I_C$, such that the following properties hold (see Figure~\ref{fig:EM1bproof}).
 \stepcounter{propcounter}
 \begin{enumerate}[label = \emph{\textbf{\Alph{propcounter}\arabic{enumi}}}]
    \item\labelinthm{lemma:reg:em1b:1} $|V_i|=m$ for all $i\in\{0\}\cup I_A\cup I_A'\cup I_C$ and $|V_i|=t_1m/t_2$ for all $i\in I_B\cup I_B'$.
    \item\labelinthm{lemma:reg:em1b:2} $|\cup_{i\in I_A\cup I_A'}V_i|\geq(1-\gamma) t_2, |\cup_{i\in I_B\cup I_B'}V_i|\geq(1-\gamma)t_1$, $|\cup_{i\in I_A'}V_i|=10\alpha t_2$, $|\cup_{i\in I_B'}V_i|=10\alpha t_1$, and $|\cup_{i\in I_C}V_i|\geq\frac23t_2$.
    \item\labelinthm{lemma:reg:em1b:3} In $R_{\text{\emph{\bfseries EM1b}}}$, 0 is adjacent to every vertex in $I_A\cup I_A'$.
    \item\labelinthm{lemma:reg:em1b:4} In $R_{\text{\emph{\bfseries EM1b}}}$, there exists a perfect matching between $I_A$ and $I_B$, and a perfect matching between $I_A'$ and $I_B'$.
    \item\labelinthm{lemma:reg:em1b:5} In $R_{\text{\emph{\bfseries EM1b}}}$, every vertex in $I_B'$ is adjacent to at least $10\alpha|I_C|$ vertices in $I_C$.
    \item\labelinthm{lemma:reg:em1b:6} In $R_{\text{\emph{\bfseries EM1b}}}$, there exist at least $\alpha|I_A'|$ vertices with at least $\alpha|I_A'|$ neighbours in $I_A'$.
\end{enumerate}
Then, $G$ contains a copy of $T$.
\end{lemma}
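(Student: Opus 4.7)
The plan is to follow the framework of Lemma~\ref{lemma:em1a}: extract from $R_{\text{\bfseries EM1b}}$ a sub-structure matching (a subgraph of) the template $R'$ of Lemma~\ref{lem:maintechnicalembedding}, decompose $T$ via Lemma~\ref{lem:maintreedecomp} into small pieces, randomly route the resulting components across the sub-structure, and apply Lemma~\ref{lem:maintechnicalembedding}. The new ingredient beyond \textbf{EM1a} is the presence of edges inside $I_A'$ guaranteed by~\ref{lemma:reg:em1b:6}, and the job is to convert those into additional ``mismatch capacity'' for the embedding.

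I would first use~\ref{lemma:reg:em1b:6} to greedily build a matching $M_A$ inside $R_{\text{\bfseries EM1b}}[I_A']$ of size $\Theta(\alpha^2|I_A'|)$; by~\ref{lemma:reg:em1b:4}, for each edge $aa^*\in M_A$ the $I_B'$-partners $b,b^*\in I_B'$ are already available. Applying Claim~\ref{claim:backandforth} (which is stated generally in the proof of Lemma~\ref{lemma:em1a} and only uses~\ref{lemma:reg:em1b:5}) to a subset of $I_B'$ disjoint from these partners, I would also produce a hub vertex $i_3\in I_B'$ together with a matching of some $Z_B'\subset I_B'$ into $Z_C\subset I_C$ such that $i_3$ is adjacent to every vertex of $Z_C$. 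Setting $i_1=0$ and letting $i_2\in I_A'$ be the $I_A'$-partner of $i_3$ then yields the three distinguished vertices required by Lemma~\ref{lem:maintechnicalembedding}, together with three possible ``detours'' out of $i_1$: the standard $I_A$--$I_B$ chain, the $Z_A$--$Z_A^*$--$Z_B$ detour through $M_A$, and the $i_3$--$Z_B'$--$Z_C$ detour. These are exactly the three sub-structures depicted on the right of Figure~\ref{fig:EM1bproof}.

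Given $\phi:T\to S$ from Lemma~\ref{lem:maintreedecomp}, each component $K_j$ of $T-\phi^{-1}(X_0\cup Y_0)$ is attached to exactly one of $\phi^{-1}(X_0)$ or $\phi^{-1}(Y_0)$, and I would assign each $K_j$ independently at random to one of the three detours with carefully chosen probabilities of order $\alpha^2$. A McDiarmid concentration argument (Lemma~\ref{lemma:mcdiarmid}), directly analogous to~\eqref{em1a:eq:1}--\eqref{em1a:eq:2}, then ensures that with positive probability each cluster-group in the $R'$-template receives a number of tree vertices at most its capacity minus $\Omega(\alpha^2 n)$, after a mild shrinking of the non-special clusters via Lemma~\ref{lemma:regularity:1} to preserve $(\sqrt\eps,d-\eps)$-regularity. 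The induced homomorphism $\varphi:T\to R'$ then satisfies~\ref{prop:maintech1}--\ref{prop:maintech3}, and Lemma~\ref{lem:maintechnicalembedding} produces a copy of $T$.

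The hard part will be the simultaneous balancing of the capacity constraints: the $M_A$ detour adds $V_2$-room inside $I_A'$ at the cost of $V_1$-room, while the $Z_B'$--$Z_C$ detour does the opposite, and the routing must cover every possible ratio $1\leq t_1/t_2\leq 2$. This is why a four-case split appears in Figure~\ref{fig:EM1bproof}, each case corresponding to a different imbalance regime and using a different combination of the three detours with different routing probabilities. The hierarchy $\gamma\ll\alpha\ll d$ is exactly what lets the $\gamma$-slack coming from~\ref{lemma:reg:em1b:2} be absorbed into the $\Omega(\alpha^2 n)$ buffer produced by the random routing, so the capacity bound in~\ref{prop:maintech2} is comfortably satisfied in every case.
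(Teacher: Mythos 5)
Your overall architecture is the same as the paper's: cut $T$ with Lemma~\ref{lem:maintreedecomp}, use \ref{lemma:reg:em1b:6} to get a matching (or star) inside $I_A'$, use \ref{lemma:reg:em1b:5} together with a Claim~\ref{claim:backandforth}-type argument to get a hub in $I_B'$ and a matching into $I_C$, route the components of $T-\phi^{-1}(X_0\cup Y_0)$ randomly, and finish with Lemma~\ref{lemma:mcdiarmid} and Lemma~\ref{lem:maintechnicalembedding}. The problem is that everything you defer as ``the hard part'' is the actual content of the lemma, and the guidance you give for it would steer the argument wrong. First, the roles of the two devices are essentially reversed: clusters indexed by $I_A'$ by default only absorb $V_2$-vertices, so a matched pair inside $I_A'$ is used to manufacture extra $V_1$-room out of the small side, whereas a $Z_B$--$Z_C$ pair, in the configuration where $Z_C$ receives the $Y$-labelled vertices, manufactures extra $V_2$-room in $I_C$; the $V_1$-deficit cannot be repaired by the $I_C$ attachment alone unless the $Z_B$-clusters are re-cut so that the $I_C$-clusters themselves receive $V_1$-vertices, which is exactly the paper's Case III device. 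Second, the case split cannot be governed by $t_1/t_2$ alone: components attached to $\phi^{-1}(X_0)$ and to $\phi^{-1}(Y_0)$ are necessarily routed through chains rooted at different distinguished vertices, so what matters is how the $V_1$- and $V_2$-vertices split between these two families, i.e.\ the quantities $\tau_{1,X}=|\phi^{-1}(X_2)|$ and $\tau_{2,X}=|\phi^{-1}(Y_3)|$; the paper's Cases I--IV are keyed to these, with the ratio $t_1/t_2$ only distinguishing II from III.

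Third, ``probabilities of order $\alpha^2$ plus mild shrinking via Lemma~\ref{lemma:regularity:1}'' does not suffice in the regimes corresponding to the paper's Cases III and IV. There the $Z_B$-clusters must be cut against $Z_C$ with side ratio determined by $\rho=t_1/t_2$ (respectively by $\rho_Y=\tau_{1,Y}/\tau_{2,Y}$), and the routing probability $p$ must be chosen inside an interval, e.g.\ $\big[\tfrac{10\alpha^2(1+10\beta)}{(1-101\beta)\rho^2},\tfrac{10\alpha^2(1-10\beta)}{\rho}\big]$, whose nonemptiness is precisely what the case hypothesis $t_1\geq(1+200\beta)t_2$ (resp.\ $\tau_{1,X}\geq100\beta t_1$) guarantees; similarly, Case I needs $\tau_{2,X}\geq3\beta t_2$ so that enough $Y_3$-mass lands in the $I_C$-clusters to keep the leftover $Z_A$-pieces from overflowing. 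A uniform three-detour template with $\Theta(\alpha^2)$ routing probabilities cannot meet condition \ref{prop:maintech2} simultaneously for both bipartition classes across all these regimes, which is why the paper builds three different substructures and verifies the capacity bounds separately in each case. Until you specify, in each regime, which detour is used, how the clusters are re-sized, and why the resulting capacities exceed the routed masses by a positive buffer, the claim that the $\gamma$-deficit is ``comfortably absorbed'' is an assertion rather than a proof.
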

\begin{proof}
Let $\xi$ satisfy $c\ll\xi\ll 1/k$. Let $S$ be the graph defined in Lemma~\ref{lem:maintreedecomp}. Using that lemma, take a homomorphism $\phi:T\to S$ such that each component of $T-\phi^{-1}(X_0\cup Y_0)$ has size at most $\xi n$ and $|\phi^{-1}(X_0\cup Y_0\cup X_1\cup Y_1)|\leq \xi n$. Without loss of generality, say $|\phi^{-1}(X_0\cup X_1\cup X_2\cup X_3)|=t_1$ and $|\phi^{-1}(Y_0\cup Y_1\cup Y_2\cup Y_3)|=t_2$. Let the components of $T-\phi^{-1}(X_0\cup Y_0)$ be $\{K_j:j\in J\}$. Moreover, partition $J$ as $J_X\cup J_Y$, so that $N_T(K_j)\subset\phi^{-1}(X_0)$ for each $j\in J_X$, and $N_T(K_j)\subset\phi^{-1}(Y_0)$ for each $j\in J_Y$.   

Let $\tau_{1,X}=|\phi^{-1}(X_2)|$, $\tau_{2,X}=|\phi^{-1}(Y_3)|$, $\tau_{1,Y}=|\phi^{-1}(X_3)|$, and $\tau_{2,Y}=|\phi^{-1}(Y_2)|$. Let $\gamma\ll\beta\ll\alpha$, and consider the following four cases. \textbf{I:} $\tau_{2,X}\geq3\beta t_2$. \textbf{II:} $\tau_{2,X}<3\beta t_2$, $\tau_{1,X}<100\beta t_1$, and $t_1<(1+200\beta)t_2$. \textbf{III:} $\tau_{2,X}<3\beta t_2$, $\tau_{1,X}<100\beta t_1$, and $t_1\geq(1+200\beta)t_2$. \textbf{IV:} $\tau_{2,X}<3\beta t_2$ and $\tau_{1,X}\geq100\beta t_1$.

\begin{figure}[h]
\input{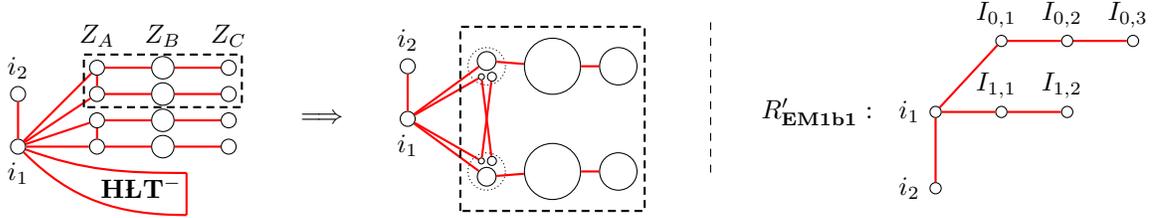}

\vspace{-0.3cm}

\caption{On the left, the transformation of the reduced graph structure used in {\bfseries Cases I} and {\bfseries II} to embed the tree. On the right, the auxiliary graph $R_{\text{\bfseries EM1b1}}'$ used when applying Lemma~\ref{lem:maintechnicalembedding}.}\label{fig:EM1bi}
\end{figure}

\medskip

\noindent \textbf{Cases I} \& \textbf{II.} By~\ref{lemma:reg:em1b:6}, we can greedily find a matching in $R_{\text{\bfseries EM1b}}[I_A']$ with size $\alpha|I_A'|/2=5\alpha^2t_2/m$. Let $Z_A\subset I_A'$ be the set of vertices covered by this matching, and let $Z_B\subset I_B'$ be the vertices matched with $Z_A$. By~\ref{lemma:reg:em1b:5}, we can greedily find a perfect matching in $R_{\text{\bfseries EM1b}}$ between $Z_B$ and some $Z_C\subset I_C$. 

Now, as depicted on the left of Figure~\ref{fig:EM1bi}, we further transform this structure into what we need to apply Lemma~\ref{lem:maintechnicalembedding}. Set $i_1=0$, pick $i_2$ arbitrarily from $I_A'\setminus Z_A$, and say it is matched with $i_2'\in I_B'\setminus Z_B$. For each $a\in Z_A$, let $a'\in Z_A$ be its neighbour in the matching above. Let $\eta\geq\beta$ be a constant to be chosen later depending on whether we are in \textbf{Case I} or \textbf{Case II}. Pick disjoint subsets $V_{a,1},V_{a,2}\subset V_a$ and $V_{a',1},V_{a',2}\subset V_{a'}$ such that $|V_{a,1}|=|V_{a',1}|=\eta t_1m/n$, $|V_{a,2}|=|V_{a',2}|=\eta t_2m/n$. By Lemma~\ref{lemma:regularity:1}, we can refine each of these new clusters, along with each cluster $V_i$ with index $i$ in $(I_A\cup I_A')\setminus(Z_A\cup \{i_2\})$ and $(I_B\cup I_B')\setminus(Z_B\cup\{i_2'\})$, into a maximum disjoint collection of smaller clusters with sizes $\gamma m$ or $\gamma t_1m/t_2$ accordingly, then pair them up so that they form $(\sqrt\eps,d-\eps)$-regular pairs. Note that at most $O(\gamma n)$ covered vertices are lost in this refinement process. Relabel these new refined clusters as $\{V_i':i\in I_{1,1}\}$ and $\{V_i':i\in I_{1,2}\}$ with $I_{1,1}$ indexing those with the smaller size. Then, we have
\begin{align*}
\sum_{i\in I_{1,1}}|V_i'|&\geq\sum_{i\in I_A\cup I_A'}|V_i|-m-|Z_A|\left(m-\frac{\eta t_2m}n\right)-O(\gamma n)\\
&\geq(1-O(\gamma))t_2-10\alpha^2t_2\left(1-\frac{\eta t_2}{n}\right)\geq(1-10\alpha^2+3\alpha^2\eta)t_2,
\end{align*}
and similarly $\sum_{i\in I_{1,2}}|V_i'|\geq(1-10\alpha^2+3\alpha^2\eta)t_1$.

Then, relabel the subsets $\{V_i\setminus(V_{i,1}\cup V_{i,2}):i\in Z_A\}$ as $\{V_i':i\in I_{0,1}\}$, and relabel the subsets $V_i$ with $i$ in $Z_B$ and $Z_C$ as $\{V_i':i\in I_{0,2}\}$ and $\{V_i':i\in I_{0,3}\}$, respectively. Note that $\sum_{i\in I_{0,1}}|V_i'|=|Z_A|(1-\eta)m=(10\alpha^2-10\alpha^2\eta)t_2$, $\sum_{i\in I_{0,2}}|V_i'|=10\alpha^2t_1$, and $\sum_{i\in I_{0,3}}|V_i'|=10\alpha^2t_2$.

Let $R_{\text{\bfseries EM1b1}}'$ be the graph on the right of Figure~\ref{fig:EM1bi}.  If we are in \textbf{Case I}, so $\tau_{2,X}\geq3\beta t_2$, set $\eta=\beta$ and define a homomorphism $\varphi:T\to R_{\text{\bfseries EM1b1}}'$ as follows. Let $\varphi(v)=i_1$ for every $v\in\phi^{-1}(X_0)$, and let $\varphi(v)=i_2$ for every $v\in\phi^{-1}(Y_0)$. For every $K_j$ with $j\in J_X$, independently with probability $10\alpha^2-\alpha^2\beta$, define $\varphi$ on $K_j$ by composing $\phi$ with the function sending $Y_1,X_2,Y_3$ to $I_{0,1},I_{0,2},I_{0,3}$, respectively; and with probability $1-10\alpha^2+\alpha^2\beta$, define $\varphi$ on $K_j$ by composing $\phi$ with the function sending $Y_1,X_2,Y_3$ to $I_{1,1},I_{1,2},I_{1,1}$, respectively. For every $K_j$ with $j\in J_Y$, independently with probability $10\alpha^2-\alpha^2\beta$, define $\varphi$ on $K_j$ by composing $\phi$ with the function sending $X_1,Y_2,X_3$ to $i_1,I_{0,1},I_{0,2}$, respectively; and with probability $1-10\alpha^2+\alpha^2\beta$, define $\varphi$ on $K_j$ by composing $\phi$ with the function sending $X_1,Y_2,X_3$ to $i_1,I_{1,1},I_{1,2}$, respectively.

By Lemma~\ref{lemma:mcdiarmid}, with positive probability we have $|\varphi^{-1}(I_{0,1}\cup I_{0,3})|=(10\alpha^2-\alpha^2\beta\pm\alpha^3\beta)t_2$, $|\varphi^{-1}(I_{0,3})|\geq20\alpha^2\beta t_2$ using that $\tau_{2,X}\geq3\beta t_2$, and $|\varphi^{-1}(I_{0,2})|=(10\alpha^2-\alpha^2\beta\pm\alpha^3\beta)t_1$. Thus, $|\varphi^{-1}(I_{0,1})|\leq(10\alpha^2-20\alpha^2\beta)t_2\leq\sum_{i\in I_{0,1}}|V_i'|-10\alpha^2\beta t_2$, and similarly for $I_{0,2}$ and $I_{0,3}$. It also follows that \[|\varphi^{-1}(I_{1,1})|\leq t_2-|\varphi^{-1}(I_{0,1}\cup I_{0,3})|\leq(1-10\alpha^2+2\alpha^2\beta)t_2\leq\sum_{i\in I_{1,1}}|V_i'|-\alpha^2\beta t_2,\] and similarly for $I_{1,2}$. Therefore, we can apply Lemma~\ref{lem:maintechnicalembedding} to find a copy of $T$ in $G$.

If we are in \textbf{Case II} instead, so $\tau_{2,X}<3\beta t_2$, $\tau_{1,X}<100\beta t_1$, and $t_1<(1+200\beta)t_2$, then we proceed similarly to above with the role of $X$ and $Y$ swapped, and with $\eta=1/2$. More specifically, we define a homomorphism $\varphi:T\to R_{\text{\bfseries EM1b1}}'$ as follows. Let $\varphi(v)=i_1$ for every $v\in\phi^{-1}(Y_0)$, and let $\varphi(v)=i_2$ for every $v\in\phi^{-1}(X_0)$.  For every $K_j$ with $j\in J_X$, independently with probability $9\alpha^2$, define $\varphi$ on $K_j$ by composing $\phi$ with the function sending $Y_1,X_2,Y_3$ to $i_1,I_{0,1},I_{0,2}$, respectively; and with probability $1-9\alpha^2$, define $\varphi$ on $K_j$ by composing $\phi$ with the function sending $Y_1,X_2,Y_3$ to $i_1,I_{1,1},I_{1,2}$, respectively. For every $K_j$ with $j\in J_Y$, independently with probability $9\alpha^2$, define $\varphi$ on $K_j$ by composing $\phi$ with the function sending $X_1,Y_2,X_3$ to $I_{0,1},I_{0,2},I_{0,3}$, respectively; and with probability $1-9\alpha^2$, define $\varphi$ on $K_j$ by composing $\phi$ with the function sending $X_1,Y_2,X_3$ to $I_{1,1},I_{1,2},I_{1,1}$, respectively.

By Lemma~\ref{lemma:mcdiarmid}, with positive probability we have $|\varphi^{-1}(I_{0,3})|\leq|\varphi^{-1}(I_{0,1}\cup I_{0,3})|=(9\pm0.1)\alpha^2t_1$, $|\varphi^{-1}(I_{0,1})|\leq\xi n+1000\alpha^2\beta t_1$ using $\tau_{1,X}<100\beta t_1$, and $|\varphi^{-1}(I_{0,2})|=(9\pm0.1)\alpha^2t_2$. It follows using $t_1<(1+200\beta)t_2$ that $|\varphi^{-1}(I)|\leq\sum_{i\in I}|V_i'|-\alpha^2n/100$ for each $I\in\{I_{0,1},I_{0,2},I_{0,3}\}$. Using $t_1<(1+200\beta)t_2$ again, we also get $|\varphi^{-1}(I_{1,1})|\leq(1-8.8\alpha^2)t_1\leq(1-8.7\alpha^2)t_2\leq\sum_{i\in I_{1,1}}|V_i'|-\alpha^2n/100$, and similarly for $I_{1,2}$. Therefore, we can apply Lemma~\ref{lem:maintechnicalembedding} to find a copy of $T$ in $G$.

\begin{figure}[h]
\input{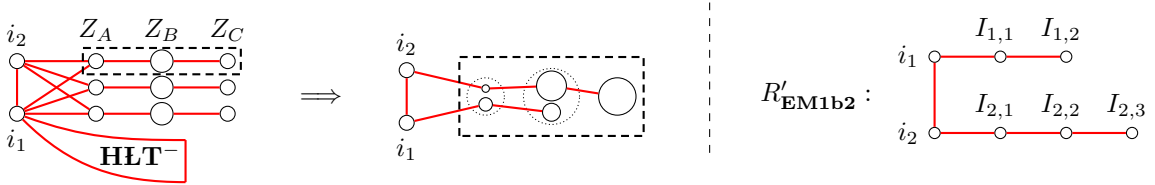}

\vspace{-0.3cm}

\caption{On the left, the transformation of the reduced graph structure used in {\bfseries Case III} to embed the tree. On the right, the auxiliary graph $R_{\text{\bfseries EM1b2}}'$ used when applying Lemma~\ref{lem:maintechnicalembedding}.}\label{fig:EM1bii}
\end{figure}

\noindent \textbf{Case III.}  In this case, we have $\tau_{2,X}<3\beta t_2$, $\tau_{1,X}<100\beta t_1$, and $t_1\geq(1+200\beta)t_2$. Thus, $\tau_{2,Y}\geq t_2-\tau_{2,X}-\xi n\geq(1-4\beta)t_2$, and similarly $\tau_{1,Y}\geq(1-101\beta)t_1$. Set $i_1=0$. By~\ref{lemma:reg:em1b:6}, we can find $i_2\in I_A'$ such that $i_2$ is adjacent to a set $Z_A$ of $\alpha|I_A'|=10\alpha^2t_2/m$ vertices in $I_A'$. Let $i_2'\in I_B'$ be the vertex matched with $i_2$, and let $Z_B\subset I_B'$ be the vertices matched with $Z_A$. By~\ref{lemma:reg:em1b:5}, we can greedily find a perfect matching between $Z_B$ and some subset $Z_C$ of $I_C$.

For convenience, denote $t_1/t_2$ by $\rho$, so $1+200\beta\leq\rho\leq2$ from assumptions. We now further transform this structure as depicted on the left of Figure~\ref{fig:EM1bii}. For each $a\in Z_A$, suppose it is matched with $b\in Z_B$, which is in turn matched with $c\in Z_C$. Take $U_a\subset V_a$ with size $2\sqrt\eps m$, take $U_b\subset V_b$ with size $m/\rho$, and let $U_c=V_c$. Then, $G[U_a,U_b],G[U_b,U_c]$ are both $(\sqrt\eps,d-\eps)$-regular by Lemma~\ref{lemma:regularity:1}. Relabel $\{U_a:a\in Z_A\},\{U_b:b\in Z_B\},\{U_c:c\in Z_C\}$ as $\{V_i':i\in I_{2,1}\},\{V_i':i\in I_{2,2}\},\{V_i':i\in I_{2,3}\}$, respectively. Note that $\sum_{i\in I_{2,1}}|V_i'|=20\alpha^2\sqrt\eps t_2\gg\xi n$, $\sum_{i\in I_{2,2}}|V_i'|=10\alpha^2t_2/\rho$, and $\sum_{i\in I_{2,3}}|V_i'|=10\alpha^2t_2$.

Next, again for each $a\in Z_A$, suppose it is matched with $b\in Z_B$. Let $W_b=V_b\setminus U_b$, so $|W_b|=(\rho-1/\rho)m$. Let $W_a\subset V_a\setminus U_a$ have size $(1-1/\rho^2)m$, possible as $\eps\ll1/\rho$. Observe that $|W_a|\geq(1-(1+200\beta)^{-2})m\geq200\beta m$. Refine the collections of clusters $\{W_a:a\in Z_A\}$ and $\{W_b:b\in Z_b\}$ above, and the clusters in $\{V_i:i\in(I_A\cup I_A')\setminus(Z_A\cup\{i_2\})\}$ and $\{V_i:i\in(I_B\cup I_B')\setminus(Z_B\cup\{i_2'\})\}$ down to clusters with sizes $\gamma m$ and $\gamma\rho m$ respectively. In the process, we lose $O(\gamma n)$ covered vertices, and the resulting refined clusters can be paired together again as $(\sqrt\eps,d-\eps)$-regular pairs by Lemma~\ref{lemma:regularity:1}. Relabel these refined clusters as $\{V_i':i\in I_{1,1}\}$ and $\{V_i':i\in I_{1,2}\}$, with $I_{1,1}$ indexing the smaller clusters. Note that
\begin{align*}
\sum_{i\in I_{1,1}}|V_i'|&\geq\sum_{i\in I_A\cup I_A'}|V_i|-\sum_{i\in Z_A\cup\{i_2\}}|V_i|+\sum_{a\in Z_A}|W_a|-O(\gamma n)\\
&\geq(1-\gamma)t_2-10\alpha^2t_2-m+10\alpha^2t_2(1-1/\rho^2)-O(\gamma n)\geq(1-(1+\beta)10\alpha^2/\rho^2)t_2,
\end{align*}
and
\begin{align*}
\sum_{i\in I_{1,2}}|V_i'|&\geq\sum_{i\in(I_B\cup I_B')\setminus\{i_2'\}}|V_i|-\sum_{b\in Z_B}|U_b|-O(\gamma n)\\
&\geq(1-\gamma)t_1-m-10\alpha^2t_2/\rho-O(\gamma n)\geq(1-(1+\beta)10\alpha^2/\rho^2)t_1.
\end{align*}

Since $\rho\geq1+200\beta$, we have $\rho(1-10\beta)(1-101\beta)\geq1+10\beta$, so we can find $p\in[0,1]$ such that
\[\frac{10\alpha^2(1+10\beta)}{(1-101\beta)\rho^2}\leq p\leq\frac{10\alpha^2(1-10\beta)}{\rho}.\]
Let $R_{\text{\bfseries EM1b2}}'$ be the graph on the right of Figure~\ref{fig:EM1bii}. Define a homomorphism $\varphi:T\to R_{\text{\bfseries EM1b2}}'$ as follows. Let $\varphi(v)=i_1$ for every $v\in\phi^{-1}(X_0)$, and let $\varphi(v)=i_2$ for every $v\in\phi^{-1}(Y_0)$. For every $K_j$ with $j\in J_X$, define $\varphi$ on $K_j$ by composing $\phi$ with the function sending $Y_1,X_2,Y_3$ to $I_{1,1},I_{1,2},I_{1,1}$, respectively. For every $K_j$ with $j\in J_Y$, independently with probability $p$, define $\varphi$ on $K_j$ by composing $\phi$ with the function sending $X_1,Y_2,X_3$ to $I_{2,1},I_{2,2},I_{2,3}$, respectively; and with probability $1-p$, define $\varphi$ on $K_j$ by composing $\phi$ with the function sending $X_1,Y_2,X_3$ to $i_1,I_{1,1},I_{1,2}$, respectively.

Using $\tau_{2,Y}\geq(1-4\beta)t_2$, $\tau_{1,Y}\geq(1-101\beta)t_1$, and Lemma~\ref{lemma:mcdiarmid}, with positive probability we have $|\varphi^{-1}(I_{2,1})|\leq\xi n$, \[(1+5\beta)10\alpha^2t_2/\rho^2\leq p(1-4\beta)t_2-5\alpha^2\beta t_2\leq|\varphi^{-1}(I_{2,2})|=p\tau_{2,Y}\pm5\alpha^2\beta t_2\leq(1-5\beta)10\alpha^2t_2/\rho,\] 
and similarly $(1+5\beta)10\alpha^2t_1/\rho^2\leq|\varphi^{-1}(I_{2,3})|\leq(1-5\beta)10\alpha^2t_1/\rho$. It follows that $|\varphi^{-1}(I)|$ is suitably smaller than $\sum_{i\in I}|V_i'|$ for each $I\in\{I_{2,1},I_{2,2},I_{2,3}\}$. Moreover, we have $|\varphi^{-1}(I_{1,1})|\leq(1-(1+5\beta)10\alpha^2/\rho^2)t_2\leq\sum_{i\in I_{1,1}}|V_i'|-40\alpha^2\beta t_2/\rho^2$, and similarly for $I_{1,2}$. Thus, we can apply Lemma~\ref{lem:maintechnicalembedding} to find a copy of $T$ in $G$.

\begin{figure}[h]
\input{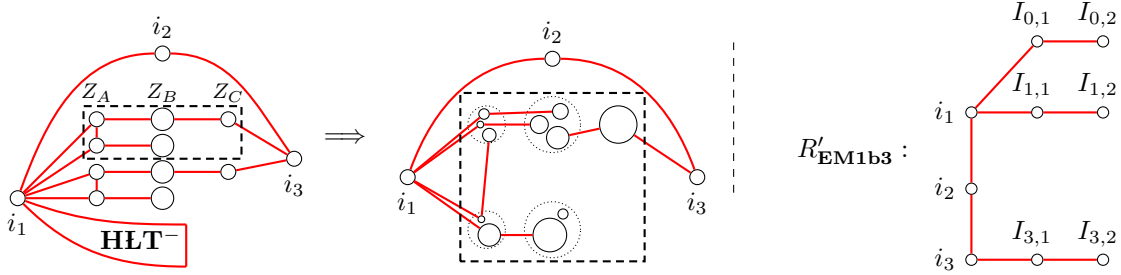}

\vspace{-0.3cm}

\caption{On the left, the transformation of the reduced graph structure used in {\bfseries Case IV} to embed the tree. On the right, the auxiliary graph $R_{\text{\bfseries EM1b3}}'$ used when applying Lemma~\ref{lem:maintechnicalembedding}.}\label{fig:EM1biii}
\end{figure}

\noindent \textbf{Case IV.} In this case, we assume that $\tau_{2,X}<3\beta t_2$ and $\tau_{1,X}\geq100\beta t_1$. Then, $\tau_{2,Y}\geq(1-4\beta)t_2$ and $\tau_{1,Y}<(1-100\beta)t_1$. Let $\rho_Y=\tau_{1,Y}/\tau_{2,Y}$, and observe that $\rho_Y<(1-40\beta)t_1/t_2$.

Let $i_1=0$. By~\ref{lemma:reg:em1b:6}, we can greedily find a matching $M$ in $R_{\text{\bfseries EM1b}}[I_A']$ with size $\alpha|I_A'|/2=5\alpha^2t_2/m$. Let $Z_A'\subset I_A'$ be the set of vertices covered by this matching, so $|Z_A'|=10\alpha^2t_2/m$, and let $Z_B'\subset I_B'$ be the vertices matched with $Z_A'$. By~\ref{lemma:reg:em1b:5}, and as in Claim~\ref{claim:backandforth}, we can find a matching $M'$ in $R_{\text{\bfseries EM1b}}[Z_B',I_C]$ with size $5\alpha|Z_B'|=50\alpha^3t_2/m$, and a vertex $i_3\in Z_B'\setminus V(M')$ that is adjacent to every vertex in $I_C\cap V(M')$. At the cost of halving the size, we can restrict $M'$ to a perfect matching between some $Z_B\subset Z_B'$ and $Z_C\subset I_C$ with $|Z_B|=|Z_C|=25\alpha^3t_2/m$, such that if $Z_A\subset Z_A'$ is the set matched with $Z_B$, then vertices in $Z_A$ all belong to the same side of the matching $M$. Let $i_2\in Z_A'\setminus Z_A$ be the vertex matched with $i_3$. 

We now further transform this structure as depicted on the left of Figure~\ref{fig:EM1biii}. For every $a\in Z_A$, suppose it is matched with $a'\in Z_A'\setminus Z_A$ by $M$ and with $b\in Z_B$. Say $b$ is matched with $c\in Z_C$ by $M'$, and $a'$ is matched with $b'\in Z_B'$. Let $U_c=V_c$, and pick $U_b\subset V_b$ with size $(\rho_Y+2\sqrt\eps)m$. Note that $G[U_b,U_c]$ is $(\sqrt\eps,d-\eps)$-regular by Lemma~\ref{lemma:regularity:1}. Relabel $\{U_c:c\in Z_C\}$ and $\{U_b:b\in Z_B\}$ as $\{V_i':i\in I_{3,1}\}$ and $\{V_i':i\in I_{3,2}\}$, respectively. Note that $\sum_{i\in I_{3,1}}|V_i'|=25\alpha^3t_2$ and $\sum_{i\in I_{3,2}}|V_i'|=25\alpha^3(\rho_Y+2\sqrt\eps)t_2\geq25\alpha^3\tau_{1,Y}+50\alpha^3\sqrt\eps t_2$.

Since $\rho_Y<(1-40\beta)t_1/t_2$, we have $|V_b\setminus U_b|\geq39\beta t_1m/t_2$, so $V_b\setminus U_b$ can be partitioned as $W_b\cup S_b$ with $|W_b|=20\beta t_1m/t_2$ and \[|S_b|=(1-20\beta)t_1m/t_2-(\rho_Y+2\sqrt\eps)m\geq19\beta t_1m/t_2.\] Partition $V_a$ into $W_a\cup S_a\cup L_a$ with $|W_a|=20\beta m$, $|S_a|=5\beta m$, and $|L_a|=(1-25\beta)m$. Recall that $a'$ is the vertex in $I_A'$ matched with $a$ by $M$, and it is matched with $b'\in I_B'$. Take $L_{a'}\subset V_{a'}$ and $L_{b'}\subset V_{b'}$ with sizes $5\beta m$ and $5\beta t_1m/t_2$, respectively. By shrinking exactly one of $S_a$ and $L_{a'}$ if necessary, we can ensure $|L_a|/|L_{a'}|=|S_b|/|S_a|$, $\max\{|L_{a'}|,|S_a|\}=5\beta m$, and $\min\{|L_{a'}|,|S_a|\}\geq\beta^2m$.

Refine all pairs of clusters of the forms $(W_a,W_b)$ and $(V_{a'}\setminus L_{a'},V_{b'}\setminus L_{b'})$, and all other matched clusters in $\{V_i:i\in(I_A\cup I_A')\setminus(Z_A\cup\{i_2\})\}$ and $\{V_i:i\in(I_B\cup I_B')\setminus(Z_B\cup\{i_3\})\}$ down to clusters with sizes $\gamma m$ and $\gamma t_1m/t_2$ accordingly. In the refinement process, we lose $O(\gamma n)$ covered vertices, and the resulting refined clusters can still be paired together as $(\sqrt\eps,d-\eps)$-regular pairs by Lemma~\ref{lemma:regularity:1}. Relabel these refined clusters as $\{V_i':i\in I_{1,1}\}$ and $\{V_i':i\in I_{1,2}\}$, with $I_{1,1}$ indexing the smaller clusters. Note that 
\begin{align*}
\sum_{i\in I_{1,1}}|V_i'|&\geq\sum_{i\in I_A\cup I_A'}|V_i|-\sum_{i\in Z_A\cup\{i_2\}}(1+5\beta)|V_i|+\sum_{a\in Z_A}|W_a|-O(\gamma n)\\
&\geq(1-O(\gamma))t_2-25\alpha^3(1+5\beta)t_2+500\alpha^3\beta t_2\geq(1-25\alpha^3+370\alpha^3\beta)t_2,
\end{align*}
and similarly $\sum_{i\in I_{1,2}}|V_i'|\geq(1-25\alpha^3+370\alpha^3\beta)t_1$. 

By Lemma~\ref{lemma:regularity:1}, we can also refine all pairs of clusters of the forms $(S_a,S_b)$ and $(L_{a'},L_a)$ down to clusters with sizes $\gamma m$ and $\gamma|L_a|m/|L_{a'}|$, then pair the new clusters into $(\sqrt\eps,d-\eps)$-regular pairs. Relabel the refined clusters as $\{V_i':i\in I_{0,1}\}$ and $\{V_i':i\in I_{0,2}\}$, with $I_{0,1}$ indexing the smaller clusters. Note that $\sum_{i\in I_{0,1}}|V_i'|\geq5\beta m\cdot25\alpha^3t_2/m-O(\gamma n)\geq100\alpha^3\beta t_2$. Moreover, using $\rho_Y=\tau_{1,Y}/\tau_{2,Y}\leq(t_1-\tau_{1,X})/\tau_{2,Y}$, we have
\begin{align*}
\sum_{i\in I_{0,2}}|V_i'|&\geq\sum_{a\in Z_A}|L_a|+\sum_{b\in Z_B}|S_b|-O(\gamma n)\\
&\geq25\alpha^3\left((1-25\beta)t_2+(1-20\beta)t_1-\frac{t_1-\tau_{1,X}}{\tau_{2,Y}}t_2-2\sqrt\eps t_2\right)-O(\gamma n)\\
&\geq25\alpha^3\left((1-26\beta)t_2+(1-20\beta)t_1-\frac{t_1}{1-4\beta}+\tau_{1,X}\right)\geq25\alpha^3\tau_{1,X}.
\end{align*}

Let $R_{\text{\bfseries EM1b3}}'$ be the graph on the right of Figure~\ref{fig:EM1biii}. Define a homomorphism $\varphi:T\to R_{\text{\bfseries EM1b3}}'$ as follows. Let $\varphi(v)=i_1$ for every $v\in\phi^{-1}(X_0)$, and let $\varphi(v)=i_2$ for every $v\in\phi^{-1}(Y_0)$. For every $K_j$ with $j\in J_X$, independently with probability $p=25\alpha^3-\alpha^3\beta$, define $\varphi$ on $K_j$ by composing $\phi$ with the function sending $Y_1,X_2,Y_3$ to $I_{0,1},I_{0,2},I_{0,1}$, respectively; and with probability $1-p$, define $\varphi$ on $K_j$ by composing $\phi$ with the function sending $Y_1,X_2,Y_3$ to $I_{1,1},I_{1,2},I_{1,1}$, respectively. For every $K_j$ with $j\in J_Y$, independently with probability $p=25\alpha^3-\alpha^3\beta$, define $\varphi$ on $K_j$ by composing $\phi$ with the function sending $X_1,Y_2,X_3$ to $i_3,I_{3,1},I_{3,2}$, respectively; and with probability $1-p$, define $\varphi$ on $K_j$ by composing $\phi$ with the function sending $X_1,Y_2,X_3$ to $i_1,I_{1,1},I_{1,2}$, respectively.

By Lemma~\ref{lemma:mcdiarmid}, with positive probability we have \[(25\alpha^3-150\alpha^3\beta)t_2\leq25\alpha^3(1-2\beta)(1-4\beta)t_2\leq|\varphi^{-1}(I_{3,1})|=p\tau_{2,Y}\pm\alpha^3\beta t_2/2\leq(25\alpha^3-\alpha^3\beta/2)t_2,\]
\[|\varphi^{-1}(I_{0,2}\cup I_{3,2})|=p(\tau_{1,X}+\tau_{1,Y})\pm\alpha^3\beta t_1\geq(25\alpha^3-\alpha^3\beta)(t_1-\xi n)-\alpha^3\beta t_1\geq(25\alpha^3-3\alpha^3\beta)t_1,\]
as well as $|\varphi^{-1}(I_{0,2})|\leq p\tau_{1,X}+\alpha^3\beta\tau_{1,X}/2\leq(25\alpha^3-\alpha^3\beta/2)\tau_{1,X}$, $|\varphi^{-1}(I_{3,2})|\leq p\tau_{1,Y}+\alpha^3\sqrt\eps t_2\leq(25\alpha^3-\alpha^3\beta)\tau_{1,Y}+\alpha^3\sqrt\eps t_2$, and $|\varphi^{-1}(I_{0,1})|\leq p\tau_{2,X}\pm\alpha^3\beta t_2\leq90\alpha^3\beta t_2$. 

It also follows that $|\varphi^{-1}(I_{1,1})|\leq t_2-|\varphi^{-1}(I_{3,1})|\leq (1-25\alpha^3+150\alpha^3\beta)t_2$ and $|\varphi^{-1}(I_{1,2})|\leq t_1-|\varphi^{-1}(I_{0,2}\cup I_{3,2})|\leq(1-25\alpha^3+3\alpha^3\beta)t_1$. Therefore, $|\varphi^{-1}(I)|$ is suitably smaller than $\sum_{i\in I}|V_i'|$ for each $I\in\{I_{1,1},I_{1,2},I_{0,1},I_{0,2},I_{3,1},I_{3,2}\}$. Thus, we can apply Lemma~\ref{lem:maintechnicalembedding} to find a copy of $T$ in $G$.
\end{proof}

%%%%%%%%%%%%%%%%%%%%%%%%%%%%%%%%%%%%%%%%%%%%%%%%%%%%%%%%%%%%%%%%%%%%%%%%
%%%%%%%%%%%%%%%%%%%%%%%%%%%%%%%%%%%%%%%%%%%%%%%%%%%%%%%%%%%%%%%%%%%%%%%%
%%%%%%%%%%%%%%%%%%%%%%%%%%%%%%%%%%%%%%%%%%%%%%%%%%%%%%%%%%%%%%%%%%%%%%%%

\subsection{EM1c Embedding Method}\label{sec:EM1c}

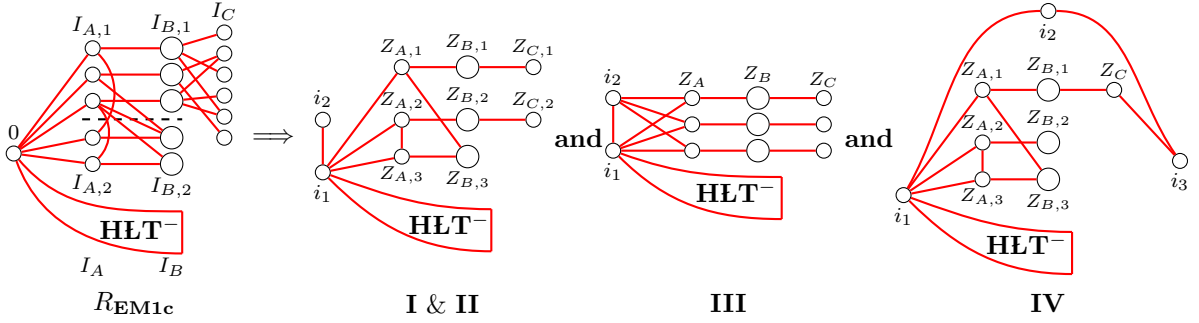
\begin{figure}[h]
\begin{center}
\hspace{-0.7cm}\begin{minipage}{0.2\textwidth}
\begin{center}
\begin{tikzpicture}[scale=0.7,main1/.style = {circle,draw,fill=none,inner sep=2}, main2/.style = {circle,draw,fill=none,inner sep=3}, main3/.style = {circle,draw,fill=none,inner sep=1.5}]

\node[main1] (A0) at (0.5,0) {};

\foreach \n in {1,...,3}
{
\node[main1] (A\n) at ($(2,2.5-0.5*\n)$) {};
\node[main2] (B\n) at ($(3.5,2.5-0.5*\n)$) {};
}
\def\droppp{0.2};
\foreach \n in {4,5}
{
\node[main1] (A\n) at ($(2,2.5-0.5*\n)-(0,\droppp)$) {};
\node[main2] (B\n) at ($(3.5,2.5-0.5*\n)-(0,\droppp)$) {};
}

\foreach \y in {1,...,6}
{
\node[main1] (D\y) at ($(4.5,1.9+0.8-\y*0.4)$) {};
}

\coordinate (Ax) at (2,-1) {};
\coordinate (Bx) at (3.7,-0.5) {};
\coordinate (By) at (3.7,-1.3) {};

\def\dropp{0.6};

\draw [thick,dashed,black] ($0.5*(A3)+0.5*(A4)-(0.2,0)$) -- ($0.5*(B3)+0.5*(B4)+(0.2,0)$);

\draw[red,thick] ($(Bx)-(0,\dropp)$) -- ($(By)-(0,\dropp)$);
\draw[red,thick] (A0) to[out=-50,in=180] ($(Bx)-(0,\dropp)$);
\draw[red,thick] (A0) to[out=-70,in=180] ($(By)-(0,\dropp)$);
\draw ($0.25*(A5)+0.25*(B5)+0.25*(Ax)+0.25*(Bx)-(-0.1,0.35)-(0,\dropp)$) node {\textbf{H\L T$^-$}};
%\draw ($0.25*(A5)+0.25*(B5)+0.25*(Ax)+0.25*(Bx)-(-0.1,1.75)-(0,\dropp)$) node {$R_{\text{\bfseries EM1c}}$};

\foreach \x/\y in {1/1,1/3,1/5,2/2,2/6,3/5,3/2,3/4}
{
\draw[red,thick] (B\x) -- (D\y);
}

\draw[red,thick] (A1) to[out=-45,in=45] (A4);
\draw[red,thick] (A3) to[out=-45,in=45] (A5);
\draw[red,thick] (A2) -- (B4);
\draw[red,thick] (A3) -- (B5);
\draw[red,thick] (A3) -- (B4);

\foreach \n in {1,...,5}
{
\draw[red,thick] (A\n) -- (B\n);
\draw[red,thick] (A0) -- (A\n);
}

%\draw ($0.5*(A1)+0.5*(C1)+(0,0.4)$) node {$M'$};
%\draw ($(A4)-(0,0.5)$) node {$t_2^-$};
%\draw ($(B4)-(0,0.55)$) node {$t_1^+$};

\draw ($(A0)+(0,0.4)$) node {\footnotesize $0$};

\draw ($(A1)+(0,0.4)$) node {\footnotesize $I_{A,1}$};
\draw ($(B1)+(0,0.45)$) node {\footnotesize $I_{B,1}$};
\draw ($(A5)-(0,0.4)$) node {\footnotesize $I_{A,2}$};
\draw ($(B5)-(0,0.45)$) node {\footnotesize $I_{B,2}$};

\draw ($(A5)-(0,1.95)$) node {\footnotesize $I_A$};
\draw ($(B5)-(0,1.95)$) node {\footnotesize $I_B$};

\draw ($(D1)+(0,0.4)$) node {\footnotesize $I_C$};
%\draw ($(D23)+(0.1,0)-(0,0.5)$) node {$0^+$};

\end{tikzpicture}
\end{center}
\end{minipage}\hspace{-0.3cm}
\begin{minipage}{1.2cm}
\begin{center}
$\implies$
\end{center}
\end{minipage}\hspace{-0.3cm}
%%%%%%%%%%%%%%%%%%%%%%%%%%%%%%%%%%%%%%%%%%%%%%%%%%%%%%%%%%%%%%%%%%%%%%%%%%%%%%%%%
%%%%%%%%%%%%%%%%%%%%%%%%%%%%%%%%%%%%%%%%%%%%%%%%%%%%%%%%%%%%%%%%%%%%%%%%%%%%%%%%%
\begin{minipage}{0.2\textwidth}
\begin{center}
\begin{tikzpicture}[scale=0.7,main1/.style = {circle,draw,fill=none,inner sep=2}, main2/.style = {circle,draw,fill=none,inner sep=3}, main3/.style = {circle,draw,fill=none,inner sep=1.5}]

\node[main1] (A0) at (0.5,0) {};
\node[main1] (J0) at (0.5,1) {};
\draw [red,thick] (A0) -- (J0);
\draw ($(J0)+(0,0.4)$) node {\footnotesize  $i_2$};

\foreach \n in {2,3}
{
\node[main1] (A\n) at ($(2,4-\n)$) {};
\node[main2] (B\n) at ($(3.25,4-\n)$) {};
\node[main1] (D\n) at ($(4.5,4-\n)$) {};
}

\def\droppp{0.2};
\foreach \n in {4}
{
\node[main1] (A\n) at ($(2,2.5-0.5*\n)-(0,\droppp)$) {};
\node[main2] (B\n) at ($(3.25,2.5-0.5*\n)-(0,\droppp)$) {};
}

\coordinate (Ax) at (2,-1) {};
\coordinate (Bx) at (3.7,-0.7) {};
\coordinate (By) at (3.7,-1.5) {};

%\draw [thick,dashed,black] ($0.5*(A3)+0.5*(A4)-(0.2,0)$) -- ($0.5*(B3)+0.5*(B4)+(0.2,0)$);

\draw[red,thick] (Bx) -- (By);
\draw[red,thick] (A0) to[out=-20,in=180] (Bx);
\draw[red,thick] (A0) to[out=-45,in=180] (By);
\draw ($0.25*(A5)+0.25*(B5)+0.25*(Ax)+0.25*(Bx)-(-0.1,0.45)$) node {\textbf{H\L T$^-$}};
%\draw ($0.25*(A5)+0.25*(B5)+0.25*(Ax)+0.25*(Bx)-(0,1.5)$) node {\textbf{i)}\&\textbf{ii)}};

\foreach \x/\y in {2/2,3/3}
{
\draw[red,thick] (B\x) -- (D\y);
}

\draw[red,thick] (A2) -- (B4);

\foreach \x/\y in {3/4}
{
\draw[red,thick] (A\x) -- (A\y);
}

\foreach \n in {2,...,4}
{
\draw[red,thick] (A\n) -- (B\n);
\draw[red,thick] (A0) -- (A\n);
}

%\draw ($0.5*(A1)+0.5*(C1)+(0,0.4)$) node {$M'$};
%\draw ($(A4)-(0,0.5)$) node {$t_2^-$};
%\draw ($(B4)-(0,0.55)$) node {$t_1^+$};

\draw ($(A0)-(0,0.4)$) node {\footnotesize $i_1$};

\draw ($(A2)+(0,0.35)$) node {\scriptsize $Z_{A,1}$};
\draw ($(B2)+(0,0.45)$) node {\scriptsize $Z_{B,1}$};
\draw ($(D2)+(0,0.35)$) node {\scriptsize $Z_{C,1}$};
\draw ($(A3)+(0,0.35)$) node {\scriptsize $Z_{A,2}$};
\draw ($(B3)+(0,0.45)$) node {\scriptsize $Z_{B,2}$};
\draw ($(D3)+(0,0.35)$) node {\scriptsize $Z_{C,2}$};
\draw ($(A4)-(0,0.35)$) node {\scriptsize $Z_{A,3}$};
\draw ($(B4)-(0,0.45)$) node {\scriptsize $Z_{B,3}$};

%\draw ($(D23)+(0.1,0)-(0,0.5)$) node {$0^+$};

\end{tikzpicture}
\end{center}
\end{minipage}\hspace{-0.3cm}
\begin{minipage}{1.2cm}
\begin{center}
\textbf{and}
\end{center}
\end{minipage}\hspace{-0.5cm}
%%%%%%%%%%%%%%%%%%%%%%%%%%%%%%%%%%%%%%%%%%%%%%%%%%%%%%%%%%%%%%%%%%%%%%%%%%%%%%%%%
%%%%%%%%%%%%%%%%%%%%%%%%%%%%%%%%%%%%%%%%%%%%%%%%%%%%%%%%%%%%%%%%%%%%%%%%%%%%%%%%%
\begin{minipage}{0.2\textwidth}
\begin{center}
\begin{tikzpicture}[scale=0.7,main1/.style = {circle,draw,fill=none,inner sep=2}, main2/.style = {circle,draw,fill=none,inner sep=3}, main3/.style = {circle,draw,fill=none,inner sep=1.5}]

\node[main1] (A0) at (0.5,0) {};

\foreach \n in {3,...,5}
{
\node[main1] (A\n) at ($(2,2.5-0.5*\n)$) {};
\node[main2] (B\n) at ($(3.25,2.5-0.5*\n)$) {};
\node[main1] (D\n) at ($(4.5,2.5-0.5*\n)$) {};
}

\coordinate (Ax) at (2,-1) {};
\coordinate (Bx) at (3.7,-0.5) {};
\coordinate (By) at (3.7,-1.3) {};

\draw[red,thick] (Bx) -- (By);
\draw[red,thick] (A0) to[out=-20,in=180] (Bx);
\draw[red,thick] (A0) to[out=-45,in=180] (By);
\draw ($0.25*(A5)+0.25*(B5)+0.25*(Ax)+0.25*(Bx)-(-0.1,0.4)$) node {\textbf{H\L T$^-$}};

%\draw ($0.25*(A5)+0.25*(B5)+0.25*(Ax)+0.25*(Bx)-(0,1.5)$) node {\textbf{iii)}};

\foreach \n in {3,4,5}
{
\draw[red,thick] (A\n) -- (B\n);
\draw[red,thick] (A0) -- (A\n);
\draw[red,thick] (B\n) -- (D\n);
}

\draw ($(A0)-(0,0.4)$) node {\footnotesize $i_1$};

\draw ($(A3)+(0,0.35)$) node {\scriptsize $Z_A$};
\draw ($(B3)+(0,0.45)$) node {\scriptsize $Z_B$};
\draw ($(D3)+(0,0.35)$) node {\scriptsize $Z_C$};
%\draw ($(D23)+(0.1,0)-(0,0.5)$) node {$0^+$};

\node[main1] (J0) at ($(A0)+(0,1)$) {};
\draw ($(J0)+(0,0.4)$) node {\footnotesize $i_2$};

\draw[red,thick] (A0) -- (J0);

\foreach \n in {3,4,5}
{
\draw[red,thick] (A\n) -- (J0);
}
\end{tikzpicture}
\end{center}
\end{minipage}\hspace{-0.3cm}
\begin{minipage}{1.2cm}
\begin{center}
\textbf{and}
\end{center}
\end{minipage}\hspace{-0.5cm}
%%%%%%%%%%%%%%%%%%%%%%%%%%%%%%%%%%%%%%%%%%%%%%%%%%%%%%%%%%%%%%%%%%%%%%%%%%%%%%%%%
%%%%%%%%%%%%%%%%%%%%%%%%%%%%%%%%%%%%%%%%%%%%%%%%%%%%%%%%%%%%%%%%%%%%%%%%%%%%%%%%%
\begin{minipage}{0.2\textwidth}
\begin{center}
\begin{tikzpicture}[scale=0.7,main1/.style = {circle,draw,fill=none,inner sep=2}, main2/.style = {circle,draw,fill=none,inner sep=3}, main3/.style = {circle,draw,fill=none,inner sep=1.5}]

\node[main1] (A0) at (0.5,0) {};
\foreach \n in {2}
{
\node[main1] (A\n) at ($(2,4-\n)$) {};
\node[main2] (B\n) at ($(3.25,4-\n)$) {};
\node[main1] (D\n) at ($(4.5,4-\n)$) {};
}

\foreach \n in {3}
{
\node[main1] (A\n) at ($(2,4-\n)$) {};
\node[main2] (B\n) at ($(3.25,4-\n)$) {};
}

\def\droppp{0.2};
\foreach \n in {4}
{
\node[main1] (A\n) at ($(2,2.5-0.5*\n)-(0,\droppp)$) {};
\node[main2] (B\n) at ($(3.25,2.5-0.5*\n)-(0,\droppp)$) {};
}

\coordinate (Ax) at (2,-1) {};
\coordinate (Bx) at (3.7,-0.7) {};
\coordinate (By) at (3.7,-1.5) {};

%\draw [thick,dashed,black] ($0.5*(A3)+0.5*(A4)-(0.2,0)$) -- ($0.5*(B3)+0.5*(B4)+(0.2,0)$);

\draw[red,thick] (Bx) -- (By);
\draw[red,thick] (A0) to[out=-20,in=180] (Bx);
\draw[red,thick] (A0) to[out=-45,in=180] (By);
\draw ($0.25*(A5)+0.25*(B5)+0.25*(Ax)+0.25*(Bx)-(-0.1,0.5)$) node {\textbf{H\L T$^-$}};
%\draw ($0.25*(A5)+0.25*(B5)+0.25*(Ax)+0.25*(Bx)-(0,1.5)$) node {\textbf{iv)}};

\foreach \x/\y in {2/2}
{
\draw[red,thick] (B\x) -- (D\y);
}

\foreach \x/\y in {3/4}
{
\draw[red,thick] (A\x) -- (A\y);
}

\foreach \n in {2,...,4}
{
\draw[red,thick] (A\n) -- (B\n);
\draw[red,thick] (A0) -- (A\n);
}

\draw[red,thick] (A2) -- (B4);

%\draw ($0.5*(A1)+0.5*(C1)+(0,0.4)$) node {$M'$};
%\draw ($(A4)-(0,0.5)$) node {$t_2^-$};
%\draw ($(B4)-(0,0.55)$) node {$t_1^+$};

\node[main1] (J0) at ($(B2)+(0,1.5)$) {};
\node[main1] (I1) at ($0.5*(B3)+0.5*(B4)+(2.5,0)$) {};

\foreach \x in {2}
{
\draw[red,thick] (I1) -- (D\x);
}

\draw[red,thick] (A0) to[out=60,in=180] (J0);
\draw[red,thick] (J0) to[out=0,in=125] (I1);

\draw ($(A0)-(0,0.4)$) node {\footnotesize $i_1$};
\draw ($(J0)-(0,0.4)$) node {\footnotesize $i_2$};
\draw ($(I1)-(0,0.4)$) node {\footnotesize $i_3$};
%\draw ($(A0)-(0.4,0)$) node {$i$};

\draw ($(A2)+(0,0.35)$) node {\scriptsize $Z_{A,1}$};
\draw ($(B2)+(0,0.45)$) node {\scriptsize $Z_{B,1}$};
\draw ($(D2)+(0,0.35)$) node {\scriptsize $Z_{C}$};
\draw ($(A3)+(0,0.35)$) node {\scriptsize $Z_{A,2}$};
\draw ($(B3)+(0,0.45)$) node {\scriptsize $Z_{B,2}$};
\draw ($(A4)-(0,0.35)$) node {\scriptsize $Z_{A,3}$};
\draw ($(B4)-(0,0.45)$) node {\scriptsize $Z_{B,3}$};
\end{tikzpicture}
\end{center}
\end{minipage}
\end{center}

\vspace{-0.75cm}
\begin{center}
\hspace{-0.4cm}
\begin{tikzpicture}
\draw (0,0) node {$R_{\text{\bfseries EM1c}}$};
\draw (4.1,0) node {\textbf{I} \& \textbf{II}};
\draw (7.9,0) node {\textbf{III}};
\draw (12.15,0) node {\textbf{IV}};
\end{tikzpicture}
\end{center}
\vspace{-0.3cm}

\caption{The initial reduced graph $R_{\text{\bfseries EM1c}}$ in Lemma~\ref{lemma:em1c} on the left, and the three substructures within used to embed the tree in \textbf{Cases I} \& \textbf{II}, \textbf{Case III}, and \textbf{Case IV}, respectively.}\label{fig:EM1cproof}
\end{figure}
%\textbf{Notes for proof:} We have three refinements that we use in different cases. These go similarly to \textbf{EM1b}, but with the extra complexity of switching in/out the cluster.
\begin{lemma}[\textbf{EM1c}]\label{lemma:em1c}
Let $1/n\ll 1/m\ll c\ll1/k\ll\eps\ll\gamma\ll\alpha\ll d\leq1$. Let $T$ be an $n$-vertex tree with $\Delta(T)\le cn$ and bipartition classes of sizes $t_1$ and $t_2$ with $t_2\leq t_1\leq 2t_2$. Let $G$ be a graph with with a partition $V(G)=V_0\cup V_1\cup\cdots\cup V_k$. Let $R_{\text{\emph{\bfseries EM1c}}}$ be a graph with vertex set $[k]_0$, such that if $ij\in E(R_{\text{\emph{\bfseries EM1c}}})$ then $G[V_i,V_j]$ is $(\eps,d)$-regular. Suppose there is a partition $[k]=I_A\cup I_{A,1}\cup I_{A,2}\cup I_B\cup I_{B,1}\cup I_{B,2}\cup I_C$, such that the following properties hold (see Figure~\ref{fig:EM1cproof}).
 \stepcounter{propcounter}
 \begin{enumerate}[label = \emph{\textbf{\Alph{propcounter}\arabic{enumi}}}]
    \item\labelinthm{lemma:reg:em1c:1} $|V_i|=m$ for all $i\in\{0\}\cup I_A\cup I_{A,1}\cup I_{A,2}\cup I_C$ and $|V_i|=t_1m/t_2$ for all $i\in I_B\cup I_{B,1}\cup I_{B,2}$.
    \item\labelinthm{lemma:reg:em1c:2} $|\cup_{i\in I_A\cup I_{A,1}\cup I_{A,2}}V_i|\geq(1-\gamma) t_2, |\cup_{i\in I_B\cup I_{B,1}\cup I_{B,2}}V_i|\geq(1-\gamma)t_1$, $|\cup_{i\in I_{A,1}}V_i|\geq|\cup_{i\in I_{A,2}}V_i|=10\alpha t_2$, $|\cup_{i\in I_{B,1}}V_i|\geq|\cup_{i\in I_{B,2}}V_i|=10\alpha t_1$, and $|\cup_{i\in I_C}V_i|\geq\frac23t_2$.
    \item\labelinthm{lemma:reg:em1c:3} In $R_{\text{\emph{\bfseries EM1c}}}$, 0 is adjacent to every vertex in $I_A\cup I_{A,1}\cup I_{A,2}$.
    \item\labelinthm{lemma:reg:em1c:4} In $R_{\text{\emph{\bfseries EM1c}}}$, there exist perfect matchings between each of the three pairs of sets $(I_A,I_B)$, $(I_{A,1},I_{B,1})$, and $(I_{A,2},I_{B,2})$.
    \item\labelinthm{lemma:reg:em1c:5} In $R_{\text{\emph{\bfseries EM1c}}}$, every vertex in $I_{B,1}$ is adjacent to at least $10\alpha|I_C|$ vertices in $I_C$.
    \item\labelinthm{lemma:reg:em1c:6} In $R_{\text{\emph{\bfseries EM1c}}}$, for every $a\in I_{A,2}$ and $b\in I_{B,2}$ with $ab\in E(R_{\text{\emph{\bfseries EM1c}}})$, both $a$ and $b$ have at least $\alpha|I_{A,1}|$ neighbours in $I_{A,1}$.
\end{enumerate}
Then, $G$ contains a copy of $T$.
\end{lemma}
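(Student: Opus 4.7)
The plan is to follow the framework established in the proof of Lemma~\ref{lemma:em1b} (\textbf{EM1b}) closely, since the conclusion and ambient regularity setup are identical; the only real difference lies in how the auxiliary structure gets assembled out of $R_{\text{\bfseries EM1c}}$. First I would apply Lemma~\ref{lem:maintreedecomp} to obtain a homomorphism $\phi:T\to S$ with $|\phi^{-1}(X_0\cup Y_0\cup X_1\cup Y_1)|\leq\xi n$ and every component of $T-\phi^{-1}(X_0\cup Y_0)$ of size at most $\xi n$. Assume by swapping roles if necessary that $|\phi^{-1}(X_0\cup X_1\cup X_2\cup X_3)|=t_1$. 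Label the components $\{K_j:j\in J\}$, partition $J=J_X\cup J_Y$ according to whether $N_T(K_j)\subset\phi^{-1}(X_0)$ or $\phi^{-1}(Y_0)$, and set $\tau_{b,X},\tau_{b,Y}$ $(b\in\{1,2\})$ as in the proof of \textbf{EM1b}. Then I would split into the same four cases as that proof (\textbf{I:} $\tau_{2,X}\geq 3\beta t_2$; \textbf{II:} $\tau_{2,X}<3\beta t_2$, $\tau_{1,X}<100\beta t_1$, $t_1<(1+200\beta)t_2$; \textbf{III:} $\tau_{2,X}<3\beta t_2$, $\tau_{1,X}<100\beta t_1$, $t_1\geq(1+200\beta)t_2$; \textbf{IV:} $\tau_{2,X}<3\beta t_2$, $\tau_{1,X}\geq100\beta t_1$).

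The key new combinatorial ingredient is to exploit condition~\ref{lemma:reg:em1c:6}, which replaces the internal-matching hypothesis~\ref{lemma:reg:em1b:6} used in \textbf{EM1b}. Concretely, I would prove an analogue of \textbf{Claim~\ref{claim:backandforth}} tailored to the current setting: for any $Z\subset I_{A,2}$ of suitable size, greedily extract in $R_{\text{\bfseries EM1c}}$ a matching from $Z$ to $I_{A,1}$ using the $\alpha|I_{A,1}|$-many neighbours guaranteed by~\ref{lemma:reg:em1c:6}, together with the ``parallel'' matching from the corresponding subset of $I_{B,2}$ into $I_{A,1}$ (via the same neighbourhood property for the $b$-side). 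Combining this with the matching $(I_{A,2},I_{B,2})$ from~\ref{lemma:reg:em1c:4} produces a short red walk $I_{A,1}\leftrightarrow I_{A,2}\leftrightarrow I_{B,2}$ (and symmetrically one can hop back from $I_{B,2}$ into $I_{A,1}$), which plays the role of the single internal edge in $I_A'$ from the \textbf{EM1b} proof. Then, by the same greedy argument as in the proof of \textbf{EM1b} (using Claim~\ref{claim:backandforth} applied inside $I_{B,1}$), we can further extend this into $I_C$ via the red matching guaranteed by~\ref{lemma:reg:em1c:5}.

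In each case I would then construct the substructure exactly as in the corresponding case of \textbf{EM1b}, but routing through the new back-and-forth via $I_{A,2}$-$I_{B,2}$-$I_{A,1}$ instead of via an edge inside $I_A'$; the three pictures on the right of Figure~\ref{fig:EM1cproof} make this explicit. The refinement step (splitting some clusters into $W_a,S_a,L_a$ pieces of carefully-chosen sizes to balance the deficits created on the two sides of the bipartition) and the random homomorphism $\varphi:T\to R'$ sending each $K_j$ with small probability $p$ into the new ``side branch'' are set up as in the proof of \textbf{EM1b} verbatim, with the numerical constants $p,\eta,\beta$ chosen identically in each case. Once $\varphi$ is chosen, an application of Lemma~\ref{lemma:mcdiarmid} yields that with positive probability $|\varphi^{-1}(I)|$ is comfortably less than $\sum_{i\in I}|V_i'|$ for every $I$ in the reduced graph, and an application of Lemma~\ref{lem:maintreedecomp}'s output together with Lemma~\ref{lem:maintechnicalembedding} produces the desired copy of $T$.

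The part I expect to be most delicate is \textbf{Case IV}, where we must simultaneously: (i) reserve enough capacity in the ``side'' subsets $I_{3,1},I_{3,2}$ (coming from the $I_{B,1}\to I_C$ detour) to absorb essentially all of $\phi^{-1}(Y_2\cup X_3)$; (ii) reserve enough capacity in the back-and-forth pieces $I_{0,1},I_{0,2}$ built out of $I_{A,2}\leftrightarrow I_{A,1}$-neighbourhoods to absorb $\phi^{-1}(X_2\cup Y_3)$ when the ratio $\rho_Y=\tau_{1,Y}/\tau_{2,Y}$ is significantly smaller than $t_1/t_2$; and (iii) refine the remaining clusters into $(\sqrt\eps,d-\eps)$-regular pairs in the correct size ratio for Lemma~\ref{lem:maintechnicalembedding} after losing only $O(\gamma n)$ vertices. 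Because~\ref{lemma:reg:em1c:6} gives adjacency into $I_{A,1}$ rather than an internal matching in $I_{A,1}$ itself, the pieces $L_{a'},S_a$ used in the proof of \textbf{EM1b} need to be re-allocated across the $I_{A,1}$-vertices receiving matching edges from $I_{A,2}\cup I_{B,2}$; keeping the size arithmetic consistent with the bound $\sum_{i\in I_{0,2}}|V_i'|\geq 25\alpha^3\tau_{1,X}$ (the crucial estimate in Case IV of \textbf{EM1b}) is the main technical hurdle.
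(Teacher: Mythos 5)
Your proposal is correct and follows essentially the same route as the paper: the same four-case split inherited from the proof of \textbf{EM1b}, the replacement of the internal matching/edge in $I_A'$ by the detour $I_{A,1}\leftrightarrow I_{B,2}\leftrightarrow I_{A,2}\leftrightarrow I_{A,1}$ obtained by greedily extracting two disjoint matchings from a submatching of $(I_{A,2},I_{B,2})$ into $I_{A,1}$ via condition \itref{lemma:reg:em1c:6}, the Claim~\ref{claim:backandforth}-style hook from $I_{B,1}$ into $I_C$, and the reduction of each case to the corresponding case of Lemma~\ref{lemma:em1b} after the same refinement and random-homomorphism steps. You also correctly single out Case IV and the estimate $\sum_{i\in I_{0,2}}|V_i'|\geq 25\alpha^3\tau_{1,X}$ as the delicate point, which matches where the paper's argument does its most careful size bookkeeping.
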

\begin{proof}
We proceed similarly to the proof of Lemma~\ref{lemma:em1b}. Let $\xi$ satisfy $c\ll\xi\ll 1/k$. Let $S$ be the graph defined in Lemma~\ref{lem:maintreedecomp} and, using that lemma, take a homomorphism $\phi:T\to S$ such that each component of $T-\phi^{-1}(X_0\cup Y_0)$ has size at most $\xi n$ and $|\phi^{-1}(X_0\cup Y_0\cup X_1\cup Y_1)|\leq \xi n$. Without loss of generality, say $|\phi^{-1}(X_0\cup X_1\cup X_2\cup X_3)|=t_1$ and $|\phi^{-1}(Y_0\cup Y_1\cup Y_2\cup Y_3)|=t_2$. Let the components of $T-\phi^{-1}(X_0\cup Y_0)$ be $\{K_j:j\in J\}$. Moreover, partition $J$ as $J_X\cup J_Y$, so that $N_T(K_j)\subset\phi^{-1}(X_0)$ for each $j\in J_X$, and $N_T(K_j)\subset\phi^{-1}(Y_0)$ for each $j\in J_Y$.   

Let $\tau_{1,X}=|\phi^{-1}(X_2)|$, $\tau_{2,X}=|\phi^{-1}(Y_3)|$, $\tau_{1,Y}=|\phi^{-1}(X_3)|$, and $\tau_{2,Y}=|\phi^{-1}(Y_2)|$. Let $\gamma\ll\beta\ll\alpha$, and consider the following four cases. \textbf{I:} $\tau_{2,X}\geq3\beta t_2$. \textbf{II:} $\tau_{2,X}<3\beta t_2$, $\tau_{1,X}<100\beta t_1$, and $t_1<(1+200\beta)t_2$. \textbf{III:} $\tau_{2,X}<3\beta t_2$, $\tau_{1,X}<100\beta t_1$, and $t_1\geq(1+200\beta)t_2$. \textbf{IV:} $\tau_{2,X}<3\beta t_2$ and $\tau_{1,X}\geq100\beta t_1$.

\begin{figure}[h]
\input{figures/EM1c/pfEM1ci}

\vspace{-0.3cm}

\caption{On the left, the transformation of the reduced graph structure used in {\bfseries Cases I} and {\bfseries II} to embed the tree. On the right, the auxiliary graph $R_{\text{\bfseries EM1c1}}'$ used when applying Lemma~\ref{lem:maintechnicalembedding}.}\label{fig:EM1ci}
\end{figure}

\noindent \textbf{Cases I} \& \textbf{II.} 
Fix a submatching in $R_{\text{\bfseries EM1c}}[I_{A,2},I_{B,2}]$ with size $\alpha|I_{A,2}|/2=5\alpha^2t_2/m$, say between $Z_{A,3}$ and $Z_{B,3}$. By~\ref{lemma:reg:em1c:6}, we can greedily find two disjoint matchings $M_1$ and $M_2$ in $R_{\text{\bfseries EM1c}}[I_{A,1},Z_{B,3}]$ and $R_{\text{\bfseries EM1c}}[I_{A,1},Z_{A,3}]$ covering $Z_{B,3}$ and $Z_{A,3}$, respectively. Let $Z_{A,1}=V(M_1)\cap I_{A,1}$ and $Z_{A,2}=V(M_2)\cap I_{A,1}$. Let $Z_{B,1},Z_{B,2}\subset I_{B,1}$ be the vertices matched with $Z_{A,1}$ and $Z_{A,2}$, respectively. By~\ref{lemma:reg:em1c:5}, we can greedily find disjoint perfect matchings in $R_{\text{\bfseries EM1c}}[Z_{B,1}\cup Z_{B,2},I_C]$ between $Z_{B,1}$ and some $Z_{C,1}\subset I_C$, and between $Z_{B,2}$ and some $Z_{C,2}\subset I_C$. Let $Z_A=Z_{A,1}\cup Z_{A,2}\cup Z_{A,3}$ and $Z_B=Z_{B,1}\cup Z_{B,2}\cup Z_{B,3}$.

We now further transform this structure as depicted on the left of Figure~\ref{fig:EM1ci}. Set $i_1=0$, pick $i_2$ arbitrarily from $I_{A,2}\setminus Z_{A,3}$, and let $i_2'$ be the vertex in $I_{B,2}\setminus Z_{B,3}$ that $i_2$ is matched with. For each $a_3\in Z_{A,3}$ matched with $b_3\in Z_{B,3}$, say $a_3$ is matched with $a_2\in Z_{A,2}$ under $M_2$, and $b_3$ is matched with $a_1\in Z_{A,1}$ under $M_1$. Let $\eta\geq\beta$ be a constant to be chosen later depending on whether we are in \textbf{Case I} or \textbf{Case II}. Pick disjoint subsets $V_{a_3,1},V_{a_3,2}\subset V_{a_3}$ and $V_{a_2,1},V_{a_2,2}\subset V_{a_2}$ such that $|V_{a_3,1}|=|V_{a_2,1}|=\eta t_1m/n$ and $|V_{a_3,2}|=|V_{a_2,2}|=\eta t_2m/n$. Partition $V_{a_1}$ as $V_{a_1,1}\cup V_{a_1,2}$ with $|V_{a_1,1}|=\eta m$ and $|V_{a_1,2}|=(1-\eta)m$. Partition $V_{b_3}$ as $V_{b_3,1}\cup V_{b_3,2}$ with $|V_{b_3,1}|=\eta t_1m/t_2$ and $|V_{b_3,2}|=(1-\eta)t_1m/t_2$. By Lemma~\ref{lemma:regularity:1}, we can refine all pairs of clusters of the forms $(V_{a_3,2},V_{a_2,1}), (V_{a_2,2},V_{a_3,1}), (V_{a_1,1},V_{b_3,1}), (V_{a_3}\setminus(V_{a_3,1}\cup V_{a_3,2}),V_{b_3,2})$, along with all pairs of matched clusters indexed by $(I_A\cup I_{A,1}\cup I_{A,2})\setminus(Z_A\cup \{i_2\})$ and $(I_B\cup I_{B,1}\cup I_{B,2})\setminus(Z_B\cup\{i_2'\})$ into smaller clusters with sizes $\gamma m$ or $\gamma t_1m/t_2$ accordingly, then pair them up again into $(\sqrt\eps,d-\eps)$-regular pairs. Note that $O(\gamma n)$ covered vertices are lost in this refinement process. Relabel these new clusters as $\{V_i':i\in I_{1,1}\}$ and $\{V_i':i\in I_{1,2}\}$ with $I_{1,1}$ indexing those with the smaller size. Then, we have
\begin{align*}
\sum_{i\in I_{1,1}}|V_i'|&\geq\sum_{i\in I_A\cup I_{A,1}\cup I_{A,2}}|V_i|-m-2|Z_{A,2}|\left(m-\frac{\eta t_2m}n\right)-O(\gamma n)\\
&\geq(1-O(\gamma))t_2-10\alpha^2t_2\left(1-\frac{\eta t_2}{n}\right)\geq(1-10\alpha^2+3\alpha^2\eta)t_2,
\end{align*}
and similarly $\sum_{i\in I_{1,2}}|V_i'|\geq(1-10\alpha^2+3\alpha^2\eta)t_1$.

Then, let $U_i=V_{i,2}$ for every $i\in Z_{A,1}$ and let $U_i=V_i\setminus(V_{i,1}\cup V_{i,2})$ for every $i\in Z_{A,2}$. Relabel the subsets $\{U_i:i\in Z_{A,1}\cup Z_{A,2}\}$ as $\{V_i':i\in I_{0,1}\}$, and similarly use $I_{0,2}$ and $I_{0,3}$ to relabel the subsets $V_i$ with $i\in Z_{B,1}\cup Z_{B,2}$ and $i\in Z_{C,1}\cup Z_{C,2}$, respectively. Note that $\sum_{i\in I_{0,1}}|V_i'|=2|Z_{A,1}|(1-\eta)m=(10\alpha^2-10\alpha^2\eta)t_2$, $\sum_{i\in I_{0,2}}|V_i'|=10\alpha^2t_1$, and $\sum_{i\in I_{0,3}}|V_i'|=10\alpha^2t_2$. We are now in the same situation as \textbf{Cases I} \& \textbf{II} in the proof of Lemma~\ref{lemma:em1b}, so we can proceed in the same way to find a copy of $T$ in $G$.

\begin{figure}[h]
\input{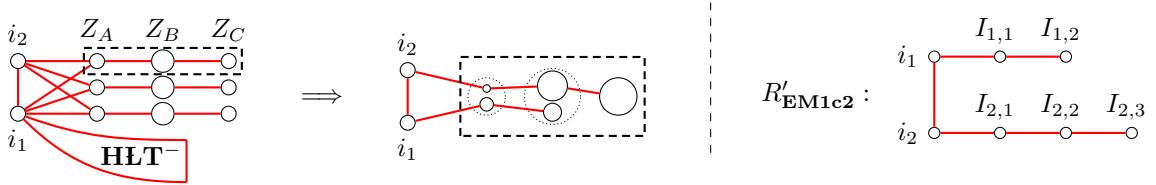}

\vspace{-0.3cm}

\caption{On the left, the transformation of the reduced graph structure used in {\bfseries Case III} to embed the tree. On the right, the auxiliary graph $R_{\text{\bfseries EM1c2}}'$ used when applying Lemma~\ref{lem:maintechnicalembedding}.}\label{fig:EM1cii}
\end{figure}

\noindent \textbf{Case III.} $\tau_{2,X}<3\beta t_2$, $\tau_{1,X}<100\beta t_1$, and $t_1\geq(1+200\beta)t_2$. Set $i_1=0$ and pick $i_2\in I_{A,2}$ arbitrarily. By~\ref{lemma:reg:em1c:6}, there is a set $Z_A\subset N_{R_{\text{\bfseries EM1c}}}(i_2,I_{A,1})$ with size $\alpha|I_{A,1}|=10\alpha^2t_2/m$. Let $Z_B\subset I_{B,1}$ be the indices matched with $Z_A$, and use~\ref{lemma:reg:em1c:5} to greedily find a perfect matching between $Z_B$ and some $Z_C\subset I_C$. This is the same structure used in \textbf{Case III} of the proof of Lemma~\ref{lemma:em1b}, so we can proceed in the same way to find an embedding of $T$ in $G$.

\begin{figure}[h]
\input{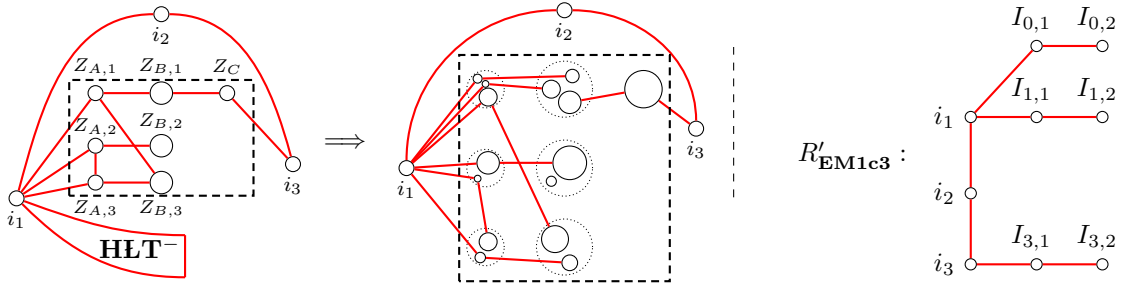}

\vspace{-0.3cm}

\caption{On the left, the transformation of the reduced graph structure used in {\bfseries Case IV} to embed the tree. On the right, the auxiliary graph $R_{\text{\bfseries EM1c3}}'$ used when applying Lemma~\ref{lem:maintechnicalembedding}.}\label{fig:EM1ciii}
\end{figure}
\noindent \textbf{Case IV.} $\tau_{2,X}<3\beta t_2$ and $\tau_{1,X}\geq100\beta t_1$. Let $\rho_Y=\tau_{1,Y}/\tau_{2,Y}$, and observe that $\rho_Y<(1-40\beta)t_1/t_2$. Fix a submatching in $R_{\text{\bfseries EM1c}}[I_{A,2},I_{B,2}]$ with size $\alpha|I_{A,1}|/2=5\alpha^2t_2/m$, say between $Z_{A,3}'$ and $Z_{B,3}'$. By~\ref{lemma:reg:em1c:6}, we can greedily find two disjoint matchings $M_1$ and $M_2$ in $R_{\text{\bfseries EM1c}}[I_{A,1},Z_{B,3}']$ and $R_{\text{\bfseries EM1c}}[I_{A,1},Z_{A,3}']$ covering $Z_{B,3}'$ and $Z_{A,3}'$, respectively. Let $Z_{A,1}'=V(M_1)\cap I_{A,1}'$ and $Z_{A,2}'=V(M_2)\cap I_{A,1}'$, and let $Z_{B,1}',Z_{B,2}'\subset I_{B,1}$ be the vertices matched with $Z_{A,1}'$ and $Z_{A,2}'$, respectively. Using~\ref{lemma:reg:em1c:5}, and as in Claim~\ref{claim:backandforth}, we can find a matching $M$ in $R_{\text{\bfseries EM1c}}[Z_{B,1}',I_C]$ with size $5\alpha|Z_{B,1}'|=25\alpha^3t_2/m$, and a vertex $i_3\in Z_{B,1}'\setminus V(M)$ that is adjacent to every vertex in $I_C\cap V(M)$. Say $M$ matches $Z_{B,1}\subset Z_{B,1}'$ with $Z_C\subset I_C$, let $Z_{A,1}\subset Z_{A,1}'$ be the set matched with $Z_{B,1}$, and let $i_2\in Z_{A,1}'\setminus Z_{A,1}$ be the vertex matched with $i_3$. Let $Z_{B,3}\subset Z_{B,3}'$ be matched with $Z_{A,1}$ by $M_1$, and suppose $Z_{B,3}$ is matched with $Z_{A,3}\subset Z_{A,3}'$, which is in turn matched with $Z_{A,2}\subset Z_{A,2}'$ by $M_2$. Finally, let $Z_{B,2}\subset Z_{B,2}'$ be matched with $Z_{A,2}$, set $Z_A=Z_{A,1}\cup Z_{A,2}\cup Z_{A,3}$, and set $Z_B=Z_{B,1}\cup Z_{B,2}\cup Z_{B,3}$. 

We now further transform this structure as depicted on the left of Figure~\ref{fig:EM1ciii}. Set $i_1=0$. %, and pick $i_2$ arbitrarily from $I_{A,2}\setminus Z_{A,3}$, and let $i_2'$ be the index in $I_{B,2}\setminus Z_{B,3}$ that $i_2$ is matched with. 
For each $a_3\in Z_{A,3}$ matched with $b_3\in Z_{B,3}$, say $a_3$ is matched with $a_2\in Z_{A,2}$ by $M_2$, which is matched with $b_2\in Z_{B,2}$. Say $b_3$ is matched with $a_1\in Z_{A,1}$ by $M_1$, which is matched with $b_1\in Z_{B,1}$, which is in turn matched with $c\in Z_C$ by $M$. Let $U_c=V_c$, and pick $U_{b_1}\subset V_{b_1}$ with size $(\rho_Y+2\sqrt\eps)m$. Note that $G[U_{b_1},U_c]$ is $(\sqrt\eps,d-\eps)$-regular by Lemma~\ref{lemma:regularity:1}. Relabel $\{U_c:c\in Z_C\}$ and $\{U_{b_1}:b_1\in Z_{B,1}\}$ as $\{V_i':i\in I_{3,1}\}$ and $\{V_i':i\in I_{3,2}\}$, respectively. Note that $\sum_{i\in I_{3,1}}|V_i'|=25\alpha^3t_2$ and $\sum_{i\in I_{3,2}}|V_i'|=25\alpha^3(\rho_Y+2\sqrt\eps)t_2\geq25\alpha^3\tau_{1,Y}+50\alpha^3\sqrt\eps t_2$.

Since $\rho_Y<(1-40\beta)t_1/t_2$, we have $|V_{b_1}\setminus U_{b_1}|\geq39\beta t_1m/t_2$, so $V_{b_1}\setminus U_{b_1}$ can be partitioned as $W_{b_1}\cup L_{b_1}$ with $|W_{b_1}|=20\beta t_1m/t_2$ and \[|L_{b_1}|=(1-20\beta)t_1m/t_2-(\rho_Y+2\sqrt\eps)m\geq19\beta t_1m/t_2.\] Partition $V_{a_1}$ into $W_{a_1}\cup W_{a_1}'\cup L_{a_1}$ with $|W_{a_1}|=20\beta m$, $|W_{a_1}'|=(1-25\beta)m$, and $|L_{a_1}|=5\beta m$. Partition $V_{b_3}$ as $W_{b_3}\cup W_{b_3}'$ with $|W_{b_3}|=25\beta t_1m/t_2$ and $|W_{b_3}'|=(1-25\beta)t_1m/t_2$. Partition $V_{a_3}$ as $W_{a_3}\cup L_{a_3}$ with $|W_{a_3}|=25\beta m$ and $|L_{a_3}|=(1-25\beta)m$. Partition $V_{a_2}$ as $W_{a_2}\cup L_{a_2}$ with $|W_{a_2}|=(1-5\beta)m$ and $|L_{a_2}|=5\beta m$. Partition $V_{b_2}$ as $W_{b_2}\cup L_{b_2}$ with $|W_{b_2}|=(1-5\beta)t_1m/t_2$ and $|L_{b_2}|=5\beta t_1m/t_2$. By shrinking exactly one of $L_{a_1}$ or $L_{a_2}$ if necessary, we can ensure $|L_{b_1}|/|L_{a_1}|=|L_{a_3}|/|L_{a_2}|$, $\max\{|L_{a_1}|,|L_{a_2}|\}=5\beta m$, and $\min\{|L_{a_1}|,|L_{a_2}|\}\geq\beta^2m$.

Refine all pairs of clusters of the forms $(W_{a_1},W_{b_1}),(W_{a_1}',W_{b_3}'),(W_{a_3},W_{b_3}),(W_{a_2},W_{b_2})$, and all matched clusters in $\{V_i:i\in(I_A\cup I_{A,1}\cup I_{A,2})\setminus(Z_A\cup\{i_2\})\}$ and $\{V_i:i\in(I_B\cup I_{B,1}\cup I_{B,2})\setminus(Z_B\cup\{i_3\})\}$ down to clusters with sizes $\gamma m$ or $\gamma t_1m/t_2$ accordingly. In the refinement process, we lose $O(\gamma n)$ covered vertices, and the resulting refined clusters can be paired together again as $(\sqrt\eps,d-\eps)$-regular pairs by Lemma~\ref{lemma:regularity:1}. Relabel these refined clusters as $\{V_i':i\in I_{1,1}\}$ and $\{V_i':i\in I_{1,2}\}$, with $I_{1,1}$ indexing the smaller clusters. Note that 
\begin{align*}
\sum_{i\in I_{1,1}}|V_i'|&\geq\sum_{i\in I_A\cup I_{A,1}\cup I_{A,2}}|V_i|-m-|Z_{A,1}|(5\beta+5\beta+(1-25\beta))m-O(\gamma n)\\
&\geq(1-O(\gamma))t_2-25\alpha^3(1-15\beta)t_2\geq(1-25\alpha^3+370\alpha^3\beta)t_2,
\end{align*}
and similarly $\sum_{i\in I_{1,2}}|V_i'|\geq(1-25\alpha^3+370\alpha^3\beta)t_1$. 

Next, by Lemma~\ref{lemma:regularity:1}, we can refine all pairs of clusters of the forms $(L_{a_2},L_{a_3})$ and $(L_{a_1},L_{b_1})$ down to clusters with sizes $\gamma m$ and $\gamma|L_{b_1}|m/|L_{a_1}|$, and pair them up again into $(\sqrt\eps,d-\eps)$-regular pairs. Relabel the refined clusters as $\{V_i':i\in I_{0,1}\}$ and $\{V_i':i\in I_{0,2}\}$, with $I_{0,1}$ indexing the smaller clusters. Note that $\sum_{i\in I_{0,1}}|V_i'|\geq5\beta m\cdot25\alpha^3t_2/m-O(\gamma n)\geq100\alpha^3\beta t_2$. Moreover, using $\rho_Y=\tau_{1,Y}/\tau_{2,Y}\leq(t_1-\tau_{1,X})/\tau_{2,Y}$, we have
\begin{align*}
\sum_{i\in I_{0,2}}|V_i'|&\geq\sum_{a_3\in Z_{A,3}}|L_{a_3}|+\sum_{b_1\in Z_{B,1}}|L_{b_1}|-O(\gamma n)\\
&\geq25\alpha^3\left((1-25\beta)t_2+(1-20\beta)t_1-\frac{t_1-\tau_{1,X}}{\tau_{2,Y}}t_2-2\sqrt\eps t_2\right)-O(\gamma n)\\
&\geq25\alpha^3\left((1-26\beta)t_2+(1-20\beta)t_1-\frac{t_1}{1-4\beta}+\tau_{1,X}\right)\geq25\alpha^3\tau_{1,X}.
\end{align*}
We are now in the same situation as \textbf{Case IV} in the proof of Lemma~\ref{lemma:em1b}, so we can proceed in the same way to find an embedding of $T$ in $G$.
\end{proof}

%%%%%%%%%%%%%%%%%%%%%%%%%%%%%%%%%%%%%%%%%%%%%%%%%%%%%%%%%%%%%%%%%%%%%%%%
%%%%%%%%%%%%%%%%%%%%%%%%%%%%%%%%%%%%%%%%%%%%%%%%%%%%%%%%%%%%%%%%%%%%%%%%
%%%%%%%%%%%%%%%%%%%%%%%%%%%%%%%%%%%%%%%%%%%%%%%%%%%%%%%%%%%%%%%%%%%%%%%%

%%%%%%%%%%%%%%%%%%%%%%%%%%%%%%%%%%%%%%%%%%%%%%%%%%%%%%%%%%%%%%%%%%%%%%%%
%%%%%%%%%%%%%%%%%%%%%%%%%%%%%%%%%%%%%%%%%%%%%%%%%%%%%%%%%%%%%%%%%%%%%%%%
%%%%%%%%%%%%%%%%%%%%%%%%%%%%%%%%%%%%%%%%%%%%%%%%%%%%%%%%%%%%%%%%%%%%%%%%

\subsection{EM2a Embedding Method}\label{sec:EM2a}

\begin{figure}[h]
\input{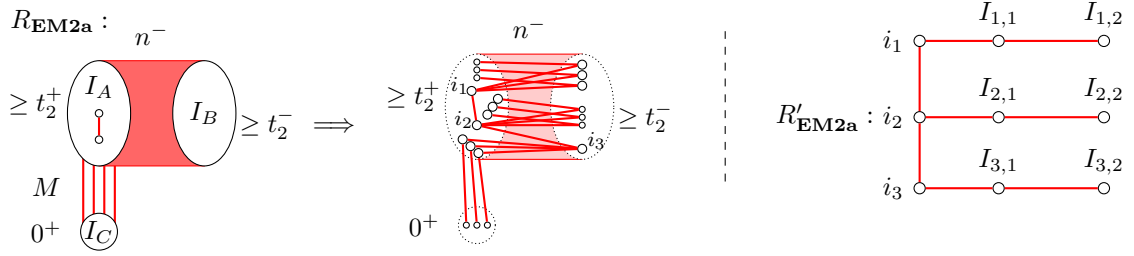}

\vspace{-0.3cm}

\caption{On the left, the initial reduced graph $R_{\text{\bfseries EM2a}}$ transformed into the substructure used to embed the tree in Lemma~\ref{lemma:em2a}. On the right, the auxiliary graph $R_{\text{\bfseries EM2a}}'$ used when applying Lemma~\ref{lem:maintechnicalembedding}.}\label{fig:EM2aproof}
\end{figure}

\begin{lemma}[\textbf{EM2a}]\label{lemma:em2a}
Let $1/n\ll c\ll 1/k\ll\eps\ll \eta \ll\alpha\ll d\leq1$. Let $T$ be an $n$-vertex tree with $\Delta(T)\le cn$ and bipartition classes of sizes $t_1$ and $t_2$ with $t_2\leq t_1\leq 2t_2$. Let $G$ be a graph with at most $2n$ vertices and a vertex partition $V_1\cup\cdots\cup V_k$ such that $|V_1|=|V_2|=\cdots=|V_k|=m$. Let $R_{\text{\emph{\bfseries EM2a}}}$ be a graph with vertex set $[k]$, such that if $ij\in E(R_{\text{\emph{\bfseries EM2a}}})$ then $G[V_i,V_j]$ is $(\eps,d)$-regular.
Suppose there is a partition $[k]=I_A\cup I_B\cup I_C$ such that the following properties hold (see Figure~\ref{fig:EM2aproof}).
\stepcounter{propcounter}
\begin{enumerate}[label = \emph{\textbf{\Alph{propcounter}\arabic{enumi}}}]
\item\labelinthm{lemma:reg:em2a:1} $|\cup_{i\in I_A}V_i|\geq t_2+200\alpha n, |\cup_{i\in I_B}V_i|\geq t_2-\alpha n, |\cup_{i\in I_A\cup I_B}V_i|=n-\alpha n$, and $|\cup_{i\in I_C}V_i|=100\alpha n$.
    \item\labelinthm{lemma:reg:em2a:2} $R_{\text{\emph{\bfseries EM2a}}}[I_A,I_B]$ is an $\eta$-almost complete bipartite graph.
    \item\labelinthm{lemma:reg:em2a:3} $R_{\text{\emph{\bfseries EM2a}}}[I_A,I_C]$ contains a matching $M$ covering $I_C$.
    \item\labelinthm{lemma:reg:em2a:4} $E(R_{\text{\emph{\bfseries EM2a}}}[I_A]-V(M))\neq \emptyset$.
\end{enumerate}
Then, $G$ contains a copy of $T$.
\end{lemma}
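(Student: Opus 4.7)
My plan is to reduce to the main technical embedding lemma (Lemma~\ref{lem:maintechnicalembedding}) by constructing a suitable three-hub, three-arm structure together with a homomorphism $\varphi:T\to R'$. I first apply the tree decomposition of Lemma~\ref{lem:maintreedecomp} with a parameter $\xi$ satisfying $c\ll\xi\ll 1/k$ to obtain a homomorphism $\phi:T\to S$ such that each component of $T-\phi^{-1}(X_0\cup Y_0)$ has size at most $\xi n$ and $|\phi^{-1}(X_0\cup Y_0\cup X_1\cup Y_1)|\leq\xi n$; without loss of generality I take $|\phi^{-1}(X_0\cup X_1\cup X_2\cup X_3)|=t_1$ and $|\phi^{-1}(Y_0\cup Y_1\cup Y_2\cup Y_3)|=t_2$.

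Next I select the hubs: using~\ref{lemma:reg:em2a:4} I pick an edge $i_1 i_2\in E(R_{\text{EM2a}}[I_A\setminus V(M)])$, and using the $\eta$-almost completeness of $R_{\text{EM2a}}[I_A,I_B]$ from~\ref{lemma:reg:em2a:2} together with $|V(M)\cap I_A|=|I_C|\gg\eta|I_A|$, I pick $i_3\in I_B$ so that $i_2 i_3\in E(R_{\text{EM2a}})$ and $i_3$ has at least $(1-2\eta)|V(M)\cap I_A|$ neighbours in $V(M)\cap I_A$. I then assemble the three arms required by Lemma~\ref{lem:maintechnicalembedding}: arms~$1$ and~$2$ come from two disjoint matchings inside the $\eta$-almost complete bipartite graph $R_{\text{EM2a}}[I_A\setminus(V(M)\cup\{i_1,i_2\}),\,I_B\setminus\{i_3\}]$, extracted via Hall's theorem (Lemma~\ref{lemma:Hall}), while arm~$3$ uses the matching $M$ restricted to $N_{R_{\text{EM2a}}}(i_3)\cap V(M)\cap I_A$ together with the corresponding $I_C$-partners under $M$. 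In every case I refine clusters via Lemma~\ref{lemma:regularity:1} so as to meet the equalities $|I_{a,1}|=|I_{a,2}|=|I_{a,3}|$ demanded by Lemma~\ref{lem:maintechnicalembedding}, while preserving $(\sqrt\eps,d-\eps)$-regularity of every matched pair.

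Finally, I define $\varphi:T\to R'$ by composing $\phi$ with an assignment $V(S)\to V(R')$ that sends $\phi^{-1}(X_0)$ to $i_1$ and $\phi^{-1}(Y_0)$ to $i_2$, and which distributes each component of $T-\phi^{-1}(X_0\cup Y_0)$ independently at random among arms~$1$, $2$, and~$3$ with probabilities chosen so that the expected load on each arm is strictly smaller than the capacity of its smallest $I_{a,b}$. McDiarmid's inequality (Lemma~\ref{lemma:mcdiarmid}) will then show that with positive probability condition~\ref{prop:maintech2} of Lemma~\ref{lem:maintechnicalembedding} holds for every $I_{a,b}$ with non-empty preimage, and conditions~\ref{prop:maintech1} and~\ref{prop:maintech3} hold by construction, so that Lemma~\ref{lem:maintechnicalembedding} produces the desired copy of $T$. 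The main obstacle is to balance cluster capacities: the hypotheses already force $t_1-t_2\geq 200\alpha n$ (since $|I_A|m\leq t_1$ and $|I_B|m\geq t_2-\alpha n$), and this imbalance must be absorbed by the combined effect of $i_1$ and $i_2$ both lying in $I_A$ (a configuration made available only by the red edge guaranteed in~\ref{lemma:reg:em2a:4}) and of the extra $I_C$-space threaded through arm~$3$ via $M$. Making this quantitative—so that the probabilities for the random arm-assignment can be tuned and the McDiarmid concentration beats the slack $\alpha n$ in every $I_{a,b}$—is what drives the precise values of the constants in~\ref{lemma:reg:em2a:1}--\ref{lemma:reg:em2a:3}.
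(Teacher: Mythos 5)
Your overall strategy (decompose $T$ via Lemma~\ref{lem:maintreedecomp}, pick $i_1i_2$ inside $I_A\setminus V(M)$ using~\ref{lemma:reg:em2a:4}, pick $i_3\in I_B$ adjacent to $i_2$ and to almost all of $I_A\cap V(M)$, build arms by refining, and conclude with a random assignment plus McDiarmid and Lemma~\ref{lem:maintechnicalembedding}) matches the paper's framework, but there is a genuine gap in how you assign the cut vertices to the hubs, and it breaks the part of the argument that the whole lemma hinges on. You send $\phi^{-1}(X_0)$ to $i_1$ and $\phi^{-1}(Y_0)$ to $i_2$, both lying in $I_A$, and then propose to distribute the components of $T-\phi^{-1}(X_0\cup Y_0)$ freely among arms $1$, $2$ and $3$. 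But $\varphi$ must be a homomorphism into $R'$, and in $R'$ the only hub adjacent to $I_{3,1}$ is $i_3$: a component can be routed into arm $a$ only if the cut vertex it hangs from is mapped to the hub attached to that arm. Since in your assignment no cut vertex is mapped to $i_3$, no component can legally enter arm~$3$, which is the only arm whose clusters reach $I_C$. Without arm~$3$ all of $T-\phi^{-1}(X_0\cup Y_0)$ (at least $n-\xi n$ vertices, with $\xi\ll\alpha$) must be packed into arms lying inside $I_A\cup I_B$, whose total size is only $n-\alpha n$ by~\ref{lemma:reg:em2a:1}; condition~\ref{prop:maintech2} of Lemma~\ref{lem:maintechnicalembedding} then cannot hold, so the embedding cannot be completed. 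Note also that you cannot instead hang the $I_C$-arm off $i_1$ or $i_2$: those are vertices of $I_A$, and nothing in~\ref{lemma:reg:em2a:2}--\ref{lemma:reg:em2a:4} guarantees they have many neighbours in $I_A\cap V(M)$; it is precisely because $i_3\in I_B$ sees almost all of $I_A\cap V(M)$ (via~\ref{lemma:reg:em2a:2}) that the $M$-arm must be attached to $i_3$.

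The paper's proof resolves this by mapping one of the two classes of cut vertices to $i_3$ (so its components can be randomly split between the $I_C$-arm and the arms inside $I_A\cup I_B$) and the other class to $i_2$, with hub $i_1$ used only to absorb the thin boundary layer ($Y_1$ or $X_1$) of some components, which is what keeps~\ref{prop:maintech3} valid. Which class goes to $i_3$ cannot be fixed in advance: it depends on whether $\tau_{1,X}+\tau_{2,Y}$ exceeds $n_B+20\alpha n$ or not, and the probabilities in the random assignment are then tuned by solving a linear equation in each case so that every $I_{a,b}$ retains slack $\Theta(\alpha n)$. This case analysis is not an optional refinement of your "tune the probabilities" step: because a component attached to a cut vertex mapped to a given hub can only enter the arms of that hub, the loads on the $I_A$-side versus $I_B$-side clusters are essentially determined by the tree statistics $\tau_{1,X},\tau_{2,X},\tau_{1,Y},\tau_{2,Y}$ once the hub assignment is fixed, and only the correct orientation (chosen case by case) makes the capacities in~\ref{lemma:reg:em2a:1} sufficient. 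Your proposal, as written, neither routes anything into $I_C$ nor addresses this orientation issue, so the balancing step you defer to "making this quantitative" cannot be carried out.
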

\begin{proof}
Let $\xi$ satisfy $c\ll\xi\ll 1/k$. Let $S$ be the graph defined in Lemma~\ref{lem:maintreedecomp} and, using that lemma, take a homomorphism $\phi:T\to S$ such that each component of $T-\phi^{-1}(X_0\cup Y_0)$ has size at most $\xi n$ and $|\phi^{-1}(X_0\cup Y_0\cup X_1\cup Y_1)|\leq \xi n$. Without loss of generality, say $|\phi^{-1}(X_0\cup X_1\cup X_2\cup X_3)|=t_1$ and $|\phi^{-1}(Y_0\cup Y_1\cup Y_2\cup Y_3)|=t_2$. Let the components of $T-\phi^{-1}(X_0\cup Y_0)$ be $\{K_j:j\in J\}$. Moreover, partition $J$ as $J_X\cup J_Y$, so that $N_T(K_j)\subset\phi^{-1}(X_0)$ for each $j\in J_X$, and $N_T(K_j)\subset\phi^{-1}(Y_0)$ for each $j\in J_Y$.

Let $\tau_{1,X}=|\phi^{-1}(X_2)|$, $\tau_{2,X}=|\phi^{-1}(Y_3)|$, $\tau_{1,Y}=|\phi^{-1}(X_3)|$ and $\tau_{2,Y}=|\phi^{-1}(Y_2)|$.  
Let $n_A=|\cup_{i\in I_A}V_i|$, $n_B=|\cup_{i\in I_B}V_i|$ and $n_C=|\cup_{i\in I_C}V_i|$. We now separate into the following two cases. \textbf{I:} $\tau_{1,X}+\tau_{2,Y}\geq n_B+20\alpha n$. \textbf{II:} $\tau_{1,X}+\tau_{2,Y}< n_B+20\alpha n$.

\medskip

\noindent \textbf{Case I.} $\tau_{1,X}+\tau_{2,Y}\geq n_B+20\alpha n$. Then, $(1-20\alpha)(\tau_{1,X}+\tau_{2,Y})\geq n_B-\alpha n$. Note that $\tau_{2,X}+\tau_{2,Y}\leq t_2\leq n_B+\alpha n$, so $(1-20\alpha)(\tau_{2,X}+\tau_{2,Y})\leq n_B-\alpha n$. Thus, we can find $p\in[0,1-20\alpha]$ such that $(1-20\alpha)\tau_{2,Y}+(1-20\alpha-p)\tau_{2,X}+p\tau_{1,X}=n_B-\alpha n$.

Let $R_{\text{\textbf{EM2a}}}'$ be the graph depicted on the right in Figure~\ref{fig:EM2aproof}, which is a subgraph of the graph $R'$ in Lemma~\ref{lem:maintechnicalembedding}.
Define a random homomorphism $\varphi:T\to R_{\text{\textbf{EM2a}}}'$ as follows. First, for vertices in $\phi^{-1}(X_0\cup Y_0)$, set $\varphi(v)=i_3$ if $\phi(v)=Y_0$, and set $\varphi(v)=i_2$ if $\phi(v)=X_0$. %Note that all other vertices of $T$ are in $K_j$ for some $j\in J$. 

For each $j\in J_X$ independently at random, with probability $20\alpha$, define $\varphi$ on $K_j$ by composing $\phi$ with the map that sends $Y_1,X_2,Y_3$ to $i_3,I_{3,1},I_{3,2}$, respectively; with probability $p$ define $\varphi$ on $K_j$ by composing $\phi$ with the map that sends $Y_1,X_2,Y_3$ to $i_1,I_{1,1},I_{1,2}$, respectively; and with probability $1-20\alpha-p$ define $\varphi$ on $K_j$ by composing $\phi$ with the map that sends $Y_1,X_2,Y_3$ to $I_{2,1},I_{2,2},I_{2,1}$, respectively.

For each $j\in J_Y$ independently at random, with probability $20\alpha$, define $\varphi$ on $K_j$ by composing $\phi$ with the map that sends $X_1,Y_2,X_3$ to $I_{3,1},I_{3,2},I_{3,1}$, respectively; and with probability $1-20\alpha$ define $\varphi$ on $K_j$ by composing $\phi$ with the map that sends $X_1,Y_2,X_3$ to $i_2,I_{2,1},I_{2,2}$, respectively.

%Note that $\mathbb{E}|\varphi^{-1}(\{I_{1,1},I_{2,1}\}|=p\tau_{1,X}+(1-5\alpha-p)\tau_{2,X}+(1-5\alpha)\tau_{2,Y}$, 
Since $|K_j|\leq\xi n$ for all $j\in J$, and $|\phi^{-1}(X_0\cup Y_0\cup X_1\cup Y_1)|\leq\xi n$, we can use Lemma~\ref{lemma:mcdiarmid} to conclude that with strictly positive probability, $|\varphi^{-1}(I_{3,1})|=20\alpha t_1\pm\alpha t_1$, $|\varphi^{-1}(I_{3,2})|=20\alpha t_2\pm\alpha t_2\leq n_C-\alpha n$, and $|\varphi^{-1}(I_{1,1}\cup I_{2,1})|=n_B-\alpha n\pm\alpha n/2$. It follows that $|\varphi^{-1}(\{I_{1,2},I_{2,2},I_{3,1}\})|\leq n-19\alpha t_2-(n_B-3\alpha n/2)\leq n_A+3\alpha n-19\alpha t_2\leq n_A-\alpha n$.

Using~\ref{lemma:reg:em2a:2}--\ref{lemma:reg:em2a:4}, we can find $i_1,i_2\in I_A\setminus V(M)$ and $i_3\in I_B$, such that $i_1i_2,i_2i_3\in E(R_\textbf{EM2a})$, and $i_3$ is adjacent to all but at most $\eta k$ vertices in $I_A\cap V(M)$. Thus, we can find $I_{3,1}\subset N_{R_\textbf{EM2a}}(i_3,I_A\cap V(M))$ covering $|\varphi^{-1}(I_{3,1})|+\alpha n/10$ vertices. Let $I_{3,2}\subset I_C$ denote the set of vertices matched with $I_{3,1}$ by $M$, then $|\varphi^{-1}(I_{3,b})|\leq\sum_{i\in I_{3,b}}|V_i|-\alpha n/10$ for each $b\in[2]$. 

Let $I_A'=I_A\setminus(I_{3,1}\cup\{i_1,i_2\})$ and $I_B'=(N_{R_\textbf{EM2a}}(i_1,I_B)\cap N_{R_\textbf{EM2a}}(i_2,I_B))\setminus\{i_3\}$. Note that from above, $|\varphi^{-1}(\{I_{1,2},I_{2,2}\})|\leq n_A-\alpha n-|\varphi^{-1}(I_{3,1})|\leq\sum_{i\in I_A'}|V_i|-\alpha n/5$, and $|\varphi^{-1}(I_{1,1}\cup I_{2,1})|\leq n_B-\alpha n/2$, so we can find partitions $I_A'=I_{1,2}'\cup I_{2,2}'$ and $I_B'=I_{1,1}'\cup I_{2,1}'$, such that $\sum_{i\in I_{a,b}'}|V_i|\geq|\varphi^{-1}(I_{a,b})|+\alpha n/20$ for each $a,b\in[2]$. Moreover, as $|I_{a,b}'|\geq\alpha n/20m$ by construction for each $a,b\in[2]$, and $R_\textbf{EM2a}[I_A,I_B]$ is $\eta$-almost complete, by choosing these two partitions randomly and applying Lemma~\ref{lemma:chernoff}, we can find a realisation such that both $R_\textbf{EM2a}[I_{1,1}',I_{1,2}']$ and $R_\textbf{EM2a}[I_{2,1}',I_{2,2}']$ are $10\eta$-almost complete. This allows us to apply Lemma~\ref{lemma:regularity:refine} to refine the clusters $V_i$ with $i\in I_{1,1}'\cup I_{1,2}'\cup I_{2,1}'\cup I_{2,2}'$ to obtain, for each $a,b\in[2]$, a set $\{V_i':i\in I_{a,b}\}$ of clusters of the same size, such that $\sum_{i\in I_{a,b}}|V_i'|\geq|\varphi^{-1}(I_{a,b})|+\alpha n/100$. Moreover, for each $a\in[2]$, the refined clusters indexed by $I_{a,1}$ and $I_{a,2}$ can be matched up, so that each pair is $(\sqrt\eps,d-\eps)$-regular (see Figure~\ref{fig:EM2aproof}). This allows us to use Lemma~\ref{lem:maintechnicalembedding} to find a copy of $T$ in $G$.

\medskip

\noindent \textbf{Case II.} $\tau_{1,X}+\tau_{2,Y}< n_B+20\alpha n$. Then, $(1-70\alpha)(\tau_{1,X}+\tau_{2,Y})\leq n_B-\alpha n$. Note that $\tau_{1,X}+\tau_{1,Y}\geq t_1-\xi n\geq n_B+100\alpha n$, so $(1-70\alpha)(\tau_{1,X}+\tau_{1,Y})\geq n_B-\alpha n$. Therefore, there exists $p\in[0,1-70\alpha]$ such that $(1-70\alpha)\tau_{1,X}+(1-70\alpha-p)\tau_{1,Y}+p\tau_{2,Y}=n_B-\alpha n$.

Similar to above, we define a random homomorphism $\varphi:T\to R_{\text{\textbf{EM2a}}}'$ as follows. First, for vertices in $\phi^{-1}(X_0\cup Y_0)$, set $\varphi(v)=i_2$ if $\phi(v)=Y_0$, and set $\varphi(v)=i_3$ if $\phi(v)=X_0$. 

For each $j\in J_X$ independently at random, with probability $70\alpha$, define $\varphi$ on $K_j$ by composing $\phi$ with the map that sends $Y_1,X_2,Y_3$ to $I_{3,1},I_{3,2},I_{3,1}$, respectively; and with probability $1-70\alpha$ define $\varphi$ on $K_j$ by composing $\phi$ with the map that sends $Y_1,X_2,Y_3$ to $i_2,I_{2,1},I_{2,2}$, respectively.

For each $j\in J_Y$ independently at random, with probability $70\alpha$, define $\varphi$ on $K_j$ by composing $\phi$ with the map that sends $X_1,Y_2,X_3$ to $i_3,I_{3,1},I_{3,2}$, respectively; with probability $p$ define $\varphi$ on $K_j$ by composing $\phi$ with the map that sends $X_1,Y_2,X_3$ to $i_1,I_{1,1},I_{1,2}$, respectively; and with probability $1-70\alpha-p$ define $\varphi$ on $K_j$ by composing $\phi$ with the map that sends $X_1,Y_2,X_3$ to $I_{2,1},I_{2,2},I_{2,1}$, respectively. 

Again, we can use Lemma~\ref{lemma:mcdiarmid} to conclude that with positive probability, $|\varphi^{-1}(I_{3,1})|=70\alpha t_2\pm\alpha t_2$, $|\varphi^{-1}(I_{3,2})|=70\alpha t_1\pm\alpha t_1\leq n_C-\alpha n$, and $|\varphi^{-1}(I_{1,1}\cup I_{2,1})|=n_B-\alpha n\pm\alpha n/2$. It follows that $|\varphi^{-1}(\{I_{1,2},I_{2,2}\})|+|\varphi^{-1}(I_{3,2})|\leq n-69\alpha t_2-(n_B-3\alpha n/2)\leq n_A+3\alpha n-69\alpha t_2\leq n_A-\alpha n$.

Similar to \textbf{Case I} above, by~\ref{lemma:reg:em2a:2}--\ref{lemma:reg:em2a:4} and after refining, we can find $i_1,i_2\in I_A$ and $i_3\in I_B$, along with three matchings of refined clusters of suitable sizes in $G$ attached to $i_1,i_2,i_3$ respectively, as depicted in Figure~\ref{fig:EM2aproof}, which allow us to apply Lemma~\ref{lem:maintechnicalembedding} to find a copy of $T$ in $G$.
\end{proof}

%%%%%%%%%%%%%%%%%%%%%%%%%%%%%%%%%%%%%%%%%%%%%%%%%%%%%%%%%%%%%%%%%%%%%%%%
%%%%%%%%%%%%%%%%%%%%%%%%%%%%%%%%%%%%%%%%%%%%%%%%%%%%%%%%%%%%%%%%%%%%%%%%
%%%%%%%%%%%%%%%%%%%%%%%%%%%%%%%%%%%%%%%%%%%%%%%%%%%%%%%%%%%%%%%%%%%%%%%%

\subsection{EM2b Embedding Method}\label{sec:EM2b}

\begin{figure}[h]
\input{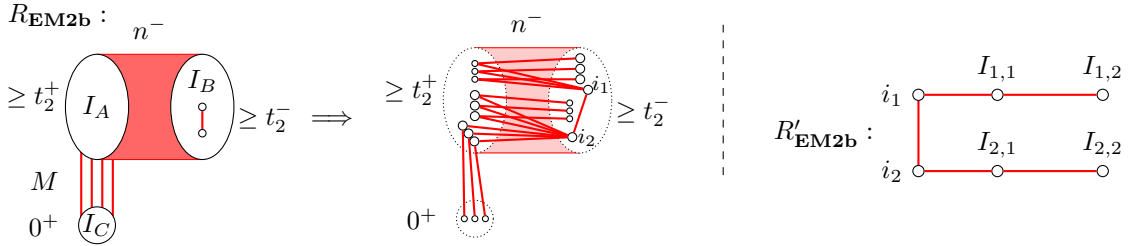}

\vspace{-0.3cm}

\caption{On the left, the initial reduced graph $R_{\text{\bfseries EM2b}}$ transformed into the substructure used to embed the tree in Lemma~\ref{lemma:em2b}. On the right, the auxiliary graph $R_{\text{\bfseries EM2b}}'$ used when applying Lemma~\ref{lem:maintechnicalembedding}.}\label{fig:EM2bproof}
\end{figure}

\begin{lemma}[\bfseries EM2b]\label{lemma:em2b}
Let $1/n\ll 1/m\ll c\ll1/k\ll\eps\ll\eta\ll\alpha\ll d\leq1$. Let $T$ be an $n$-vertex tree with $\Delta(T)\le cn$ and bipartition classes of sizes $t_1$ and $t_2$ with $t_2\le t_1\leq 2t_2$. Let $G$ be a graph with at most $2n$ vertices and a vertex partition $V_1\cup\cdots\cup V_k$ with $|V_1|=\cdots=|V_k|=m$. Let $R_{\text{\emph{\bfseries EM2b}}}$ be a graph with vertex set $[k]$, such that if $ij\in E(R_{\text{\emph{\bfseries EM2b}}})$ then $G[V_i,V_j]$ is $(\eps,d)$-regular.
Suppose there is a partition $[k]=I_A\cup I_B\cup I_C$ such that the following properties hold (see Figure~\ref{fig:EM2bproof}).
\stepcounter{propcounter}
\begin{enumerate}[label = \emph{\textbf{\Alph{propcounter}\arabic{enumi}}}]
    \item\labelinthm{lemma:reg:em2b:1} $|\cup_{i\in I_A}V_i|\geq t_2+200\alpha n, |\cup_{i\in I_B}V_i|\geq t_2-\alpha n, |\cup_{i\in I_A\cup I_B}V_i|=(1-\alpha)n$, and $|\cup_{i\in I_C}V_i|=100\alpha n$.
    \item\labelinthm{lemma:reg:em2b:2} $R_{\text{\emph{\bfseries EM2b}}}[I_A,I_B]$ is an $\eta$-almost complete bipartite graph.
    \item\labelinthm{lemma:reg:em2b:3} $R_{\text{\emph{\bfseries EM2b}}}[I_A,I_C]$ contains a matching $M$ covering $I_C$.
    \item\labelinthm{lemma:reg:em2b:4} $E(R_{\text{\emph{\bfseries EM2b}}}[I_B])\neq \emptyset$.
\end{enumerate}
Then, $G$ contains a copy of $T$.
\end{lemma}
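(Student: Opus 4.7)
The proof will follow the template of the proof of Lemma~\ref{lemma:em2a} (\textbf{EM2a}), adapted to the reversed situation in which the red edge lies in $I_B$ rather than $I_A$. The new subtlety is that the auxiliary graph $R'_{\text{\bfseries EM2b}}$ has only two pendant paths (attached to $i_1$ and $i_2$) rather than three as in $R'_{\text{\bfseries EM2a}}$, so the matching $M$ covering $I_C$ must now be threaded through one of those two existing arms instead of through a separate $I_{3,\cdot}$-arm. Because every cluster $V_i$ of $G$ has the same size~$m$, the size-uniformity requirement of Lemma~\ref{lem:maintechnicalembedding} is automatically satisfied even when the clusters placed in a single index set $I_{a,b}$ of $R'_{\text{\bfseries EM2b}}$ come from different regions of $R_{\text{\bfseries EM2b}}$.

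First, apply Lemma~\ref{lem:maintreedecomp} to obtain a homomorphism $\phi : T \to S$ so that the components of $T - \phi^{-1}(X_0 \cup Y_0)$ have size at most $\xi n$ and $|\phi^{-1}(X_0 \cup Y_0 \cup X_1 \cup Y_1)| \le \xi n$, and, WLOG, $|\phi^{-1}(X_0\cup X_1\cup X_2\cup X_3)| = t_1$. Define the quantities $\tau_{1,X},\tau_{2,X},\tau_{1,Y},\tau_{2,Y}$ and the partition sizes $n_A,n_B,n_C$ exactly as in the proof of \textbf{EM2a}, and split into the two cases $\tau_{1,X}+\tau_{2,Y}\geq n_B+20\alpha n$ and $\tau_{1,X}+\tau_{2,Y}<n_B+20\alpha n$, choosing the corresponding parameter $p$ in the same way.

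Pick $i_1, i_2 \in I_B$ as the endpoints of the red edge guaranteed by~\ref{lemma:reg:em2b:4}. Using~\ref{lemma:reg:em2b:2} and~\ref{lemma:reg:em2b:3}, realise $R'_{\text{\bfseries EM2b}}$ inside $R_{\text{\bfseries EM2b}}$ as follows: $I_{1,1}$ is a subset of $I_A\setminus V(M)$ red-adjacent to $i_1$, paired with $I_{1,2}\subset I_B\setminus\{i_1,i_2\}$ through a refinement of the $\eta$-almost complete bipartite graph $R_{\text{\bfseries EM2b}}[I_A, I_B]$; while $I_{2,1}$ is taken as a union of clusters in $I_A$ red-adjacent to $i_2$, some lying in $V(M)$ with partners in $I_{2,2}\cap I_C$ via $M$, and the rest lying in $I_A\setminus V(M)$ with partners in $I_{2,2}\cap (I_B\setminus\{i_1,i_2\})$ via the same refinement. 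Define the random homomorphism $\varphi : T \to R'_{\text{\bfseries EM2b}}$ sending $\phi^{-1}(Y_0)\to i_1$ and $\phi^{-1}(X_0)\to i_2$, and routing each component $K_j$ to one of the two arms with carefully chosen probabilities, in analogy with the three options of \textbf{EM2a} but merging the old ``$I_{3,\cdot}$-option'' into the $I_{2,\cdot}$-option. McDiarmid's inequality (Lemma~\ref{lemma:mcdiarmid}) yields concentration of the preimage sizes, and after equalising cluster sizes within each $I_{a,b}$ (via Lemma~\ref{lemma:regularity:refine}) the hypotheses of Lemma~\ref{lem:maintechnicalembedding} are verified, producing the desired copy of $T$.

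The main obstacle is the bookkeeping for the $I_{2,\cdot}$-arm: since the uniform random choice of a cluster $i\in I_{2,1}$ inside the proof of Lemma~\ref{lem:maintechnicalembedding} distributes the components assigned to this arm between the $I_C$- and $I_B$-partnered portions of $I_{2,2}$ in proportion to the relative sizes of $I_{2,1}\cap V(M)$ and $I_{2,1}\setminus V(M)$, these two fractions must be fixed at the outset so as to match the capacity $|{\cup_{i\in I_C}V_i}|=100\alpha n$ on one side and the leftover capacity in $I_B\setminus\{i_1,i_2\}$ on the other. Once the probability $p$ and the fraction $|I_{2,1}\cap V(M)|/|I_{2,1}|$ are tuned correctly in each of the two cases, the verification of the capacity condition~\ref{prop:maintech2} proceeds along exactly the same lines as in the proof of \textbf{EM2a}.
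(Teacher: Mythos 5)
Your realisation of the auxiliary structure is exactly the paper's: $i_1,i_2$ are the endpoints of the edge in $R_{\text{\bfseries EM2b}}[I_B]$, each carrying an arm whose first coordinate lies in $I_A$, with the matching $M$ covering $I_C$ absorbed into the second coordinate of one arm (this is precisely what Figure~\ref{fig:EM2bproof} depicts), and the conclusion is reached via Lemma~\ref{lem:maintechnicalembedding} after McDiarmid and a refinement step. Two points, however. First, the parameter analysis you import from \textbf{EM2a} does not transfer ``in the same way'': in \textbf{EM2a} the first arm-coordinates $I_{1,1},I_{2,1}$ sit in $I_B$ and the third arm siphons a $20\alpha$-fraction into $I_C$, whereas here $I_{1,1}\cup I_{2,1}\subset I_A$ (capacity $n_A\geq t_2+200\alpha n$) and $I_{1,2}\cup I_{2,2}\subset I_B\cup I_C$, so the quantities being balanced are different. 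The paper shows no case split is needed at all: setting $\tau_1=|\phi^{-1}(X_2\cup X_3)|$ and $\tau_2=|\phi^{-1}(Y_2\cup Y_3)|$, one checks $(1-20\alpha)\tau_2\leq n_B-\alpha n$ (since $\tau_2\leq t_2\leq n_B+\alpha n$) and $(1-20\alpha)\tau_1\geq n_B-\alpha n$ (since $n_B\leq t_1-201\alpha n$, which follows from \ref{lemma:reg:em2b:1}), and a single $p$ with $p\tau_1+(1-20\alpha-p)\tau_2=n_B-\alpha n$ does the job. Your two-case scheme could presumably be made to work, but as written the formulas for $p$ are not specified correctly for this setting, and the capacity verification of \ref{prop:maintech2} --- the crux of the lemma --- is asserted rather than carried out.

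Second, the ``main obstacle'' you identify is not actually an obstacle. Condition \ref{prop:maintech2} of Lemma~\ref{lem:maintechnicalembedding} is stated per index set $I_{a,b}$ and only constrains the total $\sum_{i\in I_{a,b}}|V_i|$ against $|\varphi^{-1}(I_{a,b})|$; the distribution of components among the individual clusters of $I_{a,b}$ (and hence between the $I_C$-partnered and $I_B$-partnered portions of $I_{2,2}$) is handled internally by the uniform random choice in that lemma's proof (Claim~\ref{clm:Zajib}), using only that all clusters in a given index set have the same size $m$. So no fraction $|I_{2,1}\cap V(M)|/|I_{2,1}|$ needs tuning for that reason; what does need choosing is how the leftover clusters of $I_A$ and $I_B$ are split between the two arms so that each of the four index sets individually satisfies \ref{prop:maintech2}, which is the refinement step the paper performs.
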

\begin{proof}
Let $\xi$ satisfy $c\ll\xi\ll 1/k$. Let $S$ be the graph defined in Lemma~\ref{lem:maintreedecomp}. Using that lemma, take a homomorphism $\phi:T\to S$ such that each component of $T-\phi^{-1}(X_0\cup Y_0)$ has size at most $\xi n$ and $|\phi^{-1}(X_0\cup Y_0\cup X_1\cup Y_1)|\leq \xi n$. Without loss of generality, say $|\phi^{-1}(X_0\cup X_1\cup X_2\cup X_3)|=t_1$ and $|\phi^{-1}(Y_0\cup Y_1\cup Y_2\cup Y_3)|=t_2$. Let the components of $T-\phi^{-1}(X_0\cup Y_0)$ be $\{K_j:j\in J\}$, and partition $J$ as $J_X\cup J_Y$, so that $N_T(K_j)\subset\phi^{-1}(X_0)$ for each $j\in J_X$, and $N_T(K_j)\subset\phi^{-1}(Y_0)$ for each $j\in J_Y$.

Let $\tau_{1}=|\phi^{-1}(X_2\cup X_3)|$ and $\tau_{2}=|\phi^{-1}(Y_2\cup Y_3)|$. Let $n_A=|\cup_{i\in I_A}V_i|$, $n_B=|\cup_{i\in I_B}V_i|$, and $n_C=|\cup_{i\in I_C}V_i|$. Note that $\tau_{2}\leq t_2\leq n_B+\alpha n$, so $(1-20\alpha)\tau_{2}\leq n_B-\alpha n$. On the other hand, $(1-20\alpha)\tau_1\geq(1-20\alpha)(t_1-\xi n)\geq n_B-\alpha n$ as $n_B\leq t_1-201\alpha n$, so there exists $p\in[0,1-20\alpha]$ such that $p\tau_1+(1-20\alpha-p)\tau_2=n_B-\alpha n$.

Let $R_{\text{\textbf{EM2b}}}'$ be the graph depicted on the right in Figure~\ref{fig:EM2bproof}, which is a subgraph of the graph $R'$ in Lemma~\ref{lem:maintechnicalembedding}. Define a random homomorphism $\varphi:T\to R_{\text{\textbf{EM2a}}}'$ as follows. First, for vertices in $\phi^{-1}(X_0\cup Y_0)$, set $\varphi(v)=i_1$ if $\phi(v)=X_0$, and set $\varphi(v)=i_2$ if $\phi(v)=Y_0$. 

For each $j\in J_X$ independently at random, with probability $1-p$, define $\varphi$ on $K_j$ by composing $\phi$ with the map that sends $Y_1,X_2,Y_3$ to $i_2,I_{2,1},I_{2,2}$, respectively; and with probability $p$ define $\varphi$ on $K_j$ by composing $\phi$ with the map that sends $Y_1,X_2,Y_3$ to $I_{1,1},I_{1,2},I_{1,1}$, respectively.

For each $j\in J_Y$ independently at random, with probability $1-p$, define $\varphi$ on $K_j$ by composing $\phi$ with the map that sends $X_1,Y_2,X_3$ to $I_{2,1},I_{2,2},I_{2,1}$, respectively; and with probability $p$ define $\varphi$ on $K_j$ by composing $\phi$ with the map that sends $X_1,Y_2,X_3$ to $i_1,I_{1,1},I_{1,2}$, respectively.

Since $|K_j|\leq\xi n$ for all $j\in J$ and $|\phi^{-1}(X_0\cup Y_0\cup X_1\cup Y_1)|\leq\xi n$, we can use Lemma~\ref{lemma:mcdiarmid} to conclude that with strictly positive probability, $|\varphi^{-1}(I_{1,2}\cup I_{2,2})|=p\tau_1+(1-p)\tau_2\pm\alpha n\leq n_B+20\alpha\tau_2\leq n_B+n_C-10\alpha n$, and $|\varphi^{-1}(I_{1,1}\cup I_{2,1})|=(1-p)\tau_1+p\tau_2\pm\alpha n\leq n_A-\alpha n$. Therefore, like in the proof of Lemma~\ref{lemma:em2a}, by refining and using~\ref{lemma:reg:em2b:2}--\ref{lemma:reg:em2b:4}, we can find $i_1,i_2\in I_B$ such that $i_1i_2\in E(R_{\text{\bfseries EM2b}})$, along with two matchings of refined clusters of suitable sizes forming $(\sqrt\eps,d-\eps)$-regular pairs attached to $i_1,i_2$ as depicted in Figure~\ref{fig:EM2bproof}, which allow us to apply Lemma~\ref{lem:maintechnicalembedding} to find a copy of $T$ in $G$.
\end{proof}

%%%%%%%%%%%%%%%%%%%%%%%%%%%%%%%%%%%%%%%%%%%%%%%%%%%%%%%%%%%%%%%%%%%%%%%%
%%%%%%%%%%%%%%%%%%%%%%%%%%%%%%%%%%%%%%%%%%%%%%%%%%%%%%%%%%%%%%%%%%%%%%%%
%%%%%%%%%%%%%%%%%%%%%%%%%%%%%%%%%%%%%%%%%%%%%%%%%%%%%%%%%%%%%%%%%%%%%%%%

\subsection{EM2c Embedding Method}\label{sec:EM2c}

\begin{figure}[h]
\input{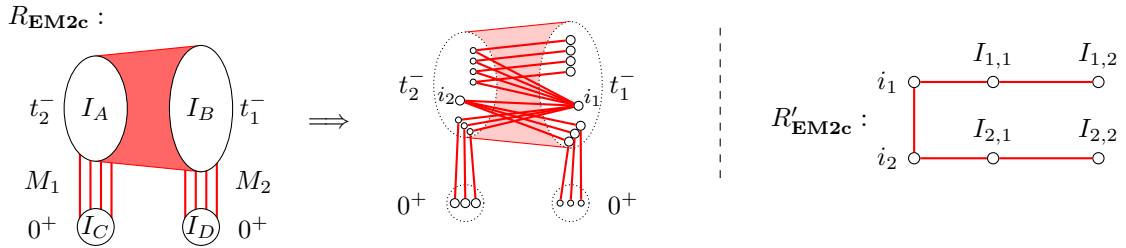}

\vspace{-0.3cm}

\caption{On the left, the initial reduced graph $R_{\text{\bfseries EM2c}}$ transformed into the substructure used to embed the tree in Lemma~\ref{lemma:em2c}. On the right, the auxiliary graph $R_{\text{\bfseries EM2c}}'$ used when applying Lemma~\ref{lem:maintechnicalembedding}.}\label{fig:EM2cproof}
\end{figure}

\begin{lemma}[\textbf{EM2c}]\label{lemma:em2c}
Let $1/n\ll 1/m\ll c\ll1/k\ll\eps\ll\eta\ll\alpha\ll d\leq1$. Let $T$ be an $n$-vertex tree with $\Delta(T)\le cn$ and bipartition classes of sizes $t_1$ and $t_2$ with $t_2\leq t_1\leq 2t_2$. Let $G$ be a graph with with a partition $V(G)=V_1\cup\cdots\cup V_k$ satisfying $|V_1|=\cdots=|V_k|=m$. Let  $R_{\text{\emph{\bfseries EM2c}}}$ be a graph with vertex set $[k]$, such that if $ij\in E(R_{\text{\emph{\bfseries EM2c}}})$ then $G[V_i,V_j]$ is $(\eps,d)$-regular. Suppose there is a partition $[k]=I_A\cup I_B\cup I_C\cup I_D$ such that the following properties hold (see Figure~\ref{fig:EM2cproof}).
\stepcounter{propcounter}
\begin{enumerate}[label = \emph{\textbf{\Alph{propcounter}\arabic{enumi}}}]
    \item\labelinthm{lemma:em2c:1} $|\cup_{i\in I_A}V_i|\geq(1-\alpha) t_2, |\cup_{i\in I_B}V_i|\geq(1-11\alpha)t_1$, $|\cup_{i\in I_C}V_i|=100\alpha t_2$, and $|\cup_{i\in I_D}V_i|=10\alpha t_2$.
    \item\labelinthm{lemma:em2c:2} $R_{\text{\emph{\bfseries EM2c}}}[I_A,I_B]$ is an $\eta$-almost complete bipartite graph.
    \item\labelinthm{lemma:em2c:3} $R_{\text{\emph{\bfseries EM2c}}}[I_A,I_C]$ contains a matching $M_1$ covering $I_C$.
    \item\labelinthm{lemma:em2c:4} $R_{\text{\emph{\bfseries EM2c}}}[I_B,I_D]$ contains a matching $M_2$ covering $I_D$.
\end{enumerate}
Then, $G$ contains a copy of $T$.
\end{lemma}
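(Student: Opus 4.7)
My plan follows the template of Lemmas~\ref{lemma:em2a} and~\ref{lemma:em2b}. With $\xi$ satisfying $c\ll\xi\ll 1/k$, I first apply Lemma~\ref{lem:maintreedecomp} to obtain a homomorphism $\phi:T\to S$ such that every component of $T-\phi^{-1}(X_0\cup Y_0)$ has size at most $\xi n$ and $|\phi^{-1}(X_0\cup Y_0\cup X_1\cup Y_1)|\le\xi n$. Without loss of generality assume $|\phi^{-1}(X_0\cup X_1\cup X_2\cup X_3)|=t_1$ and $|\phi^{-1}(Y_0\cup Y_1\cup Y_2\cup Y_3)|=t_2$. Write $\{K_j:j\in J\}$ for the components of $T-\phi^{-1}(X_0\cup Y_0)$, partitioned as $J=J_X\cup J_Y$ according to whether $N_T(K_j)\subset \phi^{-1}(X_0)$ or $\phi^{-1}(Y_0)$, and set $\tau_{1,X}=|\phi^{-1}(X_2)|$, $\tau_{2,X}=|\phi^{-1}(Y_3)|$, $\tau_{1,Y}=|\phi^{-1}(X_3)|$, $\tau_{2,Y}=|\phi^{-1}(Y_2)|$.

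Next, by \ref{lemma:em2c:2} pick $i_1\in I_B$ and $i_2\in I_A$ with $i_1i_2\in E(R_{\text{\bfseries EM2c}})$; these will receive $\phi^{-1}(X_0)$ and $\phi^{-1}(Y_0)$ respectively. Using \ref{lemma:em2c:2} once more to keep the $I_B$-ends of $M_2$ inside the neighbourhood of $i_2$, then Lemma~\ref{lemma:regularity:1} to refine all pairs to a common size, I obtain sets $I_{2,1}$ (of refined $I_B$-pieces near $i_2$) and $I_{2,2}$ (of refined $I_D$-pieces) matched by $(\sqrt\eps,d-\eps)$-regular pairs, together covering roughly $10\alpha t_2$ vertices on each side. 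The crucial step is building the $I_{1,1}$--$I_{1,2}$ matching: $I_{1,2}$ must accommodate close to $t_1$ X-side tree vertices, whereas $I_{1,1}$ only needs to hold $\approx t_2$ Y-side vertices. My plan is to refine the remaining $(I_A\setminus\{i_2\})$-clusters into pieces of a common size $s_A$, and separately refine the remaining $(I_B\setminus(\{i_1\}\cup I_{2,1}))$-clusters together with all $I_C$-clusters into pieces of a common, potentially larger, size $s_X$, choosing the ratio $s_X/s_A$ so that the two collections have equally many pieces. Crucially, Lemma~\ref{lem:maintechnicalembedding} only requires cluster sizes to be uniform within each $I$-set and not across them, so this is permitted. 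A matching between the refined $I_A$-pieces and the refined $(I_B\cup I_C)$-pieces in which every matched pair is $(\sqrt\eps,d-\eps)$-regular can then be produced by a Hall-type argument (Lemma~\ref{lemma:Hall}) combining the $\eta$-almost-completeness from \ref{lemma:em2c:2} and the $M_1$-matching from \ref{lemma:em2c:3}.

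To conclude, I would define a random homomorphism $\varphi:T\to R_{\text{\bfseries EM2c}}'$ by setting $\varphi=i_1$ on $\phi^{-1}(X_0)$, $\varphi=i_2$ on $\phi^{-1}(Y_0)$, and independently assigning each $K_j$ to one of two channels: for $j\in J_X$, either $(Y_1,X_2,Y_3)\mapsto(i_2,I_{2,1},I_{2,2})$ or $(Y_1,X_2,Y_3)\mapsto(I_{1,1},I_{1,2},I_{1,1})$; and for $j\in J_Y$, either $(X_1,Y_2,X_3)\mapsto(i_1,I_{1,1},I_{1,2})$ or $(X_1,Y_2,X_3)\mapsto(I_{2,1},I_{2,2},I_{2,1})$. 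The probabilities $p_X,p_Y\in[0,1]$ of the first channel in each case will be chosen via a short case analysis on the relative sizes of the $\tau$'s (analogous to Cases~I and~II of Lemma~\ref{lemma:em2a}) so that every expected preimage size $|\varphi^{-1}(I_{a,b})|$ lies at least $\Omega(\alpha n)$ below its cluster-mass. The critical arithmetic input will be $|\cup_{i\in I_C}V_i|=100\alpha t_2\ge 11\alpha t_1+\Omega(\alpha n)$ (using $t_1\le 2t_2$), which is what creates the X-side slack absorbing the excess mass in $I_{1,2}$. Lemma~\ref{lemma:mcdiarmid} then yields concentration, and Lemma~\ref{lem:maintechnicalembedding} supplies the embedding. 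The main obstacle will be the case analysis verifying feasibility of $(p_X,p_Y)$ in every regime of $\tau$'s, together with the concurrent bookkeeping of non-uniform refined cluster sizes across the $I_{1,*}$ family.
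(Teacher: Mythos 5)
Your proposal follows essentially the same route as the paper's proof of Lemma~\ref{lemma:em2c}: the same decomposition via Lemma~\ref{lem:maintreedecomp}, the same choice of adjacent $i_1\in I_B$, $i_2\in I_A$ receiving $\phi^{-1}(X_0)$ and $\phi^{-1}(Y_0)$, the identical pair of randomised channels for the components in $J_X$ and $J_Y$, and the same reduction to Lemma~\ref{lem:maintechnicalembedding} after building an unequal-ratio matching of $I_A$-pieces against $I_B\cup I_C$ (using \ref{lemma:em2c:2} and $M_1$) and a short matching of $I_B$-pieces against $I_D$ (using $M_2$) attached to $i_1$ and $i_2$, with McDiarmid's inequality supplying concentration. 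The only differences are presentational: the paper fixes both diversion probabilities to the single value $4\alpha$, so the case analysis on the $\tau$'s that you anticipate as the main obstacle is not needed (your feasibility region always contains this choice), and it constructs the refined matching directly from $M_1$ together with Lemma~\ref{lemma:regularity:refine} rather than via a Hall-type argument.
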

\begin{proof}
Let $\xi$ satisfy $c\ll\xi\ll 1/k$. Let $S$ be the graph defined in Lemma~\ref{lem:maintreedecomp}. Using that lemma, take a homomorphism $\phi:T\to S$ such that each component of $T-\phi^{-1}(X_0\cup Y_0)$ has size at most $\xi n$ and $|\phi^{-1}(X_0\cup Y_0\cup X_1\cup Y_1)|\leq \xi n$. Without loss of generality, say $|\phi^{-1}(X_0\cup X_1\cup X_2\cup X_3)|=t_1$ and $|\phi^{-1}(Y_0\cup Y_1\cup Y_2\cup Y_3)|=t_2$. Let the components of $T-\phi^{-1}(X_0\cup Y_0)$ be $\{K_j:j\in J\}$, and partition $J$ as $J_X\cup J_Y$, so that $N_T(K_j)\subset\phi^{-1}(X_0)$ for each $j\in J_X$, and $N_T(K_j)\subset\phi^{-1}(Y_0)$ for each $j\in J_Y$.   

Let $\tau_{1}=|\phi^{-1}(X_2\cup X_3)|$ and $\tau_{2}=|\phi^{-1}(Y_2\cup Y_3)|$. For each $\ast\in\{A,B,C,D\}$, let $n_\ast=|\cup_{i\in I_\ast}V_i|$. Let $R_{\text{\textbf{EM2c}}}'$ be the graph depicted on the right in Figure~\ref{fig:EM2cproof}, which is a subgraph of the graph $R'$ in Lemma~\ref{lem:maintechnicalembedding}. Define a random homomorphism $\varphi:T\to R_{\text{\textbf{EM2c}}}'$ as follows. First, for vertices in $\phi^{-1}(X_0\cup Y_0)$, set $\varphi(v)=i_1$ if $\phi(v)=X_0$, and set $\varphi(v)=i_2$ if $\phi(v)=Y_0$. 

For each $j\in J_X$ independently at random, with probability $4\alpha$, define $\varphi$ on $K_j$ by composing $\phi$ with the map that sends $Y_1,X_2,Y_3$ to $i_2,I_{2,1},I_{2,2}$, respectively; and with probability $1-4\alpha$ define $\varphi$ on $K_j$ by composing $\phi$ with the map that sends $Y_1,X_2,Y_3$ to $I_{1,1},I_{1,2},I_{1,1}$, respectively.

For each $j\in J_Y$ independently at random, with probability $4\alpha$, define $\varphi$ on $K_j$ by composing $\phi$ with the map that sends $X_1,Y_2,X_3$ to $I_{2,1},I_{2,2},I_{2,1}$, respectively; and with probability $1-4\alpha$ define $\varphi$ on $K_j$ by composing $\phi$ with the map that sends $X_1,Y_2,X_3$ to $i_1,I_{1,1},I_{1,2}$, respectively.

By Lemma~\ref{lemma:mcdiarmid}, with positive probability, $|\varphi^{-1}(I_{2,2})|=4\alpha\tau_2\pm\alpha\tau_2/2\leq n_D-\alpha n/10$, $|\varphi^{-1}(I_{2,1})|=4\alpha\tau_1\pm\alpha\tau_1/2\leq 10\alpha t_2-\alpha n/100$, $|\varphi^{-1}(I_{1,2}\cup I_{2,1})|\leq t_1\leq n_B+n_C-2\alpha n$, and $|\varphi^{-1}(I_{1,1})|=(1-4\alpha)\tau_2\pm\alpha\tau_2\leq n_A-\alpha n/10$. Therefore, like in Lemma~\ref{lemma:em2a}, by~\ref{lemma:em2c:2}--\ref{lemma:em2c:4} and after refining, we can find $i_1\in I_B$ and $i_2\in I_A$ such that $i_1i_2\in E(R_{\text{\bfseries EM2c}})$, along with two matchings of refined clusters of suitable sizes forming $(\sqrt\eps,d-\eps)$-regular pairs attached to $i_1,i_2$ as depicted in Figure~\ref{fig:EM2cproof}, which allow us to apply Lemma~\ref{lem:maintechnicalembedding} to find a copy of $T$ in $G$.
\end{proof}

%%%%%%%%%%%%%%%%%%%%%%%%%%%%%%%%%%%%%%%%%%%%%%%%%%%%%%%%%%%%%%%%%%%%%%%%
%%%%%%%%%%%%%%%%%%%%%%%%%%%%%%%%%%%%%%%%%%%%%%%%%%%%%%%%%%%%%%%%%%%%%%%%
%%%%%%%%%%%%%%%%%%%%%%%%%%%%%%%%%%%%%%%%%%%%%%%%%%%%%%%%%%%%%%%%%%%%%%%%

\subsection{EM2d Embedding method}\label{sec:EM3}
%We end with a final embedding lemma.
\begin{figure}[h]
\input{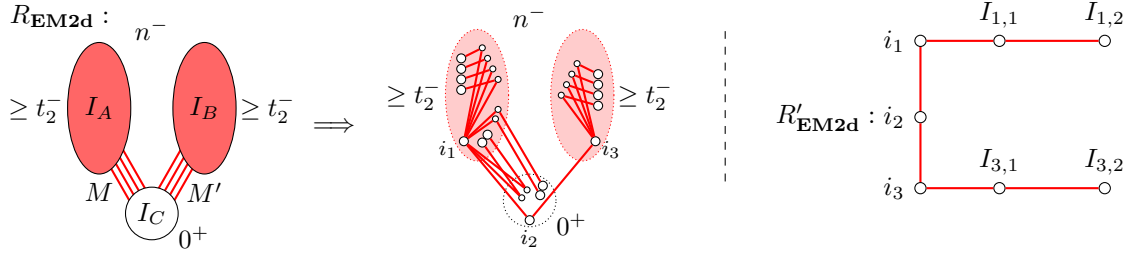}

\vspace{-0.3cm}

\caption{On the left, the initial reduced graph $R_{\text{\bfseries EM2d}}$ transformed into the substructure used to embed the tree in Lemma~\ref{lemma:em3}. On the right, the auxiliary graph $R_{\text{\bfseries EM2d}}'$ used when applying Lemma~\ref{lem:maintechnicalembedding}.}\label{fig:EM2dproof}
\end{figure}

\begin{lemma}[\textbf{EM2d}]\label{lemma:em3}
Let $1/n\ll 1/m\ll c\ll1/k\ll\eps\ll\eta\ll\alpha\ll d\leq1$. Let $T$ be an $n$-vertex tree with $\Delta(T)\le cn$ and bipartition classes of sizes $t_1$ and $t_2$ with $t_2\leq t_1\leq 2t_2$. Let $G$ be a graph with with a partition $V(G)=V_1\cup\cdots\cup V_k$ satisfying $|V_1|=\cdots=|V_k|=m$. Let  $R_{\text{\emph{\bfseries EM2d}}}$ be a graph with vertex set $[k]$, such that if $ij\in E(R_{\text{\emph{\bfseries EM2d}}})$ then $G[V_i,V_j]$ is $(\eps,d)$-regular. Suppose there is a partition $[k]=I_A\cup I_B\cup I_C$ such that the following properties hold (see Figure~\ref{fig:EM2dproof}).
\stepcounter{propcounter}
\begin{enumerate}[label = \emph{\textbf{\Alph{propcounter}\arabic{enumi}}}]
    \item\labelinthm{lemma:em2d:1} $|\cup_{i\in I_A}V_i|\geq|\cup_{i\in I_B}V_i|\geq(1-\alpha) t_2, |\cup_{i\in I_A\cup I_B}V_i|\geq(1-\alpha)n$, and $|\cup_{i\in I_A\cup I_B\cup I_C}V_i|=(1+100\alpha)n$.
    \item\labelinthm{lemma:em2d:2} $R_{\text{\emph{\bfseries EM2d}}}[I_A]$ and $R_{\text{\emph{\bfseries EM2d}}}[I_B]$ are both $\eta$-almost complete graphs.
    \item\labelinthm{lemma:em2d:3} $R_{\text{\emph{\bfseries EM2d}}}[I_A,I_C]$ contains a matching $M$ covering $I_C$, and a vertex in $I_A\cap V(M)$ that is adjacent to every vertex in $I_C\cap V(M)$.
    \item\labelinthm{lemma:em2d:4} $R_{\text{\emph{\bfseries EM2d}}}[I_B,I_C]$ contains a matching $M'$ covering $I_C$.
\end{enumerate}
Then, $G$ contains a copy of $T$.
\end{lemma}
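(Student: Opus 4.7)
The proof will follow the template of Lemmas~\ref{lemma:em2a}--\ref{lemma:em2c}: apply Lemma~\ref{lem:maintreedecomp} to obtain a tree homomorphism $\phi:T\to S$ cutting $T$ into small pieces; build a suitable substructure of the graph $R'$ of Lemma~\ref{lem:maintechnicalembedding} inside $R_{\text{\bfseries EM2d}}$; define a random homomorphism $\varphi:T\to R'$ on the components of $T-\phi^{-1}(X_0\cup Y_0)$ with carefully chosen probabilities; and finally invoke Lemma~\ref{lem:maintechnicalembedding}. The crucial distinction from those earlier lemmas is that $R_{\text{\bfseries EM2d}}[I_A,I_B]$ carries no useful structure, so the tree must be packed almost entirely into the two disjoint $\eta$-almost complete blobs $I_A$ and $I_B$, bridged only through the path $i_1-i_2-i_3$ across $I_C$.

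Set $i_1\in I_A$ to be the vertex supplied by~\ref{lemma:em2d:3} that is adjacent in $R_{\text{\bfseries EM2d}}$ to every cluster of $I_C$, and let $i_2\in I_C$ and $i_3\in I_B$ be matched with $i_1$ and $i_2$ by $M$ and $M'$, respectively. Then build three chain families: a family of two-chains $i_1-I_{0,1}-I_{0,2}$ with $I_{0,1}\subseteq I_C\setminus\{i_2\}$ and $I_{0,2}=M(I_{0,1})\subseteq I_A$, using the special adjacency of $i_1$ to $I_C$ together with $M$; a mixed family $I_{1,1}\cup I_{1,2}\cup I_{1,3}$ inside $I_A\setminus(\{i_1\}\cup I_{0,2})$, composed of some two-chains $i_1-I_{1,1}-I_{1,2}$ together with some three-chains $i_1-I_{1,1}-I_{1,2}-I_{1,3}$, obtained by applying Lemma~\ref{lemma:regularity:refine} to the $\eta$-almost complete graph $R_{\text{\bfseries EM2d}}[I_A]$; and an analogous family $I_{3,1}\cup I_{3,2}\cup I_{3,3}$ inside $I_B\setminus\{i_3\}$, from $R_{\text{\bfseries EM2d}}[I_B]$. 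Set $\varphi(Y_0)=i_1$ and $\varphi(X_0)=i_2$, which is valid because $i_1i_2\in E(R')$. Since $R'$ is bipartite with $\{i_1,i_3\}$ on one side and $i_2$ on the other, the bipartition of $T$ forces every $X$-side tree vertex into an odd-distance cluster from $i_1$ (namely $I_{a,1}$ or $I_{a,3}$, or a hub) and every $Y$-side tree vertex into an even-distance cluster ($I_{a,2}$, or a hub). Each component of $T-\phi^{-1}(X_0\cup Y_0)$ is then randomly assigned to one of the chain families via the natural homomorphism, with probabilities chosen so that each expected load $\mathbb{E}\bigl[|\varphi^{-1}(I_{a,b})|\bigr]$ lies suitably below $\sum_{i\in I_{a,b}}|V_i|$. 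Concentration via Lemma~\ref{lemma:mcdiarmid} yields a realisation satisfying~\ref{prop:maintech2}, while~\ref{prop:maintech3} is automatic because only vertices in $\phi^{-1}(X_0\cup Y_0\cup X_1\cup Y_1)$ can map to $\{i_1,i_2,i_3\}$ and $|\phi^{-1}(X_0\cup Y_0\cup X_1\cup Y_1)|\le\xi n$. One then concludes with Lemma~\ref{lem:maintechnicalembedding}.

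The main obstacle is the balance between $X$- and $Y$-side capacities in each region: a two-chain offers $X:Y=1:1$ whereas a three-chain offers $X:Y=2:1$, and the tree bipartition satisfies $1\le t_1/t_2\le 2$ precisely by hypothesis. The correct convex combination of two- and three-chains inside each of $I_A$ and $I_B$ must therefore be chosen so that the aggregate $X:Y$ capacity ratio matches $t_1:t_2$, while the total capacities $n_A$ and $n_B$ simultaneously absorb the corresponding shares of the $n$ tree vertices up to the small slack supplied by the hubs and by the $I_{0,*}$ chains into $I_C$. The hypotheses $n_A\ge n_B\ge(1-\alpha)t_2$, $n_A+n_B\ge(1-\alpha)n$ and $n_A+n_B+n_C=(1+100\alpha)n$ give exactly enough room to do so; verifying that this linear feasibility is attainable for every admissible tuple $(t_1,t_2,n_A,n_B,n_C)$, likely with a mild case analysis on whether $n_A$ is close to $t_1$ or to $t_2$, is the technical core of the argument.
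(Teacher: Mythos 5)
Your skeleton matches the paper's: fix the hub path $i_1\in I_A$, $i_2\in I_C$, $i_3\in I_B$ via \ref{lemma:em2d:3} and \ref{lemma:em2d:4}, cut $T$ with Lemma~\ref{lem:maintreedecomp}, assign the small components randomly, and conclude with Lemma~\ref{lem:maintechnicalembedding}. But the step you yourself flag as ``the technical core'' --- matching the $X{:}Y$ capacity ratio to $t_1{:}t_2$ --- is exactly where your mechanism breaks down, and it is not a routine verification. First, a ``mixed family'' of two-chains and three-chains is not available inside the framework of Lemma~\ref{lem:maintechnicalembedding}: each family $a$ must satisfy $|I_{a,1}|=|I_{a,2}|=|I_{a,3}|$ with perfect matchings between consecutive levels, so chains of different lengths cannot coexist in one family, and you cannot split $I_A$ (or $I_B$) into a pure two-chain family and a pure three-chain family because only one spare family attaches to each of $i_1$ and $i_3$ (you have already spent $I_{0,*}$ on the chains through $I_C$, and $I_{2,*}$, attached to $i_2\in I_C$, has no guaranteed edges into $I_A$ or $I_B$). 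Second, if instead you build genuine three-level families with equal cluster sizes and merely vary which components fold level $3$ back onto level $1$, the even-level capacity of a family is only one third of the region it occupies; when $t_1$ is close to $t_2$ each of $I_A$ and $I_B$ must absorb roughly half of its vertices from $V_2$, so the constraint $|\varphi^{-1}(I_{a,2})|+\alpha n\le\sum_{i\in I_{a,2}}|V_i|$ fails. So the convex-combination idea, as stated, does not give the continuously tunable ratio you need, and the feasibility analysis you defer cannot be completed along these lines.

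The paper resolves this with a different device that you do not mention: the ratio is tuned by splitting clusters, not by mixing chain lengths. Using \ref{lemma:em2d:2} and the special vertex of \ref{lemma:em2d:3}, it extends $M-\{i_1,i_2\}$ to a matching $M_1$ in $R_{\text{\bfseries EM2d}}[I_A\cup I_C]$ whose clusters cover $n_A+n_C-\alpha n$ vertices and are all adjacent to $i_1$, and similarly a matching $M_2$ in $R_{\text{\bfseries EM2d}}[I_B]$ adjacent to $i_3$; then each cluster of a matched pair is cut into two parts of sizes $m_1+m_2=m$, chosen \emph{after} the random assignment so that $|\varphi^{-1}(I_{1,b})|+\alpha n\le(n_A+n_C-\alpha n)m_b/m$ for $b\in[2]$ (Lemma~\ref{lemma:regularity:1} keeps the sliced pairs regular). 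Two-level chains with asymmetric level sizes give any ratio, which is exactly the flexibility the main lemma's ``different levels may have different cluster sizes'' permits. The component assignment is then very simple: since $\tau_X+\tau_Y\le n\le 2n_A+2n_C-20\alpha n$, one of $\tau_X,\tau_Y$ is at most $n_A+n_C-10\alpha n$; in the first case all $X$-side components go to the $M_1$-region and a $p$-fraction of the $Y$-side components join them, the rest going to the $M_2$-region, with $p$ chosen so that $\tau_X+p\tau_Y=n_A+n_C-10\alpha n$ (the other case is symmetric). You would need to replace your ratio argument by this cluster-splitting construction (or an equivalent one) for the proof to go through.
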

\begin{proof}
Let $\xi$ satisfy $c\ll\xi\ll 1/k$. Let $S$ be the graph defined in Lemma~\ref{lem:maintreedecomp}. Using that lemma, take a homomorphism $\phi:T\to S$ such that each component of $T-\phi^{-1}(X_0\cup Y_0)$ has size at most $\xi n$ and $|\phi^{-1}(X_0\cup Y_0\cup X_1\cup Y_1)|\leq \xi n$. Without loss of generality, say $|\phi^{-1}(X_0\cup X_1\cup X_2\cup X_3)|=t_1$ and $|\phi^{-1}(Y_0\cup Y_1\cup Y_2\cup Y_3)|=t_2$. Let the components of $T-\phi^{-1}(X_0\cup Y_0)$ be $\{K_j:j\in J\}$, and partition $J$ as $J_X\cup J_Y$, so that $N_T(K_j)\subset\phi^{-1}(X_0)$ for each $j\in J_X$, and $N_T(K_j)\subset\phi^{-1}(Y_0)$ for each $j\in J_Y$.   

Let $\tau_X=|\phi^{-1}(X_2\cup Y_3)|$ and $\tau_Y=|\phi^{-1}(Y_2\cup X_3)|$. For each $\ast\in\{A,B,C\}$, let $n_\ast=|\cup_{i\in I_\ast}V_i|$. Let $R_{\text{\textbf{EM2d}}}'$ be the graph depicted on the right in Figure~\ref{fig:EM2dproof}, which is a subgraph of the graph $R'$ in Lemma~\ref{lem:maintechnicalembedding}. Since \[\tau_X+\tau_Y\leq n\leq n_A+n_B+2n_C-20\alpha n\leq2n_A+2n_C-20\alpha n,\] we can separate into the following two cases. \textbf{I:} $\tau_X\leq n_A+n_C-10\alpha n$. \textbf{II:} $\tau_Y\leq n_A+n_C-10\alpha n$. 

\medskip

\noindent \textbf{Case I.} As $\tau_X+\tau_Y\geq n-\xi n\geq n_A+n_C-10\alpha n$, there exists $p\in[0,1]$ such that $\tau_X+p\tau_Y=n_A+n_C-10\alpha n$. It follows that $(1-p)\tau_Y\leq n-\tau_X-p\tau_Y\leq n_B-50\alpha n$. Define a random homomorphism $\varphi:T\to R_{\text{\textbf{EM2d}}}'$ as follows. First, for vertices in $\phi^{-1}(X_0\cup Y_0)$, set $\varphi(v)=i_1$ if $\phi(v)=X_0$, and set $\varphi(v)=i_2$ if $\phi(v)=Y_0$. For each $j\in J_X$, define $\varphi$ on $K_j$ by composing $\phi$ with the map that sends $Y_1,X_2,Y_3$ to $I_{1,1},I_{1,2},I_{1,1}$, respectively. For each $j\in J_Y$ independently at random, with probability $p$, define $\varphi$ on $K_j$ by composing $\phi$ with the map that sends $X_1,Y_2,X_3$ to $i_1,I_{1,1},I_{1,2}$, respectively; and with probability $1-p$ define $\varphi$ on $K_j$ by composing $\phi$ with the map that sends $X_1,Y_2,X_3$ to $i_3,I_{3,1},I_{3,2}$, respectively.

By Lemma~\ref{lemma:mcdiarmid}, with positive probability, $|\varphi^{-1}(I_{1,1}\cup I_{1,2})|=\tau_X+p\tau_Y\pm\alpha n\leq n_A+n_C-5\alpha n$ and $|\varphi^{-1}(I_{3,1}\cup I_{3,2})|=(1-p)\tau_Y\pm\alpha n\leq n_B-10\alpha n$. 

We now further transform the structure as depicted on the left of Figure~\ref{fig:EM2dproof}. Let $i_1\in I_A\cap V(M)$ be the vertex given by~\ref{lemma:em2d:3} that is adjacent to every vertex in $I_C\cap V(M)$, suppose it is matched with $i_2\in I_C$ by $M$, and let $i_3\in I_B$ be the neighbour of $i_2$ in $M'$. By~\ref{lemma:em2d:2}, we can find a matching $M_1$ in $R_{\textbf{EM2d}}[I_A\cup I_C]$ containing the matching $M-\{i_1,i_2\}$, such that $i_1$ is adjacent to every $i\in V(M_1)$ and $\sum_{i\in V(M_1)}|V_i|=n_A+n_C-\alpha n$. Since $|\varphi^{-1}(I_{1,1}\cup I_{1,2})|+4\alpha n\leq n_A+n_C-\alpha n$ from above, we can find $m_1,m_2\geq\sqrt\eps m$ such that $m_1+m_2=m$ and $|\varphi^{-1}(I_{1,b})|+\alpha n\leq(n_A+n_C-\alpha n)m_b/m$ for each $b\in[2]$. For each $i\in V(M_1)$, partition $V_i$ as $V_{i,1}\cup V_{i,2}$, such that $|V_{i,1}|=m_1$ and $|V_{i,2}|=m_2$. Then, for every edge $ii'$ in $M_1$, both $G[V_{i,1},V_{i',2}]$ and $G[V_{i,2},V_{i',1}]$ are $(\sqrt\eps,d-\eps)$-regular by Lemma~\ref{lemma:regularity:1}. Relabel $\{V_{i,1}:i\in V(M_1)\}$ and $\{V_{i,2}:i\in V(M_1)\}$ as $\{V_i':i\in I_{1,1}\}$ and $\{V_i':i\in I_{1,2}\}$, respectively. Note that $\sum_{i\in I_{1,1}}|V_i'|\geq|\varphi^{-1}(I_{1,1})|+\alpha n$ and $\sum_{i\in I_{1,2}}|V_i'|\geq|\varphi^{-1}(I_{1,2})|+\alpha n$.

Similarly, we can use~\ref{lemma:em2d:2} to find a matching $M_2$ in $R_{\textbf{EM2d}}[I_B]$ with all vertices in $V(M_2)$ adjacent to $i_3$, then refine them accordingly to obtain clusters $\{V_i':i\in I_{3,1}\}$ and $\{V_i':i\in I_{3,2}\}$ that can be matched together to form $(\sqrt\eps,d-\eps)$-regular pairs, such that $\sum_{i\in I_{3,1}}|V_i'|\geq|\varphi^{-1}(I_{3,1})|+\alpha n$ and $\sum_{i\in I_{3,2}}|V_i'|\geq|\varphi^{-1}(I_{3,2})|+\alpha n$. Thus, we can apply Lemma~\ref{lem:maintechnicalembedding} to find a copy of $T$ in $G$.

\medskip

\noindent \textbf{Case II.} In this case, we use essentially the same argument except that the role of $X$ and $Y$ are flipped, so we omit the full details. That is, we set $\varphi(v)=i_1$ if $\phi(v)=Y_0$, and $\varphi(v)=i_2$ if $\phi(v)=X_0$. Then, every component $K_j$ with $j\in J_Y$ will be embedded into $I_A\cup I_C$, while every component $K_j$ with $j\in J_X$ will be embedded into $I_A\cup I_C$ with some suitable probability $p$, and into $I_B$ otherwise. 
\end{proof}

%\newpage

%\section{Embedding methods for the proof of Theorem~\ref{thm:stability}: Stability}\label{sec:embed regularity}\label{sec:embedregularity}
%\input{3embeddingmethods}

%\newpage

\section{Proof of Theorem~\ref{thm:stability}: Stability}\label{sec:stages}
In this section, we will prove Theorem~\ref{thm:stability} by following the 4-stage process outlined in Section~\ref{sec:outline:stability} and depicted in Figure~\ref{fig:stages}, using the embedding methods developed in Section~\ref{sec:embedregularity}. As will be justified when we put everything together in Section~\ref{sec:stabmain}, we may assume that $t_1\leq 2t_2$ throughout these 4 stages. Our starting point is the next result that easily follows from the proof of {\cite[Theorem~3]{haxell2002ramsey}} by Haxell, \L uczak, and Tingley applied with $\alpha=t_1/t_2$ and $n=(1-\eps)(t_1+2t_2)$. The reason it follows from their proof rather than directly from their theorem statement is that we need to `remember', and later use, the remaining regularity clusters in the graph that is not part of the structure found by Haxell, \L uczak, and Tingley, and thus not included in the statement of their theorem.
\begin{theorem}\label{thm:hltstart}
   Let $1/n\ll 1/m\ll1/k\ll\eps\ll1$. Let $t_1$ and $t_2$ satisfy $t_1+t_2=n$ and $t_2\leq t_1\leq 2t_2$.
   Let $G$ be any red/blue coloured complete graph on at least $(1-\eps)(t_1+2t_2)$ vertices. Then, there exist disjoint subsets $V_0,\ldots,V_k\subset V(G)$ that form an $\eps$-regular partition with corresponding red/blue coloured $(\eps,1/3)$-reduced graph $R$, a partition $[k]=I_A\cup I_B\cup I_C$, and a colour $\ast\in\{\text{red},\text{blue}\}$ such that the following hold (see \emph{\textbf{A-situation}} in Figure~\ref{fig:stages}).
\begin{itemize}
    \item $|V_i|=m$ for every $i\in\{0\}\cup I_A\cup I_C$, and $|V_i|=t_1m/t_2$ for every $i\in I_B$.
    \item $|I_A|=|I_B|=|I_C|=k$, with $km\geq(1-2\eps)t_2$.
    \item In $R_\ast$, 0 is adjacent to every $a\in I_A$. 
    \item $\red{R}[I_A,I_B]$ contains a perfect matching.
\end{itemize}
   %Suppose $G$ is a red/blue coloured graph on $t_1+2t_2-1$ vertices, then there exists an $(m,k,\varepsilon,d,\theta)$-Stage 1 structure in colour red or blue with $km\geq(1-2\eps)t_2$.
\end{theorem}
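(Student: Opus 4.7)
The plan is to replay the structural part of the proof of Theorem~3 of Haxell, \L uczak, and Tingley, keeping track of the leftover clusters rather than using them to embed a tree.

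First, apply Theorem~\ref{theorem:regularity} with a refined parameter $\eps' \ll \eps$ to obtain an $\eps'$-regular partition of $V(G)$ into $K$ equal-sized clusters $W_1,\ldots,W_K$ of size $M$, with $1/K \ll \eps$. Form the coloured reduced graph $R^{(0)}$ on $[K]$ by assigning each $\eps'$-regular pair its majority colour; since $G$ is complete, at least one colour has density at least $1/2 > 1/3$ on every $\eps'$-regular pair, and so the chosen pair is $(\eps', 1/3)$-regular in that colour.

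Next, apply the HLT structural argument (with $\alpha = t_1/t_2 \in [1,2]$ and the hypothesis that $|V(G)| \geq (1-\eps)(\alpha+2)t_2$) to $R^{(0)}$. This produces a vertex $v_0 \in [K]$, disjoint subsets $J_A, J_B \subset [K]\setminus\{v_0\}$, and a colour $\ast$ such that, in $R^{(0)}_\ast$, every vertex of $J_A$ is adjacent to $v_0$ and there is a perfect matching $M_0$ between $J_A$ and $J_B$, with the total vertex coverage of $J_A$ (respectively $J_B$) being at least $(1-O(\eps))t_2$ (respectively $(1-O(\eps))t_1$). This is exactly the structural output that HLT extract from the reduced graph before turning to tree embedding.

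Finally, refine the clusters to achieve the prescribed sizes. Choose $m$ with $1/M \ll 1/m \ll \eps$, and for each matched pair $(a,b) \in M_0$ partition $W_a$ into sub-clusters of size $m$ and $W_b$ into sub-clusters of size $t_1m/t_2$; pair them up to form matched $(\eps, 1/3)$-regular pairs (their regularity is preserved by Lemma~\ref{lemma:regularity:1}). Take $V_0$ to be any refined piece of $W_{v_0}$ of size $m$, and subdivide all clusters of $[K] \setminus (\{v_0\} \cup V(M_0))$ into pieces of size $m$ to form $I_C$. The hypothesis $|V(G)| \geq (1-\eps)(t_1+2t_2)$ guarantees that $I_C$ inherits at least $(1-2\eps)t_2/m$ pieces, after which trimming forces $|I_A| = |I_B| = |I_C| = k$ exactly, with $km \geq (1-2\eps)t_2$.

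The main obstacle is the bookkeeping to ensure the three index sets have equal size $k$ while satisfying $km \geq (1-2\eps) t_2$: the HLT argument naturally produces the matching between equal-sized clusters in $R^{(0)}$, while we want matched refined pieces in the ratio $1 : t_1/t_2$. This is handled by choosing $m$ much smaller than $M$ so that rounding losses during the asymmetric refinement are absorbed into the $O(\eps)$ slack, and by distributing any small surplus pieces (either from refining $W_a$ with $a \in J_A$ or from leftover clusters) into $I_C$.
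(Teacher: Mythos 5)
Your overall route is the same as the paper's: the paper gives no self-contained argument for this theorem either, deriving it from the \emph{proof} of Haxell--\L uczak--Tingley's Theorem 3 applied with $\alpha=t_1/t_2$ and host size $(1-\eps)(t_1+2t_2)$, and simply ``remembering'' the unused clusters as $I_C$. So ``regularity lemma $+$ HLT structural argument $+$ refinement and leftover bookkeeping'' is the right shape of proof.

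There is, however, a genuine gap in how you interface with the HLT argument. You claim that, run on the reduced graph $R^{(0)}$ of an equal-cluster partition, it outputs a colour $\ast$, a centre $v_0$ and a \emph{perfect} matching $M_0$ between $J_A$ and $J_B$ whose sides cover $(1-O(\eps))t_2$ and $(1-O(\eps))t_1$ vertices. A perfect matching forces $|J_A|=|J_B|$, and all clusters of $R^{(0)}$ have the same size, so the two sides cover the same number of vertices; the claimed output is impossible whenever $t_1$ is appreciably larger than $t_2$ (the theorem allows $t_1$ up to $2t_2$). The asymmetry must then be manufactured by your refinement, but splitting each matched pair $(W_a,W_b)$ one-directionally into size-$m$ and size-$t_1m/t_2$ pieces uses only a $t_2/t_1$-fraction of each $W_a$, so the refined $I_A$ covers $|J_A|M\cdot t_2/t_1$ vertices while $I_B$ covers $|J_A|M$; to reach $km\geq(1-2\eps)t_2$ (equivalently, $I_B$ covering roughly $t_1$) you would need each side of $M_0$ to cover about $t_1$ vertices, i.e.\ $M_0$ to cover about $2t_1$. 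Such a monochromatic star-plus-matching need not exist: in the scaled Type I extremal colouring with $t_1$ close to $2t_2$ (blue cliques on roughly $(1-\eps)(t_1+t_2)$ and $(1-\eps)t_2$ vertices, red in between) no monochromatic matching covers more than about $t_1+t_2<2t_1$ vertices, yet the required structure does exist there, precisely because its clusters already come in the ratio $t_2:t_1$ (small clusters inside the $t_2$-side, large ones inside the big part). Splitting each pair in both orientations is not a fix either, since $v_0$ is only $\ast$-adjacent to the $J_A$-side and every size-$m$ cluster of $I_A$ must form a regular pair with $V_0$. So the unequal cluster sizes have to come out of the HLT argument itself (this is exactly why the paper invokes their proof with $\alpha=t_1/t_2$ rather than post-processing an equal-cluster matching); as written, your bridging step does not close, even though the final counting for $I_C$ would be fine once the correct structure is in hand.
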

%This follows immediately by app

To reduce repetition, we will carry out the 4 stages in reverse order in Sections~\ref{sec:stage4}--\ref{sec:stage1}. That is, we start with Lemma~\ref{lem:stage4} in Section~\ref{sec:stage4} carrying out \textbf{Stage 4}, which states that given a \textbf{D-situation} obtained in an earlier stage, we can either find a monochromatic copy of $T$ or the reduced graph is extremal (\textbf{E-situation}). Then, we similarly proceed through the other stages in reverse order, ending with Lemma~\ref{lem:stage1} in Section~\ref{sec:stage1} carrying out \textbf{Stage 1}, which says that from the \textbf{A-situation} structure given by Theorem~\ref{thm:hltstart}, we can either find a monochromatic copy of $T$, or, combining with known results about the later stages, conclude that the reduced graph is extremal (\textbf{E-situation}).

To finish the proof, we still need two further results. In Section~\ref{sec:extpart}, we formalise what it means for a reduced graph to be extremal, and show in Lemma~\ref{lemma:regtoext} that this implies the original graph is extremal as well in the sense of Definition~\ref{def:extremality}. The second is a technical `cascading lemma' about maximum matchings, which is needed in \textbf{Stage 2} and proved separately in Section~\ref{sec:cascade}.

\subsection{Extremal regular partitions}\label{sec:extpart}

Recall from Definition~\ref{def:extremality} what it means for a red/blue coloured complete graph to be Type I $(\mu,t_1,t_2)$-extremal or Type II $(\mu,t_1,t_2)$-extremal, and that if it is extremal of either type then we say it is $(\mu,t_1,t_2)$-extremal. Now, we similarly define a notion of being extremal for the reduced graph of a regular partition. 

%Fix a tree $T$ on $n$ vertices with $\Delta(T)\leq cn$ and bipartition classes of sizes $t_1\geq t_2$. Let $G$ be a red/blue coloured complete graph.

\begin{definition}\label{reduceextremality}
Let $1/n\ll1/k\ll\eps\ll1$, let $d,\mu\in [0,1]$, and let $t_1,t_2\in\mathbb{N}$ satisfy $t_1+t_2=n$. We say a red/blue-coloured graph $G$ has a \textit{Type I $(\mu,t_1,t_2)$-extremal $(\eps,d)$-regular partition} if there exists an $\eps$-regular partition $V_1\cup\cdots\cup V_k$ in $V(G)$ with corresponding $(\eps,d)$-reduced graph $R$, and a partition $[k]=I_A\cup I_B$ such that the following hold.
\begin{itemize}
    \item $|V_i|=m$ for every $i\in[k]$.
    \item $|I_A|\geq(1-\mu)n/m$ and $|I_B|\geq(1-\mu)t_2/m$.
    \item Both $\red{R}[I_A]$ and $\blue{R}[I_A,I_B]$ are $\mu$-almost empty, or both $\blue{R}[I_A]$ and $\red{R}[I_A,I_B]$ are $\mu$-almost empty.
\end{itemize}
We say $G$ has a \textit{Type II $(\mu,t_1,t_2)$-extremal $(\eps,d)$-regular partition} if there exists an $\eps$-regular partition $V_1\cup\cdots\cup V_k$ in $V(G)$ with corresponding $(\eps,d)$-reduced graph $R$, and a partition $[k]=I_A\cup I_B$ such that the following hold.
\begin{itemize}
    \item $|V_i|=m$ for every $i\in[k]$.
    \item $|I_A|,|I_B|\geq(1-\mu)t_1/m$.
    \item All of $\red{R}[I_A]$, $\red{R}[I_B]$, and $\blue{R}[I_A,I_B]$ are $\mu$-almost empty, or all of $\blue{R}[I_A]$, $\blue{R}[I_B]$, and $\red{R}[I_A,I_B]$ are $\mu$-almost empty.
\end{itemize}
\end{definition}

With suitable choices of constants, if a red/blue coloured complete graph $G$ has a Type I or Type II $(\mu',t_1,t_2)$-extremal $(\eps,d)$-regular partition, then it must be $(\mu,t_1,t_2)$-extremal, by the following lemma.
\begin{lemma}\label{lemma:regtoext}
Let $1/n\ll1/k\ll\eps\ll\mu'\ll d\ll\mu\ll1$, and let $t_1,t_2\in\mathbb{N}$ satisfy $t_1+t_2=n$. If $G$ is a red/blue coloured complete graph on at most $2n$ vertices that contains a Type I or Type II $(\mu',t_1,t_2)$-extremal $(\eps,d)$-regular partition, then $G$ is Type I or Type II $(\mu,t_1,t_2)$-extremal, respectively.
\end{lemma}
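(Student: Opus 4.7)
The idea is the standard one of passing from a reduced-graph statement to a statement about the original vertices by throwing away a small fraction of clusters and, within the remaining clusters, a small fraction of vertices. I will treat the Type I case in detail; the Type II case differs only in that one more symmetric bound needs to be verified.

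First I would choose auxiliary constants $\mu' \ll \eta \ll d \ll \mu$ with $\eps \ll \eta$, and start from the given Type~I extremal $(\eps,d)$-regular partition $V_1\cup\cdots\cup V_k$ with reduced graph $R$ and partition $[k]=I_A\cup I_B$; assume without loss of generality that $\red{R}[I_A]$ and $\blue{R}[I_A,I_B]$ are $\mu'$-almost empty. Let $I_A'\subset I_A$ and $I_B'\subset I_B$ be obtained by deleting those indices $i$ that participate in more than $2\sqrt{\eps}k$ pairs $\{i,j\}$ for which $\red{G}[V_i,V_j]$ or $\blue{G}[V_i,V_j]$ fails to be $\eps$-regular. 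Since at most $\eps k^2$ such pairs exist in total, only $\sqrt{\eps}k$ indices get discarded in total, so $|I_A'|\geq |I_A|-\sqrt{\eps}k$ and $|I_B'|\geq |I_B|-\sqrt{\eps}k$.

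Next, within each remaining cluster I will delete a small set of vertices whose degrees to neighbouring clusters are atypically large in the ``wrong'' colour. Fix $i\in I_A'$; for every $j\in I_A'\setminus\{i\}$ such that $\{i,j\}$ is an $\eps$-regular pair and $ij\notin E(\red{R})$, the graph $\red{G}[V_i,V_j]$ is $\eps$-regular of density $<d$, so applying Lemma~\ref{lemma:regularity:3} to its complement $\blue{G}[V_i,V_j]$ shows that fewer than $\eps m$ vertices $u\in V_i$ satisfy $\red{d}(u,V_j)>(d+\eps)m$. Double counting over such $j$ then shows that all but at most $\sqrt{\eps}m$ vertices $u\in V_i$ have $\red{d}(u,V_j)>(d+\eps)m$ for at most $\sqrt{\eps}k$ indices $j\in I_A'$. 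The same argument applied to the blue graph between $V_i$ and clusters indexed by $I_B'$ yields another set of at most $\sqrt{\eps}m$ bad vertices in $V_i$. Removing both bad sets gives a set $U_i'\subset V_i$ with $|U_i'|\geq(1-2\sqrt{\eps})m$; an entirely analogous construction, using $\blue{R}[I_A,I_B]$'s $\mu'$-almost-emptiness, produces $U_i'\subset V_i$ with $|U_i'|\geq(1-\sqrt{\eps})m$ for each $i\in I_B'$. Finally set
\[
U_1=\bigcup_{i\in I_A'}U_i',\qquad U_2=\bigcup_{i\in I_B'}U_i'.
\]

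Verifying the size bounds is straightforward: $|U_1|\geq (1-2\sqrt{\eps})(|I_A|-\sqrt{\eps}k)m\geq(1-\mu'-O(\sqrt{\eps}))|I_A|m\geq(1-\mu)n$ since $|I_A|m\geq(1-\mu')n$ and $\mu',\sqrt{\eps}\ll\mu$, and likewise $|U_2|\geq(1-\mu)t_2$. For the degree bounds, fix $u\in U_1$ with $u\in V_i$, $i\in I_A'$. Splitting $I_A'\setminus\{i\}$ according to whether $\{i,j\}$ is non-$\eps$-regular (at most $2\sqrt{\eps}k$ such $j$), $ij\in E(\red{R})$ (at most $\mu'|I_A|$ such $j$), or neither (in which case $u$ contributes $\red{d}(u,V_j)\leq(d+\eps)m$ except for the at most $\sqrt{\eps}k$ exceptional $j$ removed when forming $U_i'$), we get
\[
\red{d}(u,U_1)\leq m+(2\sqrt{\eps}k+\mu'|I_A|+\sqrt{\eps}k)m+(d+\eps)|I_A|m\leq(d+2\mu')|I_A|m\leq 2(d+2\mu')n\leq\mu n,
\]
using $d,\mu'\ll\mu$ and $|I_A|m\leq|V(G)|\leq 2n$. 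The bound $\blue{d}(u,U_2)\leq\mu n$ follows by the same argument from the $\mu'$-almost-emptiness of $\blue{R}[I_A,I_B]$, and the symmetric bound $\blue{d}(u,U_1)\leq\mu n$ for $u\in U_2$ follows by the same reasoning applied to clusters indexed by $I_B'$. Hence $G$ is Type~I $(\mu,t_1,t_2)$-extremal.

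The Type~II case uses the identical clean-up scheme applied to the partition $[k]=I_A\cup I_B$ given there, producing $U_1\subset\cup_{i\in I_A}V_i$ and $U_2\subset\cup_{i\in I_B}V_i$; the three required degree bounds (one between $U_1$ and $U_2$ and one within each $U_i$) follow in the same way from the three $\mu'$-almost-emptiness hypotheses. The only real content is the double-counting step combined with Lemma~\ref{lemma:regularity:3}, and the only place one has to be careful is in tracking that the constants fit the hierarchy $\mu'\ll\eta\ll d\ll\mu$; there is no essential obstacle.
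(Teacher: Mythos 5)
Your argument is correct, but it takes a genuinely different route from the paper's. You do a local, regularity-based cleanup: discard clusters lying in many irregular pairs, then inside each surviving cluster delete the few vertices whose wrong-colour degree exceeds $(d+\eps)m$ into more than $\sqrt{\eps}k$ other clusters (via Lemma~\ref{lemma:regularity:3} applied to complements, plus double counting), and finally verify the degree bounds cluster by cluster. The paper instead makes a single global count: the total number of wrong-coloured edges inside $V_A$ (resp.\ between $V_A$ and $V_B$) is at most roughly $(\mu'+d)k^2m^2+km^2\le 5dn^2$, and then it iteratively removes vertices of wrong-colour degree exceeding $\sqrt{5d}n$, of which there can be at most $\sqrt{5d}n$, never invoking regularity of individual pairs at all. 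The trade-off: the paper's argument is shorter and needs no case analysis over regular/irregular pairs, but only yields maximum wrong-colour degree $O(\sqrt{d}\,n)$; your version is longer but gives the sharper bound $O(dn)$ and produces clusters that remain usable as regular pairs (irrelevant here, since Definition~\ref{def:extremality} only asks for degree conditions). Two small cosmetic points: the constant $\eta$ you introduce is never used, and in the displayed degree bound the terms $m+3\sqrt{\eps}km$ are silently absorbed into $\mu'|I_A|m$ --- this is fine since $|I_A|\geq(1-\mu')k/2$ and $\eps\ll\mu'\ll 1/1$, but is worth a word. Neither affects correctness.
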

\begin{proof}
Suppose $G$ contains a Type I $(\mu',t_1,t_2)$-extremal $(\eps,d)$-regular partition $V_1\cup\cdots\cup V_k$ satisfying the conditions in Definition~\ref{reduceextremality}, where without loss of generality we assume that both $\red{R}[I_A]$ and $\blue{R}[I_A,I_B]$ are $\mu'$-almost empty. Let $V_A=\cup_{i\in I_A}V_i$ and $V_B=\cup_{i\in I_B}V_i$. From assumptions and using $km\leq|G|\leq 2n$, the number of red edges in $V_A$ is at most $\mu'|I_A|^2m^2+d|I_A|^2m^2+|I_A|m^2\leq\mu'k^2m^2+dk^2m^2+km^2\leq4(\mu'+d+1/k)n^2\leq5dn^2$. Therefore, by removing at most $\sqrt{5d} n$ vertices from $V_A$, we can obtain a subset $V_A'$ such that $\red{d}(u,V_A')\leq\sqrt{5d}n$ for every $u\in V_A'$.

Similarly, the number of blue edges between $V_A'$ and $V_B$ is at most $\mu'|I_A||I_B|m^2+d|I_A||I_B|m^2\leq4(\mu'+d)n^2\leq 5dn^2$. Thus, we can remove at most $\sqrt{5d} n$ vertices from each of $V_A'$ and $V_B$ to obtain subsets $U_1$ and $U_2$, respectively, such that for every $i\in[2]$ and every $u\in U_i$, $\blue{d}(u,U_{3-i})\leq\sqrt{5d} n$. Since $\mu\gg\sqrt{d}$, $U_1$ and $U_2$ show that $G$ is Type I $(\mu,t_1,t_2)$-extremal.

The case when $G$ contains a Type II $(\mu',t_1,t_2)$-extremal $(\eps,d)$-regular partition is similar and thus omitted. 
\end{proof}

%%%%%%%%%%%%%%%%%%%%%%%%%%%%%%%%%%%%%%%%%%%%%%%%%%%%%%%%%%%%%%%%%%%%%%%%
%%%%%%%%%%%%%%%%%%%%%%%%%%%%%%%%%%%%%%%%%%%%%%%%%%%%%%%%%%%%%%%%%%%%%%%%
%%%%%%%%%%%%%%%%%%%%%%%%%%%%%%%%%%%%%%%%%%%%%%%%%%%%%%%%%%%%%%%%%%%%%%%%

\subsection{Stage 4}\label{sec:stage4}

\begin{lemma}[\textbf{Stage 4}]\label{lem:stage4}
Let $1/n\ll 1/m\ll c\ll1/k\ll\eps\ll\eta\ll\alpha\ll d\ll\mu\leq1$. Let $T$ be an $n$-vertex tree with $\Delta(T)\le cn$ and bipartition classes of sizes $t_1$ and $t_2$ with $t_2\leq t_1\leq2t_2$. Let $G$ be a red/blue coloured graph that contains a coloured $\eps$-regular partition $V_1\cup\cdots\cup V_k$ with $|V_1|=\cdots=|V_k|=m$ and corresponding red/blue coloured $(\eps,d)$-reduced graph $R$. Suppose there is a partition $[k]=I_A\cup I_B$ such that the following hold (see \textbf{\emph{D-situation}} in Figure~\ref{fig:stages}).
\stepcounter{propcounter}
\begin{enumerate}[label = \emph{\textbf{\Alph{propcounter}\arabic{enumi}}}]
    \item\labelinthm{lemma:stage4:1} $|I_A|=k_1$ and $|I_B|=k_2$, with $k_1m,k_2m\geq(1-\alpha)t_2$ and $(k_1+k_2)m\geq(1-\alpha)(n+t_2)$.
    \item\labelinthm{lemma:stage4:2} $\red{R}[I_A,I_B]$ is $\eta$-almost complete.
\end{enumerate}
Then, $G$ contains a monochromatic copy of $T$, or $G$ is Type I $(\mu,t_1,t_2)$-extremal, or $t_1\geq(2-\mu)t_2$ and $G$ is Type II $(\mu,t_1,t_2)$-extremal.%, then either we can find a monochromatic copy of $T$ in $G$, or $G$ contains an extremal regular partition.
\end{lemma}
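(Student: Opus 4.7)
The plan is to assume $G$ contains no red copy of $T$ and carry out the case analysis sketched in Section~\ref{sec:outline:stability}, using the embedding lemmas of Section~\ref{sec:embed regularity} to rule out all non-extremal configurations and then converting the residual structural information on the reduced graph into extremality of $G$ itself via Lemma~\ref{lemma:regtoext}. To that end I would fix auxiliary parameters $\alpha\ll\beta\ll\mu_{\text{ext}}\ll d$, which the hierarchy permits. After relabelling so that $k_1\ge k_2$, the proof splits on whether $k_2 m \ge t_2 + \beta n$ (``both sides large'') or $k_2 m \le t_2 + \beta n$ (``smaller side small''); in every subcase, the $\eta$-almost complete red $R[I_A,I_B]$ provides all the bipartite red matchings needed by \textbf{EM2a}, \textbf{EM2b}, and \textbf{EM2c}.

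In the ``both sides large'' case I would first try to apply \textbf{EM2a} (Lemma~\ref{lemma:em2a}) or \textbf{EM2b} (Lemma~\ref{lemma:em2b}), which require only a single red edge inside $R[I_A]$ or $R[I_B]$ respectively, with the rest of the structure obtained by carving a small $I_C$ out of $I_B$ using the red bipartite $R[I_A,I_B]$. If both fail, then $\red{R}[I_A]$ and $\red{R}[I_B]$ are essentially edgeless, and I would split further on $k_1 m$. If $k_1 m \ge t_1+\beta n$, then \textbf{EM2c} (Lemma~\ref{lemma:em2c}) applies with lemma's $I_A$ hosting both EM2c's large side $I_B(\text{EM2c})$ and the attached set $I_C(\text{EM2c})$, and lemma's $I_B$ hosting EM2c's small side $I_A(\text{EM2c})$ and the attached set $I_D(\text{EM2c})$; then both matchings $M_1,M_2$ live in the red-almost-complete $R[I_A,I_B]$, and the slack $\beta n$ is precisely what lets the four EM2c sets be disjoint. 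If instead $k_1 m \le t_1+\beta n$, combining with $\sum_i k_i m \ge (1-\alpha)(n+t_2)$ forces $k_1m,k_2m \ge 2t_2-O(\beta)n$ and hence $t_1\ge(2-\mu)t_2$; the colour pattern (red-almost-complete between, red-almost-empty within each part), after a brief trimming to control maximum red degree, yields a Type II $(\mu_{\text{ext}},t_1,t_2)$-extremal $(\eps,d)$-regular partition (Definition~\ref{reduceextremality}), and Lemma~\ref{lemma:regtoext} promotes this to $G$ being Type II $(\mu,t_1,t_2)$-extremal.

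In the ``smaller side small'' case, $k_1 m \ge (1-\alpha)(n+t_2)-(t_2+\beta n) \ge n-O(\beta)n$. I would check whether $\red{R}[I_A]$ contains a red matching of size $100\alpha n/m$ together with one additional red edge; if so, \textbf{EM2a} applies with $I_C(\text{EM2a})$ drawn from $I_A$ via this matching, $I_A(\text{EM2a})$ taken from the remainder of lemma's $I_A$, and $I_B(\text{EM2a})=I_B$, producing a red $T$. Otherwise the matching number of $\red{R}[I_A]$ is below $100\alpha k_1$, so by K\H onig's theorem $\red{R}[I_A]$ has at most $O(\alpha)k_1^2$ edges, and after deleting an $O(\alpha/\mu_{\text{ext}})$-fraction of vertices of high red degree, $\blue{R}$ on the remaining set of $I_A$ is $\mu_{\text{ext}}$-almost complete. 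If moreover $k_1 m \ge n+\beta n$, the almost-complete blue graph there easily contains an \textbf{EM2c} configuration in blue (all four index sets inside $I_A$), giving a blue $T$. Otherwise $n-O(\beta)n \le k_1 m \le n+\beta n$, and I would verify that the resulting data constitutes a Type I $(\mu_{\text{ext}},t_1,t_2)$-extremal $(\eps,d)$-regular partition; Lemma~\ref{lemma:regtoext} then gives that $G$ is Type I $(\mu,t_1,t_2)$-extremal.

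The main obstacle is the \textbf{EM2c} application in Case~1: one must thread four disjoint EM2c index sets into only two slots (lemma's $I_A$ and $I_B$) while ensuring that both matchings $M_1,M_2$ lie in the bipartite $R[I_A,I_B]$ (where red edges are abundant) rather than inside $R[I_A]$ or $R[I_B]$ (where, after ruling out EM2a/b, red edges are scarce); the permissible assignment is forced, and the sizes $100\alpha_{\text{EM2c}}t_2$ and $10\alpha_{\text{EM2c}}t_2$ of EM2c's $I_C$ and $I_D$ must be absorbed by the $\beta n$ slack in each side, which works precisely because $\alpha_{\text{EM2c}}\ll\beta$. A secondary and more routine subtlety is upgrading the crude ``few red edges'' bound obtained from the K\H onig step into the maximum-degree condition demanded by Definition~\ref{reduceextremality}; this requires a small trimming of outlier vertices and is absorbed comfortably by $\alpha\ll\mu_{\text{ext}}$.
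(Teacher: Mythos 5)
Your overall architecture (split on whether $k_2m$ exceeds $t_2$ by a linear amount, use \textbf{EM2a}/\textbf{EM2b} to kill red edges inside the parts, then either embed $T$ or conclude extremality) matches the paper, and your route to a red copy in the ``both sides large'' subcase is a legitimate alternative: the paper first forces $\blue{R}[J_A,J_B]$ to be empty via \textbf{EM2d} and then applies \textbf{H\L T} after a refinement, whereas you apply \textbf{EM2c} directly across the bipartition, which works because \textbf{EM2c} requires nothing inside the parts and its four index sets do fit into $I_A$ and $I_B$ with the size bookkeeping you indicate.

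However, both of your extremality conclusions have a genuine gap: you never establish that $\blue{R}[I_A,I_B]$ is almost empty, and this is required. In the reduced graph of Definition~\ref{def:regpartition} a pair $ij$ can carry a red and a blue edge simultaneously (both $\red{G}[V_i,V_j]$ and $\blue{G}[V_i,V_j]$ can be $(\eps,d)$-regular), so ``$\red{R}[I_A,I_B]$ is $\eta$-almost complete'' gives no upper bound whatsoever on $e(\blue{R}[I_A,I_B])$. Definition~\ref{reduceextremality} demands, for Type~II, that $\red{R}[I_A]$, $\red{R}[I_B]$ \emph{and} $\blue{R}[I_A,I_B]$ all be almost empty (the colour-swapped option is ruled out by your colour pattern), and for Type~I that both $\red{R}[I_A]$ and $\blue{R}[I_A,I_B]$ be almost empty; the same between-part blue sparsity appears in Definition~\ref{def:extremality} itself, so Lemma~\ref{lemma:regtoext} cannot rescue the argument. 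Concretely: in your first case, once red edges inside $I_A$ and $I_B$ are ruled out, $\blue{R}[I_A]$ and $\blue{R}[I_B]$ are almost complete, and a single blue edge between them must be converted into a blue copy of $T$ --- this is exactly the paper's application of \textbf{EM2d} (Lemma~\ref{lemma:em3}), which you never invoke. In your second case, before concluding Type~I extremality one must similarly dispose of the possibility that many vertices of $I_A$ have many blue neighbours in $I_B$; the paper does this with a blue \textbf{EM2c} application on $I_A'\cup I_B$. Without these two steps the claimed extremal partitions need not exist, so the proof is incomplete. (A minor further point: K\H{o}nig's theorem applies to bipartite graphs; for $\red{R}[I_A]$ use the trivial bound $e\leq 2\nu\Delta$, or, as the paper does, simply delete the vertices of a maximum matching so that $\red{R}$ on the remainder is literally empty.)
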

\begin{proof}
Without loss of generality, assume that $k_1\geq k_2$. Let $A=\cup _{i\in I_A}V_i$ and $B=\cup_{i\in I_B}V_i$. Note that from \ref{lemma:stage4:1}, $|A|+|B|\geq(1+200\alpha)n$ and $|A|=k_1m\geq(1-\alpha)(n+t_2)/2\geq t_2+200\alpha n$.

\medskip

\noindent\textbf{Case I.} $k_2m\geq t_2+100\alpha n$. If there exists an edge $ij$ in $\red{R}[I_A]$, then we can use~\ref{lemma:stage4:2} to greedily find a perfect matching between some $I_A'\subset I_A\setminus\{i,j\}$ of size $100\alpha n/m$ and some $I_B'\subset I_B$. This allows us to apply Lemma~\ref{lemma:em2a} (\textbf{EM2a}) to $I_A, I_B\setminus I_B'$, and $I_B'$ to embed $T$ in red. Similarly, we can use Lemma~\ref{lemma:em2b} (\textbf{EM2b}) to embed $T$ in red if there is an edge in $\red{R}[I_B]$. Thus, we may assume that both $\red{R}[I_A]$ and $\red{R}[I_B]$ are empty. This implies that both $\blue{R}[I_A]$ and $\blue{R}[I_B]$ contain at most $\eps k^2$ non-edges, so we can find $J_A\subset I_A$ and $J_B\subset I_B$, such that $|J_A|\geq(1-10\sqrt\eps)k_1, |J_B|\geq(1-10\sqrt\eps)k_2$, and both $\blue{R}[J_A], \blue{R}[J_B]$ are $10\sqrt\eps$-almost complete. 

If there is any edge $ib$ in $\blue{R}[J_A,J_B]$ with $i\in J_A$, then by moving $i$ out of $J_A$ and finding an arbitrary neighbour $a$ of $i$ in $J_A$, we get the structure required to apply Lemma~\ref{lemma:em3} (\textbf{EM2d}) to find a blue copy of $T$ in $G$. Therefore, we can assume that $\blue{R}[J_A,J_B]$ is empty.

If $k_1m\geq t_1+10\alpha n$, then after using Lemma~\ref{lemma:regularity:refine} to refine the clusters indexed by $J_A$ and $J_B$, we can embed $T$ in red using Lemma~\ref{lemma:hlt} (\textbf{H\L T}). If instead $k_1m<t_1+10\alpha n$, then it follows from $k_1m\geq(1-\alpha)(n+t_2)/2$ that $t_1\geq(2-100\alpha)t_2$. Also, we have $k_2m\geq(1-\alpha)(n+t_2)-k_1m\geq (2-50\alpha)t_2\geq(1-100\alpha)t_1$. Therefore, $G$ contains a Type II $(200\alpha,t_1,t_2)$-extremal $(\eps,d)$-regular partition, and so $G$ is Type II $(\mu,t_1,t_2)$-extremal by Lemma~\ref{lemma:regtoext}.

\medskip

\noindent\textbf{Case II.} $k_2m<t_2+100\alpha n$. Then $k_1m\geq(1-\alpha)(n+t_2)-k_2m\geq(1-102\alpha)n>t_2+500\alpha n$. 

If the maximum matching $M$ in $\red{R}[I_A]$ has size at least $101\alpha n/m$, then we can move one side of a submatching of $M$ with size $100\alpha n/m$ out of $I_A$ to obtain the structure needed to apply Lemma~\ref{lemma:em2a} (\textbf{EM2a}) to find a red copy of $T$. Otherwise, let $I_A'=I_A\setminus M$, $A'=\cup_{i\in I_A'}V_i$, and $k_1'=|I_A'|\geq k_1-202\alpha n/m$. Then, $\red{R}[I_A']$ is empty and so $\blue{R}[I_A']$ contains at most $\eps k^2$ non-edges. 

Note that $|A'|=k_1'm\geq(1-400\alpha)n$. If $|A'|=k_1'm\geq(1+5\alpha)n$, then we can easily find the structure to apply Lemma~\ref{lemma:em2c} (\textbf{EM2c}) to embed $T$ in $\blue{G}[A']$. Thus, we may assume that $|A'|\leq(1+5\alpha)n$, and so $|B|\geq(1-\alpha)(n+t_2)-|A'|-202\alpha n\geq(1-1000\alpha)t_2$. Let $\alpha\ll\beta\ll d$. If at least $2\beta n/m$ vertices in $I_A'$ have at least $2\beta n/m$ blue neighbours in $I_B$, then we can greedily find the structure to apply Lemma~\ref{lemma:em2c} (\textbf{EM2c}) to embed $T$ in $\blue{G}[A'\cup B]$. Otherwise, we can find $J_A\subset I_A'$ and $J_B\subset I_B$ such that $|J_A|\geq(1-3\beta)n/m$, $|J_B|\geq(1-10\sqrt\beta)t_2/m$, and $\blue{R}[J_A,J_B]$ is $10\sqrt\beta$-almost empty. This shows that $G$ contains a Type I $(10\sqrt\beta,t_1,t_2)$-extremal $(\eps,d)$-regular partition, so $G$ is Type I $(\mu,t_1,t_2)$-extremal by Lemma~\ref{lemma:regtoext}.
\end{proof}

%%%%%%%%%%%%%%%%%%%%%%%%%%%%%%%%%%%%%%%%%%%%%%%%%%%%%%%%%%%%%%%%%%%%%%%%
%%%%%%%%%%%%%%%%%%%%%%%%%%%%%%%%%%%%%%%%%%%%%%%%%%%%%%%%%%%%%%%%%%%%%%%%
%%%%%%%%%%%%%%%%%%%%%%%%%%%%%%%%%%%%%%%%%%%%%%%%%%%%%%%%%%%%%%%%%%%%%%%%

\subsection{Stage 3}\label{sec:stage3}
\begin{lemma}[\textbf{Stage 3}]\label{lem:stage3}
Let $1/n\ll 1/m\ll c\ll1/k\ll\eps\ll\eta\ll\alpha\ll d\ll \mu\ll1$. Let $T$ be an $n$-vertex tree with $\Delta(T)\le cn$ and bipartition classes of sizes $t_1$ and $t_2$ with $t_2\leq t_1\leq 2t_2$.
Let $G$ be a red/blue coloured graph that contains a coloured $\eps$-regular partition $V_1\cup\cdots\cup V_k$ with $|V_1|=\cdots=|V_k|=m$ and corresponding blue/red coloured $(\eps,d)$-reduced graph $R$. Suppose there is a partition $[k]=I_A\cup I_B\cup I_C$ such that the following hold (see \textbf{\emph{C-situation}} in Figure~\ref{fig:stages}).
\stepcounter{propcounter}
\begin{enumerate}[label = \emph{\textbf{\Alph{propcounter}\arabic{enumi}}}]
    \item\labelinthm{lemma:stage3:1} $|I_A|=k_1$, $|I_B|=k_2$, and $|I_C|=k_3$, with $k_1m,k_2m,k_3m\geq(1-\alpha)t_2$ and $(k_1+k_2)m=(1-\alpha)n$.
    \item\labelinthm{lemma:stage3:2} $\red{R}[I_A,I_B]$ is $\eta$-almost complete.
\end{enumerate}
Then, $G$ contains a monochromatic copy of $T$, or $G$ is Type I $(\mu,t_1,t_2)$-extremal, or $t_1\geq(2-\mu)t_2$ and $G$ is Type II $(\mu,t_1,t_2)$-extremal.
\end{lemma}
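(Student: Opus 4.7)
My plan is to follow the two-auxiliary-claim strategy outlined in Section~\ref{sec:outline:stability} and then feed the resulting structure into Lemma~\ref{lem:stage4}. As a preliminary reduction, fix a hierarchy $\alpha \ll \beta \ll \mu$. If $\blue{R}[I_A \cup I_B, I_C]$ has at most $\beta k^2$ non-edges, then $(I_A \cup I_B, I_C)$ is a blue D-situation: the first part covers $(1-\alpha)n$, $I_C$ covers at least $(1-\alpha)t_2$, together at least $(1-\alpha)(n+t_2)$ vertices, and after discarding $O(\sqrt{\beta})k$ clusters the bipartite reduced graph is $\eta$-almost complete in blue, so Lemma~\ref{lem:stage4} gives the conclusion. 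Otherwise there are many $(\eps,d)$-regular red pairs from $I_C$ into at least one of $I_A$, $I_B$.

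The heart of the argument is two intermediate claims. \emph{Claim A}: if $k_1 m \ge (1-\beta)t_1$, $k_2 m \ge (1-\beta)t_2$, and $\red{R}[I_B, I_C]$ contains an $(\eps,d)$-regular pair, then $G$ contains a monochromatic copy of $T$, or $G$ is extremal, or we can reach a D-situation. \emph{Claim B}: if $k_1 m, k_2 m \ge t_2 + 100\beta n$, $(k_1+k_2)m \ge (1-\alpha)n$, and $\red{R}[I_A, I_C]$ contains an $(\eps,d)$-regular pair, then the same conclusion holds. Given these, the lemma follows exactly as in the outline. If both $k_1 m, k_2 m \ge t_2 + 100\beta n$, Claim~B applies (symmetrically, in whichever of $I_A, I_B$ the red pair to $I_C$ lies). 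Otherwise we may assume $k_2 m \le t_2 + 100\beta n$, which forces $k_1 m \ge (1-\beta)t_1$. Then Claim~A handles a red pair between $I_B$ and $I_C$; a red pair between $I_A$ and $I_C$ is handled by Claim~A (after swapping $I_A \leftrightarrow I_B$) when $t_1 \le t_2 + 100\beta n$, and by Claim~B when $t_1 > t_2 + 100\beta n$.

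For Claim~A, split on whether $\red{R}[I_A, I_C]$ contains an $(\eps,d)$-regular pair. If yes, combining one such pair with the assumed pair in $\red{R}[I_B, I_C]$, applying Lemma~\ref{lemma:regularity:refine} to $\red{R}[I_A, I_B]$ to align cluster sizes across the two matched pairs, produces exactly the hypothesis of Lemma~\ref{lemma:em2c} (\textbf{EM2c}) in red. If not, then $\blue{R}[I_A, I_C]$ is almost complete; and if in addition $\blue{R}[I_B, I_C]$ is essentially empty, then $(I_A \cup I_C, I_B)$ is a red D-situation since $|I_A \cup I_C|m \ge (1-2\beta)n$. Otherwise both colours appear between $I_B$ and $I_C$, and a further case analysis of an edge of either colour inside $R[I_A]$ lets us move one side of a small monochromatic matching inside $I_A$ out of $I_A$, playing the role of the $I_D$-part in \textbf{EM2c}; this closes the red side using the red pair between $I_B$ and $I_C$, and the blue side using the blue edges between $I_B$ and $I_C$ together with the almost-complete blue edges between $I_A$ and $I_C$.

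For Claim~B, I run a short cascade as in the outline. First, if $\red{R}[I_A]$ or $\red{R}[I_B]$ contains any edge, then combining with the red pair in $\red{R}[I_A, I_C]$ and Lemma~\ref{lemma:regularity:refine} produces the hypothesis of Lemma~\ref{lemma:em2a} (\textbf{EM2a}) or Lemma~\ref{lemma:em2b} (\textbf{EM2b}) respectively, in red. So assume both $\red{R}[I_A]$ and $\red{R}[I_B]$ are essentially empty. If any vertex of $I_C$ has blue neighbours in both $I_A$ and $I_B$, then Lemma~\ref{lemma:em3} (\textbf{EM2d}) applies in blue; otherwise almost all of $I_C$ partitions as $I_A' \cup I_B'$ with $\red{R}[I_A, I_B']$ and $\red{R}[I_A', I_B]$ almost complete. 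Rerun the same three tests on the enlarged sets $I_A \cup I_A'$ and $I_B \cup I_B'$: any new red edge inside $R[I_A \cup I_A']$ or $R[I_B \cup I_B']$ triggers \textbf{EM2a} or \textbf{EM2b}; any blue edge from $I_A'$ into $I_B \cup I_B'$ (or symmetrically) triggers \textbf{EM2d}; otherwise $\red{R}[I_A \cup I_A', I_B \cup I_B']$ is almost complete, which is a red D-situation totalling at least $(1-\alpha)(n+t_2)$ vertices. The main obstacle is precisely this two-step cascade in Claim~B: I must choose the hierarchy $\eta \ll \beta \ll \alpha \ll d \ll \mu$ delicately so that the almost-completeness conditions inherited after enlarging $(I_A, I_B)$ to $(I_A \cup I_A', I_B \cup I_B')$ still satisfy the hypotheses of Lemma~\ref{lem:stage4}, and so that the embedding lemmas \textbf{EM2a}--\textbf{EM2d} can be invoked without their error tolerances being eaten by the accumulated losses.
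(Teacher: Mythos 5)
Your overall architecture --- reduce to the case of many red reduced-graph edges between $I_A\cup I_B$ and $I_C$, prove two intermediate claims, and close with the case analysis comparing $k_1m,k_2m$ with $t_2$ --- is exactly the paper's, and the internal logic of each claim (which embedding lemma is triggered by which structure, the red/blue dichotomies, the two-step cascade in Claim~B) also matches. However, there is a genuine quantitative gap running through both claims: you state their hypotheses and draw their internal dichotomies at the scale of a \emph{single} reduced-graph edge (``contains an $(\eps,d)$-regular pair'', ``combining with the red pair in $\red{R}[I_A,I_C]$'', ``if any vertex of $I_C$ has blue neighbours in both $I_A$ and $I_B$''), whereas Lemmas~\ref{lemma:em2a}--\ref{lemma:em3} all require the auxiliary sets $I_C$ (and $I_D$) to cover $\Theta(\alpha n)$ vertices of $G$ and to be covered by a \emph{matching} in the reduced graph. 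A single cluster covers only $m\leq|G|/k\ll\alpha n$ vertices (since $1/k\ll\alpha$), and refinement does not increase the number of covered vertices, so one regular pair cannot supply the $100\alpha n$ vertices \textbf{EM2a} demands of its $I_C$, the $100\alpha t_2$ and $10\alpha t_2$ that \textbf{EM2c} demands of $I_C$ and $I_D$, or the structure \textbf{EM2d} needs (which additionally requires a vertex of $I_A\cap V(M)$ adjacent to \emph{all} of $I_C\cap V(M)$, obtained by a separate double-counting step you omit). The dichotomies must therefore be drawn at a linear threshold --- e.g.\ ``at least $\Theta(\beta t_2/m)$ clusters each with at least $\Theta(\beta t_2/m)$ red neighbours in $I_C$'', from which a linear-sized matching is extracted greedily and kept disjoint from the matchings already in play --- and the complementary branches then yield only almost-completeness in the other colour after discarding a small fraction of clusters, which is the cleaning your write-up skips. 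Your preliminary reduction does produce linearly many red pairs, so the inputs to the claims can be upgraded to matchings of size $\Theta(\beta k)$; but as stated, neither claim can invoke the embedding lemmas it relies on.

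A secondary inconsistency: your Claim~B requires \emph{both} $k_1m,k_2m\geq t_2+100\beta n$, yet the final case analysis invokes it precisely in the regime $k_2m\leq t_2+100\beta n$ (when $t_1>t_2+100\beta n$ and the red structure lies between $I_A$ and $I_C$). The hypothesis must be asymmetric: only the side carrying the red matching to $I_C$ needs the surplus $t_2+\Theta(\beta n)$, while the other side needs only $t_2-O(\beta n)$.
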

\begin{proof}
Let $\alpha\leq\alpha'\ll\beta\ll d$. To avoid repetition, we first prove the following two claims dealing with two commonly occuring structures.

\begin{claim}\label{stage3claimA}
Suppose there exist disjoint $J_A,J_B,J_C\subset[k]$, such that the following hold (see Figure~\ref{fig:claimA}).
\stepcounter{propcounter}
\begin{enumerate}[label = \emph{\textbf{\Alph{propcounter}\arabic{enumi}}}]
    \item\labelinthm{lemma:stage3A:1} $|J_A|=(1-\alpha')t_1/m$ and $|J_B|=|J_C|=(1-\alpha')t_2/m$.
    \item\labelinthm{lemma:stage3A:2} $\red{R}[J_A,J_B]$ is $\eta$-almost complete.
    \item\labelinthm{lemma:stage3A:3} $\red{R}[J_B,J_C]$ contains a matching with size $100\alpha' t_2/m$.
\end{enumerate}
Then, $G$ contains a monochromatic copy of $T$, or $G$ is Type I $(\mu,t_1,t_2)$-extremal, or $t_1\geq(2-\mu)t_2$ and $G$ is Type II $(\mu,t_1,t_2)$-extremal.
\end{claim}
\begin{proof}[Proof of Claim~\ref{stage3claimA}]
If at least $100\alpha' t_2/m$ vertices in $J_A$ have at least $200\alpha' t_2/m$ red neighbours in $J_C$, then we can greedily find a matching of size $100\alpha' t_2/m$ in $\red{R}[J_A,J_C]$ disjoint from the matching given by~\ref{lemma:stage3A:3}. This allows us to apply Lemma~\ref{lemma:em2c} (\textbf{EM2c}) to find a red copy of $T$ in $G$. Otherwise, at most $100\alpha' t_2/m$ vertices in $J_A$ have at least $200\alpha' t_2/m$ red neighbours in $J_C$, so we can find $J_A'\subset J_A$ and $J_C'\subset J_C$ with $|J_A'|\geq(1-200\alpha')t_1/m$ and $|J_C'|\geq(1-20\sqrt{\alpha'})t_2/m$, such that $\blue{R}[J_A',J_C']$ is $20\sqrt{\alpha'}$-almost complete.

If at most $200\beta t_2/m$ vertices in $J_B$ have at least $200\beta t_2/m$ blue neighbours in $J_C'$, then we can find $J_B'\subset J_B$ and $J_C''\subset J_C'$, such that $|J_B'|\geq(1-300\beta)t_2/m$, $|J_C''|\geq(1-20\sqrt\beta)t_2/m$, and $\red{R}[J_A\cup J_C'',J_B']$ is $20\sqrt\beta$-almost complete. This gives the required structure (\textbf{D-situation}) in red to apply Lemma~\ref{lem:stage4} (\textbf{Stage 4}) with $\eta=20\sqrt\beta$ to finish the proof.

Thus, we may assume that at least $200\beta t_2/m$ vertices in $J_B$ have at least $200\beta t_2/m$ blue neighbours in $J_C'$, so we can find a blue matching of size $100\beta t_2/m$ in $\blue{R}[J_B,J_C']$ disjoint from the red matching given by~\ref{lemma:stage3A:3}. Arbitrarily pick a matching of size $10(\alpha'+\beta)t_2/m$ in $R[J_A]$. By pigeonhole, it either contains a red matching $\red{M}$ of size $10\alpha' t_2/m$ or a blue matching $\blue{M}$ of size $10\beta t_2/m$. In the former case, moving vertices on one side of $\red{M}$ out of $J_A$, and using~\ref{lemma:stage3A:2},~\ref{lemma:stage3A:3}, we have the structure to apply Lemma~\ref{lemma:em2c} (\textbf{EM2c}) to find a red copy of $T$ in $G$. In the latter case, moving vertices on one side of $\blue{M}$ out of $J_A$, and using that $\blue{R}[J_A,J_C']$ is $20\sqrt{\alpha'}$-almost complete, we can again apply Lemma~\ref{lemma:em2c} (\textbf{EM2c}) to find a blue copy of $T$ in $G$. 
\renewcommand{\qedsymbol}{$\boxdot$}
\end{proof}
\renewcommand{\qedsymbol}{$\square$}

\begin{claim}\label{stage3claimB}
Suppose there exist disjoint $J_A,J_B,J_C\subset[k]$, such that the following hold (see Figure~\ref{fig:claimB}).
\stepcounter{propcounter}
\begin{enumerate}[label = \emph{\textbf{\Alph{propcounter}\arabic{enumi}}}]
    \item\labelinthm{lemma:stage3B:1} $|J_A|\geq(t_2+200\alpha'n)/m$, $|J_B|\geq(t_2-\alpha'n)/m$, $|J_A|+|J_B|=(1-\alpha')n/m$, and $|J_C|=(1-\alpha')t_2/m$.
    \item\labelinthm{lemma:stage3B:2} $\red{R}[J_A,J_B]$ is $\eta$-almost complete.
    \item\labelinthm{lemma:stage3B:3} $\red{R}[J_A,J_C]$ contains a matching with size $100\alpha'n/m$.
\end{enumerate}
Then, $G$ contains a monochromatic copy of $T$, or $G$ is Type I $(\mu,t_1,t_2)$-extremal, or $t_1\geq(2-\mu)t_2$ and $G$ is Type II $(\mu,t_1,t_2)$-extremal.
\end{claim}
\begin{proof}[Proof of Claim~\ref{stage3claimB}]
If there is an edge in either $\red{R}[J_A]$ or $\red{R}[J_B]$, then we can apply Lemma~\ref{lemma:em2a} (\textbf{EM2a}) or Lemma~\ref{lemma:em2b} (\textbf{EM2b}), respectively, to find a red copy of $T$ in $G$. Thus, by removing at most $10\sqrt\eps$-fraction of vertices from $J_A$ and $J_B$, we may assume that $\blue{R}[J_A]$ and $\blue{R}[J_B]$ are both $10\sqrt\eps$-almost complete.

Suppose there exists a set of $10\beta t_2/m$ vertices in $J_C$, each of which has at least $10\beta t_2/m$ blue neighbours in both $J_A$ and $J_B$. Then, using a similar argument to Claim~\ref{claim:backandforth}, we can find a blue matching $M_1$ of size $\beta^2 t_2/m$ between $J_A$ and $J_C$ with a vertex in $V(M_1)\cap J_A$ adjacent to every vertex in $J_C':=V(M_1)\cap J_C$. Greedily, we can also find another blue matching between $J_C'$ and $J_B$ covering $J_C'$. This allows us to apply Lemma~\ref{lemma:em3} (\textbf{EM2d}) to find a blue copy of $T$ in $G$.

Thus, we may now assume that there exist two disjoint subsets $J_A^+,J_B^+\subset J_C$ containing all but at most $10\beta t_2/m$ vertices in $J_C$, such that every vertex in $J_A^+$ has at most $10\beta t_2/m$ blue neighbours in $J_B$, and every vertex in $J_B^+$ has at most $10\beta t_2/m$ blue neighbours in $J_A$. Using~\ref{lemma:stage3B:2}, and by removing at most $5\sqrt\beta t_2/m$ vertices from each of $J_A$ and $J_B$ to obtain $J_A^-$ and $J_B^-$, respectively, we can ensure that both $\red{R}[J_A^-,J_B^-\cup J_B^+]$ and $\red{R}[J_B^-,J_A^-\cup J_A^+]$ are $5\sqrt\beta$-almost complete. Note that if $|J_A^+|\leq10\beta t_2/m$, then $|J_A^-\cup J_B^-\cup J_B^+|\geq(1-20\beta)(n+t_2)/m$, so $J_A^-$ and $J_B^-\cup J_B^+$ form the required red structure (\textbf{D-situation}) to apply Lemma~\ref{lem:stage4} (\textbf{Stage 4}) with $\eta=5\sqrt\beta$ to finish the proof. Thus, we can assume that $|J_A^+|\geq10\beta t_2/m$, and similarly $|J_B^+|\geq10\beta t_2/m$.

As in the beginning of this proof, we may further assume that $\blue{R}[J_A^-\cup J_A^+]$ and $\blue{R}[J_B^-\cup J_B^+]$ are both $10\sqrt\eps$-almost complete, as the presence of any red edge within either of them allows us to apply Lemma~\ref{lemma:em2a} (\textbf{EM2a}) or Lemma~\ref{lemma:em2b} (\textbf{EM2b}) to find a red copy of $T$ in $G$. If there is any blue edge between $J_A^-\cup J_A^+$ and $J_B^-\cup J_B^+$, then we can move one end of this blue edge out and then find a blue copy of $T$ using Lemma~\ref{lemma:em3} (\textbf{EM2d}). Therefore, $\red{R}[J_A^-\cup J_A^+,J_B^-\cup J_B^+]$ contains the red structure (\textbf{D-situation}) needed to apply Lemma~\ref{lem:stage4} (\textbf{Stage 4}) to finish the proof.
\renewcommand{\qedsymbol}{$\boxdot$}
\end{proof}
\renewcommand{\qedsymbol}{$\square$}

Now we can carry out \textbf{Stage 3}. If at most $2\beta t_2/m$ vertices in $I_A\cup I_B$ have at least $2\beta t_2/m$ red neighbours in $I_C$, then we can find $I_{AB}\subset I_A\cup I_B$ and $I_C'\subset I_C$ with $|I_{AB}|\geq(1-2\beta)n/m$ and $|I_C'|\geq(1-5\sqrt\beta)t_2/m$, such that $\blue{G}[I_{AB}, I_C']$ is $5\sqrt\beta$-almost complete. Therefore, $I_{AB}$ and $I_C'$ form the blue structure (\textbf{D-situation}) required to apply Lemma~\ref{lem:stage4} (\textbf{Stage 4}) to finish the proof. Thus, we may assume that at least $2\beta t_2/m$ vertices in $I_A\cup I_B$ have at least $2\beta t_2/m$ red neighbours in $I_C$, from which it follows that there is a red matching of size $\beta t_2/m$ either between $I_A$ and $I_C$, or between $I_B$ and $I_C$.

\medskip

\noindent\textbf{Case I.} $|I_A|,|I_B|\geq(t_2+200\alpha n)/m$. Without loss of generality, assume that there is a red matching of size $\beta t_2/m$ between $I_A$ and $I_C$. Then, we can apply Claim~\ref{stage3claimB} to to finish the proof.

\medskip

\noindent\textbf{Case II.} Without loss of generality, assume that $|I_B|<(t_2+200\alpha n)/m$, so $|I_A|\geq(t_1-201\alpha n)/m\geq(1-500\alpha)t_1/m$ from~\ref{lemma:stage3:1}. If there is a red matching of size $\beta t_2/m$ between $I_B$ and $I_C$, then we can apply Claim~\ref{stage3claimA} with $\alpha'=500\alpha$ to finish the proof. 

Thus, we may assume that there is a red matching of size $\beta t_2/m$ between $I_A$ and $I_C$. If $t_1\leq(1+1000\alpha)t_2$, then $|I_B|\geq(1-\alpha)t_2/m\geq(1-2000\alpha)t_1/m$, so we can apply Claim~\ref{stage3claimA} with $\alpha'=2000\alpha$ to finish the proof. If instead $t_1>(1+1000\alpha)t_2$, then $|I_A|\geq(1-3\alpha)t_1/m\geq(t_2+200\alpha n)/m$, so we can apply Claim~\ref{stage3claimB} to finish the proof.
\end{proof}

%%%%%%%%%%%%%%%%%%%%%%%%%%%%%%%%%%%%%%%%%%%%%%%%%%%%%%%%%%%%%%%%%%%%%%%%
%%%%%%%%%%%%%%%%%%%%%%%%%%%%%%%%%%%%%%%%%%%%%%%%%%%%%%%%%%%%%%%%%%%%%%%%
%%%%%%%%%%%%%%%%%%%%%%%%%%%%%%%%%%%%%%%%%%%%%%%%%%%%%%%%%%%%%%%%%%%%%%%%

\subsection{Stage 2}\label{sec:stage2}
\begin{lemma}[\textbf{Stage 2}]\label{lem:stage2}
Let $1/n\ll 1/m\ll c\ll1/k\ll\eps\ll\eta\ll\alpha\ll d\ll \mu\ll1$. Let $T$ be an $n$-vertex tree with $\Delta(T)\le cn$ and bipartition classes of sizes $t_1$ and $t_2$ with $t_2\leq t_1\leq 2t_2$.
Let $G$ be a red/blue coloured graph that contains a coloured $\eps$-regular partition $V_0\cup V_1\cup\cdots\cup V_k$ with $|V_0|=\cdots=|V_k|=m$ and corresponding red/blue coloured $(\eps,d)$-reduced graph $R$. Suppose there is a partition $[k]=I_A\cup I_B\cup I_C$ such that the following hold (see \textbf{\emph{B-situation}} in Figure~\ref{fig:stages}).
\stepcounter{propcounter}
\begin{enumerate}[label = \emph{\textbf{\Alph{propcounter}\arabic{enumi}}}]
    \item\labelinthm{lemma:stage2:1} $|I_A|=|I_B|=k_1$ with $k_1m=(1-\alpha)t_2$, and $|I_C|=k_2$ with $k_2m=(1-\alpha)t_1$.
    \item\labelinthm{lemma:stage2:2} In $\red{R}$, 0 is adjacent to every $i\in I_A\cup I_B$.
    \item\labelinthm{lemma:stage2:3} $\red{R}[I_A,I_B]$ is $\eta$-almost complete.
\end{enumerate}
Then, $G$ contains a monochromatic copy of $T$, or $G$ is Type I $(\mu,t_1,t_2)$-extremal, or $t_1\geq(2-\mu)t_2$ and $G$ is Type II $(\mu,t_1,t_2)$-extremal.
\end{lemma}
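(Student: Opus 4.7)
Following the outline of \textbf{Stage 2} in Section~\ref{sec:outline:stability}, I would start by letting $M$ be a maximum matching in $\red{R}[I_A\cup I_B,I_C]$, and split the argument on the size of $V(M)\cap I_C$.

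\textbf{Case A: $|V(M)\cap I_C|\cdot m\geq (t_1-t_2)+10\alpha n$.} In this case, $I_A\cup I_B\cup (V(M)\cap I_C)$ covers at least
\[
2(1-\alpha)t_2+(t_1-t_2)+10\alpha n \;\geq\; n+8\alpha n
\]
vertices. I would then build the structure of Lemma~\ref{lemma:hlt} (\textbf{H\L T}) in red using vertex $0$ as the centre $i$, which is red-adjacent to every vertex of $I_A\cup I_B$ by~\ref{lemma:stage2:2}. The required H\L T matching is obtained by combining $M$ with a (near-)perfect red matching inside $\red{R}[I_A,I_B]$ (which exists by~\ref{lemma:stage2:3} and Hall's theorem, since $\eta\ll 1$), so that a set $J\subset I_A\cup I_B$ of cardinality $k':=(t_1+\alpha n)/m$ is red-matched to $k'$ other clusters in $(I_A\cup I_B\cup (V(M)\cap I_C))\setminus J$. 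Because H\L T requires the two sides of the matching to cover $t_2^+$ and $t_1^+$ vertices respectively, I would rescale cluster sizes in the $t_2:t_1$ ratio: refining via Lemma~\ref{lemma:regularity:refine} replaces each cluster on the $I_A^H$ side by a sub-cluster of size $\approx t_2 m/n$ and each on the $I_B^H$ side by one of size $\approx t_1 m/n$, preserving regularity of every matched pair. Lemma~\ref{lemma:hlt} then produces a red copy of $T$.

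\textbf{Case B: $|V(M)\cap I_C|\cdot m\leq (t_1-t_2)+10\alpha n$.} Then $I_C\setminus V(M)$ covers at least $(1-\alpha)t_1-(t_1-t_2)-10\alpha n\geq t_2-20\alpha n$ vertices, and maximality of $M$ (in red) forces every pair between $(I_A\cup I_B)\setminus V(M)$ and $I_C\setminus V(M)$ that corresponds to an edge of $R$ to be blue; after discarding the $\eps k^2$ non-regular pairs, $\blue R\bigl[(I_A\cup I_B)\setminus V(M),\,I_C\setminus V(M)\bigr]$ is $2\sqrt\eps$-almost complete. However, when $t_1\approx 2t_2$ the set $(I_A\cup I_B)\setminus V(M)$ may cover only $\approx 2\alpha n$ vertices, so this direct consequence of maximality is not sufficient. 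I would then invoke the cascading lemma (Lemma~\ref{lemma:improvemaximummatching}, proved in Section~\ref{sec:cascade}) to the maximum red matching $M$: iteratively swapping blue edges into $M$ yields sets $X\subseteq I_A\cup I_B$ and $Y\subseteq I_C$ with $(I_A\cup I_B)\setminus V(M)\subseteq X$, $I_C\setminus V(M)\subseteq Y$, with $|X|m,|Y|m\geq t_2-O(\alpha n)$, $(|X|+|Y|)m\geq n-O(\alpha n)$, and $\blue R[X,Y]$ almost complete. This is precisely a blue C-situation, so I would finish by applying Lemma~\ref{lem:stage3} (\textbf{Stage 3}) with the roles of red and blue swapped.

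\textbf{Main obstacle.} The genuinely hard step is the cascading argument that turns the weak local consequence of maximality into a blue C-situation spanning $n^-$ vertices; this is precisely the content of the technical Lemma~\ref{lemma:improvemaximummatching} and is deferred to Section~\ref{sec:cascade}. Modulo that lemma, the remaining subtlety is in Case A, where the asymmetry $t_1\neq t_2$ forces the H\L T matching to use clusters of two different sizes in the ratio $t_2:t_1$, so the combination of $M$ and the perfect matching inside $\red R[I_A,I_B]$ must be realised after a suitable refinement (via Lemma~\ref{lemma:regularity:refine}) of the clusters involved.
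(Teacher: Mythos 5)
Your two-case split on the size of the maximum red matching $M$ between $I_A\cup I_B$ and $I_C$ is the same strategy as the paper's, and your Case B (small matching $\Rightarrow$ cascading lemma $\Rightarrow$ blue \textbf{C-situation} $\Rightarrow$ Stage 3) is exactly the paper's first case and is fine. The gap is in Case A.

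In Case A you complete the \textbf{H\L T} matching by adjoining to $M$ a red matching inside $\red{R}[I_A,I_B]$. But that internal matching must avoid $V(M)$, so it lives in $\red{R}[I_A\setminus V(M),\,I_B\setminus V(M)]$ and has size at most $\min\{|I_A\setminus V(M)|,|I_B\setminus V(M)|\}$: the only red edges you know about inside $I_A\cup I_B$ go \emph{between} $I_A$ and $I_B$. If $M$ is concentrated on one side — say every edge of $M$ meets $I_A$, which can certainly happen, e.g.\ when all red edges from $I_C$ into $I_A\cup I_B$ land in $I_A$ — then $(I_A\cup I_B)\setminus V(M)$ is essentially all of $I_B$, no internal red matching covers it, and those $(1-\alpha)t_2$ vertices are lost. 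The surviving structure ($M$ alone) then covers only about $2|V(M)\cap I_C|\,m\leq 2(1-\alpha)t_2$ vertices, which after rescaling in ratio $t_2:t_1$ gives a small side of size about $2(1-\alpha)t_2^2/n<t_2$, so \textbf{H\L T} does not apply. (Your claim that a set $J$ of cardinality $(t_1+\alpha n)/m$ can be red-matched inside $I_A\cup I_B\cup(V(M)\cap I_C)$ also fails numerically when $t_1$ is close to $2t_2$, since $|I_A\cup I_B|m=2(1-\alpha)t_2<t_1+\alpha n$ is then possible only barely, and half of those clusters must sit on the other side of the matching.) The paper closes this hole with an extra subcase: it first normalises $M$ to cover exactly $(t_1-t_2+\beta n)$ vertices of $I_C$, and if $M$ meets one of the two sides, say $I_B$, in too few clusters, it rechooses $M$ to \emph{maximise} $|V(M)\cap I_B|$; maximality then forces $\red{R}[I_B\setminus V(M),\,I_C\setminus J_D]$ to be empty (any red edge there could be swapped in), and since $I_B\setminus V(M)$ and $I_C\setminus J_D$ still cover roughly $t_2$ and $t_1$ vertices respectively, this is a blue \textbf{C-situation} and Stage 3 finishes. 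You need this (or an equivalent) third branch. Separately, and more minor: even when $M$ meets both sides substantially, the leftover sets $I_A\setminus V(M)$ and $I_B\setminus V(M)$ must be matched to each other in clusters rescaled to ratio $t_2:t_1$ \emph{in both directions} (part of $I_A\setminus V(M)$ serves the large side matched to part of $I_B\setminus V(M)$ on the small side, and vice versa), which is a genuine balancing condition rather than a single application of Lemma~\ref{lemma:regularity:refine}; also note that Lemma~\ref{lemma:regularity:refine} needs an almost-complete bipartite reduced graph, so the pairs coming from the matching $M$ itself have to be refined pair-by-pair via Lemma~\ref{lemma:regularity:1}.
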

\begin{proof}
Let $M$ be a maximum red matching in $R$ between $I_A\cup I_B$ and $I_C$. For any $I\subset[k]$, for brevity we use $V(I)$ to denote $\cup_{i\in I}V_i$, the vertices covered by $I$. Let $\alpha\ll\beta\ll d$.

\medskip

\noindent\textbf{Case I.} $M$ contains at most $(t_1-t_2+2\beta n)/m$ edges. Let $X=(I_A\cup I_B)\cap V(M), X'=(I_A\cup I_B)\setminus V(M), Y=I_C\cap V(M)$, and $Y'=I_C\setminus V(M)$. By Lemma~\ref{lemma:improvemaximummatching}, there are partitions $X=X^+\cup X^-\cup \overline{X}$ and $Y=Y^+\cup Y^-\cup \overline{Y}$ such that $M$ matches $X^+$ with $Y^-$, $X^-$ with $Y^+$, and $\overline{X}$ with $\overline{Y}$, and $\red{R}[X'\cup X^-,Y'\cup Y^-\cup\overline{Y}]$ is empty. From assumption, $|X|=|Y|\leq(t_1-t_2+2\beta n)/m$, and the following hold.
\[|V(X'\cup X^-)|\geq|V(X')|\geq(1-\alpha)2t_2-(t_1-t_2+2\beta n)=(3-2\alpha-2\beta)t_2-(1+2\beta)t_1\geq(1-10\beta)t_2.\]
\[|V(Y'\cup Y^-\cup\overline{Y})|=|V(I_C)|-|V(Y^+)|\geq|V(I_C)|-|V(Y)|\geq(1-\alpha)t_1-(t_1-t_2+2\beta n)\geq(1-10\beta)t_2.\]
\[|V(X'\cup X^-\cup Y'\cup Y^-\cup\overline{Y})|=|V([k]\setminus X)|\geq (1-\alpha)(t_1+2t_2)-(t_1-t_2+2\beta n)\geq(1-10\beta)n.\]
Hence, $X'\cup X^-$ and $Y'\cup Y^-\cup\overline{Y}$ form the blue structure (\textbf{C-situation}) required to apply Lemma~\ref{lem:stage3} (\textbf{Stage 3}) to finish the proof.
%blue $(m,k_1,k_2,\eps,d,\eps)$-Stage 3 structure with $k_1m,k_2m\geq(1-10\beta)t_2$ and $(k_1+k_2)m\geq(1-10\beta)n$. 

\medskip

\noindent\textbf{Case II.} $M$ contains at least $(t_1-t_2+2\beta n)/m$ edges. Let $\eps\ll\gamma\ll\alpha$. Refine every cluster $V_i$ with $i\in I_A$ or $i\in I_B$ down to clusters of size $\gamma m$, and label these new clusters as $\{V_i':i\in J_A\}$ and $\{V_i':i\in J_B\}$, respectively. Refine every cluster $V_i$ with $i\in I_C$ down to clusters of size $\gamma t_1m/t_2$ and label the new clusters as $\{V_i':i\in J_C\}$. In this refinement process, at most $O(\gamma n)$ covered vertices are lost. Let $R'$ be the new reduced graph on $J_A\cup J_B\cup J_C$, where for each $\ast\in\{\text{red},\text{blue}\}$, $ij\in E(R_\ast')$ if and only if $G_\ast[V_i',V_j']$ is $(\sqrt\eps,d-\eps)$-regular.  Using~\ref{lemma:stage2:3}, Lemma~\ref{lemma:regularity:1}, and the matching $M$, we see that $\red{R'}[J_A,J_B]$ is also $\eta$-almost complete, and we can find a matching $M'$ in $\red{R'}[J_A\cup J_B,J_C]$ covering $(t_1-t_2+\beta n)t_2/t_1$ vertices in $\cup_{i\in J_A\cup J_B}V_i'$ and $t_1-t_2+\beta n$ vertices in $\cup_{i\in J_C}V_i'$.  

\medskip

\textbf{Case II.1.} $|V(M')\cap J_A|,|V(M')\cap J_B|\geq5\beta n/\gamma m$. Then, we can find disjoint subsets $J_{A,1},J_{A,2}\subset J_A\setminus V(M')$ and $J_{B,1},J_{B,2}\subset J_B\setminus V(M')$, such that $|J_{A,1}|/|J_{B,2}|=|J_{B,1}|/|J_{A,2}|\in[t_1/t_2,(1+\alpha)t_1/t_2]$, each of $J_{A,1},J_{A,2},J_{B,1},J_{B,2}$ covers at least $\alpha n$ vertices, and together with $V(M')\cap(J_A\cup J_B)$ they cover at least $(2-10\alpha)t_2$ vertices. Moreover, since $\red{R'}[J_A,J_B]$ is $\eta$-almost complete, by choosing the subsets above randomly and using Lemma~\ref{lemma:chernoff}, we can ensure that both $\red{R'}[J_{A,1},J_{B,2}]$ and $\red{R'}[J_{A,1},J_{B,2}]$ are $10\eta$-almost complete.
Therefore, for some $\eps\ll\gamma'\ll\gamma$, we can use Lemma~\ref{lemma:regularity:refine} to further appropriately refine these clusters along with those in $M'$ into two sets of smaller clusters of sizes $\gamma'm$ and $\gamma't_1m/t_2$, respectively, which can be paired up into $(\sqrt[4]\eps,d-2\sqrt\eps)$-regular pairs. Together, these new clusters cover at least \[(1-\alpha)(2-10\alpha)t_2+(1-\alpha)(t_1-t_2+\beta n)\geq(1+\beta/2)n\]
vertices. Finally, note that by~\ref{lemma:stage2:2}, each new cluster of size $\gamma'm$ forms an $(\sqrt\eps,d-\eps)$-regular pair with $V_0$, so we have the structure to apply Lemma~\ref{lemma:hlt} (\textbf{H\L T}) to find a red copy of $T$.

\medskip

\textbf{Case II.2.} One of $|V(M')\cap J_A|$ and $|V(M')\cap J_B|$ is at most $5\beta n/\gamma m$. Without loss of generality, assume it is $|V(M')\cap J_B|$, and assume that $M'$ is chosen so that $|V(M')\cap J_B|$ is maximised. Let $J_D$ be the vertices in $J_C$ matched by $M'$ to the vertices in $J_B$. Since $M'$ maximises $|V(M')\cap J_B|$, there is no red edge in $R'$ between $J_B\setminus V(M')$ and $J_C\setminus J_D$, as we can swap any such edge with an edge in $M'$ adjacent to $J_A$ to obtain a matching $M''$ with $|V(M'')\cap J_B|>|V(M')\cap J_B|$, a contradiction. Thus, $\blue{R'}[J_B\setminus V(M'),J_C\setminus J_D]$ contains at most $\eps(k+1)^2$ non-edges. Moreover, from $|V(M')\cap J_B|\leq5\beta n/\gamma m$, it follows that $J_B\setminus V(M')$ and $J_C\setminus J_D$ cover at least $(1-20\beta)t_2$ and $(1-20\beta)t_1$ vertices, respectively. Thus, after removing clusters with low degrees in $\blue{R'}[J_B\setminus V(M'),J_C\setminus J_D]$ and refining all remaining clusters down to a smaller common size, we get the required structure (\textbf{C-situation}) in blue to apply Lemma~\ref{lem:stage3} (\textbf{Stage 3}) to finish the proof.
%$(m',k_1',k_2',\eps,d,\eps)$-Stage 3 structure with $m'k_1',m'k_2'\geq(1-10\beta)t_2$ and $m'(k_1'+k_2')\geq(1-10\beta)n$. 
\end{proof}

%%%%%%%%%%%%%%%%%%%%%%%%%%%%%%%%%%%%%%%%%%%%%%%%%%%%%%%%%%%%%%%%%%%%%%%%
%%%%%%%%%%%%%%%%%%%%%%%%%%%%%%%%%%%%%%%%%%%%%%%%%%%%%%%%%%%%%%%%%%%%%%%%
%%%%%%%%%%%%%%%%%%%%%%%%%%%%%%%%%%%%%%%%%%%%%%%%%%%%%%%%%%%%%%%%%%%%%%%%

\subsection{Stage 1}\label{sec:stage1}

\begin{lemma}[\textbf{Stage 1}]\label{lem:stage1}
Let $1/n\ll 1/m\ll c\ll1/k\ll\eps\ll\alpha\ll d\ll\mu\ll1$. Let $T$ be an $n$-vertex tree with $\Delta(T)\le cn$ and bipartition classes of sizes $t_1$ and $t_2$ with $t_2\leq t_1\leq 2t_2$. Suppose $G$ is a red/blue coloured graph that contains an $\eps$-regular partition $V_0\cup V_1\cup\cdots\cup V_k$ with corresponding red/blue coloured $(\eps,d)$-reduced graph $R$, and a partition $[k]=I_A\cup I_B\cup I_C$ such that the following hold (see \textbf{\emph{A-situation}} in Figure~\ref{fig:stages}).
\stepcounter{propcounter}
\begin{enumerate}[label = \emph{\textbf{\Alph{propcounter}\arabic{enumi}}}]
    \item\labelinthm{lemma:stage1:1} $|V_i|=m$ for every $i\in\{0\}\cup I_A\cup I_C$, and $|V_i|=t_1m/t_2$ for every $i\in I_B$.
    \item\labelinthm{lemma:stage1:2} $|I_A|=|I_B|=|I_C|=k$, with $km=(1-\alpha)t_2$.
    \item\labelinthm{lemma:stage1:3} In $\red{R}$, 0 is adjacent to every $a\in I_A$.
    \item\labelinthm{lemma:stage1:4} $\red{R}[I_A,I_B]$ contains a perfect matching $M$.
\end{enumerate}
Then, $G$ contains a monochromatic copy of $T$, or $G$ is Type I $(\mu,t_1,t_2)$-extremal, or $t_1\geq(2-\mu)t_2$ and $G$ is Type II $(\mu,t_1,t_2)$-extremal.
\end{lemma}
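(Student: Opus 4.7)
The plan is to carry out the deductions outlined in Section~\ref{sec:outline:stability}, refining the A-situation in $R$ until either a red copy of $T$ emerges via one of Lemmas~\ref{lemma:em1a}--\ref{lemma:em1c}, or a blue B-situation surfaces that we can feed into Lemma~\ref{lem:stage2}.

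First I would partition $I_A$ and $I_B$ as follows. Let $I_{B,1} \subset I_B$ consist of the vertices with at least $\alpha |I_C|$ red neighbours in $I_C$, and let $I_{A,1} \subset I_A$ be the vertices matched with $I_{B,1}$ by $M$. Let $I_{A,3} \subset I_A \setminus I_{A,1}$ consist of the vertices with at least $\alpha |I_C|$ red neighbours in $I_C$, and let $I_{B,3}$ be their images under $M$. Finally set $I_{A,2} = I_A \setminus (I_{A,1} \cup I_{A,3})$ and $I_{B,2} = I_B \setminus (I_{B,1} \cup I_{B,3})$. I would then show that if any one of the four conditions (i) $\red{R}[I_{A,1}, I_C]$ is $\eta$-almost empty, (ii) $\red{R}[I_{A,1}, I_{B,3}]$ is $\eta$-almost empty, (iii) $\red{R}[I_{A,1}]$ is $\eta$-almost empty, or (iv) for all but an $\eta$-fraction of the edges $i_A i_B \in M[I_{A,2}, I_{B,2}]$, at least one of $i_A, i_B$ has mostly blue neighbours in $I_{A,1}$, fails, then we can already find a red copy of $T$. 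Indeed: in (i) any red edges from $I_{A,1}$ to $I_C$ combine with the $M$-edges between $I_{A,1}$ and $I_{B,1}$ and the many red edges from $I_{B,1}$ to $I_C$ (by definition of $I_{B,1}$) to build the structure of Lemma~\ref{lemma:em1a}; in (ii) the same argument applies, with the additional step of re-matching back into $I_C$ the vertices of $I_{A,3}$ orphaned by diverting some $I_{B,3}$-partners (using the definition of $I_{A,3}$); in (iii) a red edge inside $I_{A,1}$ together with $M$ and the red $I_{B,1}$-to-$I_C$ matching gives the input for Lemma~\ref{lemma:em1b}; and in (iv) a small red submatching of $M[I_{A,2}, I_{B,2}]$ whose vertices all have red neighbours in $I_{A,1}$ (and hence red paths to $I_C$ via $I_{B,1}$) gives the input for Lemma~\ref{lemma:em1c}. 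Each of these invocations first uses Lemma~\ref{lemma:regularity:refine} to split the clusters of $I_A, I_B, I_C$ into refined pieces of the sizes demanded by the relevant embedding lemma, preserving the ratio $t_1 : t_2$ between the $A$-side and $B$-side cluster sizes.

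Assuming (i)--(iv) all hold, I would produce a blue B-situation as follows. Select $I_{AB,2} \subset I_{A,2} \cup I_{B,2}$ containing one endpoint from all but an $\eta$-fraction of the edges of $M[I_{A,2}, I_{B,2}]$, chosen via (iv) so that every vertex of $I_{AB,2}$ has mostly blue neighbours in $I_{A,1}$. Set $I_D := I_{A,1} \cup I_{B,3} \cup I_{AB,2}$. From (ii), (iii) and the choice of $I_{AB,2}$, the bipartite graph $\blue{R}[I_D, I_{A,1}]$ is $O(\eta)$-almost complete; combined with (i), also $\blue{R}[I_C, I_{A,1}]$ is $O(\eta)$-almost complete, so an averaging argument over $I_{A,1}$ furnishes $j \in I_{A,1}$ whose blue neighbourhood covers all but an $O(\sqrt{\eta})$-fraction of $I_C \cup (I_D \setminus \{j\})$. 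Since $I_D$ selects one cluster from almost every pair of $M$ and the $I_B$-clusters are larger in the ratio $t_1/t_2$, $I_D$ covers at least $(1 - O(\eta))t_2$ vertices. Applying Lemma~\ref{lemma:regularity:refine} to equalise cluster sizes across $I_C$, $I_D \setminus \{j\}$, and the leftover $(I_A \cup I_B) \setminus I_D$ (discarding a small fraction of vertices so sizes match exactly), we obtain the blue B-situation required by Lemma~\ref{lem:stage2}: $j$ (after refinement) serves as the central vertex, refined $I_C$ and refined $I_D \setminus \{j\}$ are the two sets each covering $(1-O(\eta))t_2$ vertices, and the remaining refined clusters form the third set covering $(1-O(\eta))t_1$ vertices. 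Lemma~\ref{lem:stage2} then yields one of the three possible conclusions.

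The main obstacle will be executing the deductions (i)--(iv) cleanly: each requires assembling concrete red matchings of the exact sizes demanded by Lemmas~\ref{lemma:em1a}--\ref{lemma:em1c} while respecting the mismatched cluster sizes $m$ in $I_A \cup I_C$ versus $t_1 m/t_2$ in $I_B$. Deduction (ii) is particularly delicate, since one has to manage two vertex-disjoint red matchings simultaneously --- the one from $I_{A,1}$ into $I_{B,3}$ and the compensating one from $I_{A,3}$ back into $I_C$ --- without destroying the matching structure $M$ that Lemma~\ref{lemma:em1a} relies upon. The parameter hierarchy $c \ll 1/k \ll \eps \ll \alpha \ll d \ll \mu$ leaves ample room to absorb the cascading $O(\eta)$ and $O(\sqrt{\eta})$ losses through the refinement and averaging steps.
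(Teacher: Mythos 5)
Your overall strategy matches the paper's: the same partition of $I_A\cup I_B$ according to red neighbours in $I_C$, the same four deductions feeding \textbf{EM1a--c} (Lemmas~\ref{lemma:em1a}--\ref{lemma:em1c}), and the same assembly of a blue \textbf{B-situation} around a vertex $j\in I_{A,1}$ handed to Lemma~\ref{lem:stage2}. However, there is a genuine gap: you never handle the case where $I_{B,1}$ is empty or very small, i.e.\ where almost all edges between $I_B$ and $I_C$ are blue. In that case there is no $j\in I_{A,1}$ to average over, and more importantly $I_{A,1}$ can be far too small for any of the embedding lemmas to apply (each of \textbf{EM1a--c} needs its auxiliary sets, drawn from $I_{A,1}$ or $I_{A,1}\cap X$, to cover $\Theta(\alpha t_2)$ vertices), so neither the "red copy of $T$" branch nor the "blue B-situation" branch of your argument is available. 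The paper deals with this by a separate case at the start: if $|I_{B,1}|\le 2\beta k$, it trims $I_C$ to a set $I_C'$ so that $\blue{R}[I_B\setminus I_{B,1},I_C']$ is almost complete, and these two sets (each covering roughly $t_2$ vertices, together roughly $n$) form a blue \textbf{C-situation} that is fed to Lemma~\ref{lem:stage3} (Stage 3) — a lemma your proposal never invokes. Only under the complementary assumption $|I_{B,1}|\ge 2\beta k$, which forces $|I_{A,1}|$ to be linear in $k$, do the four deductions become usable.

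A secondary, related issue is the quantitative form of your deductions (i)--(ii): stating them as "$\red{R}[I_{A,1},I_C]$ (resp.\ $\red{R}[I_{A,1},I_{B,3}]$) is $\eta$-almost empty" is too weak, because the failure of almost-emptiness only yields a single vertex of large red degree, whereas applying \textbf{EM1a} requires extracting from $I_{A,1}\cap X$ a set of clusters covering $\Theta(\alpha t_2)$ vertices admitting two disjoint red perfect matchings (into $I_{B,1}$ via $M$ and into $I_C$). The paper instead formulates the deductions as "at most $\beta|I_{A,1}|$ vertices of $I_{A,1}$ lie in $X$", etc., so that their failure, combined with $|I_{A,1}|\ge 2\beta k$, directly supplies matchings of the required size. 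With the missing case split added and the deductions restated in this quantitative form, your argument would align with the paper's proof.
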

\begin{proof}
Let $\alpha\ll\beta\ll d$. Let $X$ be the set of vertices in $I_A\cup I_B$ that have at least $20\beta|I_C|$ red neighbours in $I_C$.
%For all edges in $M$ with both endpoints in $X$, remove their endpoints from $I_A$ and $I_B$ to obtain $I_A',I_B'$, respectively. 
Let $I_{B,1}=I_{B}\cap X$, and let $I_{A,1}$ be the vertices matched with $I_{B,1}$ by $M$. Let Let $I_{A,3}=(I_{A}\cap X)\setminus I_{A,1}$, and let $I_{B,3}$ be the vertices matched with $I_{A,3}$ by $M$. Let $I_{A,2}=I_{A}\setminus(I_{A,1}\cup I_{A,3}), I_{B,2}=I_{B}\setminus(I_{B,1}\cup I_{B,3})$, and note that $M$ gives a perfect matching between $I_{A,2}$ and $I_{B,2}$.

If $|I_{B,1}|\leq2\beta k$, then we can remove at most $5\sqrt{\beta}|I_C|$ vertices from $I_C$ to obtain $I_C'$, such that $\blue{R}[I_B\setminus I_{B,1}, I_C']$ is $5\sqrt{\beta}$-almost complete, both $I_B\setminus I_{B,1}$ and $I_C'$ cover at least $(1-6\sqrt\beta)t_2$ vertices, and they together cover at least $(1-6\sqrt\beta)n$ vertices. Thus, after refining all clusters down to a common smaller size, $I_B\setminus I_{B,1}$ and $I_C'$ provide the blue structure (\textbf{C-situation}) required to apply Lemma~\ref{lem:stage3} (\textbf{Stage 3}) with $\eta=5\sqrt\beta$ to finish the proof.

Now suppose $|I_{B,1}|\geq2\beta k$. In this case, assuming there is no red copy of $T$ in $G$, we make the following four deductions.
%\medskip

\medskip

\textbf{i)} $|I_{A,1}\cap X|\leq\beta|I_{A,1}|$. 

If not, let $I_{A,1}'\subset I_{A,1}\cap X$ have size $\beta^2k$, and let $I_{B,1}'\subset I_{B,1}$ be the vertices in $\red{R}$ matched with $I_{A,1}'$ by $M$. From the definition of $X$, we can find a perfect matching between $I_{A,1}'$ and some $I_{C,1}\subset I_C$, such that if $I_C'=I_C\setminus I_{C,1}$, then every vertex in $I_{B,1}'$ still has at least $10\beta^2|I_C'|$ neighbours in $I_C'$. After appropriate refinements, this structure allows us to apply Lemma~\ref{lemma:em1a} (\textbf{EM1a}) to find $T$ in red.

\medskip

\textbf{ii)} At most $\beta^2k$ vertices in $I_{B,3}$ have at least $\beta^2k$ red neighbours in $I_{A,1}$.

If not, we can find a perfect matching of size $\beta^2k$ between some $I_{B,3}'\subset I_{B,3}$ and $I_{A,1}'\subset I_{A,1}$. Let $I_{B,1}'\subset I_{B,1}$ be the vertices matched with $I_{A,1}'$ by $M$, and let $I_{A,3}'\subset I_{A,3}$ be the vertices matched with $I_{B,3}'$ by $M$. Since $I_{A,3}'\subset X$, we can find a perfect matching between $I_{A,3}'$ and some $I_{C,3}\subset I_C$, such that if $I_C'=I_C\setminus I_{C,3}$, then every vertex in $I_{B,1}'$ still has at least $10\beta^2|I_C'|$ neighbours in $I_C'$. After appropriate refinements, this structure allows us to apply Lemma~\ref{lemma:em1a} (\textbf{EM1a}) to find $T$ in red.

\medskip

\textbf{iii)} At most $\beta |I_{A,1}|$ vertices in $I_{A,1}$ have at least $\beta|I_{A,1}|$ red neighbours in $I_{A,1}$. 

If not, we can apply Lemma~\ref{lemma:em1b} (\textbf{EM1b}) to find $T$ in red.

\medskip

\textbf{iv)} At most $\beta k$ edges in $M[I_{A,2},I_{B,2}]$ satisfy that both of their endpoints have at least $\beta|I_{A,1}|$ red neighbours in $I_{A,1}$.

If not, we can apply Lemma~\ref{lemma:em1c} (\textbf{EM1c}) to find $T$ in red.

\medskip

From \textbf{iv)}, we can find a subset $J_{AB,2}\subset I_{A,2}\cup I_{B,2}$ containing a vertex in all but at most $\beta k$ edges in $M[I_{A,2},I_{B,2}]$, so that every vertex in $J_{AB,2}$ has at most $\beta|I_{A,1}|$ red neighbours in $I_{A,1}$. Similarly, using \textbf{i)}, \textbf{ii)}, and \textbf{iii)}, we can find $J_{B,3}\subset I_{B,3}$ containing all but at most $\beta^2k$ vertices in $I_{B,3}$, and $J_{A,1}\subset I_{A,1}\setminus X$ containing all but at most $2\beta|I_{A,1}|$ vertices in $I_{A,1}$, such that every vertex in $J:=J_{AB,2}\cup J_{B,3}\cup J_{A,1}$ has at most $2\beta|J_{A,1}|$ red neighbours in $J_{A,1}$, and at most $20\beta|I_C|$ red neighbours in $I_C$. In particular, by averaging and using that $J_{A,1}$ is non-empty, we can find some $j\in J_{A,1}$ with at most $2\beta|J|$ red neighbours in $J$ and at most $20\beta|I_C|$ red neighbours in $I_C$. Also, we have 
\[|J|\geq(1-2\beta)|I_{A,1}|+|I_{A,2}|-\beta k+|I_{A,3}|-\beta^2k\geq(1-4\beta)|I_A|\geq(1-5\beta)t_2/m.\] 
Therefore, we can find $J'\subset N_{\blue{R}}(j,J)$ and $I_C'\subset N_{\blue{R}}(j,I_C)$ with $|J'|\geq(1-2\beta)|J|\geq(1-10\beta)t_2/m$ and $|I_C'|\geq(1-20\beta-\sqrt{20\beta})|I_C|\geq(1-5\sqrt\beta)t_2/m$, such that $\blue{R}[J',I_C']$ is $5\sqrt{\beta}$-almost complete. Therefore, after refining all clusters down to a smaller common size, $J'$ and $I_C'$ provide the blue structure (\textbf{B-situation}) required to apply Lemma~\ref{lem:stage2} (\textbf{Stage 2}) with $\eta=5\sqrt\beta$ to finish the proof.
\end{proof}

\subsection{Cascading lemma}\label{sec:cascade}
\begin{lemma}[Cascading lemma]\label{lemma:improvemaximummatching}
Let $G$ be a bipartite graph with bipartition classes $A$ and $B$, and let $M$ be a maximum matching in $G$. Let $A_M=A\cap V(M)$, $B_M=B\cap V(M)$, $A'=A\setminus A_M$, and $B'=B\setminus B_M$. Then, $A_M$ and $B_M$ can be partitioned as $A_M=A^+\cup A^-\cup \overline{A}$ and $B_M=B^+\cup B^-\cup \overline{B}$ such that the following hold.
\begin{itemize}
    \item $M$ matches vertices in $A^+$ with vertices in $B^-$, vertices in $A^-$ with vertices in $B^+$, and vertices in $\overline{A}$ with vertices in $\overline{B}$.
    \item $G[A'\cup A^-,B'\cup B^-\cup\overline{B}]$ and $G[A'\cup A^-\cup\overline{A},B'\cup B^-]$ are both empty graphs.
\end{itemize}
\end{lemma}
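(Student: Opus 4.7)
The plan is to build the partition from the two alternating-path explorations rooted at the unmatched sides $A'$ and $B'$, and then argue that these explorations are compatible thanks to the maximality of $M$.

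Concretely, I would first run the standard alternating-path BFS from $A'$: traverse a non-matching edge from $A' \cup A^-_{\text{curr}}$, then a matching edge, then another non-matching edge, and so on. Let $B^+$ denote the set of $B$-vertices reached and let $A^-$ denote the set of $A_M$-vertices reached (a vertex of $A_M$ is reached precisely when its matching partner is reached first, so $A^- = M^{-1}(B^+)$). Maximality of $M$ gives $B^+ \cap B' = \emptyset$, so $B^+ \subseteq B_M$. Symmetrically, running the exploration from $B'$ produces sets $A^+ \subseteq A_M$ and $B^- \subseteq B_M$ with $B^- = M^{-1}(A^+)$. Then I would set $\overline{A} = A_M \setminus (A^- \cup A^+)$ and $\overline{B} = M(\overline{A})$; because $M$ is a bijection $A_M \to B_M$ and swaps $A^- \leftrightarrow B^+$ and $A^+ \leftrightarrow B^-$, this gives the required matched partition \emph{provided} the sets are pairwise disjoint.

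The heart of the argument is the disjointness $A^- \cap A^+ = \emptyset$ (equivalently $B^+ \cap B^- = \emptyset$). If some $a$ lay in both, then concatenating the alternating path from $A'$ to $a$ (whose last edge is the matching edge $M(a)a$) with the alternating path from $M(a) \in B^-$ back to some $b_0 \in B'$ would yield an augmenting path from $A'$ to $B'$, contradicting the maximality of $M$. A small amount of care is needed to ensure the concatenated walk is a simple alternating path: I would take shortest such paths on each side and use the standard symmetric-difference trick (if the concatenation is not simple, prune at the first shared vertex to obtain a shorter augmenting walk, and iterate).

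Finally, I would verify the two emptiness conditions; both reduce to the claim that every $B$-neighbor of $A' \cup A^-$ lies in $B^+$ (and symmetrically). For $a \in A'$, any neighbor is either in $B'$ (forbidden by maximality) or in $B_M$, in which case the connecting edge is non-matching and the neighbor is reachable from $A'$, hence in $B^+$. For $a \in A^-$, the matching partner $M(a)$ is in $B^+$ by definition, and any non-matching neighbor can be appended to the alternating path reaching $a$, placing it in $B^+$ as well. The other condition follows by the symmetric argument from $B'$. I expect the only nontrivial step to be the disjointness, namely turning the two alternating paths into a bona fide augmenting path; everything else is bookkeeping.
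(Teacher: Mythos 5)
Your construction is correct, but it takes a genuinely different route from the paper. You define the partition globally by alternating-path reachability from the two exposed sides: $B^+$ is the set of $B$-vertices reachable from $A'$, $A^-=M(B^+)$, and symmetrically $A^+$, $B^-=M(A^+)$ from $B'$ — essentially the coarse Dulmage--Mendelsohn decomposition — and the whole lemma then reduces to the disjointness $A^-\cap A^+=\emptyset$ (equivalently $B^+\cap B^-=\emptyset$), which you derive from maximality by splicing an alternating path from $A'$ with one from $B'$ into an augmenting path. The paper instead runs an iterative exchange process: starting from $\overline{A}=A_M$, $\overline{B}=B_M$, it repeatedly moves a matched pair into either $(A^+,B^-)$ or $(A^-,B^+)$ whenever one of the two target bipartite graphs is non-empty, maintaining the invariant that every vertex of $A'\cup A^-$ (resp.\ $B'\cup B^-$) is linked to $A'$ (resp.\ $B'$) by an $M$-alternating path with interior in $A^-\cup B^+$ (resp.\ $B^-\cup A^+$); since these two interiors are disjoint by construction, the two paths concatenate into a simple augmenting path with no pruning needed. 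That is what the paper's formulation buys: it sidesteps entirely the one delicate point in your argument, namely making the concatenated walk simple. Your version buys an explicit, non-algorithmic description of the final partition, and the splice does go through — the clean way is to walk along the path $Q$ from $b_0\in B'$ and cut at the \emph{first} vertex $w$ lying on the path $P$ from $a_0\in A'$: because the two explorations reach $A$-vertices and $B$-vertices via edges of opposite types (matching vs.\ non-matching), the two edges entering $w$ always have different types, so the spliced path is alternating, simple, and augmenting in a single step; no shortest-path choice, "symmetric-difference trick", or iteration is actually needed. Your sketch of this step is a little vague, but the missing detail is standard bookkeeping rather than a gap.
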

\begin{proof}
First, by maximality of $M$, $G[A',B']$ must be an empty graph. Consider the following process. To initialise, set $A_0^+=A_0^-=B_0^+=B_0^-=\emptyset$, $\overline{A}_0=A_M$, and $\overline{B}_0=B_M$. Throughout this process, we will maintain the following conditions.
\stepcounter{propcounter}
\begin{enumerate}[label = \textbf{\Alph{propcounter}\arabic{enumi}}]
    \item\label{cascade:1} $A^+_i\cup A^-_i\cup\overline{A}_i$ is a partition of $A_M$ and $B^+_i\cup B^-_i\cup\overline{B}_i$ is a partition of $B_M$.
    \item\label{cascade:2} $M$ matches vertices in $A^+_i$ with vertices in $B^-_i$, vertices in $A^-_i$ with vertices in $B^+_i$, and vertices in $\overline{A}_i$ with vertices in $\overline{B}_i$.
    \item\label{cascade:3} $G[A'\cup A_i^-,B'\cup B^-_i]$ is the empty graph.
    \item\label{cascade:4} For every $a\in A'\cup A^-_i$, there exists an $M$-alternating path, possibly of length 0, that connects $a$ to some vertex $a'\in A'$, starts with an edge in $M$, and has all internal vertices in $A^-_i\cup B^+_i$. For every $b\in B'\cup B^-_i$, there exists an $M$-alternating path, possibly of length 0, that connects $b$ to some vertex $b'\in B'$, starts with an edge in $M$, and has all internal vertices in $B^-_i\cup A^+_i$.
\end{enumerate}
Note that~\ref{cascade:1}--\ref{cascade:4} are all satisfied when $i=0$. Suppose for some $i\geq 0$ we have found $A^+_i,A^-_i,\overline{A}_i$ and $B^+_i,B^-_i,\overline{B}_i$ satisfying \ref{cascade:1}--\ref{cascade:4}. If $G[\overline{A}_i,B'\cup B^-_i]$ and $G[A'\cup A^-_i,\overline{B}_i]$ are both empty graphs, then we are done by letting $A^+=A^+_i, A^-=A^-_i, \overline{A}=\overline{A}_i, B^+=B^+_i, B^-=B^-_i$, and $\overline{B}=\overline{B}_i$. Otherwise, we show that we can continue this process.

Indeed, suppose there exists $a\in \overline{A}_i$ adjacent to some $b_1\in B'\cup B^-_i$. Let $b\in\overline{B}_i$ be the vertex matched to $a$ by $M$. We claim that $b$ is not adjacent to any vertex in $A'\cup A^-_i$. Indeed, suppose $b$ is adjacent to $a_1\in A'\cup A^-_i$. Then by~\ref{cascade:4}, there exists an $M$-alternating path $P_1$ starting with an edge not in $M$ connecting some vertex $a_1'\in A'$ to $a_1$, with its internal vertices in $A^-_i\cup B^+_i$. Also by~\ref{cascade:4}, there exists some $M$-alternating path $P_2$ starting with an edge in $M$ connecting $b_1$ to some vertex $b_1'\in B'$, with its interval vertices in $B^-_i\cup A^+_i$. In particular, $P_1$ and $P_2$ are disjoint. Then, the path $P_1a_1bab_1P_2$ is an $M$-augmenting path beginning and ending with edges not in $M$ connecting $a'\in A'$ and $b'\in B'$, contradicting that $M$ is a maximum matching. Let $A^+_{i+1}=A^+_i\cup\{a\}, \overline{A}_{i+1}=\overline{A}_i\setminus\{a\}, B^-_{i+1}=B^-_i\cup\{b\}, \overline{B}_{i+1}=\overline{B}_i\setminus\{b\}$, and keep $A^-_{i+1},B^+_{i+1}$ unchanged. Then $G[A'\cup A^-_{i+1},B'\cup B^-_{i+1}]$ is still an empty graph, and $bab_1P_2$ is an $M$-alternating path starting with an edge in $M$ connecting $b$ to $b_1'\in B'$, with internal vertices in $B^-_{i+1}\cup A^+_{i+1}$. It follows that~\ref{cascade:1}--\ref{cascade:4} are all maintained.

If instead there exists $b\in\overline{B}_i$ adjacent to some $a_1\in A'\cup A^-_i$, then let $a\in\overline{A}_i$ be the vertex matched to $b$ in $M$, set $A^-_{i+1}=A^-_i\cup\{a\}, \overline{A}_{i+1}=\overline{A}_i\setminus\{a\}, B^+_{i+1}=B^+_i\cup\{b\}, \overline{B}_{i+1}=\overline{B}_i\setminus\{b\}$, and keep $A^+_{i+1}, B^-_{i+1}$ unchanged. Like above, \ref{cascade:1}--\ref{cascade:4} are all maintained, which finishes the proof.
\end{proof}

\subsection{Proof of Theorem~\ref{thm:stability}}\label{sec:stabmain}
\begin{proof}[Proof of Theorem~\ref{thm:stability}]
Let $1/n\ll c\ll\eps\ll\mu \ll 1$ and let $t_1,t_2\in \mathbb{N}$ satisfy $t_1+t_2=n$ and $t_1\geq t_2$. Let $G$ be a red/blue coloured complete graph with $\max\{t_1+2t_2,2t_1\}-1$ vertices. Suppose that $G$ is not Type I $(\mu,t_1,t_2)$-extremal, and either $t_1<(2-\mu)t_2$ or $G$ is not Type II $(\mu,t_1,t_2)$-extremal. Let $T$ be any $n$-vertex tree with $\Delta(T)\le cn$ and bipartition class sizes $t_1$ and $t_2$, we need to show that $G$ contains a monochromatic copy of $T$.

If $t_1\leq 2t_2$, let $T'=T$, $t_1'=t_1$ and $t_2'=t_2$. If $t_1\geq 2t_2+1$, then by Lemma~\ref{lemma:leaves:V1}, $T$ contains a set $L$ of $2/c$ leaves in $V_1$. Let $T'$ be a new tree obtained by attaching $\ceil{(t_1-2t_2)/2}$ new leaves to vertices in $L$, with no vertex in $L$ receiving more than $cn$ of these new leaves. Then, observe that $\Delta(T')\leq c|T'|$, and the bipartition classes of $T'$ have sizes $t_1'=t_1$ and $t_2'=\ceil{t_1/2}$. It follows that $t_1'\leq 2t_2'$ and $|G|=2t_1-1=2t_1'-1\geq t_1'+2t_2'-2\geq(1-\eps)(t_1'+2t_2')$. Therefore, Theorem~\ref{thm:hltstart} implies that we have the structure required to apply Lemma~\ref{lem:stage1}, which then implies that one of the following is true.  
\stepcounter{propcounter}
\begin{enumerate}[label = \textbf{\Alph{propcounter}\arabic{enumi}}]
    \item\label{2.2:a} $G$ contains a monochromatic copy of $T'$.
    \item\label{2.2:b} $G$ is Type I $(\mu/2,t_1',t_2')$-extremal.
    \item\label{2.2:c} $t_1'\geq(2-\mu/2)t_2'$ and $G$ is Type II $(\mu/2,t_1',t_2')$-extremal.
\end{enumerate}
Note that each of~\ref{2.2:b} and~\ref{2.2:c} implies that the same is true with $t_1,t_2,\mu$ in place of $t_1',t_2',\mu/2$, respectively, which contradicts our assumption. Therefore, it must be that~\ref{2.2:a} is true, which implies that $G$ contains a monochromatic copy of $T$ as well, finishing the proof.
\end{proof}

\section{Proof of Theorem~\ref{theorem:extremal:case1}: Type I extremal graphs}\label{sec:extremearg1}
In this section, we  prove Theorem~\ref{theorem:extremal:case1}. We will start by outlining the proof in Section~\ref{sec:outline:extcase1}, breaking it down into different cases that are then proved throughout the rest of the section. %Extremal case I: $t_1\le 2t_2$.

%%%%%%%%%%%%%%%%%%%%%%%%%%%%%%%%%%%%%%%%%%%%%%%%%%%%%%%%%%%%%%%%%%%%%%%
%%%%%%%%%%%%%%%%%%%%%%%%%%%%%%%%%%%%%%%%%%%%%%%%%%%%%%%%%%%%%%%%%%%%%%%
%%%%%%%%%%%%%%%%%%%%%%%%%%%%%%%%%%%%%%%%%%%%%%%%%%%%%%%%%%%%%%%%%%%%%%%
%%%%%%%%%%%%%%%%%%%%%%%%%%%%%%%%%%%%%%%%%%%%%%%%%%%%%%%%%%%%%%%%%%%%%%%

\subsection{Proof outline for Type I extremal graphs}\label{sec:outline:extcase1}
We start by recapping the situation in Theorem~\ref{theorem:extremal:case1}, where we have parameters $1/n\ll c\ll \mu\ll 1$. Let $T$ be an $n$-vertex tree with $\Delta(T)\le cn$ and bipartition classes of sizes $t_1$ and $t_2$ satisfying $t_1\geq t_2$.
Let $G$ be a red/blue coloured complete graph on $\max\{2t_1,t_1+2t_2\}-1$ vertices which is Type I $(\mu,t_1,t_2)$-extremal. This means that there are disjoint sets $U_1,U_2\subset V(G)$ such that $|U_1|\geq(1-\mu)n$, $|U_2|\geq(1-\mu)t_2$, $\red{d}(u,U_1)\leq \mu n$ for every $u\in U_1$, and $\blue{d}(u,U_{3-i})\leq \mu n$ for every $i\in[2]$ and $u\in U_i$. We wish to find a monochromatic copy of $T$ in $G$. As mentioned at the end of Section~\ref{sec:division}, we can and will assume that $t_1\leq 2t_2+1$, so $|G|\in\{n+t_2-1,n+t_2\}$.

Let us first partition the vertices outside of $U_1\cup U_2$. For some $\beta$ with $\mu \ll \beta\ll 1$, partition $V(G)=U_1^+\cup U_2^+$ so that vertices in $U_2^+$ have at least $\beta n$ red neighbours in $U_1$ and vertices in $U_1^+$ do not. In particular, $U_1\subset U_1^+$, $U_2\subset U_2^+$, and at least one of $|U_1^+|\geq n$ and $|U_2^+|\geq t_2$ must hold. Say $|U_1^+|=n+k$ for some $k$ satisfying $|k|\leq2\mu n$, from which it follows that $|U_2^+|\in\{t_2-k-1,t_2-k\}$.

A first approach to proving Theorem~\ref{theorem:extremal:case1} would be to embed $T$ into $G_{\mathrm{blue}}[U_1^+]$ if $|U_1^+|\geq n$, and into $G_{\mathrm{red}}[U_1^+,U_2^+]$ if $|U_2^+|\geq t_2$. We will be able to do this when the tree has many, say $n/100$, vertex-disjoint bare paths with length 5, which we call \textbf{Case}~\ref{CaseIA}. This is split into \textbf{Case}~\ref{CaseIA1} and \textbf{Case}~\ref{CaseIA2} depending on whether $k\geq 0$.
To embed such a tree $T$, we first remove many suitable bare paths with length 4, so that the remaining forest $T'$ can be embedded greedily. Then, we find a collection of vertex-disjoint paths with length 2 in $G$ that will form the middle parts of the missing paths, and make sure that they cover all the low degree vertices in $G$ if necessary. Finally, we attach these length 2 paths to the image of $T'$ to complete a copy of $T$ by verifying the appropriate Hall's matching conditions. These proofs are carried out in Section~\ref{sec:IA:paths}.

If $T$ does not have $n/100$ vertex-disjoint bare paths with length 5, we call this \textbf{Case}~\ref{CaseIB}. Note that by Lemma~\ref{lemma:paths-leaf}, $T$ must then have at least $n/20$ leaves. Unlike in \textbf{Case}~\ref{CaseIA} above where no spare vertex is required to embed $T$, it follows from a famous example of Koml\'os, S\"ark\'ozy and Szemer\'edi~\cite{komlos2001spanning} that we cannot necessarily embed $T$ into $G_{\mathrm{blue}}[U_1^+]$ even if $|U_1^+|\geq n$, or into $G_{\mathrm{red}}[U_1^+,U_2^+]$ even if $|U_2^+|\geq t_2$. For example, in our setting, it is easy to create an $n$-vertex tree $T$ with $\Delta(T)\leq cn$ in which there is a set $W$ of at most $2/c$ vertices such that every edge of the tree contains a vertex in $W$. Thus, $G_{\mathrm{blue}}[U_1^+]$ can only contain a blue copy of $T$ if it has a set $W'$ with size at most $2/c$ for which at most $k=|U_1^+|-n$ vertices in $U_1^+\setminus W'$ have no blue neighbour in $W'$.
An appropriate red sparse binomial random graph placed on $U_1^+$ can destroy this property, even when $|U_1^+|=n+\Theta_c(n)$, while having maximum degree at most $c n$ (see~\cite{komlos2001spanning} for a more detailed example). Thus, whether we embed the tree in red or blue will depend not only on $k$, but also on the number of edges in $\red{G}[U_1^+]$ and $\blue{G}[U_1^+,U_2^+]$.

We divide \textbf{Case}~\ref{CaseIB} (where $T$ has many leaves) into two further subcases, using a parameter $D$, which is defined to be the $30\mu n$-th biggest value of $\blue{d}(u,U_2^+)$ across all $u\in U_1^+$.
%\[D=\max\{d:\text{at least $2d$ vertices in $U_1^+$ have at least $d$ blue neighbours in $U_2^+$}\}\geq 0.\]
The hope is that this could allow us to embed $D$ leaves of the tree $T$ using the edges in $\blue{G}[U_1^+,U_2^+]$, thus creating more space to embed the rest of the tree into $G_{\mathrm{blue}}[U_1^+]$. With this reasoning, one might expect that if $k+D\geq 0$, then as $|U_1^+|+D=n+k+D\geq n$, we will be able to embed the tree in blue. However, an example similar to the one mentioned above from \cite{komlos2001spanning} shows that simiply requiring $k+D\geq 0$ is not enough, as comparatively few red edges in $G[U_1^+]$ can spoil this embedding attempt in blue. However, in Section~\ref{sec:IB1}, we will show that we can embed the tree in blue as long as $G[U_1^+]$ has at most $10^7(k+D+1)n$ red edges, which we call \textbf{Case}~\ref{CaseIB1}.

Our final case, \textbf{Case}~\ref{CaseIB2}, is when there are more than $10^7(k+D+1)n$ red edges in $G[U_1^+]$. These red edges will allow us to embed into $\red{G}[U_1^+]$ some small subtree of $T$, which contains at least $k+D+1$ vertices from $V_2$, the bipartition class of $T$ with size $t_2$. Note that to embed the remaining vertices of $V_2$, we now have at least $k+D+1-k-1\geq D$ spare vertices available in $U_2^+$. As most of the vertices in $U_1^+$ have at most $D$ blue neighbours in $U_2^+$, we will be able to embed the rest of the tree essentially greedily, with some slight complications such as vertices in $U_2^+\setminus U_2$ having fewer red neighbours in $U_1^+$.

In Section~\ref{sec:IA:paths}, we prove the embedding results used for both \textbf{Case}~\ref{CaseIA1} and \textbf{Case}~\ref{CaseIA2}. In Section~\ref{sec:IB1}, we prove the embedding result used for \textbf{Case}~\ref{CaseIB1}, stated so that it will also be useful in Section~\ref{sec:extremearg2}. In Section~\ref{sec:IB2}, we prove the embedding result used for \textbf{Case}~\ref{CaseIB2}. Finally, we put all of these together in Section~\ref{sec:IBfinal} to prove Theorem~\ref{theorem:extremal:case1}.
To finish this outline, we recap the different cases in the proof of Theorem~\ref{theorem:extremal:case1}, noting the main result that takes care of each of them.

%\medskip

\begin{enumerate}[label = \textbf{\Roman{enumi}}]
\item $G$ is Type I extremal.\label{CaseI}
\begin{enumerate}[label = \textbf{I.\Alph{enumii}}]
\item $T$ has at least $n/100$ vertex-disjoint bare paths with length 5.\label{CaseIA}
\begin{enumerate}[label = \textbf{I.\Alph{enumii}.\arabic{enumiii}}]
\item $k\geq 0$: $T$ embeds in blue.\hfill\emph{Lemma~\ref{proposition:bare-paths}}\label{CaseIA1}
\item $k<0$: $T$ embeds in red.\hfill\emph{Lemma~\ref{proposition:bipartite:bare-paths}}\label{CaseIA2}
\end{enumerate}
\item $T$ has at least $n/20$ leaves.\label{CaseIB}
\begin{enumerate}[label = \textbf{I.\Alph{enumii}.\arabic{enumiii}}]
\item $e(\red{G}[U_1^+])\leq 10^7(k+D+1)n$: $T$ embeds in blue.\hfill\emph{Lemma~\ref{lemma:caseIB1embedding}}\label{CaseIB1}
\item $e(\red{G}[U_1^+])>10^7(k+D+1)n$: $T$ embeds in red.\hfill\emph{Lemma~\ref{lemma:caseIB2embedding}}\label{CaseIB2}
\end{enumerate}
\end{enumerate}
\end{enumerate}

%%%%%%%%%%%%%%%%%%%%%%%%%%%%%%%%%%%%%%%%%%%%%%%%%%%%%%%%%%%%%%%%%%%%%%%
%%%%%%%%%%%%%%%%%%%%%%%%%%%%%%%%%%%%%%%%%%%%%%%%%%%%%%%%%%%%%%%%%%%%%%%
%%%%%%%%%%%%%%%%%%%%%%%%%%%%%%%%%%%%%%%%%%%%%%%%%%%%%%%%%%%%%%%%%%%%%%%
%%%%%%%%%%%%%%%%%%%%%%%%%%%%%%%%%%%%%%%%%%%%%%%%%%%%%%%%%%%%%%%%%%%%%%%

\subsection{Case~\ref{CaseIA}: trees with many bare paths in Type I extremal graphs}\label{sec:IA:paths}
In \textbf{Case}~\ref{CaseIA1}, we will use the following result that allows us to embed spanning trees with many short bare paths into an almost complete graph. A stronger version in which the degree condition is weakened like in Lemma~\ref{proposition:bipartite:bare-paths} also holds, though this is not needed in our proof.
%In this subsection we embed trees with many bare paths into almost complete graphs. This is the easiest case as it will turn out that no spare vertex is needed. We start with embedding into almost complete graphs.
\begin{lemma}\label{proposition:bare-paths} Let $1/n\ll \mu\ll 1$. Let $H$ be an $n$-vertex graph with $\delta(H)\ge (1-\mu)n$, and let $T$ be an $n$-vertex tree that contains $10\mu n$ vertex-disjoint bare paths with length 4. Then, for any $t\in V(T)$ not on any of these bare paths and any $s\in V(H)$, there is a copy of $T$ in $H$ with $t$ copied to $s$.%, then $G$  contains a copy of $T$.
\end{lemma}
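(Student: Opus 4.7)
The plan is to remove one interior vertex from each of the given bare paths of length $4$ in $T$, embed the resulting forest $T'$ (on $n - 10\mu n$ vertices) greedily into $H$ with $t$ sent to $s$, and then use Hall's theorem (Lemma~\ref{lemma:Hall}) to place the removed vertices into the remaining $10\mu n$ vertices of $H$.

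More precisely, write $m = 10\mu n$ and label the given vertex-disjoint bare paths of length $4$ as $P_i = u_i^0 u_i^1 u_i^2 u_i^3 u_i^4$ for $i \in [m]$, where by assumption $t \notin \bigcup_i V(P_i)$. I would let $X = \{u_i^2 : i \in [m]\}$ and $T' = T - X$; since each $u_i^2$ has degree $2$ in $T$ and the $u_i^2$'s are pairwise non-adjacent, $T'$ is a forest on $n - m$ vertices in which each $u_i^1$ has become a leaf attached to $u_i^0$ and each $u_i^3$ a leaf attached to $u_i^4$. I would then embed $T'$ into $H$ component by component, starting from the component containing $t$ (sending $t \mapsto s$) and embedding each subsequent component by mapping an arbitrary root to an arbitrary unused vertex of $H$ before extending greedily. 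At every step, at most $n - m - 1$ vertices of $H$ are in use, so the current parent has at least $(1-\mu)n - (n - m - 1) \geq 9\mu n + 1$ unused neighbours in $H$, and the embedding succeeds. Write $\phi \colon V(T') \to V(H)$ for the resulting embedding with $\phi(t) = s$, and let $L = V(H) \setminus \phi(V(T'))$, so that $|L| = m$.

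To finish, I would extend $\phi$ to $T$ by assigning each $u_i^2$ to a distinct $w_i \in L$ that is adjacent in $H$ to both $\phi(u_i^1)$ and $\phi(u_i^3)$. This reduces to finding a matching saturating $[m]$ in the bipartite graph $B$ with parts $[m]$ and $L$, in which $i \sim_B w$ iff $w \in N_H(\phi(u_i^1)) \cap N_H(\phi(u_i^3))$. Each $i \in [m]$ has $\deg_B(i) \geq |L| - 2\mu n = 8\mu n$, since $\phi(u_i^1)$ and $\phi(u_i^3)$ each have at most $\mu n$ non-neighbours in $H$. If some $S \subseteq [m]$ violated Hall's condition, then any $w \in L \setminus N_B(S)$ would have at least one non-neighbour in $\{\phi(u_i^1), \phi(u_i^3)\}$ for every $i \in S$; as $\phi$ is injective and the bare paths are vertex-disjoint, these $2|S|$ images are pairwise distinct, forcing $|S| \leq |V(H) \setminus N_H(w)| \leq \mu n$, and then $|N_B(S)| \geq 8\mu n > |S|$, a contradiction. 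The main obstacle is arguably this Hall-condition verification, but the large slack coming from $\mu \ll 1$ makes it routine.
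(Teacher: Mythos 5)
Your proof is correct, and it takes a somewhat leaner route than the paper's. The paper deletes all three internal vertices of each bare path, embeds the resulting forest on $n-30\mu n$ vertices greedily, then invokes Dirac's theorem to find a Hamilton cycle in the $30\mu n$ leftover vertices so as to extract $10\mu n$ vertex-disjoint cherries $x_iw_iy_i$, and finally runs a Hall argument matching each deleted path to a cherry via the condition that $u_ix_j$ and $v_iy_j$ are both edges. You instead delete only the middle vertex $u_i^2$ of each path, so the leftover set $L$ has exactly $10\mu n$ vertices and needs no internal structure at all: each missing vertex just has to be matched to a single vertex of $L$ lying in $N_H(\phi(u_i^1))\cap N_H(\phi(u_i^3))$, and your Hall verification (degree at least $|L|-2\mu n$ on one side, and any unreachable $w\in L$ forcing $|S|\le\mu n$ by the disjointness of the pairs $\{\phi(u_i^1),\phi(u_i^3)\}$) is sound. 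What you buy is the elimination of the Dirac/Hamilton-cycle step and a simpler auxiliary bipartite graph; what the paper's version buys is nothing essential here, though its pattern of reinserting length-2 connecting paths from the host graph recurs in the bipartite variant (Lemma~\ref{proposition:bipartite:bare-paths}), where the leftover vertices include low-degree vertices $W$ that must be covered by the path centres, a flexibility your single-vertex deletion would not provide. For the present lemma, your argument is complete.
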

\begin{proof}
From assumption, there is a collection $\mathcal P=\{P_1,\ldots,P_\ell\}$ of $\ell=10\mu n$ vertex-disjoint bare paths in $T$ with length $4$, such that $t$ is not on any of these bare paths. Let $T'$ be the forest obtained by removing all internal vertices of the paths in $\mathcal P$ from $T$, so that $|T'|=n-3\ell$.

Since $\delta(H)\geq (1-\mu)n\geq |T'|$, by Lemma~\ref{lemma:greedy}, we can greedily find a copy $S'$ of $T'$ in $H$ with $t$ copied to $s$. Let $H'=H-V(S')$. Note that $|H'|=3\ell=30\mu n$, so $\delta(H')\geq|H'|-\mu n\geq|H'|/2$. Thus, by Dirac's theorem, $H'$ contains a Hamilton cycle. In particular, we can label the vertices in $H'$ as $w_1,\ldots,w_\ell,x_1,\ldots,x_\ell,y_1,\ldots,y_\ell$, so that for each $i\in [\ell]$, $x_iw_iy_i$ is a path in $H'$.

For each $i\in [\ell]$, let $u_i,v_i$ be the copies of the endpoints of $P_i$ in $S'$. Let $K$ be an auxiliary bipartite graph with bipartition classes $A=\{a_1,\ldots,a_\ell\}$ and $B=\{b_1,\ldots,b_\ell\}$, such that for any $i,j\in[\ell]$, there is an edge $a_ib_j$ in $K$ if and only if both $u_ix_j$ and $v_iy_j$ are edges in $H$. Since $\delta(H)\geq(1-\mu)n$, for every $i\in[\ell]$, $d_K(a_i),d_K(b_i)\geq\ell-2\mu n$. Then, for any $I\subset A$ with $0<|I|\leq\ell-2\mu n$, we have $|N_K(I,B)|\geq\ell-2\mu n\geq |I|$, while for any $I\subset A$ with $|I|>\ell-2\mu n$, we have $|N_K(I,B)|=|B|\geq|I|$, as any $b\in B\setminus N_K(I,B)$ would satisfy $d_K(b)<2\mu n<\ell-2\mu n$, a contradiction. Thus, by Lemma~\ref{lemma:Hall}, there is a perfect matching in $K$, say matching $a_i$ with $b_{\sigma(i)}$ for every $i\in[\ell]$. This then implies that $S'$ along with the paths $u_ix_{\sigma(i)}w_{\sigma(i)}y_{\sigma(i)}v_i$ for all $i\in[\ell]$ form a copy of $T$ in $H$, as required.
\end{proof}

In \textbf{Case}~\ref{CaseIA2}, we will use the following bipartite version of Lemma~\ref{proposition:bare-paths}. We prove it in a more flexible form so that we may also use it later in the proof of Lemma~\ref{lemma:caseIB2embedding}.
%Now we embed into almost complete bipartite graphs.
\begin{lemma} \label{proposition:bipartite:bare-paths}
Let $1/n\ll \mu\ll\beta\ll 1$. Let $H$ be a bipartite graph with bipartition classes $U_1$ and $U_2$ such that $d(u,U_2)\ge|U_2|-\mu n$ for every $u\in U_1$, $d(u,U_1)\geq\beta n$ for every $u\in U_2$, and $d(u,U_1)\geq|U_1|-\mu n$ for all but a set $W$ of at most $\mu n$ vertices in $U_2$.

Let $T$ be an $n$-vertex forest with bipartition classes $V_1$ and $V_2$ such that $|V_j|\le |U_j|$ for each $j\in [2]$. Let $R$ be a subforest of $T$ with $|R|\leq\beta n/2$, such that $T-R$ contains a collection $\mathcal{P}$ of $10\mu n$ vertex-disjoint bare paths of length $4$ whose endpoints are all in $V_2$. Suppose that $H-W$ contains a copy $S$ of $R$ with vertices in $V(R)\cap V_j$ copied into $U_j$ for each $j\in [2]$, then $S$ can be extended to a copy of $T$ in $H$, such that $W$ is covered by the central vertices of the bare paths in $\mathcal{P}$.% for any $i\in [2]$, any $t\in V_i$ and $v\in U_i$, $H$ contains a copy of $T$ with $t$ copied to $v$. $V_i$ copied to $U_i$ for $i\in [2]$ and
\end{lemma}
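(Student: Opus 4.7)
The plan is to adapt the Hall-matching framework from the proof of Lemma~\ref{proposition:bare-paths} to the bipartite setting, routing the central vertices of the bare paths through $W$. Write each bare path as $P_i = u_i a_i b_i c_i v_i$ with $u_i, b_i, v_i \in V_2$ and $a_i, c_i \in V_1$, and let $T' = T - \{a_i,b_i,c_i : i \in [\ell]\}$ where $\ell = 10\mu n$; this is a subforest of $T$ containing $R$, with $|V(T') \cap V_1| = |V_1| - 2\ell$ and $|V(T') \cap V_2| = |V_2| - \ell$. The main obstacle will be that a vertex $w \in W$ may have as few as $\beta n$ neighbours in $U_1$, which could all be consumed by an unconstrained embedding of $T'$ when $|V_1|$ is close to $|U_1|$, leaving no room to attach a bare path through $w$.

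To circumvent this, I would first reserve, for each $w \in W$, two distinct vertices $x_w, z_w \in N(w) \cap U_1$, chosen greedily so that all $2|W|$ of them are distinct; this is possible since $d(w,U_1) \ge \beta n$ while $|W| \le \mu n \ll \beta n$. I would then extend $S$ greedily to a copy $\phi$ of $T'$ in $H-W$, forbidding $V_1$-images in $\{x_w,z_w\}_{w\in W}$. Processing $V(T')\setminus V(R)$ in a BFS order (rooted at $R$ in each component meeting $R$, and at an arbitrary vertex otherwise), at each step the current vertex has at most one already-embedded neighbour whose image lies in $U_1$ or $U_2\setminus W$, contributing $\ge |U_2|-\mu n$ or $\ge |U_1|-\mu n$ candidate neighbours in $H$; a short count using $|V_j|\le|U_j|$ and $|W|,\ell\ll\beta n$ leaves $\Omega(\mu n)$ valid choices at every step.

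With $\phi$ fixed, set $u_i' := \phi(u_i)$ and $v_i' := \phi(v_i)$. To match $W$ to bare paths, I would use the auxiliary bipartite graph $K$ on $W$ and $[\ell]$ with $w \sim_K i$ iff $x_w \sim u_i'$ and $z_w \sim v_i'$ in $H$. Since each $x_w, z_w \in U_1$ has at most $\mu n$ non-neighbours in $U_2$, $d_K(w) \ge \ell - 2\mu n \ge |W|$, so Hall's theorem (Lemma~\ref{lemma:Hall}) yields an injection $\sigma : W \hookrightarrow [\ell]$; I would extend $\phi$ by $a_{\sigma(w)} \to x_w$, $b_{\sigma(w)} \to w$, $c_{\sigma(w)} \to z_w$. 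Finally, letting $I = [\ell] \setminus \sigma(W)$, $\ell' = |I|$, $X' = U_1 \setminus \phi(V_1\cap V(T')) \setminus \{x_w,z_w\}_{w\in W}$, and $Y = (U_2 \setminus \phi(V_2\cap V(T'))) \setminus W$, one verifies $|X'| \ge 2\ell'$ and $|Y| \ge \ell'$; two further Hall arguments on almost-complete bipartite graphs (each with minimum degree at least the opposite side size minus $O(\mu n)$) then produce, first, distinct $x_i, z_i \in X'$ for $i \in I$ with $u_i' \sim x_i$ and $v_i' \sim z_i$, and then distinct centres $y_i \in Y$ with $y_i \sim x_i$ and $y_i \sim z_i$. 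Setting $\phi(a_i)=x_i$, $\phi(b_i)=y_i$, $\phi(c_i)=z_i$ completes the desired copy of $T$ in $H$, and by construction $\{\phi(b_i)\}_{i\in[\ell]} \supseteq W$ as required.
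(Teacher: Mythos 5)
Your proposal is correct and follows essentially the same route as the paper: remove the internal path vertices, reserve two $U_1$-neighbours for each $w\in W$, greedily embed $T'$ (extending $S$) while avoiding the reserved vertices and $W$, and then complete the length-4 paths through the prescribed centres via Hall-type matchings; the paper merely packages the final step as one perfect matching assigning all $\ell$ centre-triples to paths, whereas you assign the $W$-triples first and then treat the remaining paths, which is an inessential reordering. The one small omission is that the reserved vertices $x_w,z_w$ must also be chosen outside $V(S)$ (otherwise embedding $a_{\sigma(w)},c_{\sigma(w)}$ there could collide with images of $R$); this is exactly where the hypothesis $|R|\le\beta n/2$ is used, since $d(w,U_1\setminus V(S))\ge\beta n-\beta n/2\gg\mu n$, and with that adjustment your argument goes through.
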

\begin{proof}
%Fix $i\in [2]$, $t\in V_i$ and $u\in U_i$.
Let $\ell=10\mu n$, and label the paths in $\mathcal{P}$ as $P_1,\ldots, P_\ell$. Let $T'$ be the forest obtained by removing the internal vertices of $P_1,\ldots,P_\ell$ from $T$, noting that $|V(T')\cap V_1|=|V_1|-2\ell$ and $|V(T')\cap V_2|=|V_2|-\ell$.%For each $j\in [\ell]$, let $P_j=a_jx_jy_jz_jb_j$  and observe that $x_j,z_j\in V_1$ and $y_j\in V_2$.

Let $W=\{w_1,\ldots,w_r\}$, where $r\leq\mu n$. Since $d(w,U_1\setminus V(S))\geq\beta n-\beta n/2\geq2\mu n$ for every $w\in W$, we can greedily find distinct vertices $x_1,\ldots,x_r,y_1,\ldots,y_r\in U_1\setminus V(S)$, such that $x_i,y_i\in N(w_i)$ for every $i\in[r]$. Let $W^+=\{w_i,x_i,y_i:i\in[r]\}$, and $H'=H-W^+$.

Note that every vertex in $H'$ has at most $\mu n$ non-neighbours in the opposite side of the bipartition. Using $\ell=10\mu n$ and Lemma~\ref{lemma:bipartitegreedy}, we can greedily extend the copy $S$ of $R$ to a copy $S'$ of $T'$ in $H'$, in which $V(T')\cap V_{j}$ is copied into $U_j\setminus W^+$ for each $j\in [2]$. For every $i\in[\ell]$, let $u_i,v_i\in U_2$ be the copies of the two endpoints of $P_i$ in $S'$. To complete a copy of $T$, it suffices to find, for every $i\in[\ell]$, a $u_i,v_i$-path with length 4 using distinct new internal vertices.

Let $H''=H'-V(S')=H-W^+-V(S')$. Then $|V(H'')\cap U_1|\geq|V_1|-2r-(|V_1|-2\ell)=2(\ell-r)$, and similarly $|V(H'')\cap U_2|\geq|V_2|-r-(|V_2|-\ell)\geq\ell-r\geq5\mu n$. Moreover, every vertex in $H''$ has at most $\mu n$ non-neighbours in the opposite side. Arbitrarily pick distinct $w_{r+1},\ldots,w_{\ell}\in V(H'')\cap U_2$, we claim that there exist distinct $x_{r+1},\ldots,x_{\ell}, y_{r+1},\ldots,y_{\ell}\in V(H'')\cap U_1$ such that $x_i,y_i\in N(w_i)$ for every $i\in[\ell]\setminus[r]$. Indeed, by Lemma~\ref{lemma:hallmatching}, it suffices to show that for every $\emptyset\not=I\subset[\ell]\setminus[r]$, $|N(\{w_i:i\in I\},V(H'')\cap U_1)|\geq2|I|$. If $0<|I|\leq\ell-r-\mu n/2$, then $|N(\{w_i:i\in I\},V(H'')\cap U_1)|\geq|V(H'')\cap U_1|-\mu n\geq2(\ell-r)-\mu n\geq2|I|$. If $|I|>\ell-r-\mu n/2$, then $|N(\{w_i:i\in I\},V(H'')\cap U_1)|=|V(H'')\cap U_1|\geq2(\ell-r)\geq2|I|$, as any $u\in V(H'')\cap U_1$ that is not adjacent to any $w_i$ with $i\in I$ would have at least $|I|\geq2\mu n$ non-neighbours in $V(H'')\cap U_2$, a contradiction.

Therefore, together with $W^+$, we have found distinct $x_1,\ldots,x_\ell,y_1,\ldots,y_\ell,w_1,\ldots,w_\ell\in V(H)\setminus V(S')$, such that $x_i,y_i\in N(w_i)$ for every $i\in[\ell]$. Let $K$ be an auxiliary bipartite graph with bipartition classes $A=\{a_1,\ldots,a_\ell\}$ and $B=\{b_1,\ldots,b_\ell\}$, such that for any $i,j\in[\ell]$, $a_ib_j\in E(K)$ if and only if both $u_ix_j$ and $v_iy_j$ are in $E(H)$. From construction, for each $i\in[\ell]$, $u_i,v_i\in U_2\setminus W$, so $d_K(a_i)\geq\ell-2\mu n$. Similarly, $d_K(b_j)\geq\ell-2\mu n$ for every $j\in[\ell]$. Like in Lemma~\ref{proposition:bare-paths}, we can now use these degree conditions to verify that for every $I\subset A$, $|N_K(I,B)|\geq|I|$, so Lemma~\ref{lemma:Hall} gives a perfect matching in $K$, say matching $a_i$ with $b_{\sigma(i)}$ for every $i\in[\ell]$. Then, $S'$ together with the paths $u_ix_{\sigma(i)}w_{\sigma(i)}y_{\sigma(i)}v_i$ for all $i\in[\ell]$ form a copy of $T$ in $H$, with vertices in $W$ covered by the central vertices of the bare paths in $\mathcal{P}$, as required.
\end{proof}

%%%%%%%%%%%%%%%%%%%%%%%%%%%%%%%%%%%%%%%%%%%%%%%%%%%%%%%%%%%%%%%%%%%%%%%
%%%%%%%%%%%%%%%%%%%%%%%%%%%%%%%%%%%%%%%%%%%%%%%%%%%%%%%%%%%%%%%%%%%%%%%
%%%%%%%%%%%%%%%%%%%%%%%%%%%%%%%%%%%%%%%%%%%%%%%%%%%%%%%%%%%%%%%%%%%%%%%
%%%%%%%%%%%%%%%%%%%%%%%%%%%%%%%%%%%%%%%%%%%%%%%%%%%%%%%%%%%%%%%%%%%%%%%

\subsection{Case~\ref{CaseIB1}: embedding trees in almost complete graphs}\label{sec:IB1}
We now prove the main result to be used in \textbf{Case}~\ref{CaseIB1}. This is proved in a slightly stronger form so that we may also use it in Section~\ref{sec:extremearg2}.
\begin{lemma}\label{lemma:caseIB1embedding}
Let $1/n\ll c\ll \mu,\beta\ll1$, let $|k|\leq\mu n$ and $0\leq D\leq\mu n$ satisfy $k+D\geq0$. Let $G$ be a graph with a vertex partition $U_1\cup U_2$ such that $|U_1|=n+k$ and $\delta(G[U_1])\geq|U_1|-\beta n$. Let $X\subset U_1$ satisfy $|X|\leq \mu n$ and $d_G(u,U_2)\geq n/10$ for each $u\in X$. Suppose $e(G[U_1\setminus X])\leq10^7(k+D+1)n$, and there are at least $10\mu n$ vertices in $U_1$ with at least $D$ neighbours in $U_2$. %Let $s\in U_1$.

Let $T$ be an $n$-vertex tree with $\Delta(T)\leq cn$ such that, if $D>0$, then $T$ has at least $n/20$ leaves. Then, $G$ contains a copy of $T$. Moreover, if $D=0$ and $X=\emptyset$, then for any $t\in T$ and $s\in U_1$, there is a copy of $T$ in $G$ with $t$ copied to $s$. % in which $t$ is copied to $s$.% with at $100D$ leaves
\end{lemma}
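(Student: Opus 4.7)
The plan is the classical ``peel the leaves, embed the rest greedily, reattach via Hall's matching'' strategy, adapted to handle the two complications introduced by $X$ and $U_2$. Since $t_1\ge t_2$, Lemma~\ref{lemma:leaves:V1} supplies at least $t_1-t_2+1$ leaves of $T$ in the larger class $V_1$, and when $D>0$ the hypothesis provides at least $n/20$ total leaves. I would fix a set $L$ of leaves of $T$ of size a suitable constant times $|X|+D+|k|+\mu n$ and set $T':=T-L$, so that $|T'|=n-|L|$ comfortably embeds into $U_1$ with room held in reserve for sending at most $|X|+D$ parents of $L$ to vertices with $U_2$-connections.

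The key structural observation is that the bound $e(G[U_1\setminus X])\le10^7(k+D+1)n$ on exceptional pairs relative to the near-complete graph $G[U_1\setminus X]$ controls the Koml\'os--S\'ark\'ozy--Szemer\'edi obstruction. I would choose a threshold $M$ of order $\sqrt{(k+D+1)n}$ and partition $U_1\setminus X$ into \emph{good} vertices, those with at most $M$ exceptional incidences in $G[U_1\setminus X]$, and \emph{bad} vertices. By double counting, the number of bad vertices is $O((k+D+1)n/M)=O(M)$, which is $o(n)$ and in particular much smaller than $|L|$.

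Next I would embed $T'$ into $G[U_1]$ greedily using Lemmas~\ref{lemma:greedy} and~\ref{lemma:bipartitegreedy}, subject to three rules: (i) every parent of a leaf in $L$ is placed on a good vertex; (ii) a designated set of at most $|X|$ parents, chosen so that their attached $L$-leaves will be routed into $U_2$ through $X$, is placed inside $X$; (iii) a further set of up to $D$ parents of $L$-leaves destined for $U_2$ is placed on the $10\mu n$ distinguished vertices of $U_1$ that have at least $D$ neighbours in $U_2$. Since $G[U_1]$ is $(\beta n)$-close to complete, the bad set is of sublinear size, and $|T'|$ leaves enough slack in $|U_1|$, this greedy embedding succeeds. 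In the ``moreover'' case, where $D=0$ and $X=\emptyset$, conditions (ii) and (iii) are vacuous, so I would simply initialise with $t\mapsto s$ and embed $T'$ greedily.

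Finally, I would extend the embedding to $L$ via Lemma~\ref{lemma:hallmatching}, applied to the bipartite graph between the parents of $L$-leaves and the unused vertices $W:=V(G)\setminus\varphi(V(T'))$, with prescribed degrees equal to the number of $L$-leaves attached to each parent. The main obstacle is verifying Hall's matching condition, which is essentially the Koml\'os--S\'ark\'ozy--Szemer\'edi obstruction made precise. For a subset $S$ of parents whose total leaf-demand is at most $|W|-O(M)$, each good parent has at least $|W|-O(M+\beta n)$ valid targets in $W$, so $|N(S)|$ easily dominates; for larger $S$, one must use the global exceptional-edge bound $10^7(k+D+1)n$ together with $\Delta(T)\le cn$ to rule out many $L$-leaves sharing a small neighbourhood, and the constant $10^7$ is chosen precisely large enough to absorb the combinatorial slack. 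Parents placed on $X$ or on the distinguished vertices have at least $n/10$ or $D$ neighbours in $U_2$ respectively, plenty to accommodate the $L$-leaves routed to $U_2$ within the same Hall argument.
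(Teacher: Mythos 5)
Your high-level strategy (peel leaves, embed the rest, reattach via Hall) matches the paper's, but there is a genuine gap at the heart of the argument, namely in how you control the ``bad'' vertices, and this is exactly where the constant $10^7(k+D+1)n$ has to be spent.

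The Hall condition you must verify at the end is tight: after embedding $T-L$, the leftover set $U_1'\subset U_1$ has size exactly $k+|L|$, so when $S$ is (close to) the full set of parents of $L$, you need all but roughly $k+D$ vertices of $U_1'$ to be adjacent to some parent-image in $S$ (the $D$ being recovered through $U_2$). In other words, the number of leftover vertices that can be missed by the parent-images is at most about $k+D+1$ --- a quantity that can be as small as $1$ --- not merely $o(n)$ or ``much smaller than $|L|$''. Your double-counting with threshold $M=\Theta(\sqrt{(k+D+1)n})$ yields $O(\sqrt{(k+D+1)n})$ bad vertices, which is far too many: when $k+D=0$ these $\Theta(\sqrt n)$ vertices could each be non-adjacent to every parent-image, leaving them uncoverable with zero spare room. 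Moreover, your good/bad classification concerns the parents' degrees, whereas the obstruction lives on the \emph{leftover} vertices: even if every parent has at most $M$ non-neighbours, a single leftover vertex $v$ with $\beta n$ non-neighbours can, under a deterministic greedy embedding, end up non-adjacent to all parent-images, and nothing in a greedy scheme prevents this. The paper resolves this by embedding $T-L$ \emph{randomly} and proving (Claim~\ref{clm:fewbad}, via a dyadic decomposition of the leaf-degrees and Azuma-free union bounds) that $\mathbb{P}(v\text{ is bad})\leq m_v/10^8n$ for each $v$, so that the expected number of bad vertices is at most $2e(G^c[U_1\setminus X])/10^8n\leq(k+D+1)/5$; a Markov argument then gives a realisation with at most $\lfloor(k+D+1)/2\rfloor$ bad vertices, which is precisely what the Hall computation can absorb. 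Your sentence deferring the large-$S$ case to ``the global exceptional-edge bound together with $\Delta(T)\le cn$'' is gesturing at this step without supplying it.

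Two smaller issues: when $D=0$ the tree need not have many leaves at all (Lemma~\ref{lemma:leaves:V1} is not applicable here since no bipartition imbalance is assumed --- a path has only two leaves), so your choice of $L$ of size $\Theta(\mu n)$ is impossible in general; the paper handles this case separately via bare paths and Lemma~\ref{proposition:bare-paths}. Also, the paper takes $|L|$ of linear size ($\approx n/50$) rather than $\Theta(\mu n)$, which is needed so that the error term $200cn$ in the dyadic argument is dominated by $|L_3|/10$ and so that a non-bad vertex is guaranteed to hit some parent in any $S$ carrying most of the leaf-weight.
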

\begin{proof}
If $D=0$ and $T$ has fewer than $n/20$ leaves, then $T$ has at least $n/100$ vertex-disjoint bare paths with length 4 by Lemma~\ref{lemma:paths-leaf}. Since $k\geq-D=0$, the result follows by applying Lemma~\ref{proposition:bare-paths} to $G[U_1]$.  %if $L$ is the set of leaves of $T$, then $|L|\geq n/20$ leaves.

Therefore, we can assume that $D\geq0$ and $T$ has at least $n/20$ leaves. If $D\not=0$ or $X\not=\emptyset$, pick $t\in V(T)$ arbitrarily. Then, there exists a set $L'$ of $n/49$ leaves in $T$, which does not contain $t$ or any neighbours of $t$, and either all belong to $V_1$ or all belong to $V_2$. Let $P=N_T(L')$ be the set of parents of $L'$ in $T$, and note that $P$ is an independent set. 
Let $P_1\subset P$ be a set of size at most $D$ such that $t\notin P_1$ and $D\leq|N_T(P_1,L')|\leq n/150$, which is possible as $\Delta(T)\leq cn$. Similarly, let $P_2'\subset P\setminus P_1$ be a set of size at most $\lceil |X|/2\rceil$ such that $t\notin P_2'$ and $\lceil |X|/2\rceil \leq|N_T(P_2',L')|\leq n/150$. Let $P_3=P\setminus (P_1\cup P_2')$, so $|N_T(P_3,L')|\geq n/149$. In particular, we can add a set $L_3'$ of $\ceil{|X|/2}-|P_2'|$ leaves adjacent to $P_3$ to the set $P_2'$ to obtain a set $P_2$ of size $\lceil |X|/2\rceil$. Note that $P_1\cup P_2$ is still an independent set. Let $L=L'\setminus L_3'$ and let $m=|T-L|$. Observe that $n/50\leq |L|\leq n/49$ and $|N_T(P_3,L)|\geq n/150$.
Let $t_1=t$, and let $t_1,t_2,\ldots, t_{m}$ be an ordering of $V(T-L)$ so that each vertex apart from $t_1$ has exactly 1 neighbour in $T$ to its left in this ordering. Let $d_i=d_T(t_i,L)$ for every $i\in[m]$.

From assumption, we can take a set $Y$ of $2D$ vertices in $U_1\setminus X$, each having at least $D$ neighbours in $U_2$. Let $U^-_1=\{u\in U_1:d_G(u,Y)\geq D\text{ and }d_G(u,X)\geq |X|/2\}$. Since $\delta(G[U_1])\geq|U_1|-\beta n$, if $D>0$, by double counting there are at most $2D\beta n/D=2\beta n$ vertices $u\in U_1$ with $d_G(u,Y)<D$, while there is no such vertex if $D=0$. Similarly there are at most $2\beta n$ vertices $u\in U_1$ with $d_G(u,X)<|X|/2$. Thus, $|U_1\setminus U_1^-|\leq4\beta n$. If $D=0$ and $X=\emptyset$, note that $s\in U_1=U_1^-\setminus (X\cup Y)$ already. Otherwise, pick $s\in U_1^-\setminus (X\cup Y)$ arbitrarily.

Let $s_1=s$, so that $s_1\in U_1^-\setminus (X\cup Y)$. For each $1<i\leq m$ in turn, embed $t_i$ as follows, where $j_i<i$ is such that $t_{j_i}t_i\in E(T-L)$.
\stepcounter{propcounter}
\begin{enumerate}[label = \textbf{\Alph{propcounter}\arabic{enumi}}]
    \item\label{IB1embed1} If $t_i\in P_1$, select $s_{i}$ uniformly at random from $N_G(s_{j_i},Y\setminus \{s_1,\ldots,s_{i-1}\})$.
    \item\label{IB1embed1b} If $t_i\in P_2$, select $s_{i}$ uniformly at random from $N_G(s_{j_i},X\setminus \{s_1,\ldots,s_{i-1}\})$.
    \item\label{IB1embed2} If $t_i\not\in P_1\cup P_2$, select $s_{i}$ uniformly at random from $N_G(s_{j_i},U_1^-\setminus (X\cup Y\cup \{s_1,\ldots,s_{i-1}\}))$.
\end{enumerate}
Note that~\ref{IB1embed1} is always possible as $t_{j_i}\not\in P_1\cup P_2$, so $s_{j_i}\in U_1^-$ has at least $D$ neighbours in $Y$, and at most $|P_1|-1\leq D-1$ of them have been used. Similarly,~\ref{IB1embed1b} is always possible. % as $t_{j_i}\not\in P_1\cup P_2$, so $s_{j_i}\in U_1^-$ has at least $|X|/2$ neighbours in $X$, at least one of which has not been used. 
\ref{IB1embed2} is always possible as 
\begin{align*}
|N_G(s_{j_i},U_1^-\setminus (X\cup Y\cup \{s_1,\ldots,s_{i-1}\}))|&\geq|U_1^-|-\beta n-|X|-|Y|-|T-L|\\
&\geq n+k-4\beta n-\beta n-\mu n-2D-n+|L|\geq n/100>0.   
\end{align*}Therefore, this random process always succeeds in producing a copy of $T-L$ in $G[U_1]$. Note that, in particular, we have $|X\setminus \{s_1,\ldots,s_m\}|=|X|-|P_2|=\lfloor |X|/2\rfloor$.

Now, let $j^*$ be the smallest integer so that $2^{j^*}>cn$. For each $j\in [j^*]$, let $I_j$ be the set of $\{i:t_i\in P_3\}$ with $2^{j-1}\leq d_i<2^j$, and say a vertex $v$ is \emph{$j$-bad} if
\[
|\{s_i:i\in I_j\}\cap N_G(v)|\leq \frac{2}{3}|I_j|-20(j^*-j+1).
%\sum_{i:2^k\leq f_i\leq 2^{k+1}}\mathbf{1}_{\{s_{j_i}\in N_G(v)}\cdot f_i
%\leq \left(\frac{2}{3}\sum_{i:2^k\leq f_i\leq 2^{k+1}}f_i \right)-
\]
If $v$ is $j$-bad for some $j\in [j^*]$, then say $v$ is \emph{bad}. For each $v\in U_1\setminus X$, let $m_v$ be the number of non-neighbours of $v$ in $U_1\setminus X$, so that $m_v\leq \beta n$ from assumption.

\begin{claim}\label{clm:fewbad} For each $v\in U_1\setminus X$,
\[
\mathbb{P}(v\text{ is bad})\leq \frac{m_v}{10^8n}.
\]
%With probability more than $3/4$,
\end{claim}
\begin{proof}[Proof of Claim~\ref{clm:fewbad}]
Let $j\in[j^*]$. Note that if $|I_j|<30(j^*-j+1)$, then $v$ cannot be $j$-bad, so $\mathbb{P}(v\text{ is $j$-bad})=0$. Assume now that $|I_j|\geq30(j^*-j+1)$. Note that for every $i\in I_j$, $t_i\not\in P_1\cup P_2$, so by \ref{IB1embed2} and conditioning on any choices of $s_1,\ldots,s_{i-1}$, we have
\[\mathbb{P}(s_i\not\in N_G(v)\mid s_1,\ldots,s_{i-1})\leq\frac{m_v}{|N_G(s_{j_i},U_1^-\setminus (X\cup Y\cup \{s_1,\ldots,s_{i-1}\}))|}\leq\frac{m_v}{n/100}.\]
Therefore, for each $I\subset I_j$, if $E_{v,j,I}$ is the event that $s_i\notin N_G(v)$ for each $i\in I$, then
\[
\mathbb{P}(E_{v,j,I})=\prod_{i\in I}\mathbb{P}(s_i\notin N_G(v)\mid s_{i'}\notin N_G(v)\text{ for each }i'\in I\text{ less than }i)\leq\left(\frac{100m_v}{n}\right)^{|I|}.
\]
Since $v$ is $j$-bad implies that $E_{v,j,I}$ holds for some $I\subset I_j$ with size $k_j:=\floor{|I_j|/3}\geq10(j^*-j+1)$, we can use a union bound to get
\[
\mathbb{P}(v\text{ is }j\text{-bad})\leq \binom{|I_j|}{k_j}\left(\frac{100m_v}{n}\right)^{k_j}\leq\left(\frac{10^4m_v}{n}\right)^{k_j}\leq\left(\frac{10^4m_v}{n}\right)^{10(j^*-j+1)}\leq\left(\frac{m_v}{10^{10}n}\right)^{(j^*-j+1)},
\]
where we used $m_v\leq\mu n$ and $1/n\ll\mu\ll1$. Thus,
\[
\mathbb{P}(v\text{ is bad})\leq \sum_{j=1}^{j^*}\mathbb{P}(v\text{ is $j$-bad})\leq\sum_{j=1}^{j^*}\left(\frac{m_v}{10^{10}n}\right)^{(j^*-j+1)}\leq\frac{m_v}{10^8n},
\]
as required.
\renewcommand{\qedsymbol}{$\boxdot$}
\end{proof}
\renewcommand{\qedsymbol}{$\square$}

By Claim~\ref{clm:fewbad}, we have
\[
\mathbb{E}|\{v\in U_1\setminus X:v\text{ is bad}\}|\leq \sum_{v\in U_1\setminus X}\frac{m_v}{10^8n}\leq \frac{2e(G^c[U_1\setminus X])}{10^8n}\leq \frac{k+D+1}{5},
\]
so by Markov's inequality, with probability at least $1/2$ there are at most $\floor{(k+D+1)/2}$ bad vertices in $U_1\setminus X$. Thus, we can take a realisation of this random embedding that has at most $\floor{(k+D+1)/2}$ bad vertices in $U_1\setminus X$.

Let $U_1'$ be the set of unused vertices in $U_1$, noting that $|U_1'|=n+k-|T-L|=k+|L|$ and $|U_1'\cap X|=\floor{|X|/2}$. To complete the embedding, we use Lemma~\ref{lemma:hallmatching} to embed the leaves in $L$ into $U_1'\cup U_2$. It suffices to show that $N_G(\{s_i:t_i\in J\},U_1'\cup U_2)\geq\sum_{i:t_i\in J}d_i$ for every $\emptyset\not=J\subset P$.

First suppose that $0<\sum_{i:t_i\in J}d_i\leq 999|L|/1000$, then, as required,
\[
|N_G(\{s_i:t_i\in J\},U_1')|\geq|U_1'|-\beta n=k+|L|-\beta n\geq999|L|/1000\geq\sum_{i:t_i\in J}d_i.
\]

Now suppose that $\sum_{i:t_i\in J}d_i>999|L|/1000$. Let $L_3=N_T(P_3,L)$, recall that $|L|\leq n/49$ and $|L_3|\geq n/150$. Thus, $\sum_{i:t_i\in J\cap P_3}d_i\geq|L_3|-|L|/1000\geq9|L_3|/10$. Then, for any $v\in U_1\setminus X$ which is not bad,
\begin{align*}
\sum_{\substack{i:t_i\in P_3,\\s_i\notin N_G(v)}}d_i&\leq\sum_{j=1}^{j^*}\sum_{i\in I_j:s_i\notin N_G(v)}2^j
\leq\sum_{j=1}^{j^*}\left(\frac{1}{3}|I_j|+20(j^*-j+1)\right)\cdot 2^j\\
&\leq\frac{1}{3}\sum_{j=1}^{j^*} \sum_{i\in I_j}2d_i+\sum_{j=1}^{j^*} 20(j^*-j+1)\cdot 2^j\\
&\leq\frac{2|L_3|}{3}+80\cdot2^{j^*}\leq\frac{2|L_3|}{3}+200cn<\frac{9|L_3|}{10}\leq\sum_{i:t_i\in J\cap P_3}d_i.
\end{align*}
Thus, $v$ is adjacent to some vertex in $\{s_i:t_i\in J\cap P_3\}$. Since at most $\floor{(k+D+1)/2}$ vertices in $U_1\setminus X$ are bad, this implies that
\begin{align*}
|N_G(\{s_i:t_i\in J\},U_1')|&\geq|U_1'|-|U_1'\cap X|-\floor{(k+D+1)/2}\\
&=|L|+k-\floor{|X|/2}-\floor{(k+D+1)/2}\geq|L|-D-\ceil{|X|/2},    
\end{align*}
so we are done if $\sum_{i:t_i\in J}d_i\leq|L|-D-\ceil{|X|/2}$.

Suppose now that $\sum_{i:t_i\in J}d_i>|L|-D-\ceil{|X|/2}$. If $J\cap P_2=\emptyset$, then $\sum_{i:t_i\in J}d_i\leq |L|-\ceil{|X|/2}$ as $\sum_{i:t_i\in P_2}d_i\geq\ceil{|X|/2}$. Hence, $J\cap P_1\neq\emptyset$ as $\sum_{i:t_i\in P_1}d_i\geq D$, and thus
\begin{align*}
|N_G(\{s_i:t_i\in J\},U_1'\cup U_2)|
&=|N_G(\{s_i:t_i\in J\},U_1')|+|N_G(\{s_i:t_i\in J\},U_2)|\\
&\geq|L|-D-\ceil{|X|/2}+D=|L|-\ceil{|X|/2}\geq\sum_{i:t_i\in J}d_i,
\end{align*}
as required. If $J\cap P_2\neq \emptyset$, then we have
\begin{align*}
|N_G(\{s_i:t_i\in J\},U_1'\cup U_2)|
&=|N_G(\{s_i:t_i\in J\},U_1')|+|N_G(\{s_i:t_i\in J\},U_2)|\\
&\geq|L|-D-\ceil{|X|/2}+n/10\geq |L|\geq\sum_{i:t_i\in J}d_i.
\end{align*}
Thus, by Lemma~\ref{lemma:hallmatching}, we can embed $L$ into $U_1'\cup U_2$ to finish a copy of $T$ in $G$.
\end{proof}

%%%%%%%%%%%%%%%%%%%%%%%%%%%%%%%%%%%%%%%%%%%%%%%%%%%%%%%%%%%%%%%%%%%%%%%
%%%%%%%%%%%%%%%%%%%%%%%%%%%%%%%%%%%%%%%%%%%%%%%%%%%%%%%%%%%%%%%%%%%%%%%
%%%%%%%%%%%%%%%%%%%%%%%%%%%%%%%%%%%%%%%%%%%%%%%%%%%%%%%%%%%%%%%%%%%%%%%
%%%%%%%%%%%%%%%%%%%%%%%%%%%%%%%%%%%%%%%%%%%%%%%%%%%%%%%%%%%%%%%%%%%%%%%

\subsection{Case~\ref{CaseIB2}: embedding trees in almost complete bipartite graphs}\label{sec:IB2}

In \textbf{Case}~\ref{CaseIB2}, using the notations in Section~\ref{sec:outline:extcase1}, we want to embed a small subtree into $\red{G}[U_1^+]$, before embedding the rest of $T$ into $\red{G}[U_1^+,U_2^+]$. To find this small subtree, we use the following result.
\begin{proposition}\label{prop:treeforIB2}
Let $n,m\in \mathbb{N}$ satisfy $1\leq m\leq n/18$. Let $T$ be an $n$-vertex tree with bipartition classes $V_1$ and $V_2$, where $|V_2|\geq |V_1|/3$. Then, $T$ contains a subtree $T'$ with $|T'|\leq 10^4m$, such that it contains at least $m$ vertices in $V_2$ and has at most 1 vertex in $V_2$ with a neighbour outside of $V(T')$ in $T$.
\end{proposition}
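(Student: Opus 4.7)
Root $T$ at an arbitrary vertex $r \in V_2$, and for each $v \in V(T)$ let $g(v) := |V_2 \cap T_v|$, where $T_v$ denotes the subtree of descendants of $v$ (including $v$ itself). Since $|V_1| \le 3|V_2|$ gives $|V_2| \ge n/4$, and since $m \le n/18$ gives $n \ge 18m$, we have $g(r) = |V_2| \ge 18m/4 > m$. Starting at $r$ and at each step moving from the current vertex to a child with the largest $g$-value, continuing while some child has $g \ge m$, the process terminates at a vertex $v^\ast$ satisfying $g(v^\ast) \ge m$ while every child $w$ of $v^\ast$ satisfies $g(w) < m$.

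If $|T_{v^\ast}| \le 10^4 m$, I simply take $T' := T_{v^\ast}$. Then $|V_2 \cap T'| = g(v^\ast) \ge m$ and $|T'| \le 10^4 m$, and because $T'$ is a descendants subtree in the rooted tree, the only vertex of $T'$ that can have a $T$-neighbor outside $V(T')$ is $v^\ast$ itself (through its parent edge in $T$, when $v^\ast \ne r$). Hence $T'$ has at most one vertex of $V_2 \cap T'$ with a neighbor outside $V(T')$, namely $v^\ast$ when $v^\ast \in V_2$.

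The hard case is when $|T_{v^\ast}| > 10^4 m$. Here the plan is to build $T'$ as a proper $V_2$-closed sub-subtree of $T_{v^\ast}$, by which I mean a connected subgraph containing $v^\ast$ in which every vertex of $V_2 \cap T'$ other than $v^\ast$ has all of its $T$-neighbors in $T'$; such a $T'$ automatically satisfies the boundary condition. The critical structural input is a degree bound coming from the global ratio hypothesis: since $\sum_{u \in V_2}\deg_T(u) = n-1 \le 4|V_2|$, fewer than $n/41$ vertices of $V_2$ have $\deg_T > 40$, and so at least $|V_2| - n/41 \ge 9n/40 \ge 4m$ vertices of $V_2$ are \emph{cheap}, meaning they have $\deg_T \le 40$.

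Using this, I construct $T'$ by attaching to $v^\ast$ small $V_2$-closed units, each of the form $\{w, c\} \cup N_T(c)$, where $w$ is a child of $v^\ast$ (when $v^\ast \in V_2$) or $w = v^\ast$ itself together with a chosen child $w_i \in V_2$ of $v^\ast$ whose full $T$-neighborhood is included (when $v^\ast \in V_1$), and $c$ is a cheap $V_2$-vertex close to $w$. Each such unit contributes one vertex to $|V_2 \cap T'|$ at a cost of at most $1 + \deg_T(c) \le 41$ extra vertices; a greedy selection of approximately $m$ such units, drawn from the $\ge 4m$ cheap $V_2$-vertices distributed across the branches under $v^\ast$, yields $T'$ with $|V_2 \cap T'| \ge m$ and $|T'| \le 41m + O(1) \le 10^4 m$. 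The main obstacle is that the cheap $V_2$-vertices need not concentrate in $T_{v^\ast}$, nor lie immediately below $v^\ast$; one must argue, exploiting that $\sum_i g(w_i) \ge m - 1$ across the children $w_i$ of $v^\ast$, that the cheap $V_2$-vertices inside $T_{v^\ast}$ are sufficiently numerous and accessible---and this is exactly where the very generous slack between the per-unit bound $41$ and the allowed total $10^4 m$ is essential.
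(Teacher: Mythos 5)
Your construction of $v^\ast$ and your Case 1 ($|T_{v^\ast}|\leq 10^4m$) are fine, but the hard case has a genuine gap, and in fact the strategy of building $T'$ inside $T_{v^\ast}$ cannot work in general. The greedy walk only guarantees that $T_{v^\ast}$ contains at least $m$ vertices of $V_2$; it gives no control over their degrees, because the hypothesis $|V_2|\geq|V_1|/3$ is global and can fail badly inside $T_{v^\ast}$: all of the cheap $V_2$-vertices you count may lie in side branches hanging off the path from $r$ to $v^\ast$, entirely outside $T_{v^\ast}$. Concretely, for $m\geq 3$ let $v^\ast\in V_1$ have exactly $m$ children $w_1,\ldots,w_m\in V_2$, each $w_i$ with $2\cdot10^4$ pendant leaves in $V_1$; let the parent of $v^\ast$ be the root $r\in V_2$, and give $r$ a further $10^5$ children, each the top of a path containing $m-1$ vertices of $V_2$. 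Then $|V_2|\geq|V_1|/3$, the walk stops at $v^\ast$ (each $w_i$ has $g(w_i)=1<m$ while $g(v^\ast)=m$), $|T_{v^\ast}|>10^4m$, and the only $V_2$-vertices of $T_{v^\ast}$ are the $w_i$, each of degree $2\cdot10^4+1$. Any subtree $T'\subset T_{v^\ast}$ with at least $m$ vertices of $V_2$ must contain every $w_i$ and, to have at most one boundary $V_2$-vertex, the full neighbourhoods of at least $m-1$ of them, so $|T'|\geq 2\cdot10^4(m-1)>10^4m$. Hence no admissible $T'$ exists inside $T_{v^\ast}$ at all; the desired subtree lives in the side branches. (Even when cheap $V_2$-vertices do exist in $T_{v^\ast}$, your count of at most $41$ vertices per unit ignores the connecting path from $v^\ast$ to each chosen cheap vertex, whose intermediate $V_2$-vertices would either become extra boundary vertices or force their whole neighbourhoods into $T'$.)

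The paper avoids this localisation problem by using a uniform decomposition instead of a rooted walk: repeatedly applying Corollary~\ref{cor:splittree}, it cuts $T$ into edge-disjoint subtrees $T_1,\ldots,T_\ell$, each of size between $6m$ and $18m$ and each meeting the union of the earlier ones in a single vertex. Because all pieces have comparable size, an averaging argument shows that a constant proportion of them contain at least $m$ vertices of $V_2$, and the auxiliary tree on the pieces has $\ell-1$ edges, so some $V_2$-rich piece $T_i$ has at most $200$ later pieces attached; adding the ($V_1$-)neighbours of $V(T_i)\setminus\{t_i\}$ lying in those later pieces closes off every $V_2$-vertex except possibly the single connecting vertex $t_i$, with $|T'|\leq 18m+200\cdot 18m\leq 10^4m$. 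The essential point your argument misses is that $V_2$-richness and a small boundary must be secured simultaneously, which the paper achieves by averaging over equal-sized pieces; the greedy walk secures only the former.
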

\begin{proof}
For any tree $R$ on at least $18m$ vertices, by repeated applications of Corollary~\ref{cor:splittree}, we can find subtrees $R_1$ and $R_2$ with a unique common vertex that decompose $R$, such that $6m\leq|R_1|\leq18m$. Iterating this in the tree $T$ to find a subtree with size between $6m$ and $18m$ at a time, we obtain a sequence $T_1,\ldots,T_\ell$ of subtrees satisfying the following properties.
\begin{itemize}
    \item $E(T_1),\ldots,E(T_\ell)$ partition $E(T)$.
    \item For every $j\in [\ell]$, $6m\le |T_j|\le 18m$.
    \item For every $j\in[\ell]$, $\cup_{i=1}^jT_i$ is a tree.
    \item\label{IB23} For every $2\leq j\leq\ell$, $T_j$ shares a unique vertex with $\cup_{i=1}^{j-1}T_i$.
\end{itemize}

Note that $\ell\leq n/(6m-1)$. We claim that $|V(T_i)\cap V_2|\geq m$ for at least $\ell/100$ indices $i\in[\ell]$. Indeed, if not, then there would be at most $\ell m+(\ell/100)\cdot 18m<n/4$ vertices in $V_2$, contradicting the assumption that $|V_2|\geq|V_1|/3$. Let $I=\{i\in[\ell]:|V(T_i)\cap V_2|\geq m\}$.

Consider the auxiliary graph $K$ with vertex set $[\ell]$, where for any $i<j$ in $[\ell]$, $ij\in E(K)$ if and only if $i$ is the smallest index such that $V(T_i)\cap V(T_j)\not=\emptyset$. Note that every $1<j\leq\ell$ has a unique neighbour in $[j-1]$ in $K$, so $K$ is a tree and $e(K)=\ell-1$. It follows that there exists $i\in I$ with $d_K(i)\leq 200$.
%Direct the edges of $K$ out from $1$. and the sum of the degrees of the vertices in $N^+_K(i)$ is at most $O(n/(n/m))=O(m)$.

Let $t_i$ be the unique vertex shared by $T_i$ and $\cup_{j<i}T_j$. Then, $N_T(V(T_i)\setminus\{t_i\},\cup_{j>i}V(T_j))\leq200\cdot 18m$. Let $T'$ be the subtree of $T$ induced by $V(T_i)$ and $N_T(V(T_i)\setminus\{t_i\},\cup_{j>i}V(T_j))\cap V_1$. Then $|T'|\leq10^4m$, $|V(T')\cap V_2|\geq|V(T_i)\cap V_2|\geq m$, and every vertex in $V(T')\cap V_2$, except possibly $t_i$, has no neighbour in $T$ outside of $V(T')$.
\end{proof}

Using Proposition~\ref{prop:treeforIB2}, we can now prove the main result used for \textbf{Case}~\ref{CaseIB2}.
\begin{lemma}\label{lemma:caseIB2embedding}
Let $1/n\ll c\ll \mu \ll \beta \ll 1$, let $0\leq D\leq \mu n$, and let $|k|\leq\mu n$. Let $T$ be any $n$-vertex tree with $\Delta(T)\leq cn$ and bipartition classes $V_1$ and $V_2$ with sizes $t_1$ and $t_2$, respectively, that satisfy $t_2\leq t_1\leq 2t_2+1$. Let $G$ be a graph with a vertex partition $U_1\cup U_2$ such that $1.1t_1\leq |U_1|\leq 2n$, $|U_2|=t_2-k-1$, and the following hold. 
\begin{itemize}
    \item $d_G(u,U_{1})\geq \beta n$ for each $u\in U_2$, and $d_G(u,U_1)\geq |U_1|-\mu n$ for all but at most $\mu n$ vertices $u\in U_2$.
    \item $d_G(u,U_2)\geq |U_2|-D$ for all but at most $10\mu n$ vertices $u\in U_1$.
    \item There exists $X\subset U_1$ with $|X|\leq2\mu n$ such that $d_G(u,U_2)\geq |U_2|-\mu n$ for each $u\in U_1\setminus X$ and $e(G[U_1\setminus X])\geq 10^7(k+D+1)n$.
\end{itemize}
Then, $G$ contains a copy of $T$.% with at $100D$ leaveswith at least $n/20$ leaves,
\end{lemma}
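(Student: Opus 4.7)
The plan is to decompose the embedding of $T$ into two phases that exploit the two different features of $G$. Since $|U_2|=t_2-k-1<t_2=|V_2|$, any copy of $T$ in $G$ must place at least $k+1$ vertices of $V_2$ inside $U_1$, which is precisely what the $10^7(k+D+1)n$ edges inside $G[U_1\setminus X]$ are there for. Accordingly, we first extract a small subtree $T'\subset T$ carrying slightly more than $k+1$ vertices of $V_2$, embed $T'$ into $G[U_1\setminus X]$ using the edge density there, and then extend to $T$ via the almost-complete bipartite graph between the remaining part of $U_1$ and $U_2$. Absorbing roughly $D$ surplus $V_2$-vertices into $T'$ yields $D$ spare slots in $U_2$, which combined with the low-degree structure on the $V_2$-side of any tree with $t_2\geq(n-1)/3$ will let us handle the at most $\mu n$ bad vertices in $U_2$ of small $U_1$-degree.

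Concretely, set $m=k+D+1$ and apply Proposition~\ref{prop:treeforIB2} to produce a subtree $T'\subset T$ with $|T'|\leq 10^4m$, $|V_2\cap V(T')|\geq m$, and at most one vertex $t^*\in V_2\cap V(T')$ with a neighbour in $T\setminus V(T')$. Since $G[U_1\setminus X]$ has at least $10^7mn$ edges on at most $2n$ vertices, iteratively deleting vertices of degree less than $10^5m$ leaves a nonempty subgraph $H$ with $\delta(H)\geq 10^5m\geq|T'|$, into which Lemma~\ref{lemma:greedy} embeds $T'$ as $\psi\colon V(T')\to V(H)\subset U_1\setminus X$. Writing $U_1':=U_1\setminus(X\cup\psi(V(T')))$, the hypotheses $|U_1|\geq 1.1t_1$ together with $|T'|,|X|=O(\mu n)$ give $|U_1'|\geq 1.05t_1\geq|V_1\setminus V(T')|$ and $|U_2|-|V_2\setminus V(T')|\geq D$, while every vertex of $U_1'$ has at least $|U_2|-\mu n$ neighbours in $U_2$. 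A minor tweak of the greedy embedding (applying Lemma~\ref{lemma:greedy} to $t^*$ last, anchored at a high-degree vertex of $H$) further ensures that $\psi(t^*)$ has at least $9\cdot 10^4m$ neighbours inside $U_1'$, which is needed for the single possible component of $T-V(T')$ rooted in $V_1$ that hangs off $t^*$.

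It remains to extend $\psi$ by embedding the forest $F:=T[V(T)\setminus V(T')]$ into $G[U_1',U_2]$ subject to the prescribed attachments at $\psi(V(T'))$. All non-exceptional components of $F$ have $V_2$-roots attached to images of $V_1\cap V(T')$ inside $U_1\setminus X$, which have nearly full degree into $U_2$, so the only genuine obstruction is the set $W\subset U_2$ of at most $\mu n$ bad vertices having only $\beta n$ neighbours in $U_1$. To absorb $W$ we use that $\sum_{v\in V_2}d_T(v)=n-1$ together with $t_2\geq(n-1)/3$ forces at least $n/6$ vertices of $V_2$ to have $T$-degree at most $6$, so after excluding $V(T')$ there remain $\geq|W|$ such low-degree vertices in $V(F)$. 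Reserving $|W|$ of them and matching them to $W$ by Hall's theorem (Lemma~\ref{lemma:Hall}) is feasible because each bad $w$ has at least $\beta n/2\gg 6$ neighbours in $U_1'$, so every reserved $v$ can plausibly be paired with any $w$; compatibility of the matching is certified by the fact that any subset of reserved vertices contributes only a bounded number of $V_1$-neighbours, far fewer than $\beta n$. The remainder of $F$ is then embedded component-by-component into $G[U_1',U_2\setminus W]$ using Lemma~\ref{lemma:bipartitegreedy}, with the $D$ spare $U_2$-slots absorbing any imbalance.

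The main obstacle is the coordination between these three substeps of the second phase: pre-committing the $\leq 6$ $V_1$-neighbours of each reserved low-degree vertex to sit inside the small set $N_G(w,U_1')$ must not block the greedy finish of the rest of $F$. This will be handled by an order-of-embedding argument in the spirit of the proof of Lemma~\ref{lemma:caseIB1embedding}: first fix the Hall matching of reserved vertices into $W$; next embed the small subtrees of $F$ that are rooted at the reserved $V_2$-vertices, placing their $V_1$-neighbours inside the prescribed $N_G(w,U_1')$; and only then complete the unconstrained bulk of $F$ greedily via Lemma~\ref{lemma:bipartitegreedy}. The slack in the final step is guaranteed by the size of $U_1'$ (at least $1.05t_1$) and by the fact that each $N_G(w,U_1')$ has size $\geq\beta n/2\gg\mu n$, so none of the earlier commitments exhaust the neighbourhoods needed later.
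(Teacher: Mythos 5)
Your skeleton matches the paper's (extract a small subtree $T'$ rich in $V_2$ via Proposition~\ref{prop:treeforIB2}, embed it in a dense subgraph of $G[U_1\setminus X]$, then extend across the almost-complete bipartite graph), but there are two genuine gaps. The first concerns the exceptional vertex $t^*$: Proposition~\ref{prop:treeforIB2} bounds the \emph{number} of $V_2$-vertices of $T'$ with outside neighbours (at most one), not the number of such outside neighbours, so $t^*$ may have up to $\Delta(T)\leq cn$ children in $V_1$ outside $T'$. If you embed $t^*$ into $U_1$ (as you do), each of these children must be placed adjacent to $\psi(t^*)$ \emph{inside} $U_1$, since $U_2$ has no spare capacity for $V_1$-vertices; but $e(G[U_1\setminus X])\geq 10^7(k+D+1)n$ is only a lower bound on edges, and $G[U_1\setminus X]$ could be a disjoint union of cliques of size $O(k+D+1)$, so $\psi(t^*)$ is only guaranteed $O(10^5m)$ neighbours in $U_1$ — far fewer than $cn$ when $m=k+D+1$ is constant. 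Your ``minor tweak'' giving $9\cdot10^4m$ neighbours does not fix this. The paper avoids the problem by first finding a vertex $v\in U_2$ of nearly full $U_1$-degree with $\geq 10^6(k+D+1)$ neighbours in the dense subgraph and embedding $t^*$ to $v$; this costs one unit of the $V_2$-gain, which is why the paper takes a subtree with at least $10(k+D+1)$ vertices of $V_2$ rather than your tight $m=k+D+1$ (with your choice, moving $t^*$ to $U_2$ breaks the capacity count already when $D=0$). (You also never treat $k+D+1\leq 0$, though that reduction is easy.)

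The second gap is the endgame in $U_2$. The spare capacity there is only $D\leq\mu n$ (possibly $0$), while a generic vertex of $U_1\setminus X$ may miss up to $\mu n$ vertices of $U_2$; so your greedy completion with ``the $D$ spare $U_2$-slots absorbing any imbalance'' can get stuck near the end. You never use the second hypothesis (all but $10\mu n$ vertices of $U_1$ have at most $D$ non-neighbours in $U_2$), which is exactly what is needed: the paper restricts parents to that set and reserves a buffer of $10\mu n$ vertices of $V_2$ (leaves of the contracted tree, or bare-path structure) to be embedded last into the final $\geq |L_2|+D$ slots, and the need for such a reservoir when the tree has no suitable $V_2$-leaves, or when all of them hang off $T'$, is what forces the paper's three-case analysis (bare paths via Lemma~\ref{proposition:bipartite:bare-paths}; leaves away from $T'$; leaves on $T'$ handled via Lemma~\ref{lemma:divide:vertices}). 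Your alternative device for covering the low-$U_1$-degree set $W\subset U_2$ by degree-$\leq 6$ vertices of $V_2$ is an interesting substitute for the paper's bare-path-centre/$Z$-reservoir trick, but as written it also needs repair: reserved vertices must be chosen with distinct parents (two reserved siblings matched to $w_1,w_2\in W$ would force a common parent-image in $N_G(w_1)\cap N_G(w_2)\cap U_1$, which may be empty since each $w_i$ has only $\beta n$ neighbours in $U_1$), and embedding a component of $T-V(T')$ outward from an interior reserved vertex must still hit the prescribed attachment vertex of $T'$, which your order-of-embedding sketch does not ensure.
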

\begin{proof}
First observe that it suffices to prove this in the case when $k+D\geq-1$. Indeed, if $k+D<-1$, then let $k'=-D-1$ and remove $-k-D-1$ vertices from $U_2$ to obtain $U_2'$ with size $t_2-k'-1$. Note that trivially $e(G[U_1\setminus X])\geq 10^7(k'+D+1)n$, and all other assumptions still hold with $U_2'$ and $k'$ in place of $U_2$ and $k$, so $G$ contains a copy of $T$. Thus, we assume that $k+D\geq-1$ from now on.

Let $Y_1=\{u\in U_1:d_G(u,U_2)<|U_2|-D\}$ and $Y_2=\{u\in U_2:d_G(u,U_1)<|U_1|-\mu n\}$, so that $|Y_1|\leq 10\mu n$ and $|Y_2|\leq \mu n$. For each $i\in[2]$, let $U_i^-=U_i\setminus Y_i$. Let $Z\subset U_1^-$ be a random subset chosen by including each vertex independently at random with probability $\beta$. By Lemma~\ref{lemma:chernoff}, with high probability, we have $|Z|\leq 3\beta n$, $d_G(u,Z)\geq\beta^2n/2$ for each $u\in U_2$, and $e(G[U_1\setminus (X\cup Z)])\geq 10^7(k+D+1)n/2$, noting that the last condition is trivial when $k+D=-1$. Fix a choice of $Z$ with all of these properties.

%If $k+D\geq1$, then $e(G[U_1\setminus(W\cup Z)])\geq 10^6(k+D)n/2-2n|W|\geq 10^5(k+D)n$.
As $e(G[U_1\setminus (X\cup Z)])\geq 10^7(k+D+1)n/2$, we can find a subgraph $H'\subset G[U_1\setminus (X\cup Z)]$ with minimum degree at least $10^7(k+D+1)/4$. Since each vertex $u\in U_1\setminus (X\cup Z)$ satisfies $d_G(u,U_2^-)\geq |U_2^-|-\mu n$, there are at least $|H'||U_2^-|/2$ edges in $G$ between $H'$ and $U_2^-$. Hence, there exists $v\in U_2^-$ with at least $|H'|/2\geq10^6(k+D+1)$ neighbours in $V(H')$.  

By Proposition~\ref{prop:treeforIB2}, there is a subtree $T'$ of $T$ with $|T'|\leq10^5(k+D+1)$ that contains at least $10(k+D+1)$ vertices in $V_2$, and has at most 1 vertex in $V(T')\cap V_2$ with a neighbour in $T-E(T')$. Call such a vertex $t$ if it exists. %Let $F$ be the forest given by $F=T-T'$, and view each component of $F$ as being rooted at the unique vertex in it that is adjacent to $T'$. 
Let $F$ be the tree obtained from $T$ by contracting $T'$ to a single vertex $r$. Let $L_1$ be the set of leaves in $F$ that are in $V_1\setminus\{r\}$, observing that they are also leaves in $T$. Note that $|F-L_1|\geq|V_2|-10^5(k+D+1)\geq n/4$, so by Lemma~\ref{lemma:paths-leaf}, $F-L_1$ either has at least $n/100$ vertex-disjoint bare paths of length 5 or at least $n/100$ leaves excluding $r$. Note that any such leaf in $F-L_1$ must be in $V_2$, and is also a leaf in $T-L_1$. We now separate into several cases.

\medskip

\noindent\textbf{Case I.} $F-L_1$ contains at least $n/100$ vertex-disjoint bare paths with length 5, then $T-T'-L_1$ contains $n/200$ vertex-disjoint bare paths with length 4 with both endpoints in $V_2$. Since there are at $n/200$ central vertices on these paths, and $|L_1|\leq n$, by averaging, we can find a set $L_2$ of $10\mu n$ such central vertices that are adjacent to at most $200\cdot10\mu n\leq\beta^2n/10$ vertices in $L_1$. Embed $t$ to $v$ if $t$ exists, then in any case embed the rest of $T'$ greedily into $H'$. Note that this embeds at least $\max\{0,10k+10D+9\}$ vertices in $V_2$ into $U_1$, so we now have enough room to apply Lemma~\ref{proposition:bipartite:bare-paths} to extend this embedding of $T'$ to an embedding of $T-L_1$, with $T-T'-L_1$ embedded into the rest of $G[U_1^-\setminus Z,U_2]$ such that $Y_2$ is covered by vertices in $L_2$. To finish, greedily embed leaves in $L_1$ not adjacent to $L_2$ into $U_1^-\setminus Z$, possible as the parents of these leaves are embedded into $U_2^-$, and greedily embed leaves in $L_1$ adjacent to $L_2$ into $Z$, using that every vertex in $U_2$ has at least $\beta^2n/2\geq|N_T(L_2,L_1)|$ neighbours in $Z$.

\medskip

\noindent\textbf{Case II.} $F-L_1$ contains a set $L_2'$ of $n/200$ leaves not in $N_T(V(T'))$. Then, we can find a set $L_2\subset L_2'$ of size $10\mu n$ with $|N_T(L_2,L_1)|\leq 200\cdot10\mu n\leq\beta^2n/10$. Embed $t$ to $v$ if $t$ exists, then in any case embed the rest of $T'$ greedily into $H'$. Note that this embeds at least $\max\{0,10k+10D+9\}$ vertices in $V_2$ into $U_1$. Next, greedily extend this to embed the rest of $T-L_1-L_2$ into the rest of $G[U_1^-\setminus Z,U_2^-]$ with vertices in $V_i$ going into $U_i^-$ for each $i\in[2]$, which is possible as $|L_2|=10\mu n$ and $d(u,U_2^-)\geq|U_2^-|-D\geq t_2-|L_2|$ for every $u\in U_1^-$. We can then greedily embed $L_2$ into the rest of $U_2$, which is possible as there are at least $t_2-k-1-(t_2-\max\{0,10k+10D+9\}-|L_2|)\geq|L_2|+D$ vertices left in $U_2$, and every vertex in $U_1^-$ is adjacent to all but at most $D$ of them. Finally, greedily embed the leaves in $L_1$ not adjacent to $L_2$ into $U_1^-\setminus Z$, and greedily embed the leaves in $L_1$ adjacent to $L_2$ into $Z$, using that every vertex in $U_2$ has at least $\beta^2n/2\geq|N_T(L_2,L_1)|$ neighbours in $Z$.

\medskip

\noindent\textbf{Case III.} $F-L_1$ contains a set $L_2'$ of at least $n/200$ leaves in $N_T(V(T'))$. By Lemma~\ref{lemma:divide:vertices}, there exists subtrees $T_1,T_2$ decomposing $T'$ with a unique common vertex $t'$, such that $|V(T_1)\cap V_2|,|V(T_2)\cap V_2|\geq 3(k+D+1)$. Without loss of generality, suppose that $N_T(V(T_2),L_2')\geq n/500$, and pick a set $L_2\subset N_T(V(T_2),L_2')$ of size $10\mu n$, with none of them adjacent to $t'$, such that $N_T(L_2,L_1)\leq\beta^2n/10$. Note that at most two vertices in $V(T_1)\cap V_2$ can have a neighbour in $V(T)\setminus V(T_1)$, namely $t$ if it exists, and $t'$ if it is in $V_2$. 

If $t$ exists and $t'\in V_2$, then view $T_1$ as rooted at $t$, let the parent of $t'$ in $T_1$ be $p$, and let the parent of $p$ be $p'$. Embed $t$ to $v$, then greedily embed the rest of $T_1$ into $H'$ with the following exception. Let $W$ be the set of neighbours of the image of $p'$ in $H'$ that are still unused. If $p'$ is embedded into $H'$, then $|W|\geq10^7(k+D+1)/4-|T_1|\geq2\cdot 10^5(k+D+1)$, while if $p'$ coincides with $t$ then it is embedded to $v$, so $|W|\geq10^6(k+D+1)-|T_1|\geq2\cdot 10^5(k+D+1)$ as well. Like before, there exists $v'\in U_2^-$ with at least $|W|/2\geq10^5(k+D+1)$ neighbours in $W$. Embed $t'$ to $v'$, $p$ and $N_{T_1}(t')$ into $W$, then carry on greedily to finish the embedding of the rest of $T_1$ inside $H'$. If $t$ does not exist or $t'\not\in V_2$, we can greedily embed $T_1$ into $G[V(H')\cup\{v\}]$ such that $t$ is embedded to $v$ if it exists, and the same for $t'$ if it is in $V_2$. 

In any case, we have an embedding of $T_1$ into $G$, with all but at most two vertices embedded into $H'$, and every vertex in $V(T_1)\cap V_2$ that has any neighbour outside of $T_1$ is embedded into $U_2^-$. In particular, at least $3(k+D+1)-2\geq k+D+1$ vertices in $V_2$ are embedded into $U_1$ if $k+D\geq0$, while the same holds trivially if $k+D=-1$. Next, greedily embed the rest of $T-L_1-L_2$ into the rest of $G[U_1^-\setminus Z,U_2^-]$. We can then greedily embed $L_2$ into the rest of $U_2$, which is possible as there are at least $t_2-k-1-(t_2-k-D-1-|L_2|)=|L_2|+D$ vertices left in $U_2$ and every vertex in $U_1^-$ is adjacent to all but at most $D$ of them. Finally, greedily embed leaves in $L_1$ not adjacent to $L_2$ into $U_1^-\setminus Z$, and greedily embed leaves in $L_1$ adjacent to $L_2$ into $Z$, using that every vertex in $U_2$ has at least $\beta^2n/2\geq|N_T(L_2,L_1)|$ neighbours in $Z$.
\end{proof}

%%%%%%%%%%%%%%%%%%%%%%%%%%%%%%%%%%%%%%%%%%%%%%%%%%%%%%%%%%%%%%%%%%%%%%%
%%%%%%%%%%%%%%%%%%%%%%%%%%%%%%%%%%%%%%%%%%%%%%%%%%%%%%%%%%%%%%%%%%%%%%%
%%%%%%%%%%%%%%%%%%%%%%%%%%%%%%%%%%%%%%%%%%%%%%%%%%%%%%%%%%%%%%%%%%%%%%%
%%%%%%%%%%%%%%%%%%%%%%%%%%%%%%%%%%%%%%%%%%%%%%%%%%%%%%%%%%%%%%%%%%%%%%%

\subsection{Proof of Theorem~\ref{theorem:extremal:case1}}\label{sec:IBfinal}
Having proved all of the embedding results necessary for the Type I extremal case, we can now put them together to prove Theorem~\ref{theorem:extremal:case1}, following the outline in Section~\ref{sec:outline:extcase1}.

\begin{proof}[Proof of Theorem~\ref{theorem:extremal:case1}]
Let $1/n\ll c\ll\mu \ll 1$ and let $t_1,t_2\in \mathbb{N}$ satisfy $t_1+t_2=n$ and $t_1\geq t_2$. Let $G$ be a Type I $(\mu,t_1,t_2)$-extremal graph on $\max\{2t_1,t_1+2t_2\}-1$ vertices, and let $T$ be an $n$-vertex tree with $\Delta(T)\le cn$ and bipartition classes $V_1,V_2$ of sizes $t_1$ and $t_2$, respectively. From definition, there are disjoint subsets $U_1,U_2\subset V(G)$ such that $|U_1|\geq(1-\mu)n$, $|U_2|\geq(1-\mu)t_2$, $\red{d}(u,U_1)\leq \mu n$ for every $u\in U_1$, and $\blue{d}(u,U_{3-i})\leq \mu n$ for every $i\in[2]$ and every $u\in U_i$.

First assume that $t_1\leq 2t_2+1$, so $|G|\in\{n+t_2-1,n+t_2\}$. Let $\beta$ be such that $\mu \ll \beta \ll 1$. Let $U_2^+=\{v\in V(G):d_{\mathrm{red}}(u,U_1)\geq \beta n\}$, $U_1^+=V(G)\setminus U_2^+$, and note that $U_1\subset U_1^+$ and $U_2\subset U_2^+$. Let $k=|U_1^+|-n$, so $|k|\leq2\mu n$ and $|U_2^+|\in\{t_2-k-1,t_2-k\}$.
If $T$ has at least $n/100$ vertex-disjoint bare paths with length 5, then $T$ has $n/100$ vertex-disjoint bare paths with length 4 whose endpoints are all in $V_2$. Thus, there is a blue copy of $T$ in $G$ if $k\geq 0$ by Lemma~\ref{proposition:bare-paths}, and there is red copy of $T$ in $G$ if $k<0$ by Lemma~\ref{proposition:bipartite:bare-paths}.

Suppose, then, that $T$ does not have at least $n/100$ vertex-disjoint bare paths with length 5, then, by Lemma~\ref{lemma:paths-leaf}, $T$ has at least $n/20$ leaves. Let $D\geq0$ be the $30\mu n$-th biggest value of $\blue{d}(u,U_2^+)$ across all $u\in U_1^+$, and note that $D\leq3\mu n$. Let $X=\{u\in U_1^+:\blue{d}(u,U_2^+)\geq n/10\}$, and note that $X\subset U_1^+\setminus U_1$, so $|X|\leq2\mu n$. If $e(\red{G}[U_1^+\setminus X])<10^7(k+D+1)n$, then $k+D\geq0$, so there is a blue copy of $T$ in $G$ by Lemma~\ref{lemma:caseIB1embedding}, while if $e(\red{G}[U_1^+\setminus X])\geq10^7(k+D+1)n$, then there is a red copy of $T$ in $G$ by Lemma~\ref{lemma:caseIB2embedding}.
%Let $Y\subset U_1^+$ be a set of $10\mu n$ vertices that achives the $10\alpha n$ largest values of $\blue{d}(y,U_2^+)$ for $y\in U_1^+$. then $\red{d}(x,U_2^+)\geq |U_2^+|-D$ for all $x\in U_1^+\setminus Y$.

Finally, if $t_1\geq 2t_2+2$, then we can take $2/c$ leaves of $T$ in $V_1$, which are guaranteed to exist by Lemma~\ref{lemma:leaves:V1}, and attach $\lfloor (t_1-2t_2)/2\rfloor$ new leaves to them, with none of them receiving more than $cn$ new leaves. Let $T'$ be the new tree obtained in this way, and note that the bipartition classes of $T'$ have sizes $t_1'=t_1$ and $t_2'=\floor{t_1/2}$, with $t_1'\leq 2t_2'+1$ and $\max\{t'_1+2t'_2,2t'_1\}-1=2t_1'-1=2t_1-1=\max\{t_1+2t_2,2t_1\}-1$. Therefore, $G$ contains a monochromatic copy of $T'$ from above, and thus also contains a monochromatic copy of $T$.
\end{proof}

\section{Proof of Theorem~\ref{theorem:extremal:case2}: Type II extremal graphs}\label{sec:extremearg2}
In this section, we prove Theorem~\ref{theorem:extremal:case2}. We will start by outlining the proof in Section~\ref{sec:outline:extcase2}, breaking it down into different cases that are then proved throughout the rest of this section. %Extremal case I: $t_1\le 2t_2$.

\subsection{Proof outline for Type II extremal graphs}\label{sec:outline:extcase2}
%\label{sec:outline:ext case 2}

We start by recapping the situation in Theorem~\ref{theorem:extremal:case2}, where we have parameters $1/n\ll c\ll\mu\ll 1$. Suppose $n=t_1+t_2$ with $t_1\geq(2-\mu)t_2$. Let $T$ be an $n$-vertex tree with $\Delta(T)\le cn$ and bipartition classes $V_1$ and $V_2$ of sizes $t_1$ and $t_2$, respectively.
Let $G$ be a red/blue coloured complete graph on $\max\{2t_1-1,t_1+2t_2-1\}$ vertices which is Type II $(\mu,t_1,t_2)$-extremal, which means that there are disjoint sets $U_1,U_2\subset V(G)$ such that $|U_1|,|U_2|\geq(1-\mu)t_1$, and for every $i\in [2]$ and $u\in U_i$, $\red{d}(u,U_i)\leq \mu n$ and $\blue{d}(u,U_{3-i})\leq \mu n$.

For some $\beta$ with $\mu \ll \beta\ll 1$, we will start by taking maximal disjoint sets $U_1^+,U_2^+\subset V(G)$ with $U_1\subset U_1^+$ and $U_2\subset U_2^+$, such that for every $i\in [2]$ and $u\in U_i^+$, $\red{d}(u,U_{3-i})\geq \beta n$. By relabelling if necessary, we can assume that $|U_1^+|\geq |U_2^+|$.

Our first two cases are reasonably easy. First, in \textbf{Case}~\ref{CaseIIY}, we assume that there exist two vertices $v_1,v_2$ with mostly blue neighbours in both $U_1^+$ and $U_2^+$. This will allow us to embed part of $T$ into $\blue{G}[U_1^+]$ and the rest of $T$, apart from at most 2 vertices, into $\blue{G}[U_2^+]$, then connect them together appropriately using $v_1$ and $v_2$. Using two vertices in this way is optimal, as $T$ may have a vertex whose removal creates exactly three subtrees of roughly equal sizes, so we could not easily fit two of them together into one of $G_{\mathrm{blue}}[U_1^+]$ or $G_{\mathrm{blue}}[U_2^+]$.

Next, in \textbf{Case}~\ref{CaseIIA}, we assume that there is a vertex $w$ that has at least $\beta n$ red neighbours in both $U_1^+$ and $U_2^+$. Then, we decompose $T$ into subtrees $T_1$ and $T_2$ with a common neighbour $t$, so that $T_1$ is a small subtree containing suitably more vertices in $V_1$ than in $V_2$ (see Proposition~\ref{prop:bipartitesplittree}). We embed $t$ to $w$, then embed the rest of $T_1,T_2$ greedily into $\red{G}[U_1^+,U_2^+]$ by embedding vertices in $V(T_1)\cap V_2$ and $V(T_2)\cap V_1$ into $U_1^+$, and vertices in $V(T_1)\cap V_1$ and $V(T_2)\cap V_2$ into $U_2^+$. Observe that $T_1$ and $T_2$ are embedded in opposite ways, which `rebalances' the vertices in $T$ across $U_1^+$ and $U_2^+$, so that there is enough room in each side for this embedding to be completed greedily.

Assuming neither of these cases hold, then $U_1^+$ and $U_2^+$ together cover all but at most one vertex of $G$, and $\delta(G_{\mathrm{blue}}[U_i^+])\geq |U_i^+|-\beta n$ for each $i\in [2]$. Assume without loss of generality that $|U_1^+|\geq |U_2^+|$. If there is a vertex $v$ not in $U_1^+\cup U_2^+$, then it has mostly blue neighbours in both $U_1^+$ and $U_2^+$, and we can use Corollary~\ref{cor:splittree} to decompose $T$ into subtrees $T_1$ and $T_2$ with a unique common vertex $t$ such that $|U_1^+\cup \{v\}|\geq |T_1|$ and $|U_2^+|\geq |T_2|+n/100$. %Let $k=|U_1^+\cup \{v\}|-|T_1|$. Similarly to our work for Type I extremal graphs, the larger $k$ is, the easier it is to embed $T_1$ in $G_{\mathrm{blue}}[U_1^+\cup \{v\}]$, so the more red edges we can tolerate in $G[U_1^+\cup\{v\}]$. 
In \textbf{Case}~\ref{CaseIIZ}, we assume that there are at most $10^6n$ red edges in $G[U_1^+]$. Using our work in Section~\ref{sec:IB1}, we can embed $T_1$ into $G_{\mathrm{blue}}[U_1^+\cup \{v\}]$ with $t$ embedded to $v$. Then, as $v$ has plenty of blue neighbours in $U_2^+$ and $|U_2^+|$ is comfortably larger than $|T_2|$, we can greedily embed $T_2$ into $G_{\mathrm{blue}}[U_2^+\cup \{v\}]$ to complete a blue copy of $T$.

Suppose now that we are not in \textbf{Cases}~\ref{CaseIIY}--\ref{CaseIIZ}. %If the vertex $v$ exists then add
Let $k=t_1-|U_1^+|$, and note that $k\leq 1$ as $|U_1^+|\geq |U_2^+|$. Moreover, if $k=1$, then there exists $v\in V(G)\setminus(U_1^+\cup U_2^+)$, and so $G[U_1^+]$ contain at least $10^6n$ red edges since we are not in \textbf{Case}~\ref{CaseIIZ}. In theory, we have enough space while attempting to embed $T$ in red to embed all but at most $1$ vertex of $V_1$ into $U_1^+$, and $V_2$ into $U_2^+$. %along with some small subtree. 
While this can always be done when $T$ has many vertex-disjoint bare paths, if $T$ has many leaves instead, then similar to
\textbf{Case}~\ref{CaseIB} discussed before, a small number of blue edges in $G[U_1^+,U_2^+]$ can prevent this embedding for some trees.

Therefore, we will first try to embed $T$ in blue again, using the following sparse cut structure.
\begin{definition}\label{def:sparsecut}
Let $T$ be an $n$-vertex tree. An \emph{$(\eps,d)$-sparse cut} in $T$ is a partition $V(T)=A\cup B$ such that the following hold. 
\begin{itemize}
    \item $T[A]$ is a tree and $|A|,|B|\leq (2/3-\eps)n$.
    \item For each $v\in A$, $d_T(v,B)\leq d$.
    \item $\{v\in A:d_T(v,B)>0\}$ is an independent set in $T$ with size at most $2\Delta(T)$.
\end{itemize}
\end{definition}
Such a sparse cut with $\eps\gg\mu$ and $d=\sqrt n$ will be found later using Proposition~\ref{lem:sparsecut}, and we will also need some additional properties guaranteed by Proposition~\ref{prop:splitwithsparsecut}. In \textbf{Case}~\ref{CaseIIB}, we assume that $T[A,B]$ can be embedded into $G_{\mathrm{blue}}[U_1^+,U_2^+]$, with, say $A$ embedded into $U_1^+$ and $B$ embedded into $U_2^+$. Then, we use the high minimum degree condition to find an embedding of $T[A]$ in $\blue{G}[U_1^+]$ that matches enough of the embedding of $T[A,B]$. Using the part that matches, we can greedily extend the embedding of $T[A]$ to embed most of the vertices in $B$ into $U_2^+$. The number of remaining vertices in $B$ will be small enough that they can be greedily embedded into the remaining part of $\blue{G}[U_1^+]$. %again using the high minimum degree of $\blue{G}[U_1^+]$.

Finally, in \textbf{Case}~\ref{CaseIIC}, we assume that $T[A,B]$ cannot be embedded into $G_{\mathrm{blue}}[U_1^+,U_2^+]$ as described above. The fact that we failed to do so will imply that there exist $U_A\subset U_1^+$ and $U_B\subset U_2^+$ of suitable sizes, such that every vertex in $U_1^+\setminus U_A$ has at most $\sqrt{n}$ blue neighbours in $U_2^+\setminus U_B$. Here, we focus on a specific case where $|U_1^+|\geq t_1$ and $T$ has a set $L$ of many leaves in $V_1$, the other case is handled similarly. First, we embed $T-L$ essentially randomly into $\red{G}[U_1^+,U_2^+]$, with vertices in $V_i$ embedded into $U_i^+$ for each $i\in[2]$, while ensuring that leaves in $L$ have their parents embedded into $U_2^+\setminus U_B$. Similar to \textbf{Case}~\ref{CaseIB1}, in $U_1^+$ we may have some `bad' vertices to which it is hard to embed the vertices in $L$. However, it will be likely that there is no bad vertex in $U_1^+\setminus U_A$ as all these vertices have very high red degrees into $U_2^+\setminus U_B$ from assumption. To make sure that we have no uncovered bad vertices in $U_A$, we use that the size of $U_A$ is related to the sparse cut $V(T)=A\cup B$. Roughly speaking, there will be enough components in $T[N_T(B,A)\cup B]$ that contains a vertex in $V_1\cap B$ for us to use these vertices to cover $U_A$. Finally, having ensured that there is no bad vertex, we can embed the leaves in $L$ to complete the embedding of $T$.

In Sections~\ref{subsec:caseIIY}--\ref{subsec:caseIIC}, we will prove the main embedding results used for \textbf{Cases}~\ref{CaseIIY}--\ref{CaseIIC}, respectively, before putting these all together in Section~\ref{sec:IIfinal} to prove  Theorem~\ref{theorem:extremal:case2}.
To finish this outline, we recap the different cases in the proof of Theorem~\ref{theorem:extremal:case2}, noting the main result that takes care of each of them. In what follows, by `otherwise' we mean that none of the previous cases hold.

\begin{enumerate}[label = \textbf{\Roman{enumi}}]\addtocounter{enumi}{1}
\item $G$ is Type II extremal.\label{CaseII}
\begin{enumerate}[label = \textbf{II.\Alph{enumii}}]
\item Two vertices have mostly blue neighbours in $U_1^+$ and $U_2^+$: $T$ embeds in blue.\hfill\emph{Lemma~\ref{lem:caseIIY}}\label{CaseIIY}
\item Some vertex has $\beta n$ red neighbours in both $U_1^+$ and $U_2^+$: $T$ embeds in red.\hfill\emph{Lemma~\ref{lem:caseIIA}}\label{CaseIIA}
\item Otherwise, but one vertex has mostly blue neighbours in $U_1^+$ and $U_2^+$, and $G[U_1^+]$ contains at most $10^6n$ red edges: $T$ embeds in blue.\hfill\emph{Lemma~\ref{lem:caseIIZ}}\label{CaseIIZ}
\item Otherwise, but $T[A,B]$ embeds into $G_{\mathrm{blue}}[U_1^+,U_2^+]$: $T$ embeds in blue.\hfill\emph{Lemma~\ref{lem:caseIIB}}\label{CaseIIB}
% with $A,B$ embedded into $U_1^+,U_2^+$, respectively
\item Otherwise, either $|U_1^+|\geq t_1$, or $|U_1^+|=t_1-1$ and there are at least $10^6n$ red edges in $G[U_1^+]$: $T$ embeds in red.\hfill\emph{Lemma~\ref{lem:caseIIC}}\label{CaseIIC}
%\item Otherwise: $T$ embeds in blue.\hfill\emph{Lemma~\ref{lem:caseIID}}\label{CaseIID}
\end{enumerate}
\end{enumerate}

%%%%%%%%%%%%%%%%%%%%%%%%%%%%%%%%%%%%%%%%%%%%%%%%%%%%%%%%%%%%%%%%%%%%%%%
%%%%%%%%%%%%%%%%%%%%%%%%%%%%%%%%%%%%%%%%%%%%%%%%%%%%%%%%%%%%%%%%%%%%%%%
%%%%%%%%%%%%%%%%%%%%%%%%%%%%%%%%%%%%%%%%%%%%%%%%%%%%%%%%%%%%%%%%%%%%%%%
%%%%%%%%%%%%%%%%%%%%%%%%%%%%%%%%%%%%%%%%%%%%%%%%%%%%%%%%%%%%%%%%%%%%%%%

\subsection{Case~\ref{CaseIIY}}\label{subsec:caseIIY}
For \textbf{Case}~\ref{CaseIIY}, we first use the following result to find a large subtree of $T$ that contains at most two vertices that have neighbours in the rest of the tree. Moreover, if there are two such vertices, then they are not adjacent.

\begin{proposition}\label{prop:splittreewith2vertices} Let $1/n\ll \eps\ll 1$. Let $T$ be an $n$-vertex tree.
Then, there is a partition $V(T)=A\cup B$ with $|A|,|B|\leq (2/3-\eps)n$ such that $T[A]$ is a tree and $\{v\in A:d_T(v,B)>0\}$ is an independent set in $T$ with size at most 2.
\end{proposition}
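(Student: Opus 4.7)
My plan is to exploit the centroid decomposition of $T$ together with Corollary~\ref{cor:splittree}. Let $v^*$ be a centroid of $T$, which exists by Lemma~\ref{lemma:treehalfcomponent}, so every component of $T-v^*$ has size at most $n/2$; label these components $C_1,C_2,\ldots,C_k$ in decreasing order of size, and set $M=|C_1|$.

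First I would try for a one-boundary-vertex partition by taking $A=\{v^*\}\cup\bigcup_{i\in S_A}C_i$ and $B=\bigcup_{i\in S_B}C_i$ for some partition $S_A\cup S_B$ of $[k]$. Then $T[A]$ is automatically a tree and $v^*$ is its unique boundary vertex (provided $S_B\neq\emptyset$). A greedy subset-sum argument on the $|C_i|$, processing them in decreasing size order and adding each $C_i$ to $B$ whenever doing so keeps $|B|\le(2/3-\eps)n$, always lands $|B|\in[(1/3+\eps)n,(2/3-\eps)n]$ provided $M\le(1/3-2\eps)n$ (the greedy gap is at most $M$, so $|B|>(2/3-\eps)n-M\ge(1/3+\eps)n$) or $M\ge(1/3+\eps)n$ (simply take $S_B=\{1\}$, since also $M\le n/2\le(2/3-\eps)n$).

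The remaining case is $M\in((1/3-2\eps)n,(1/3+\eps)n)$, in which I would allow a second boundary vertex. I would apply Corollary~\ref{cor:splittree} to $C_1$ to write $C_1=D_1\cup D_2$ with unique common vertex $u$ and $\lceil|C_1|/3\rceil\le|D_1|\le|D_2|\le\lceil 2|C_1|/3\rceil$, so that both $|D_1|,|D_2|$ lie safely below $(2/9+\eps)n+1$ and can act as small, flexible placement units. I would then take $A=\{v^*\}\cup D_1\cup\bigcup_{i\in S_A}C_i$ and $B=(D_2\setminus\{u\})\cup\bigcup_{i\in S_B}C_i$ for some partition $S_A\cup S_B$ of $\{2,\ldots,k\}$ obtained by a second greedy subset-sum applied to the collection $\{|D_2|-1\}\cup\{|C_i|:i\ge 2\}$. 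Since the largest item is now bounded by $\max(|D_2|-1,|C_2|)$, which is comfortably smaller than $(1/3+\eps)n$, the greedy has enough slack to bring $|B|$ into $[(1/3+\eps)n,(2/3-\eps)n]$. The boundary vertices are $v^*$ (adjacent to any $C_i\subseteq B$) and $u$ (adjacent to $D_2\setminus\{u\}$).

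The main obstacle will be ensuring that the two boundary vertices are non-adjacent, which requires $u\neq u^*$, where $u^*$ is the neighbor of $v^*$ in $C_1$. For generic $C_1$ this holds automatically, but for structured components (e.g.\ $C_1$ a star rooted at $u^*$) Corollary~\ref{cor:splittree} will return $u=u^*$. In that corner case my remedy is to drop $v^*$ from the boundary altogether: place $v^*$ in $A$ together with all of its neighbors $u^*_1,\ldots,u^*_k$ in $T$, and draw the two boundary vertices from the interiors of two different components $C_i,C_j$ (so the pieces of those components hanging off these boundary vertices go into $B$). The two chosen boundary vertices are automatically non-adjacent because they lie in different components of $T-v^*$. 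A finite case split on how many of the $C_i$ are in the medium range $((1/3-2\eps)n,(1/3+\eps)n)$ completes the argument: there can be at most three such components, since four would already sum to more than $(4/3-8\eps)n>n$, and in each subcase one can hand-pick where the two boundary vertices go to balance the sizes within the cap $(2/3-\eps)n$.
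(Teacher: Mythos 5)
Your route — a centroid $v^*$ (Lemma~\ref{lemma:treehalfcomponent}) plus greedy subset-sums on the components of $T-v^*$ — is genuinely different from the paper's, which instead takes a decomposition $T=T_1\cup T_2$ from Corollary~\ref{cor:splittree} minimising $|T_2|$ and shows that in the bad case $T_2-t$ has exactly two components of size about $n/3$, one of which is re-split. Your first two regimes ($M\le(1/3-2\eps)n$ and $M\ge(1/3+\eps)n$) are correct. In the remaining regime there are two repairable imprecisions and one genuine gap. The imprecisions: your second greedy's gap argument needs the largest item to be at most $(1/3-2\eps)n$, not merely below $(1/3+\eps)n$; when $|C_2|$ is also medium you must argue separately through the item $|D_2|-1$ (whichever of acceptance or rejection occurs forces $|B|\ge(1/3+\eps)n$). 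You also never ensure that the neighbour $u^*$ of $v^*$ in $C_1$ lies in $D_1$, which is needed for $T[A]$ to be connected; this is fixed by relabelling $D_1,D_2$.

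The genuine gap is the corner case $u=u^*$, which is not rare and is in fact the crux of the proposition. Take $T$ to be a vertex $v^*$ joined to the centres of three stars, each of size about $n/3$: every admissible split of $C_1$ has $u=u^*$, and your remedy — two boundary vertices in the interiors of two components, with ``the pieces hanging off'' them sent to $B$ — does not pin down $|B|$. For a star component the piece hanging below a vertex $w$ is either all $\approx n/3$ leaves (if $w$ is the centre) or empty (if $w$ is a leaf), so every choice of two boundary vertices with their full pendant pieces gives $|B|\approx n/3-O(1)$ or $|B|\approx 2n/3-O(1)$, both outside $[(1/3+\eps)n,(2/3-\eps)n]$. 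What rescues this example is that a boundary vertex may send only a subcollection of its pendant subtrees to $B$; but proving that a subcollection of the right total size always exists is another subset-sum problem on pieces whose sizes you do not control (the pendant subtrees below a chosen $w$ can be few and large), i.e., it reproduces the original difficulty one level down. The closing ``finite case split \ldots\ hand-pick where the two boundary vertices go'' therefore does not constitute a proof. The paper sidesteps exactly this by taking the two boundary vertices to be the split point $t_1'$ of one component $S_1$ of $T_2-t$ and the neighbour $t_2$ of $t$ in the other component $S_2$: their non-adjacency is automatic since they lie in different components of $T_2-t$, and the size of the piece detached at $t_1'$ is controlled by Corollary~\ref{cor:splittree}. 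Some version of that idea is what your corner case still needs.
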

\begin{proof}
Using Corollary~\ref{cor:splittree}, let $T_1$ and $T_2$ be subtrees decomposing $T$ with a unique common vertex $t$, such that $\frac n3\leq|T_1|\leq|T_2|\leq 1+\frac{2n}3$. Furthermore, assume that $T_1$ and $T_2$ are chosen so that $|T_2|$ is minimised subject to these conditions. If $|T_2|\leq (2/3-\eps)n$, then set $A=V(T_2)$ and $B=V(T)\setminus A$, and note that the conditions in the lemma hold.

Suppose now that $|T_2|>(2/3-\eps)n$. If $t$ has only one neighbour, say $t'$, in $T_2$, then adding $tt'$ to $T_1$ and removing $t$ from $T_2$ gives two trees that contradict the minimality of $|T_2|$. If $T_2-t$ has a component $S$ with size at most $(1/3-2\eps)n$, then, letting $T_1'=T[\{t\}\cup V(S)]\cup T_1$ and $T_2'=T_2-V(S)$ gives a pair of trees $(T_1',T_2')$ that again contradicts the minimality of $|T_2|$, as $\max\{|T_1'|,|T_2'|\}<|T_2|$. Thus, $t$ must have exactly two neighbours in $T_2$, and $T_2-t$ is the disjoint union of two trees $S_1$ and $S_2$ with $(1/3-2\eps)n\leq|S_1|,|S_2|\leq(1/3+2\eps)n$. For each $i\in[2]$, let $t_i$ be the neighbour of $t$ in $S_i$.

Using Corollary~\ref{cor:splittree} again, let $S_1'$ and $S_2'$ be subtrees decomposing $S_1$ with a unique common vertex $t_1'$, such that $(1-6\eps)n/9\leq|S'_1|,|S'_2|\leq 1+(2+12\eps)n/9$. Relabelling if necessary, assume that $S_1'$ contains $t_1$.
Let $A=V(T_1)\cup V(S_1')\cup\{t_2\}$ and note that $T[A]$ is the tree made by connecting $T_1$ and $S_1'$ with the edge $tt_1$ and adding the edge $tt_2$. Let $B=V(T)\setminus A$, and note that the only vertices in $A$ with neighbours in $B$ in $T$ are $t_2$ and $t_1'$, and they are not adjacent in $T$ because they are in different components of $T_2-t$. Thus, $A$ and $B$ satisfy the required conditions.
\end{proof}

Using Proposition~\ref{prop:splittreewith2vertices}, it is now straightforward to prove the following result used in \textbf{Case}~\ref{CaseIIY}.

\begin{lemma}\label{lem:caseIIY}
Let $1/n\ll c\ll \mu \ll 1$. Let $T$ be an $n$-vertex tree with $\Delta(T)\leq cn$.
Let $G$ be a graph that contains two disjoint vertex sets $U_1$ and $U_2$ such that $|U_i|\geq (2/3-\mu)n$ and $\delta(G[U_i])\geq |U_i|-\mu n$ for each $i\in [2]$. Suppose there exist $v_1,v_2\in V(G)\setminus (U_1\cup U_2)$ such that $d_G(v_i,U_j)\geq |U_j|-\mu n$ for each $i,j\in [2]$. Then, $G$ contains a copy of $T$.
\end{lemma}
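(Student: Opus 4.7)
Let $\eps$ satisfy $\mu \ll \eps \ll 1$. The plan is to decompose $T$ into two pieces of balanced size, embed one piece into $U_1$ and the other into $U_2$, and use $v_1,v_2$ as ``bridges'' that carry the few tree-edges joining the two pieces.

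First, apply Proposition~\ref{prop:splittreewith2vertices} to obtain a partition $V(T) = A \cup B$ such that $T[A]$ is a tree, $|A|,|B| \leq (2/3-\eps)n$, and the set $S = \{v \in A : d_T(v,B) > 0\}$ is an independent set in $T$ with $|S| \leq 2$. Since $|A| < n$ we have $B \neq \emptyset$, and connectivity of $T$ forces $|S| \geq 1$. The main case is $|S|=2$, which is what I describe below; the case $|S|=1$ is strictly easier (one just ignores $v_2$). Write $S = \{s_1,s_2\}$, and observe that since $s_1 s_2 \notin E(T)$, the $s_1$-$s_2$ path $P = s_1 w_1 \cdots w_{\ell-1} s_2$ in $T[A]$ has length $\ell \geq 2$.

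The embedding has two stages. In Stage~1, embed $T[A]$ into $G[U_1 \cup \{v_1,v_2\}]$ with $s_i \mapsto v_i$. To handle the constraint at both endpoints simultaneously, first embed the path $P$: if $\ell=2$, pick $w_1 \mapsto u \in N_G(v_1) \cap N_G(v_2) \cap U_1$, which is non-empty because this intersection has size at least $|U_1| - 2\mu n$; if $\ell \geq 3$, greedily pick images for $w_1,\ldots,w_{\ell-2}$ inside $U_1$ as neighbours of the previous image, and then pick $w_{\ell-1} \mapsto u' \in N_G(v_2) \cap N_G(w_{\ell-2}) \cap U_1$, again non-empty since each vertex misses at most $\mu n$ vertices of $U_1$. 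Now extend $P$ to all of $T[A]$ by processing the remaining vertices in a BFS order rooted at the embedded part: at each step the parent's image lies in $U_1 \cup \{v_1,v_2\}$ and has at least $|U_1| - \mu n$ neighbours in $U_1$, while at most $|A|-2 \leq (2/3-\eps)n$ vertices of $U_1$ have been used, leaving at least $(\eps - 2\mu)n$ unused neighbours in $U_1$ (using $|U_1| \geq (2/3-\mu)n$), so the greedy step succeeds.

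In Stage~2, embed $B$ into $U_2$. Every component $C$ of $T[B]$ is attached to $T[A]$ by a single edge, with the $A$-endpoint in $\{s_1,s_2\}$; let $b_C$ be the $B$-endpoint and let $i(C) \in \{1,2\}$ be such that $b_C$ is adjacent to $s_{i(C)}$ in $T$. Process the components of $T[B]$ in some order: embed $b_C$ to an unused neighbour of $v_{i(C)}$ in $U_2$, then extend greedily within $U_2$ using Lemma~\ref{lemma:bipartitegreedy}-style arguments. The slack for this greedy embedding is at least $|U_2| - \mu n - (|B|-1) \geq (\eps - 2\mu)n$, which is positive throughout.

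There is no genuine obstacle: Proposition~\ref{prop:splittreewith2vertices} gives a split in which the ``interface'' with $B$ uses at most two independent vertices of $T$, and the hypotheses of Lemma~\ref{lem:caseIIY} provide exactly two vertices $v_1,v_2$ simultaneously adjacent to almost all of $U_1$ and almost all of $U_2$, which are tailor-made to carry these two interface vertices. The only mildly delicate step is the embedding of the path $P$ in Stage~1, where one must satisfy a constraint at each end simultaneously, but the $\mu$-almost-complete degree conditions on $v_1,v_2$ make this routine.
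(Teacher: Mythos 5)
Your proof is correct and follows essentially the same route as the paper: apply Proposition~\ref{prop:splittreewith2vertices}, send the (at most two, independent) interface vertices of $A$ to $v_1,v_2$, embed $T[A]$ greedily into $U_1$ together with $v_1,v_2$, and then hang the components of $T[B]$ off $v_1,v_2$ greedily inside $U_2$. The only difference is a minor implementation detail: the paper guarantees the second interface vertex can land on $v_2$ by working inside the common neighbourhoods $U_i^-=N_G(v_1,U_i)\cap N_G(v_2,U_i)$, whereas you achieve the same by embedding the $s_1$--$s_2$ path of $T[A]$ first; both are routine given the $\mu$-almost-complete degree conditions.
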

\begin{proof} Using Proposition~\ref{prop:splittreewith2vertices}, let $V(T)=A\cup B$ be a partition with $|A|,|B|\leq (2/3-10\mu)n$, such that $T[A]$ is a tree and $A':=\{v\in A:d_T(v,B)>0\}$ is an independent set in $T$ with $|A'|\leq 2$. For each $i\in[2]$, let $U_i^-=N_G(v_1,U_i)\cap N_G(v_2,U_i)$, then $|U_i^-|\geq (2/3-3\mu)n$ and $\delta(G[U_i^-])\geq |U_i^-|-\mu n$. Embed one vertex in $A'$ to $v_1$. Then, greedily extend this to an embedding of the tree $T[A]$ in $G[U_1^-\cup\{v_1,v_2\}]$, such that if there is another vertex in $A'$, then it is embedded to $v_2$. We can then extend this to copy of $T$ by embedding $T[A'\cup B]$ greedily in $G[U_2^-\cup\{v_1,v_2\}]$. 
\end{proof}

%%%%%%%%%%%%%%%%%%%%%%%%%%%%%%%%%%%%%%%%%%%%%%%%%%%%%%%%%%%%%%%%%%%%%%%
%%%%%%%%%%%%%%%%%%%%%%%%%%%%%%%%%%%%%%%%%%%%%%%%%%%%%%%%%%%%%%%%%%%%%%%
%%%%%%%%%%%%%%%%%%%%%%%%%%%%%%%%%%%%%%%%%%%%%%%%%%%%%%%%%%%%%%%%%%%%%%%
%%%%%%%%%%%%%%%%%%%%%%%%%%%%%%%%%%%%%%%%%%%%%%%%%%%%%%%%%%%%%%%%%%%%%%%

\subsection{Case~\ref{CaseIIA}}\label{subsec:caseIIA}
For \textbf{Case}~\ref{CaseIIA}, we need to find a subtree which has suitably more vertices in $V_1$ than in $V_2$, which we do with the following result.
\begin{proposition}\label{prop:bipartitesplittree}
Let $1/n\ll\mu\ll1$, and let $T$ be an $n$-vertex tree with bipartition classes $V_1$ and $V_2$ such that $|V_1|\geq 1.1|V_2|$. Then, there exists a decomposition of $T$ into subtrees $T_1$ and $T_2$ with a unique common vertex $v$, such that $10\mu n\leq|V(T_1)\cap V_1|-|V(T_1)\cap V_2|\leq 25\mu n$.
\end{proposition}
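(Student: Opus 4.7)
I plan to root $T$ and find a subtree whose $V_1$-vs-$V_2$ imbalance lies in the window $[10\mu n, 25\mu n]$. Fix an arbitrary root $r$, and for each $w \in V(T)$ let $S_w$ denote the subtree consisting of $w$ and its descendants, and set $f(w) = |V(S_w)\cap V_1| - |V(S_w)\cap V_2|$. The hypothesis $|V_1|\geq 1.1 |V_2|$ combined with $|V_1|+|V_2|=n$ gives $|V_2|\leq n/2.1$, and hence $f(r) = n - 2|V_2| \geq n/21$, which comfortably exceeds $10\mu n$ since $\mu \ll 1$.

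\textbf{Locating a good vertex.} Let $U = \{w \in V(T) : f(w) \geq 10\mu n\}$, so that $r \in U$. Pick $v \in U$ of maximum depth, so that no proper descendant of $v$ lies in $U$; in particular every child $c$ of $v$ satisfies $f(c) < 10\mu n$. If it happens that $f(v) \leq 25\mu n$, take $T_1 = S_v$ and $T_2 = T[(V(T) \setminus V(S_v)) \cup \{v\}]$: both are subtrees, their vertex sets intersect exactly in $\{v\}$, and $|V(T_1)\cap V_1| - |V(T_1)\cap V_2| = f(v) \in [10\mu n, 25\mu n]$ as needed.

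\textbf{Greedy case: $f(v) > 25\mu n$.} Write $\epsilon_v = +1$ if $v\in V_1$ and $-1$ otherwise, list the children of $v$ as $c_1, \ldots, c_k$, and set $s_i = f(c_i) < 10\mu n$. Then $f(v) = \epsilon_v + \sum_i s_i > 25\mu n$, so the positive $s_i$ alone sum to more than $25\mu n - 1$. I will insert these positive children into a set $I$ one at a time in arbitrary order; each increment is strictly less than $10\mu n$, and the eventual total exceeds $25\mu n - 1 \geq 10\mu n - \epsilon_v$. Hence at the first moment the running total $\sigma$ satisfies $\sigma \geq 10\mu n - \epsilon_v$, we also have $\sigma < 10\mu n - \epsilon_v + 10\mu n = 20\mu n - \epsilon_v$. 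Freezing $I$ at that moment and setting $T_1 = \{v\} \cup \bigcup_{i\in I} V(S_{c_i})$ and $T_2 = T[(V(T)\setminus V(T_1))\cup\{v\}]$, both $T_1$ and $T_2$ are connected subtrees (each piece is joined to $v$) with $V(T_1)\cap V(T_2) = \{v\}$, and $|V(T_1)\cap V_1| - |V(T_1)\cap V_2| = \epsilon_v + \sigma \in [10\mu n, 20\mu n) \subseteq [10\mu n, 25\mu n]$.

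\textbf{Main obstacle.} The only real subtlety is ensuring that the greedy step cannot overshoot the target interval in a single move. The danger would be an increment large enough to jump the running total past $25\mu n$, but the choice of $v$ as a deepest element of $U$ is exactly what rules this out: every $s_i$ is strictly below $10\mu n$, which is smaller than the target window width of $15\mu n$, so the first partial sum to cross the lower threshold automatically stays below the upper one.
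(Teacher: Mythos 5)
Your proof is correct. The rooted-tree argument is sound: the threshold $f(r)=|V_1|-|V_2|\geq n/21\gg 10\mu n$ guarantees $U\neq\emptyset$; a deepest $v\in U$ has all children $c$ with $f(c)<10\mu n$; and in the overshoot case the identity $f(v)=\epsilon_v+\sum_i s_i$ together with the first-crossing argument (each increment below $10\mu n$, target window of width $15\mu n$) lands $\epsilon_v+\sigma$ in $[10\mu n,20\mu n)$. The connectivity and edge-disjointness of $T_1$ and $T_2$ in both cases are as you describe, and the degenerate case $T_2=\{v\}$ is permitted by the paper's notion of decomposition.

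Your route differs from the paper's. The paper runs an extremal argument: among all decompositions with imbalance at least $12\mu n$ it takes one minimising $|T_1|$, and then derives the upper bound $25\mu n$ by contradiction, splitting on whether $\deg(v,T_1)=1$ or $\deg(v,T_1)\geq 2$ and transferring a suitably balanced component of $T_1-v$ across to $T_2$. Your proof is instead a direct construction: root the tree, descend to a deepest vertex whose subtree is still imbalanced enough, and if that subtree overshoots, greedily aggregate its children's subtrees until the running imbalance first crosses the lower threshold. Both proofs hinge on the same quantitative fact — that the imbalance cannot jump by more than the window width in one step — but yours is arguably more transparent and avoids the case analysis on $\deg(v,T_1)$, at the cost of introducing the rooting and the explicit bookkeeping with $\epsilon_v$. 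Either works; no gap.
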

\begin{proof}
Among all $v\in T$ and all subtrees $T_1$ and $T_2$ decomposing $T$ with a unique common vertex $v$ that satisfy $|V(T_1)\cap V_1|-|V(T_1)\cap V_2|\geq12\mu n$, pick the combination that minimises $|T_1|$. Note that such $v,T_1,T_2 $ exist as $|V_1|\geq 1.1|V_2|$ implies that $|V_1|-|V_2|\geq 12\mu n$, so picking an arbitrary $v\in V(T)$, letting $T_1=T$ and $T_2=T[\{v\}]$ would satisfy these conditions.

First, consider the case when $\deg(v,T_1)\geq2$. If there is a component $S$ of $T_1-v$ that satisfies $|V(S)\cap V_2|-|V(S)\cap V_1|> 0$, then, transferring $S$ and the edge between $v$ and $S$ from $T_1$ to $T_2$ gives two subtrees $T_1',T_2'$ that still satisfy the required conditions but with $|T_1'|<|T_1|$, a contradiction. Thus, we can assume that every component of $T_1-v$ has at least as many vertices in $V_1$ as in $V_2$. Then, since there are at least 2 such components, at least one of them, say $S'$, satisfies $0\leq |V(S')\cap V_1|-|V(S')\cap V_2|\leq (|V(T_1)\cap V_1|-|V(T_1)\cap V_2|+1)/2$. In order for transferring $S'$ and the edge between $v$ and $S'$ from $T_1$ to $T_2$ to not contradict the minimality of $|T_1|$, we must have that $(|V(T_1)\cap V_1|-|V(T_1)\cap V_2|)-(|V(S')\cap V_1|-|V(S')\cap V_2|)<12\mu n$, and hence, $|V(T_1)\cap V_1|-|V(T_1)\cap V_2|\leq 25\mu n$, as required.

Suppose, then, that $\deg(v,T_1)=1$. Let $v'$ be the neighbour of $v$ in $T_1$. Let $T'_1=T_1-v$ and $T'_2=T_2+vv'$. Note that, in order to not get a contradiction, we must have $|V(T_1')\cap V_1|-|V(T_1')\cap V_2|< 12\mu n$, so $|V(T_1)\cap V_1|-|V(T_1)\cap V_2|<\mu n+1\leq 25\mu n$, as required.
\end{proof}
Using Proposition~\ref{prop:bipartitesplittree}, we can now prove the following lemma required in \textbf{Case}~\ref{CaseIIA}.
\begin{lemma}\label{lem:caseIIA}
Let $1/n\ll c\ll \mu\ll1$.
Let $T$ be an $n$-vertex tree with $\Delta(T)\leq cn$ and bipartition classes of sizes $t_1$ and $t_2$ satisfying $t_1\geq 1.1t_2$.
%and on $n$ vertices such that  and the bipartition classes $V_1$ and $V_2$ of $T$ satisfies $|V_1|=t_1\geq1.1t_2=1.1|V_2|$.
Let $G$ be a graph containing two disjoint subsets $U_1,U_2$ and an additional vertex $w$, such that for each $i\in [2]$, $|U_i|\geq t_1-\mu n$, $d_G(w,U_i)\geq\mu n$, and $d_G(u,U_{3-i})\geq|U_{3-i}|-\mu n$ for every $u\in U_i$.
Then, $G$ contains a copy of $T$.
\end{lemma}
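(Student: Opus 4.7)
The plan is to rebalance $T$ as sketched in Section~\ref{sec:outline:extcase2}: after placing a carefully chosen vertex $v$ on the extra vertex $w$, I will embed the two pieces of $T$ on either side of $v$ in opposite bipartite orientations across the pair $(U_1,U_2)$, so that the vertex counts on the two sides come out balanced even though $|V_1|$ is much larger than $|V_2|$.

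First, apply Proposition~\ref{prop:bipartitesplittree} to obtain subtrees $T_1,T_2$ decomposing $T$ with a unique common vertex $v$ and $s := |V(T_1)\cap V_1|-|V(T_1)\cap V_2| \in [10\mu n, 25\mu n]$. Embed $v$ to $w$. Then embed $T_1$ in the \emph{reversed} orientation, sending $V(T_1)\cap V_2$ into $U_1$ and $V(T_1)\cap V_1\setminus\{v\}$ into $U_2$; and embed $T_2$ in the \emph{normal} orientation, sending $V(T_2)\cap V_1\setminus\{v\}$ into $U_1$ and $V(T_2)\cap V_2\setminus\{v\}$ into $U_2$. A direct count, splitting on whether $v\in V_1$ or $v\in V_2$, shows that this scheme deposits at most $t_1-s$ vertices into $U_1$ and at most $t_2+s$ vertices into $U_2$. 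Since $|U_1|,|U_2|\geq t_1-\mu n$ and $t_1-t_2 \geq 0.1\, t_1 \gg 25\mu n \geq s$, both sides have at least $\mu n/2$ spare room, enough to absorb the embedding.

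For the embedding itself, process the vertices of $T_1$ and $T_2$ in BFS order rooted at $v$. The neighbours of $v$ must be placed in $N_G(w,U_1)$ and $N_G(w,U_2)$, each of size at least $\mu n \gg cn \geq \Delta(T)$, so there is plenty of room. Every subsequent vertex is placed in $U_1$ or $U_2$ as prescribed above, and at each step one needs to avoid at most $n+\mu n$ vertices in the opposite side (the already-embedded vertices plus the at most $\mu n$ non-neighbours of the parent), which is safely less than what remains available by the slack computed above. Formally, after trimming $U_1$ and $U_2$ to subsets of the exact required sizes that contain the neighbours of $w$ used at the first step, one application of Lemma~\ref{lemma:bipartitegreedy} to each of $T_1$ and $T_2$ rooted at $v$ finishes the embedding. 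I do not expect a serious obstacle; the only technicality is accounting for the $\pm 1$ discrepancies arising from which side $v$ lies on, which are absorbed by the $\mu n$ slack.
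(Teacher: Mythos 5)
Your proposal is correct and follows essentially the same route as the paper: Proposition~\ref{prop:bipartitesplittree} to split $T$ at a vertex $v$ with the stated imbalance, embedding $v$ to $w$, and then placing $T_1$ and $T_2$ in opposite bipartite orientations across $(U_1,U_2)$ so that the counts $|V(T_1)\cap V_2|+|V(T_2)\cap V_1|$ and $|V(T_1)\cap V_1|+|V(T_2)\cap V_2|$ fit into $U_1$ and $U_2$ with room to spare, finishing greedily using the near-complete bipartite degrees and $d_G(w,U_i)\geq\mu n\geq\Delta(T)$. The only cosmetic difference is your invocation of Lemma~\ref{lemma:bipartitegreedy} (where the root sits at $w\notin U_1\cup U_2$ and the two embeddings must be kept disjoint, both easily arranged), whereas the paper argues the greedy step directly; the substance is identical.
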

\begin{proof}
By Proposition~\ref{prop:bipartitesplittree}, we can find subtrees $T_1$ and $T_2$ decomposing $T$ with a unique common vertex $v$, such that $10\mu n\leq|V(T_1)\cap V_1|-|V(T_1)\cap V_2|\leq25\mu n$. Embed $v$ to $w$, then we can greedily embed both $T_1$ and $T_2$ so that vertices in $V(T_2)\cap V_1$ and $V(T_1)\cap V_2$ go into $U_1$ and vertices in $V(T_1)\cap V_1$ and $V(T_2)\cap V_2$ go into $U_2$. This is possible because \[|V(T_1)\cap V_2|+|V(T_2)\cap V_1|\leq|V(T_1)\cap V_1|+|V(T_2)\cap V_1|-10\mu n\leq t_1+1-10\mu n\leq|U_1|-\mu n,\]
\[|V(T_1)\cap V_1|+|V(T_2)\cap V_2|\leq|V(T_1)\cap V_2|+|V(T_2)\cap V_2|+25\mu n\leq t_2+1+25\mu n\leq |U_2|-\mu n,\]
and $w$ has $\mu n\geq\Delta(T)$ neighbours in both $U_1$ and $U_2$.
\end{proof}

%%%%%%%%%%%%%%%%%%%%%%%%%%%%%%%%%%%%%%%%%%%%%%%%%%%%%%%%%%%%%%%%%%%%%%%
%%%%%%%%%%%%%%%%%%%%%%%%%%%%%%%%%%%%%%%%%%%%%%%%%%%%%%%%%%%%%%%%%%%%%%%
%%%%%%%%%%%%%%%%%%%%%%%%%%%%%%%%%%%%%%%%%%%%%%%%%%%%%%%%%%%%%%%%%%%%%%%
%%%%%%%%%%%%%%%%%%%%%%%%%%%%%%%%%%%%%%%%%%%%%%%%%%%%%%%%%%%%%%%%%%%%%%%

\subsection{Case~\ref{CaseIIZ}}\label{subsec:caseIIZ}
The embedding result for \textbf{Case}~\ref{CaseIIZ} follows easily from Lemma~\ref{lemma:caseIB1embedding} proved earlier for \textbf{Case}~\ref{CaseIB1}.

\begin{lemma}\label{lem:caseIIZ}
Let $1/n\ll c\ll \mu \ll 1$. Let $T$ be an $n$-vertex tree with $\Delta(T)\leq cn$.
Let $G$ be a graph that contains two disjoint vertex sets $U_1$, $U_2$ such that $|U_1|\geq\ceil{2n/3}-1$, $|U_2|\geq (2/3-\mu)n$, and $\delta(G[U_i])\geq |U_i|-\mu n$ for each $i\in [2]$. Suppose $G[U_1]$ contains at most $10^6n$ non-edges, and that there exists $v\in V(G)\setminus (U_1\cup U_2)$ with $d_G(v,U_i)\geq |U_i|-\mu n$ for each $i\in [2]$.
Then, $G$ contains a copy of $T$.
\end{lemma}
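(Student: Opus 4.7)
The plan is to decompose $T$ into subtrees $T_1, T_2$ with a unique shared vertex $t$, then embed $T_1$ into $G[U_2 \cup \{v\}]$ and $T_2$ into $G[U_1 \cup \{v\}]$, each sending $t$ to $v$; gluing these embeddings along the shared image $v$ yields a copy of $T$ in $G$. By Corollary~\ref{cor:splittree}, we may choose $T_1, T_2$ with $\lceil n/3 \rceil \leq |T_1| \leq |T_2| \leq \lceil 2n/3 \rceil$, which also forces $|T_1| \leq (n+1)/2$. The key asymmetry is that the larger subtree $T_2$ goes into the larger host $U_1 \cup \{v\}$, which has room because $|U_1|+1 \geq \lceil 2n/3 \rceil \geq |T_2|$, while the smaller $T_1$ sits comfortably in $U_2 \cup \{v\}$ even though $|U_2|$ may be only $(2/3-\mu)n$.

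For $T_2$, I would invoke the ``moreover'' clause of Lemma~\ref{lemma:caseIB1embedding}. Fix any $U_1' \subseteq U_1$ with $|U_1'| = |T_2|-1$ and run the lemma on the tree $T_2$ and the graph $G[U_1' \cup \{v\}]$ taken with the trivial partition (main part $U_1' \cup \{v\}$, second part empty), with parameters $k = D = 0$, $X = \emptyset$, and rescaled hierarchy constants $c' = 3c$ and $\mu' = \beta' = 3\mu$. These are admissible because $|T_2| \geq n/2$, so $1/|T_2| \ll c' \ll \mu', \beta' \ll 1$. The lemma's hypotheses are then straightforward: the minimum-degree bound $\delta(G[U_1' \cup \{v\}]) \geq |U_1'| - \mu n$ follows from $\delta(G[U_1]) \geq |U_1| - \mu n$ and $d_G(v, U_1) \geq |U_1| - \mu n$, and the non-edge count satisfies $e(G^c[U_1' \cup \{v\}]) \leq 10^6 n + \mu n \leq 10^7 |T_2|$ again because $|T_2| \geq n/2$. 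The ``moreover'' conclusion then produces the desired embedding of $T_2$ with $t \mapsto v$. For $T_1$, Lemma~\ref{lemma:greedy} applied to $G[U_2 \cup \{v\}]$ suffices: the minimum degree of this graph is at least $|U_2| - \mu n \geq (2/3 - 2\mu)n$, which exceeds $|T_1| - 1 \leq (n-1)/2$ for $\mu \ll 1$, and the lemma lets us prescribe $t \mapsto v$.

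The proof is largely mechanical; the one nontrivial ingredient is deploying Lemma~\ref{lemma:caseIB1embedding} (rather than a naive greedy argument) to handle $T_2$, because Corollary~\ref{cor:splittree} in the worst case (e.g.\ a balanced ``tripod'') forces $|T_2|$ all the way up to $\lceil 2n/3 \rceil$, exactly matching $|U_1|+1$, so even a single missing edge in $G[U_1]$ near the end of a greedy embedding could be fatal. The flexibility of Lemma~\ref{lemma:caseIB1embedding} absorbs the up to $10^6 n$ non-edges since, with $|T_2| = \Theta(n)$, the allowance $10^7(k+D+1)|T_2|$ swallows them comfortably.
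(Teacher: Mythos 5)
Your proof is correct and follows essentially the same route as the paper: split $T$ via Corollary~\ref{cor:splittree} at a vertex $t$, embed the larger piece $T_2$ into $G[U_1\cup\{v\}]$ with $t\mapsto v$ using the ``moreover'' clause of Lemma~\ref{lemma:caseIB1embedding} (with $D=0$, $X=\emptyset$, the non-edge bound absorbed since $|T_2|\geq n/2$), and greedily embed $T_1$ into $G[U_2\cup\{v\}]$ with $t\mapsto v$. The only difference is cosmetic: you trim $U_1$ to a set of size exactly $|T_2|-1$ so that $k=0$, whereas the paper applies the lemma to all of $U_1\cup\{v\}$ with $k=|U_1\cup\{v\}|-|T_2|\geq 0$.
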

\begin{proof} Using Corollary~\ref{cor:splittree}, let $T_1$ and $T_2$ be a decomposition of $T$ into subtrees with a unique common vertex $t$, so that $\ceil{n/3}\leq|T_1|\leq|T_2|\leq\ceil{2n/3}$. Let $D=0$ and $k=|U_1\cup \{v\}|-|T_2|$, so that $k\geq 0$ and thus $G[U_1\cup\{v\}]$ has at most $10^6n+\mu n\leq 10^7(k+D+1)|T_2|$ non-edges. Then, by Lemma~\ref{lemma:caseIB1embedding}, $G[U_1\cup\{v\}]$ contains a copy of $T_2$, in which $t$ is copied to $v$. Since $|T_1|\leq 1+n/2\leq |U_2|-\mu n$, we can complete the embedding of $T$ by greedily finding a copy of $T_1$ in $G[U_2\cup \{v\}]$ with $t$ copied to $v$.
\end{proof}

%%%%%%%%%%%%%%%%%%%%%%%%%%%%%%%%%%%%%%%%%%%%%%%%%%%%%%%%%%%%%%%%%%%%%%%
%%%%%%%%%%%%%%%%%%%%%%%%%%%%%%%%%%%%%%%%%%%%%%%%%%%%%%%%%%%%%%%%%%%%%%%
%%%%%%%%%%%%%%%%%%%%%%%%%%%%%%%%%%%%%%%%%%%%%%%%%%%%%%%%%%%%%%%%%%%%%%%
%%%%%%%%%%%%%%%%%%%%%%%%%%%%%%%%%%%%%%%%%%%%%%%%%%%%%%%%%%%%%%%%%%%%%%%

\subsection{Case~\ref{CaseIIB}}\label{subsec:caseIIB}

We now give the embedding in \textbf{Case}~\ref{CaseIIB}, where there are enough blue edges between $U_1$ and $U_2$ to embed a large subtree of the tree in $U_2$ and connect this across to embed the rest into $U_1$.

\begin{lemma}\label{lem:caseIIB}
Let $1/n\ll c\ll \mu \ll \eps\ll 1$. Let $G$ be a graph that contains two disjoint vertex sets $U_1$, $U_2$ such that $|U_i|\geq(2/3-\eps/3)n$ and $\delta(G[U_i])\geq |U_i|-\mu n$ for each $i\in [2]$. Let $T$ be an $n$-vertex tree with $\Delta(T)\leq cn$. Suppose $T$ has an $(\eps,\sqrt n)$-sparse cut $V(T)=A\cup B$, such that $T[A,B]$ can be embedded into $G[U_1,U_2]$ with $A$ embedded into $U_1$ and $B$ embedded into $U_2$.
Then, $G$ contains a copy of $T$.
\end{lemma}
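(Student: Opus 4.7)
Let $\varphi$ denote the given bipartite embedding of $T[A,B]$ into $G[U_1,U_2]$ with $\varphi(A)\subset U_1$ and $\varphi(B)\subset U_2$, and set $A'=\{v\in A:d_T(v,B)>0\}$ and $A_0=A\setminus A'$. By Definition~\ref{def:sparsecut}, $A'$ is an independent set in $T$ with $|A'|\leq 2\Delta(T)\leq 2cn$, and every $v\in A'$ has $d_T(v,B)\leq\sqrt n$. The key quantities driving the argument are the slacks $|U_1|-|A|,|U_2|-|B|\geq(2\eps/3)n$ and the near-completeness $\delta(G[U_i])\geq|U_i|-\mu n$ within each $U_i$. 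The plan is to build the target copy $\psi$ of $T$ in $G$ in three steps: re-embed $T[A]$ into $G[U_1]$ while agreeing with $\varphi$ on a large subset of $A'$; extend $\psi$ to most of $B$ using $\varphi$ inside $U_2$; and finally absorb the remaining $B$-vertices into the spare capacity in $U_1$.

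In Step~1, I will pre-fix $\psi(v)=\varphi(v)$ for every $v\in A'$ and then embed $A_0$ greedily in BFS order along $T[A]$. When processing $u\in A_0$, I choose $\psi(u)$ from $N_{G[U_1]}(\psi(\mathrm{parent}(u)))\cap\bigcap_{w\in N_T(u)\cap A'}N_{G[U_1]}(\varphi(w))$ with the already-used vertices removed; by $\delta(G[U_1])\geq|U_1|-\mu n$ and $|U_1|-|A|\geq(2\eps/3)n$, this set is non-empty whenever $u$ has at most $K:=\floor{2\eps/(3\mu)}-1$ fixed $A'$-neighbours. For vertices $u$ with more $A'$-neighbours than $K$, I release some of them from the fixed set, placing the released ones into a set $A'_{\mathrm{bad}}$. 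In Step~2, for each $v\in A'\setminus A'_{\mathrm{bad}}$ I set $\psi(u)=\varphi(u)$ for every $u\in N_T(v)\cap B$, which preserves the $T[A,B]$-edges incident to $v$; I then extend $\psi$ into $U_2$ on the components of $T[B]$ whose attachment vertices all lie in $A'\setminus A'_{\mathrm{bad}}$, using $\varphi$ where it already agrees with $G[U_2]$ and locally rerouting the few $T[B]$-edges that $\varphi$ sends to non-edges, exploiting $\delta(G[U_2])\geq|U_2|-\mu n$ and the slack $|U_2|-|B|\geq(2\eps/3)n$. In Step~3, the residual $B$-vertices (those in components of $T[B]$ with some attachment in $A'_{\mathrm{bad}}$) are embedded greedily into the spare capacity of $U_1$, as neighbours of the corresponding $\psi(v)\in U_1$, using the min-degree of $G[U_1]$.

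The main obstacle will be to control $|A'_{\mathrm{bad}}|$ tightly enough that the at most $|A'_{\mathrm{bad}}|\cdot\sqrt n$ residual $B$-vertices in Step~3 fit into the spare capacity $(2\eps/3)n$ of $U_1$, i.e.\ $|A'_{\mathrm{bad}}|=O(\eps\sqrt n)$. The na\"ive bound $|A'_{\mathrm{bad}}|\leq|A'|\leq 2cn$ is too weak for large $n$, so a more refined analysis will be needed: a global accounting over the BFS exploration, combined with routing some residual components of $T[B]$ through the slack in $U_2$ instead of $U_1$, and exploiting that $A_0$-vertices with many $A'$-neighbours are constrained by the tree-edge count $\sum_{u\in A_0}d_T(u,A')\leq|A|$. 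Keeping the three steps consistent at the interface between the matched and unmatched regions --- particularly for components of $T[B]$ whose attachments straddle $A'\setminus A'_{\mathrm{bad}}$ and $A'_{\mathrm{bad}}$ --- is the delicate heart of the argument.
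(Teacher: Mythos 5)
There is a genuine gap, and it sits exactly where you flag the ``main obstacle,'' but it is worse than you suggest. First, your budget for Step~3 is miscounted: the bound $d_T(v,B)\leq\sqrt n$ for $v\in A'$ controls only the \emph{number} of $B$-neighbours of a released vertex, i.e.\ the number of components of $T[B]$ hanging from it, not their sizes. Each such component can have $\Theta(n)$ vertices (since $T$ is a tree and $A'$ is independent, every component of $T[B]$ attaches to exactly one $A'$-vertex, and these components can be large), so the residual is not ``at most $|A'_{\mathrm{bad}}|\cdot\sqrt n$'' but can be as large as $|B|\approx(2/3-\eps)n$, dwarfing the $U_1$-slack of roughly $(2\eps/3)n$. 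Second, the repair ideas you sketch do not close this: the lemma assumes nothing about $G[U_1,U_2]$ beyond the existence of the one embedding $\varphi$ of $T[A,B]$, so if an $A'$-vertex is embedded anywhere other than its $\varphi$-image, its image may have \emph{no} neighbours in $U_2$ at all, and the attached $B$-components cannot be ``routed through the slack in $U_2$''; they must go into $U_1$, whose slack is only $O(\eps n)$. Your deterministic scheme (pre-fix all of $A'$, embed $A_0$ adjacent to all fixed neighbours, release when the intersection of up to $\Delta(T)=cn$ neighbourhoods empties out) has no mechanism to keep the total size of the components hanging off released vertices below that slack: e.g.\ an $A_0$-vertex with $cn$ children in $A'$, each carrying a constant-size $B$-component, forces you to release all but $O(\eps/\mu)$ of them, rerouting $\Theta(n)$ vertices of $B$.

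The paper avoids this by not pre-fixing $A'$ at all. It embeds $T[A]$ vertex by vertex, each vertex after its unique parent, choosing images \emph{uniformly at random} from the available neighbourhood of the parent's image; when an $A'$-vertex $t_i$ comes up, it is placed at its designated image $s_i'=\varphi(t_i)$ if the (random) parent image happens to be adjacent to $s_i'$, and at a random vertex otherwise. This turns the difficulty into a single adjacency constraint per $A'$-vertex, failing with probability at most $\mu n/(|U_1|-|A|-\mu n)\leq 3\mu/\eps\leq\sqrt\mu$, and --- this is the key point missing from your plan --- the failures are weighted by the sizes of the attached components: $\sum_{i}\mathbb{E}(X_i)\leq\sqrt\mu\sum_i|T_i|\leq\sqrt\mu n$, where $X_i=|T_i|-1$ when $t_i$ misses its designated image. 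Fixing a realisation with $\sum_i X_i\leq\sqrt\mu n$, the $B$-parts of the successful components are embedded greedily into $U_2$ (anchored at the $\varphi$-images of the first-level $B$-neighbours, using $\delta(G[U_2])\geq|U_2|-\mu n\geq|B|$), and the $B$-parts of the failed components, of total size at most $\sqrt\mu n\ll\eps n$, are absorbed greedily into $U_1$. Without this randomised, size-weighted control (or some equivalent), your Steps~1 and~3 cannot be made to fit the available space.
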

\begin{proof}
Let $A'=\{v\in A:d_T(v,B)>0\}$ and let $t_1\in A\setminus A'$, so $d_T(t_1,B)=0$. Let $m=|A|$ and extend $t_1$ to an ordering $t_1,\ldots,t_m$ of the vertices in $A$ so that each vertex $t_j$ except $t_1$ has exactly one neighbour to its left in $T[A]$ in this ordering.
%(which exists as $|B|\geq (1/3+\eps)n$ and $\{v\in B:d_T(v,A)>0\}$ is an independent set in $T$).

Let $I=\{i\in[m]:t_i\in A'\}$, so $|I|\leq2cn$ from the definition of sparse cuts. From assumption, there is an embedding $\phi'$ of $T[A,B]$ into $G[U_1,U_2]$ with $A$ embedded into $U_1$ and $B$ embedded into $U_2$. Let $s'_i=\phi'(t_i)$ for each $i\in I$.

Pick $s_1\in U_1\setminus \{s'_i:i\in I\}$ uniformly at random. Then, for each $1<i\leq m$, let $j_i<i$ satisfy $t_{j_i}t_i\in E(T[A])$, and embed $t_i$ to some $s_i\in U_1$ randomly as follows.
\stepcounter{propcounter}
\begin{enumerate}[label = \textbf{\Alph{propcounter}\arabic{enumi}}]
    \item\label{IID1} If $i\in I$, then let $s_i=s'_i$ if $s_{j_i}s'_i\in E(G)$, otherwise pick $s_i$ uniformly at random from $N_G(s_{j_i},U_1)\setminus (\{s_j:j<i\}\cup \{s'_j:j\in I\})$.
    \item\label{IID2} If $i\notin I$, then pick $s_i$ uniformly at random from $N_G(s_{j_i},U_1)\setminus (\{s_j:j<i\}\cup \{s'_j:j\in I\})$.
\end{enumerate}

For each $i\in I$, let $T_i$ be the component containing $t_i$ in $T[A'\cup B]$, let $X_i=|T_i|-1$ if $s_i\neq s_i'$ and let $X_i=0$ otherwise. For each $i\in I$, the probability that $s_{j_i}$ is not in $N_G(s'_i)$ is at most $\mu n/(|U_1|-|A|-\mu n)\leq3\mu/\eps\leq\sqrt{\mu}$. Thus,
\[
\sum_{i\in I}\mathbb{E}(X_i)\leq \sqrt{\mu}\cdot \sum_{i\in I}|T_i|\leq \sqrt{\mu}n,
\]
so there is a realisation of $s_1,\ldots,s_m$ for which $\sum_{i\in I}X_i\leq \sqrt{\mu} n$. Take such a realisation, and let $I'\subset I$ be the set of $i\in I$ for which $s_i=s_i'$, so that $\sum_{i\in I\setminus I'}(|T_i|-1)=\sum_{i\in I}X_i\leq \sqrt{\mu}n$.

Let $\phi(t_i)=s_i$ for each $i\in [m]$ and note that this is an embedding of $T[A]$ in $G[U_1]$. Extend this to an embedding of $T[A\cup(\cup_{i\in I'}N_T(t_i))]$ using the embedding $\phi'$ of $T[A,B]$. Using $\delta(G[U_2])\geq |U_2|-\mu n\geq |B|$, we can greedily extend $\phi$ to an embedding of $T[A\cup(\cup_{i\in I'}T_i)]$ by embedding the vertices in $\cup_{i\in I'}(V(T_i)\cap B)$ into $U_2$. Then, using $\sum_{i\in I\setminus I'}(|T_i|-1)\leq \sqrt{\mu}n$,
\[\delta(G[U_1])\geq |U_1|-\mu n\geq (2/3-\eps/3)n-\mu n\geq |A|+\sum_{i\in I\setminus I'}(|T_i|-1),\]
so we can extend $\phi$ to an embedding of $T$ by greedily embedding the vertices in $\cup_{i\in I\setminus I'}(V(T_i)\cap B)$ into $U_1$. Thus, $G$ contains a copy of $T$.% completes the copy of $T$ in
%Greedily
% Extend this using $\phi$, then greedily embed the rest of both sides.
\end{proof}

%%%%%%%%%%%%%%%%%%%%%%%%%%%%%%%%%%%%%%%%%%%%%%%%%%%%%%%%%%%%%%%%%%%%%%%
%%%%%%%%%%%%%%%%%%%%%%%%%%%%%%%%%%%%%%%%%%%%%%%%%%%%%%%%%%%%%%%%%%%%%%%
%%%%%%%%%%%%%%%%%%%%%%%%%%%%%%%%%%%%%%%%%%%%%%%%%%%%%%%%%%%%%%%%%%%%%%%
%%%%%%%%%%%%%%%%%%%%%%%%%%%%%%%%%%%%%%%%%%%%%%%%%%%%%%%%%%%%%%%%%%%%%%%

\subsection{Case~\ref{CaseIIC}}\label{subsec:caseIIC}
In the last case, \textbf{Case}~\ref{CaseIIC}, we aim to embed the tree $T$ into $\red{G}[U_1^+,U_2^+]$. Our embedding method here is the most involved, but it shares some similarities with \textbf{Case}~\ref{CaseIB1}. We will remove some leaves in $V_1$ from the tree $T$, and aim to embed the rest of the tree so that each remaining vertex in $G$ in the correct side has plenty of neighbours among the vertices that need leaves attached to them, which would allow us to complete the embedding using Lemma~\ref{lemma:hallmatching}. As mentioned before, the key difficulty is to ensure that the lower degree vertices in $G$ are covered, either in the initial stage by some carefully chosen vertices in $T$, or in the last stage by the leaves. An additional complication is that if $|U_1^+|\geq t_1$, then we will embed vertices in $V_i$ into $U_i^+$ for each $i\in[2]$, but if $|U_1^+|=t_1-1$, which implies that $|U_2^+|=t_1-1$ as well, we will instead embed vertices in $V_i$ into $U_{3-i}^+$ for each $i\in[2]$, except for one leaf in $V_1$ which needs to be embedded into $U_1^+$. The last part is possible as there will be some red edges in $G[U_1^+]$ in this case. To avoid repetition, we will prove the following embedding lemma that will later be applied to $(U_1^+,U_2^+)$ in the former case, and to $(U_2^+,U_1^+)$ in the latter case.

\begin{lemma}\label{lem:caseIIC}
Let $1/n\ll c\ll\mu\ll\alpha\ll\beta\ll\eps\ll1$. Let $G$ be a graph on at most $2n$ vertices that contains two disjoint vertex sets $U_1$, $U_2$ with $|U_1|\geq t_1-1$ and $|U_2|\geq(1-\mu)t_1$. Moreover, if $|U_1|=t_1-1$ then $e(G[U_2])\geq 10^6n$.
Suppose $\delta(G[U_1,U_2])\geq \beta n$ and, for each $i\in [2]$, all but at most $\mu n$ vertices $u\in U_i$ satisfy $d_G(u,U_{3-i})\geq |U_{3-i}|-\mu n$.

Let $0\leq \ell\leq 2cn$. Suppose there exist subsets $U_A\subset U_1$ and $U_B\subset U_2$ with $|U_A|<\ell$ and $|U_B|\leq(2/3-\eps)n$, such that $|N_G(u,U_2\setminus U_B)|\geq|U_2\setminus U_B|-\sqrt n$ for each $u\in U_1\setminus U_A$.

Let $T$ be an $n$-vertex tree with $\Delta(T)\leq cn$ and bipartition classes $V_1$ and $V_2$ of sizes $t_1$ and $t_2$, respectively, such that $t_1\geq(2-\mu)t_2$, and one of the following holds. %has an $(\eps,\sqrt n)$-sparse cut $V(T)=A\cup B$ with $|\{u\in A:d_T(u,B)>0\}|=\ell$, and  
\stepcounter{propcounter}
\begin{enumerate}[label =\emph{\textbf{\Alph{propcounter}\arabic{enumi}}}]
\item\labelinthm{subcase1} There is a set $L$ of $\alpha n$ leaves of $T$ in $V_1$ such that $d_T(u,L)\leq \sqrt{n}$ for each $u\in N_T(L)$.
\item\labelinthm{subcase2} $T$ contains a set $V_1'$ of $\ell$ vertices in $V_1$ and a disjoint set $L$ of $\alpha n$ leaves in $V_1$ such that vertices in $V_1'$ have no common neighbour, $|N_T(V_1')|\leq\eps n$, and $N_T(V_1')\cap N_T(L)=\emptyset$.
\end{enumerate}
%at least $\eps \ell$ components of $T-\{v\in B:d_T(v,A)=0\}$ have a vertex in $V_1$.
Then, $G$ contains a copy of $T$.%|\{v\in B:d_T(v,A)\}|
\end{lemma}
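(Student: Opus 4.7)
The plan is to embed $T$ into $G$ with $V_1\to U_1$ and $V_2\to U_2$, adapting the random-embedding-plus-Hall-matching strategy from Lemma~\ref{lemma:caseIB1embedding}. I strip the leaves $L\subset V_1$ from $T$, randomly embed $T-L$ while ensuring $U_A$ is covered and the parents $P:=N_T(L)$ land in $U_2\setminus U_B$, then reattach $L$ using Lemma~\ref{lemma:hallmatching}. The two subcases differ only in how the bad set $U_A$ is covered.

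For the boundary case $|U_1|=t_1-1$, the assumption $e(G[U_2])\geq 10^6n$ provides some edge $xy\in G[U_2]$; since $\delta(G[U_1,U_2])\geq\beta n\gg \Delta(T)$, both $x$ and $y$ have far more than $\Delta(T)$ neighbours in $U_1$. Picking a leaf $\ell^*\in V_1\setminus L$ with parent $p^*\in V_2\setminus N_T(L)$ (which exists by Lemma~\ref{lemma:leaves:V1} applied to $T$) and pre-embedding $\ell^*\mapsto x$, $p^*\mapsto y$ reduces the task to embedding $T-\{\ell^*\}$ with $|V_1|-1=|U_1|$, which fits the case $|U_1|\geq t_1$ (with $p^*$'s image pre-fixed). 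I assume $|U_1|\geq t_1$ henceforth.

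For the main random embedding of $T-L$, I first cover $U_A$: in subcase~\ref{subcase2}, I inject $V_1'$ into $U_A$ via Hall's theorem (Lemma~\ref{lemma:Hall}), using the no-common-neighbour condition together with $\Delta(T)\leq cn\ll \beta n$ to verify Hall's condition; in subcase~\ref{subcase1}, since $|U_A|<\ell\leq 2cn$ is tiny, I reserve $|U_A|$ vertices of $V_1\setminus L$ with bounded tree-degree and route them onto $U_A$ via an analogous matching. I then order the remaining vertices of $T-L$ by BFS and, for each $t_i$ in turn, pick $\psi(t_i)$ uniformly at random from $N_G(\psi(t_{j_i}),R_i)$, where $R_i\subseteq U_2\setminus U_B$ whenever $t_i\in P$ (so that $\psi(P)\subset U_2\setminus U_B$), and $R_i$ is the appropriate side of $U_1\cup U_2$ otherwise, with already-used vertices excluded. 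The bipartite minimum degree $\beta n$ together with $|U_2\setminus U_B|\geq \eps n/2\gg |L|$ keeps every step feasible.

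The crux is verifying Hall's condition for Lemma~\ref{lemma:hallmatching} when attaching the leaves $L$. Following Lemma~\ref{lemma:caseIB1embedding}, call $u\in U_1\setminus\psi(V_1\setminus L)$ \emph{bad} if, in some dyadic band $[2^{j-1},2^j)$ of the values $d_T(p,L)$ for $p\in P$, $u$ has too few neighbours among the parent images $\psi(p)$ in that band. A conditional Markov-style computation bounds $\mathbb{P}(u\text{ bad})$ by $O(m_u/n)$, where $m_u$ is the number of non-neighbours of $u$ in $U_2\setminus U_B$. For $u\in U_1\setminus U_A$ this count is at most $\sqrt n$, so the expected number of bad vertices in $U_1\setminus U_A$ is $O(|L|\sqrt n/n)=o(1)$, and a realisation with no such bad vertex exists. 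Since $U_A$ is fully covered by our designated placements above, every vertex left unused in $U_1$ lies in $U_1\setminus U_A$ and is hence good. Hall's condition then follows by the familiar dichotomy on $\sum_{p\in S}d_T(p,L)$ for $S\subseteq P$: small sums are handled by the bipartite minimum-degree bound, while large sums are handled by the absence of bad vertices, which forces every unused vertex to be a neighbour of some $\psi(p)$ with $p\in S$. Applying Lemma~\ref{lemma:hallmatching} then completes the embedding of $T$.
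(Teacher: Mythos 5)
Your skeleton matches the paper's proof (strip the leaves $L$, embed the rest by a vertex-by-vertex random process, run a dyadic ``bad vertex'' analysis, finish with Lemma~\ref{lemma:hallmatching}, and handle $|U_1|=t_1-1$ by pre-embedding a spare leaf on an edge of $G[U_2]$), but two essential mechanisms are missing. First, the treatment of $U_A$. A vertex $u\in U_A$ is only guaranteed $\beta n$ neighbours in $U_2$, so to place a prescribed tree vertex on $u$ you must arrange that its parent's image (and the images of all its other $V_2$-children) land inside $N_G(u,U_2)$; your rules pick images uniformly from ``the appropriate side'', which does this with probability only about $\beta$ per vertex, and ``inject $V_1'$ into $U_A$ via Hall's theorem'' does not say in which graph the matching lives nor how adjacency to the parent's image is secured. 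The paper's device is a random reservoir $Z\subset U_2^-$ (the high-degree part of $U_2$) with $d_G(u,Z)\geq\beta^2 n/2$ for \emph{every} $u\in U_1$, together with special rules that embed the parent of the designated child of $u$ into $N_G(u,Z)$ and the remaining children into $Z$. Moreover, in subcase~\ref{subcase1} the paper does \emph{not} cover $U_A$ at all: it splits the parents of $L$ into $P_1\cup P_2$, sends $P_2$ into $Z$, and uses Azuma with the hypothesis $d_T(\cdot,L)\leq\sqrt n$ (bounded increments) to show every uncovered $u\in U_A$ still sees leaf-weight $\Omega(\alpha\beta n)$. Your alternative of covering $U_A$ by reserved bounded-degree vertices of $V_1\setminus L$ is not justified (suitable vertices with distinct, steerable parents need not exist, and nothing in your process routes those parents next to the targets), and it would make hypothesis~\ref{subcase1} unused -- a sign the plan cannot work as stated.

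Second, your bad-vertex estimate is too weak. The Markov-style bound $\mathbb{P}(u\text{ bad})=O(m_u/n)$ with $m_u\leq\sqrt n$ gives an expected number of bad vertices in $U_1\setminus U_A$ of order $\sum_u m_u/n=O(\sqrt n)$; your claim ``$O(|L|\sqrt n/n)=o(1)$'' is arithmetically false ($\alpha n\cdot\sqrt n/n=\alpha\sqrt n\to\infty$). Unlike Lemma~\ref{lemma:caseIB1embedding}, there is no surplus to absorb bad vertices here: when $|U_1|=t_1$ the set of unused vertices of $U_1$ has size exactly $|L'|$, so even one bad unused vertex destroys Hall's condition in the large-sum case. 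One needs ``no bad vertices with high probability'', i.e.\ a per-vertex failure probability $O(n^{-2})$ followed by a union bound; this is exactly where the $\sqrt n$ bound on non-neighbours in $U_2\setminus U_B$ is exploited, via a different argument from Lemma~\ref{lemma:caseIB1embedding} (per-parent failure probability at most $n^{-1/3}$, Azuma for parents with $d_i\leq n/\log^2 n$, and a union bound over $10$-element subsets of the at most $\log^2 n$ heavier parents). A further small point: the parents of $L$ must be embedded into $U_2^-\cap(U_2\setminus U_B)$, not merely $U_2\setminus U_B$, since the small-sum case of the final Hall verification uses that their images have near-full degree back to $U_1$.
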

\begin{proof}
Let $V_1'=\emptyset$ if $T$ does not satisfy~\ref{subcase2}. In both cases, let $s_1$ be a leaf of $T$ in $V_1\setminus V_1'$, which exists by Lemma~\ref{lemma:leaves:V1} using $t_1\geq(2-\mu)t_2$. Let $s_2$ be the neighbour of $s_1$ in $T$, and view $T$ as being rooted at $s_1$. 

Let $L'=L\setminus N_T(s_2)$, and let $P$ be the set of parents of $L'$. Note that $|L'|\geq\alpha n/2\gg\mu n$. Partition $P=P_1\cup P_2$, so that for each $j\in[2]$, $L_j:=N_T(P_j,L')$ has size at least $\alpha n/10$. If~\ref{subcase2} holds, then take an injection $\phi:U_A\to V_1'$ such that no vertex in $\phi(U_A)$ is adjacent to $s_2$, which is possible as $|U_A|<\ell=|V_1'|$ and vertices in $V_1'$ share no common neighbour. For every $u\in U_A$, let $\phi'(u)$ be the parent of $\phi(u)$ in the rooted tree $T$, and let $P'=\{\phi'(u):u\in U_A\}$. Note that from the assumption in~\ref{subcase2}, $P'\cap P=\emptyset$. If~\ref{subcase2} does not hold then let both $\phi$ and $\phi'$ be the empty function, and let $P'=\emptyset$.  

For each $j\in [2]$, let $U_j^-=\{u\in U_j:d_G(u,U_{3-j})\geq |U_{3-j}|-\mu n\}$, so that $|U_j\setminus U_j^-|\leq \mu n$. Select a random subset $Z\subset U_2^-$ by including each vertex independently at random with probability $\beta$. Using Lemma~\ref{lemma:chernoff}, with positive probability we have $|Z|\leq2\beta n$, $d_G(u,Z)\geq\beta^2n/2$ for each $u\in U_1$, and $e(G[U_2]-Z)>0$ if $|U_1|=t_1-1$. Fix a choice of $Z$ with these properties. Note that \[|U_2^-\setminus(U_B\cup Z)|\geq(1-\mu)t_1-\mu n-(2/3-\eps)n-2\beta n\geq2\alpha n.\] By adding vertices to $U_B$ if necessary, we may assume that $|U_2^-\setminus (U_B\cup Z)|=2\alpha n$.

Let $T'=T-L'$ and $m=|T'|$. Extend $s_1,s_2$ to an ordering $s_1,\ldots,s_m$ of the vertices in $T'$, such that for every $2\leq i\leq m$, $s_i$ has a unique neighbour in $T'$ to its left in this ordering. If $|U_1|=t_1-1$, pick $v_1v_2\in E(G[U_2]-Z)$ arbitrarily. Otherwise, arbitrarily pick $v_1v_2\in E(G[U_1,U_2]-Z)$ with $v_1\in U_1$ and $v_2\in U_2$. Embed $s_1$ to $v_1$ and $s_2$ to $v_2$. For each $3\leq i\leq m$, suppose $s_j$ has been embedded to $v_j$ for all $j<i$ with only vertices in $V_1'$ embedded into $U_A$, and let $j_i<i$ satisfy $s_{j_i}s_i\in E(T')$. Note that if $s_i\in V_2$ and $v_{j_i}\in U_A$, then $s_{j_i}\in V_1'$, so $s_i\not\in P\cup P'$ as vertices in $V_1'$ share no common neighbour and $N_T(V_1')\cap N_T(L')=\emptyset$. Embed $s_i$ randomly to some $v_i$ as follows.
\stepcounter{propcounter}
\begin{enumerate}[label =\textbf{\Alph{propcounter}\arabic{enumi}}]
\item\label{embed1} If $s_i\in P_1$, then randomly select $v_i$ from $N_G(v_{j_i},U_2^-\setminus(U_B\cup Z\cup\{v_1,\ldots,v_{i-1}\}))$.
\item\label{embed2} If $s_i\in P_2$, then randomly select $v_i$ from $N_G(v_{j_i},Z\setminus\{v_1,\ldots,v_{i-1}\})$.
\item\label{embed3} If $s_i\in P'$, say $s_i=\phi'(u)$, then randomly select $v_i$ from $N_G(v_{j_i},N_G(u,Z)\setminus\{v_1,\ldots,v_{i-1}\})$.
\item\label{embed4} If $s_i\in V_2\setminus(P\cup P')$ and $v_{j_i}\not\in U_A$, then randomly select $v_i$ from $N_G(v_{j_i},(U_2^-\cap U_B)\setminus(Z\cup\{v_1,\ldots,v_{i-1}\}))$.
\item\label{embed5} If $s_i\in V_2\setminus(P\cup P')$ and $v_{j_i}\in U_A$, then randomly select $v_i$ from $N_G(v_{j_i},Z\setminus\{v_1,\ldots,v_{i-1}\})$.
\item\label{embed6} If $s_i\in V_1$ and there is some $u\in U_A$ such that $\phi(u)=s_{i}$, then let $v_i=u$. 
\item\label{embed7} If $s_i\in V_1$ and there is no $u\in U_A$ such that $\phi(u)=s_{i}$, randomly select $v_{i}$ from $N_G(v_{j_i},U_1^-\setminus (U_A\cup\{v_1,\ldots,v_{i-1}\}))$.
\end{enumerate}
Note that~\ref{embed1}--\ref{embed7} can always be carried out to obtain an embedding of $T'$. Moreover, from~\ref{embed6}, $U_A$ is covered by this embedding if \ref{subcase2} holds. 

For each $i\in[m]$, let $d_{i,1}=d_T(s_i,L_1)$, $d_{i,2}=d_T(s_i,L_2)$, and $d_i=d_{i,1}+d_{i,2}$.
\begin{claim}\label{claim:canextend}
With high probability, for every vertex $v\in U_1$ not yet covered by the embedding of $T'$,
\[\sum_{i\in[m]:v_i\in N_G(v)}d_i\geq10\mu n.\]
%for any set $U\subset U_2\setminus (V(S)\cup U'_B)$ with $|U|=|L|$, $S$ can be extended to a copy of $T[B^+]$ in $G[V(S)\cup U]$.
\end{claim}
\begin{proof}[Proof of Claim~\ref{claim:canextend}]
%\textbf{Point: this is easy for vertices not in $U_B$ because of how many neighbours they have in the set where $s_i$, $i\in P_1$, is embedded, and either $U_B$ was covered, or the standard Azuma argument works as the neighbours of leaves is a large set.}
Note that $d_1=d_2=0$. For each $v\in U_1\setminus U_A$ and $i\in [m]$, let $X_{v,i}=d_{i,1}$ if $v_i\in N_G(v)$ and $X_{v,i}=0$ otherwise. For each $v\in U_A$ and $i\in [m]$, let $X_{v,i}=d_{i,2}$ if $v_i\in N_G(v)$ and $X_{v,i}=0$ otherwise.

Regardless of whether~\ref{subcase1} or~\ref{subcase2} holds, for each $v\in U_1\setminus U_A$ and $i\in [m]$, let $s_{j_i}$ be the unique neighbour of $s_i$ to its left, then, using~\ref{embed1} and $|U_2^-\setminus (U_B\cup Z)|=2\alpha n$, 
\begin{align*}
\mathbb{P}(X_{v,i}\neq d_{i,1}\mid X_{v,1},\ldots,X_{v,i-1})&\leq\frac{|N_G(v_{j_i},U_2^-\setminus (N_G(v)\cup U_B\cup Z\cup \{v_1,\ldots,v_{i-1}\}))|}{|N_G(v_{j_i},U_2^-\setminus (U_B\cup Z\cup \{v_1,\ldots,v_{i-1}\}))|}\\
&\leq\frac{\sqrt n}{2\alpha n-\alpha n-\mu n}\leq \frac{1}{n^{1/3}}.
\end{align*}
Let $\ell_1=|L_1|=\sum_{i:s_i\in P_1}d_{i,1}\geq\alpha n/10$. Let $P_1'=\{s_i\in P_1:d_{i,1}\leq n/\log^2n\}$. By Lemma~\ref{lemma:azuma}, if $\sum_{i:s_i\in P_1'}d_{i,1}\geq\ell_1/2$, then for every $v\in U_1\setminus U_A$,
\begin{align}\label{eq:firstsmall}
\mathbb{P}\left(\sum_{i:s_i\in P_1'}X_{v,i}<\alpha n/40\right)\leq\exp\left(-\frac{\ell_1^2}{10^4\sum_{i:s_i\in P_1'}d_{i,1}^2}\right)\leq\exp\left(-\frac{\ell_1^2}{10^4(\frac{\ell_1\log^2n}{n})(\frac n{\log^2n})^2}\right)\leq \frac{1}{n^2}.
\end{align}
Otherwise, $\sum_{i:s_i\in P_1\setminus P_1'}d_{i,1}\geq\ell_1/2$. Since $|P_1\setminus P_1'|\leq \log^2n$, for each $v\in U_1\setminus U_A$, the probability that there is some $J\subset P_1\setminus P_1'$ with $|J|\geq 10$ and $X_{v,i}\neq d_{i,1}$ for each $i\in J$ is at most
\[
(\log^2n)^{10}\cdot \left(\frac{1}{n^{1/3}}\right)^{10}\leq \frac{1}{n^2},
\]
so with probability at least $1-1/n^2$, $\sum_{i:s_i\in P_1\setminus P_1'}X_{v,i}\geq\ell_1/2-10cn\geq\alpha n/40$. Combined with \eqref{eq:firstsmall} and using a union bound, we have with high probability that $\sum_{i\in[m]:v_i\in N_G(v)}d_i\geq\sum_{i:s_i\in P_1}X_{v,i}\geq\alpha n/40\geq10\mu n$ for all $v\in U_1\setminus U_A$.

Now let $v\in U_A$. If~\ref{subcase2} holds, then $v$ is covered by the embedding of $T'$ so there is nothing to prove. Suppose now that~\ref{subcase1} holds. For every $i\in[m]$, note that $d_{i,2}\leq d_i\leq\sqrt n$, and let $s_{j_i}$ be the unique neighbour of $s_i$ to its left. Then, using~\ref{embed2},
\begin{align*}
\mathbb{P}(X_{v,i}=d_{i,2}\mid X_{v,1},\ldots,X_{v,i-1})&\geq\frac{|N_G(v_{j_i},(Z\cap N_G(v))\setminus\{v_1,\ldots,v_{i-1}\})|}{|N_G(v_{j_i},Z\setminus\{v_1,\ldots,v_{i-1}\})|}\\
&\geq\frac{\beta^2n/2-\alpha n-\mu n}{2\beta n}\geq\beta/10.    
\end{align*}
Let $\ell_2=|L_2|=\sum_{i:s_i\in P_2}d_{i,2}\geq\alpha n/10$, then by Lemma~\ref{lemma:azuma},
\[
\mathbb{P}\left(\sum_{i:s_i\in P_2}X_{v,i}<\alpha\beta n/200\right)\leq\exp\left(-\frac{\beta^2\ell_2^2}{10^4\sum_{i:s_i\in P_2}d_{i,2}^2}\right)\leq\exp\left(-\frac{\beta^2\ell_2^2}{10^4(\ell_2/\sqrt n)\cdot (\sqrt n)^2}\right)\leq\frac1{n^2}.
\]
By a union bound, with high probability, all $v\in U_A$ satisfy $\sum_{i\in[m]:v_i\in N_G(v)}d_i\geq\sum_{i:s_i\in P_2}X_{v,i}\geq\alpha\beta n/200\geq10\mu n$.
%with probability at least $2/3$, the number of $v\in U_B$ with $\sum_{i\in J_1}X_{v,i}<\beta/4\cdot \alpha n$ is at most $3\eps^3|U_B|$. Let $U_B'$
%so that, for each $v\in U_B$, $\P(\sum_{i\in J_1:s_i\in N_G(v)}d_{i,1}\geq \beta )$.
\renewcommand{\qedsymbol}{$\boxdot$}
\end{proof}
\renewcommand{\qedsymbol}{$\square$}
Finally, it remains to embed $L'$. Let $U$ be the set of unused vertices in $U_1$, note that $|U|\geq|L'|$, and every $v\in U$ satisfies $\sum_{i\in[m]:v_i\in N_G(v)}d_i\geq10\mu n$ by Claim~\ref{claim:canextend}. We verify that Hall's condition holds between the set $\{v_i:s_i\in P\}$ of images of parents of $L'$ and $U$. Indeed, for any non-empty $J\subset P$, if $0<\sum_{i:s_i\in J}d_i\leq |L'|-\mu n$, then as $v_i\in U_2^-$ for each $i\in J$ by~\ref{embed1} and~\ref{embed2}, we have
\[
|N_G(\{v_i:s_i\in J\},U)|\geq |U|-\mu n\geq|L'|-\mu n\geq\sum_{i:s_i\in J}d_i.
\]
If instead $\sum_{i:s_i\in J}d_i>|L'|-\mu n$, then $|N_G(\{v_i:s_i\in J\},U)|=|U|\geq|L'|\geq\sum_{i:s_i\in J}d_i$, as any $v\in U\setminus N_G(\{v_i:s_i\in J\})$ would satisfy $\sum_{i\in[m]:v_i\in N_G(v)}d_i\leq\mu n$, a contradiction. Therefore, by Lemma~\ref{lemma:hallmatching}, we can embed $L'$ into $U$ to finish a copy of $T$ in $G$.
\end{proof}

%%%%%%%%%%%%%%%%%%%%%%%%%%%%%%%%%%%%%%%%%%%%%%%%%%%%%%%%%%%%%%%%%%%%%%%
%%%%%%%%%%%%%%%%%%%%%%%%%%%%%%%%%%%%%%%%%%%%%%%%%%%%%%%%%%%%%%%%%%%%%%%
%%%%%%%%%%%%%%%%%%%%%%%%%%%%%%%%%%%%%%%%%%%%%%%%%%%%%%%%%%%%%%%%%%%%%%%
%%%%%%%%%%%%%%%%%%%%%%%%%%%%%%%%%%%%%%%%%%%%%%%%%%%%%%%%%%%%%%%%%%%%%%%

\subsection{Proof of Theorem~\ref{theorem:extremal:case2}}\label{sec:IIfinal}
Our final step before we can prove Theorem~\ref{theorem:extremal:case2} is to prove the following results which give the desired sparse cut (see Definition~\ref{def:sparsecut}) used in \textbf{Case}~\ref{CaseIIB} and \textbf{Case}~\ref{CaseIIC}.

\begin{proposition}\label{lem:sparsecut}
Let $1/n\ll c\ll\mu\ll\eps\ll 1$. Let $T$ be an $n$-vertex tree with $\Delta(T)\leq cn$ and bipartition classes $V_1$ and $V_2$ satisfying $|V_1|\geq (2-\mu)|V_2|$.
%Let $T$ be an $n$-vertex tree with $\Delta(T)\leq cn$. %Let $V_1,V_2$ be the bipartition classes of $T$.
Then, $T$ has an $(\eps,\sqrt n)$-sparse cut $V(T)=A\cup B$, such that at most two vertices in $\{v\in A:d_T(v,B)>0\}$ are adjacent to leaves of $T$ in $A$.
\end{proposition}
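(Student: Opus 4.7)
The plan is to start from Proposition~\ref{prop:splittreewith2vertices}, which already gives a partition $V(T) = A \cup B$ with $|A|, |B| \leq (2/3 - \eps)n$, $T[A]$ a tree, and boundary $\{v \in A : d_T(v,B) > 0\}$ of size at most two. This handles three of the four required conditions immediately: the boundary is trivially independent and has size at most $2 \leq 2\Delta(T)$, and the additional clause ``at most two vertices adjacent to leaves of $T$ in $A$'' holds vacuously since the boundary itself has size at most two. The only nontrivial requirement remaining is $d_T(v,B) \leq \sqrt n$ for each boundary vertex, which is not guaranteed by Proposition~\ref{prop:splittreewith2vertices} because we can have $\Delta(T) \leq cn$ with $cn \gg \sqrt n$ for $n$ large compared to $1/c$.

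To address this, I would refine the partition as follows. Suppose a boundary vertex $v$ has $d_T(v,B) > \sqrt n$, and orient $T$ with $v$'s $A$-neighbour as its parent, so that $v$'s $B$-neighbours are the roots of pairwise-disjoint subtrees $T_1, \ldots, T_d \subseteq B$ with $d > \sqrt n$. Sort these subtrees by size and move the $d - \sqrt n$ smallest of them from $B$ to $A$. This leaves exactly $\sqrt n$ subtrees hanging off $v$ into $B$, which forces $d_T(v,B) \leq \sqrt n$, and (as $T_1, \ldots, T_d$ are disjoint and their removal or reattachment preserves the connectedness of $T[A]$ through $v$) the partition remains a valid sparse-cut candidate.

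The main obstacle will be showing that this refinement does not violate the size bounds $|A|, |B| \leq (2/3 - \eps)n$. Here is where the bipartite hypothesis $|V_1| \geq (2-\mu)|V_2|$ becomes essential: it forces $|V_2| \leq n/3 + O(\mu n)$, which together with $\Delta(T) \leq cn$ constrains how much total mass can concentrate in the small subtrees $T_{\sqrt n + 1}, \ldots, T_d$. Concretely, if the refinement would overshoot, then $v$'s dangling subtrees must be numerous and close to uniform in size, and in that case I would instead keep the largest ones in $B$ and ``cut deeper'': take a single large $T_i$, apply a secondary one-edge cut in its interior at a $V_1$-to-$V_2$ edge, and use the resulting $V_1$-vertex as a second boundary vertex. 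Since this second vertex has $d_T(\cdot, B) = 1$, and we have two boundary vertices available to us, this extra cut respects both the independence condition (two $V_1$-vertices cannot be adjacent in $T$) and the leaf-adjacency clause. To ensure the initial partition from Proposition~\ref{prop:splittreewith2vertices} has enough ``slack'' for the refinement, I would revisit its proof and choose the decomposition vertex $t$ so that $|A|$ is as close as possible to the lower bound $n/3 + \eps n$, exploiting the freedom in Corollary~\ref{cor:splittree} to pick the balanced subtree sizes within the allowed range.
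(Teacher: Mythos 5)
There is a genuine gap, and it is structural rather than a missing detail. Your whole plan rests on keeping the boundary $\{v\in A:d_T(v,B)>0\}$ of size at most two (so that the leaf-adjacency clause is vacuous), but this is impossible for some trees satisfying the hypotheses. Consider a tree with a centre $u$ of degree roughly $cn$ whose legs are caterpillars of size about $1/c$ arranged internally in ratio $2:1$, so that $|V_1|\geq(2-\mu)|V_2|$ and $\Delta(T)\leq cn$. Every component of $T[B]$ attaches to $A$ through exactly one boundary vertex, each boundary vertex may dangle at most $\sqrt n$ such components, and in this tree every such component has size at most $1/c$; since $|B|\geq(1/3+\eps)n$, any $(\eps,\sqrt n)$-sparse cut of this tree needs at least $\Omega(c\sqrt n)\gg 2$ boundary vertices. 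So Proposition~\ref{prop:splittreewith2vertices} cannot be ``repaired'' within a two-vertex boundary: any correct argument must allow a boundary of size up to $2\Delta(T)$ (as Definition~\ref{def:sparsecut} permits) and then genuinely verify the clause that at most two boundary vertices are adjacent to leaves of $T$ in $A$. The moment your ``cut deeper'' step introduces new boundary vertices, that clause is no longer vacuous, and your sketch never checks it; indeed a new cut vertex may well keep leaf-children inside $A$, and one may need many such cuts, not one.

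Two further steps are also unjustified. First, the claim that $|V_1|\geq(2-\mu)|V_2|$ ``constrains how much total mass can concentrate in the small subtrees'' is false: subtrees of size $O(1/c)$ or $O(\sqrt n)$ with internal ratio $2:1$ can carry a constant fraction of $n$, so moving the $d-\sqrt n$ smallest subtrees into $A$ can overshoot badly, and overshooting does not force near-uniform subtree sizes (a mix of a few huge and many tiny subtrees also overshoots). Second, a single interior cut in one large $T_i$ cannot in general restore the balance $|A|,|B|\leq(2/3-\eps)n$, and your independence argument (``two $V_1$-vertices cannot be adjacent'') does not apply since the original boundary vertices need not lie in $V_1$. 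For comparison, the paper avoids all of this by building the cut from scratch: it takes a vertex $v$ from Lemma~\ref{lemma:treehalfcomponent} all of whose components in $T-v$ have size at most $n/2$, groups whole components (or, in the two-large-components case, descends to vertices $v_1',v_2'$ and groups components of $T_j'-v_j'$), so the boundary consists of children of a single fixed non-leaf vertex; their only neighbour in $A$ is that non-leaf vertex, which makes the ``at most two adjacent to leaves in $A$'' clause automatic, and the size window is controlled by choosing how many components (all of size at most $\sqrt n$ or handled separately when large) go to $B$.
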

\begin{proof}
By Lemma~\ref{lemma:treehalfcomponent}, there is a vertex $v\in V(T)$ such that each component of $T-v$ has size at most $n/2$. Let $\ell\leq cn$ be the number of components in $T-v$ and let $T_1,\ldots,T_\ell$ be these components in order of decreasing size. Let $r$ be maximal subject to $\sum_{i=1}^r|T_i|\leq (2/3-2\eps)n$, and note that $r\geq 1$.

If $|T_{r}|\leq \sqrt{n}$, then as $|T_i|\leq\sqrt n$ for all $i\geq r$, there exists $r'>r$ such that $(1/3+3\eps/2)n\leq\sum_{i=r'}^{\ell}|T_i|\leq (1/3+3\eps/2)n+\sqrt n$, so $A=\{v\}\cup N_T(v)\cup (\cup_{i<r'}V(T_i))$ and $B=V(G)\setminus A$ form an $(\eps,\sqrt n)$-sparse cut. 

Assume now that $|T_r|>\sqrt{n}$, then as $|T_i|>\sqrt n$ for all $i\in[r]$, we have $r\leq\sqrt n$. If, moreover, $\sum_{i\in [r]}|T_i|\geq (1/3+\eps)n$, then $A=\{v\}\cup(\cup_{i=r+1}^{\ell}V(T_i))$ and $B=V(G)\setminus A$ form an $(\eps,\sqrt n)$-sparse cut. Thus, we can assume that $\sum_{i\in [r]}|T_i|<(1/3+\eps)n$.

Now, by the maximality of $r$, we have $|T_{r+1}|\geq (2/3-2\eps)n-(1/3+\eps)n=(1/3-3\eps)n$. As $|T_1|,|T_r|\geq |T_{r+1}|\geq (1/3-3\eps)n$, we must have $r=1$, and $(1/3-3\eps)n\leq |T_2|\leq |T_1|\leq (1/3+\eps)n$.
For each $j\in[2]$, let $v_j$ be the unique neighbour of $v$ in $T_j$, and view $T_j$ as a tree rooted at $v_j$. Let $v_j'$ be a vertex farthest away from $v_j$ in $T_j$ subject to the condition that the subtree $T_j'\subset T_j$ containing $v_j'$ and all of its descendents in $T_j$ has size at least $(1/3-10\eps)n$.

Let $\{S_i:i\in I_1\}$ be the components of $T_1'-v_1'$ and let $\{S_i:i\in I_2\}$ be the components of $T_2'-v_2'$. If $\sum_{i\in I_1:|S_i|<\sqrt{n}}|S_i|\geq5\eps n$, then we can find $I_1'\subset I_1$ such that $|S_i|\leq \sqrt{n}$ for each $i\in I_1'$ and $5\eps n\leq \sum_{i\in I_1'}|S_i|\leq6\eps n$. Then, $B=V(T_2)\cup (\cup_{i\in I_1'}V(S_i)\setminus N_T(v_1'))$ and $A=V(G)\setminus B$ form an $(\eps,\sqrt n)$-sparse cut. Similarly, an $(\eps,\sqrt n)$-sparse cut exists if $\sum_{i\in I_2:|S_i|<\sqrt{n}}|S_i|\geq5\eps n$. Thus, we can assume that $\sum_{i\in I_1:|S_i|\geq\sqrt{n}}|S_i|$ and $\sum_{i\in I_2:|S_i|\geq\sqrt{n}}|S_i|$ are both at least $(1/3-20\eps)n$. Let $I\subset\{i\in I_1\cup I_2:|S_i|\geq\sqrt n\}$ be minimal subject to $\sum_{i\in I}|S_i|\geq(1/3+\eps)n$. Then, minimality and the choices of $v_1',v_2'$ imply that $\sum_{i\in I}|S_i|<(1/3+\eps)n+(1/3-10\eps)n=(2/3-9\eps)n$, so $B=\cup_{i\in I}V(S_i)$ and $A=V(T)\setminus B$ form an $(\eps,\sqrt n)$-sparse cut. %, we have a subtree of $T_2$ such that $T_2-(V(S)\setminus \{v'\})$ contains $v_1$, only $v_1'$ has neighbours in $T$ in $V(S)\setminus \{v'\}$, and $(1/3-16\eps)n\leq |S|\leq (1/3-10\eps)n$.

Therefore, $T$ always contains an $(\eps,\sqrt n)$-sparse cut $A\cup B$. Finally, it is easy to verify that in all cases above, there are at most two vertices in $\{v\in A:d_T(v,B)>0\}$ that are adjacent to leaves of $T$ in $A$.
\end{proof}

\begin{proposition}\label{prop:splitwithsparsecut}
Let $1/n\ll c\ll\mu\ll\alpha\ll \eps\ll 1$. Let $T$ be an $n$-vertex tree with $\Delta(T)\leq cn$ and bipartition classes $V_1$ and $V_2$ satisfying $|V_1|\geq (2-\mu)|V_2|$.
%Let $T$ be an $n$-vertex tree with $\Delta(T)\leq cn$. %Let $V_1,V_2$ be the bipartition classes of $T$.
Then, $T$ has an $(\eps,\sqrt n)$-sparse cut $V(T)=A\cup B$ such that at least one of the following holds.
\stepcounter{propcounter}
\begin{enumerate}[label =\emph{\textbf{\Alph{propcounter}\arabic{enumi}}}]
    \item\labelinthm{sparsecut:1} There is a set $L$ of at least $\alpha n$ leaves of $T$ in $V_1$ such that $d_T(u,L)\leq \sqrt{n}$ for each $u\in N_T(L)$. 
    \item\labelinthm{sparsecut:2} $T$ contains a set $V_1'$ of $|\{v\in A:d_T(v,B)>0\}|$ vertices in $V_1$, and a disjoint set $L'$ of $\alpha n$ leaves in $V_1$, such that vertices in $V_1'$ have no common neighbour, $|N_T(V_1')|\leq\eps n$, and $N_T(V_1')\cap N_T(L')=\emptyset$.
\end{enumerate}
\end{proposition}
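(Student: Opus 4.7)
The plan is to invoke Proposition~\ref{lem:sparsecut} to extract an $(\eps,\sqrt n)$-sparse cut $V(T)=A\cup B$, write $A'=\{v\in A:d_T(v,B)>0\}$ (so $|A'|\le 2\Delta(T)\le 2cn$ by the sparse cut definition), and then let the distribution of leaves of $T$ in $V_1$ decide whether \ref{sparsecut:1} or \ref{sparsecut:2} holds. The bipartition inequality $t_1\ge(2-\mu)t_2$ together with $t_1+t_2=n$ gives $t_1-t_2\ge(1-2\mu)n/(3-\mu)$, so combining with Lemma~\ref{lemma:leaves:V1} yields a set $L\subseteq V_1$ of leaves of $T$ with $|L|\ge n/4$. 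Let $P=N_T(L)\subseteq V_2$ be their parents and split $P=P_{\mathrm{low}}\cup P_{\mathrm{high}}$ according to whether $d_T(u,L)\le\sqrt n$.

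If $\sum_{u\in P_{\mathrm{low}}}d_T(u,L)\ge \alpha n$, I would greedily pick an arbitrary set $L^{*}\subseteq L$ of size $\alpha n$ whose parents all lie in $P_{\mathrm{low}}$. Every $u\in N_T(L^{*})\subseteq P_{\mathrm{low}}$ then satisfies $d_T(u,L^{*})\le d_T(u,L)\le\sqrt n$, so $L^{*}$ witnesses \ref{sparsecut:1}. Otherwise, $\sum_{u\in P_{\mathrm{high}}}d_T(u,L)\ge|L|-\alpha n\ge n/5$, and since each such degree is at most $\Delta(T)\le cn$, we have $|P_{\mathrm{high}}|\ge 1/(5c)\ge 2$.

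In this second case I would build the pair $(V_1',L')$ required by \ref{sparsecut:2} as follows. Order $P_{\mathrm{high}}=\{u_1,u_2,\dots\}$ by decreasing $d_T(u_i,L)$ and pick the smallest $k'\ge 1$ with $\sum_{i\le k'}d_T(u_i,L)\ge|A'|$. Using $d_T(u_i,L)\ge\sqrt n$ and $|A'|\le 2cn$, we get $k'\le 2c\sqrt n+1\ll\eps n$, and minimality together with $d_T(u_{k'},L)\le cn$ gives $\sum_{i\le k'}d_T(u_i,L)\le|A'|+cn\le 3cn$. Choose $V_1'\subseteq L$ of size exactly $|A'|$ whose parents lie in $\{u_1,\dots,u_{k'}\}$; if $k'=1$ I enlarge to $\{u_1,u_2\}$ and force at least one leaf from $u_2$, so that in every case $V_1'$ contains leaves attached to at least two distinct parents. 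Then $N_T(V_1')$ sits inside a set of size at most $\max(k',2)\le\eps n$, and since the elements of $V_1'$ are leaves with at least two different parents, no vertex of $V_2$ is adjacent to all of $V_1'$, securing the no-common-neighbour condition. Finally, since the leaves used so far come from parents with total leaf-count at most $3cn$, the set $L\setminus V_1'$ still contains at least $|L|-3cn\ge n/4-3cn\ge\alpha n$ leaves whose parents avoid $N_T(V_1')$, and any $\alpha n$ of them serve as $L'$.

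The main obstacle is the simultaneous bookkeeping around $V_1'$: keeping $|N_T(V_1')|$ small enough to respect $\eps n$, while also guaranteeing that (i) at least two distinct parents are used so that the common-neighbour condition holds, and (ii) enough leaves remain outside $N_T(V_1')$ for $L'$. All three are controlled by the same numerical inputs $|A'|\le 2cn$, $|L|\ge n/4$, and $\Delta(T)\le cn$ under the hierarchy $c\ll\alpha\ll\eps$. The degenerate possibilities $|A'|\in\{0,1\}$ are either vacuous or handled by the $k'=1$ padding step above, which ensures $V_1'$ always has at least two distinct parents whenever $|A'|\ge 2$.
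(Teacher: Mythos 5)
Your Case for \ref{sparsecut:1} is fine, but the construction of $V_1'$ in the other case has a genuine gap, and it sits exactly where the difficulty of the proposition lies. The condition ``vertices in $V_1'$ have no common neighbour'' has to be read pairwise, i.e.\ the neighbourhoods $N_T(v)$, $v\in V_1'$, are pairwise disjoint: this is how the condition is used in Lemma~\ref{lem:caseIIC} (the injection $\phi:U_A\to V_1'$ avoiding the single vertex $s_2$ only has the one-vertex slack $|U_A|<|V_1'|$, so at most one member of $V_1'$ may be adjacent to $s_2$; and the deduction that a child $s_i$ of a vertex embedded into $U_A$ lies outside $P\cup P'$ needs that no vertex is adjacent to two distinct members of $V_1'$), and it is how the paper's own proof builds $V_1'$ (one representative $r_a$ inside each pendant component $R_a$, so the neighbourhoods are automatically disjoint). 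Your $V_1'$ consists of up to $|A'|\le 2cn$ leaves whose parents all lie in a set of at most $k'\le 2c\sqrt n+1$ high-degree vertices, so by pigeonhole many of its members share a parent; you only verify the weak statement that no vertex is adjacent to \emph{all} of $V_1'$, which is not what the downstream argument needs. (Even under your reading, when $k'\ge 2$ nothing stops your choice from taking all $|A'|$ leaves below $u_1$.)

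Moreover, this cannot be repaired by simply insisting on distinct parents: a tree satisfying the hypotheses can have all of its $\Theta(n)$ leaves in $V_1$ attached to only $O(1/c)$ parents, and essentially no large set of $V_1$-vertices with pairwise disjoint neighbourhoods, while the cut handed to you by Proposition~\ref{lem:sparsecut} may have $|A'|$ as large as $2cn$. The point of the paper's proof is that the cut and the set $V_1'$ must be chosen \emph{together}: it runs a case analysis (via Claim~\ref{sparseclaim} and a further splitting around a centroid) that either produces a cut for which many leaves certify \ref{sparsecut:1}, or modifies the cut so that each vertex of $\{v\in A:d_T(v,B)>0\}$ has its own pendant component $R_a$ containing a low-degree $V_1$-vertex $r_a$, and these $r_a$ form $V_1'$. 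Fixing an arbitrary sparse cut first and then hunting for $V_1'$ among the leaves, as you do, fails in general.
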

\begin{proof} 
Let $c\ll\gamma\ll\eps$. We begin with the following claim. 
\begin{claim}\label{sparseclaim}
Let $L$ be a set of leaves of $T$ in $V_1$. Suppose that $V(T)=A'\cup B'$ is a $(2\eps,\sqrt n)$-sparse cut of $T$, and let $A_1=\{a\in A':d_T(a,B')>0\}$. For each $a\in A_1$, let $R_a$ be the component of $T-(A'\setminus A_1)$ containing $a$, and suppose there exists $r_a\in V(R_a-a)\cap V_1$ with $d_T(r_a)\leq1/\gamma$. Moreover, assume that at least $1/3$ of the vertices in $\cup_{a\in A_1}R_a$ are in $L$, then $T$ has an $(\eps,\sqrt n)$-sparse cut $V(T)=A\cup B$ and some corresponding $L'$, $V_1'$ such that~\emph{\ref{sparsecut:2}} holds.
\end{claim}
\begin{proof}[Proof of Claim~\ref{sparseclaim}]
Arrange the components $R_a$, $a\in A_1$, in decreasing order of $|V(R_a)\cap L|/|R_a|$. Take a minimal collection $A_1'\subset A_1$, starting from the elements with the highest ratio, such that $\sum_{a\in A_1'}|V(R_a)\cap L|\geq2\alpha n$.  

Suppose first that $\sum_{a\in A_1'}|V(R_a)\cap L|\leq4\alpha n$. Noting that $(\sum_{a\in A_1}|V(R_a)\cap L|)/(\sum_{a\in A_1}|R_a|)\le \max_{a\in A_1}|V(R_a)\cap L|/|R_a|$, we have $\sum_{a\in A_1'}|R_a|\leq12\alpha n\ll\eps n$. Let $L'\subset\cup_{a\in A_1'}(V(R_a)\cap L)$ have size $\alpha n$, $B=\cup_{a\in A_1\setminus A_1'}V(R_a-a)$, $A=V(G)\setminus B$, $V_1'=\{r_a:a\in A_1\setminus A_1'\}$. Note that $|N_T(V_1')|\leq|A_1|/\gamma\leq2cn/\gamma\ll\eps n$ and $N_T(V_1')$ is disjoint from $N_T(L')$, so~\ref{sparsecut:2} holds with respect to the $(\eps,\sqrt n)$-sparse cut $V(T)=A\cup B$, $L'$, and $V_1'$. 

If $\sum_{a\in A_1'}|V(R_a)\cap L|>4\alpha n$, then the minimality of $A_1'$ implies that there exists $a^*\in A_1'\subset A_1$ satisfying $|V(R_{a^*})\cap L|\geq2\alpha n$. In particular, we can find a set $L'\subset V(R_{a^*})\cap L$ of size $\alpha n$, and some $r'_{a^*}\in V(R_{a^*})\cap(L\setminus L')$, such that $N_T(r'_{a^*})\cap N_T(L')=\emptyset$. Then, let $A=A'$ and $B=B'$, and note that~\ref{sparsecut:2} holds with respect to the $(\eps,\sqrt n)$-sparse cut $V(T)=A\cup B$, $L'$, and $V_1'=\{r_a:a\in A_1\setminus\{a^*\}\}\cup\{r'_{a^*}\}$.
\renewcommand{\qedsymbol}{$\boxdot$}
\end{proof}
\renewcommand{\qedsymbol}{$\square$}

Now, Proposition~\ref{lem:sparsecut} gives a $(100\eps,\sqrt n)$-sparse cut $V(T)=A'\cup B'$, such that at most two vertices in $\{v\in A':d_T(v,B')>0\}$ are adjacent to leaves of $T$ in $A'$. Let $A_1=\{v\in A':d_T(v,B')>0\}$. For each $a\in A_1$, let $R_a$ be the component of $T-(A'\setminus A_1)$ containing $a$. %Since $T[A]$ is a tree, for distinct $a\in A_1$ the components $R_a$ are distinct as well.%Let $\gamma$ satisfy $c\ll \gamma \ll\eps$. 
Let $A_2$ be the set of $a\in A_1$ such that $R_a-a$ contains at least $\gamma|R_a|$ vertices in $V_1$. Then, for each $a\in A_2$, there exists $r_a\in V(R_a-a)\cap V_1$ such that $d_T(r_a)\leq1/\gamma$.

\medskip

\noindent\textbf{Case I.} $\sum_{a\in A_2}|R_a-a|\geq (1/3+2\eps)n$. By Lemma~\ref{lemma:leaves:V1}, $T$ contains at least $t_1-t_2\geq(1/3-10\mu)n$ leaves in $V_1$. If $A'\setminus A_1$ contains at least $10\alpha n\geq\alpha n+2cn$ leaves of $T$ in $V_1$, then there is a set $L'$ of $\alpha n$ such leaves in $A'\setminus A_1$ with their parents not in $A_1$. Then, let $B=\cup_{a\in A_2}V(R_a-a)$, $A=V(G)\setminus B$, and note that $A$ and $B$ form an $(\eps,\sqrt n)$-sparse cut. Set $V_1'=\{r_a:a\in A_2\}$, then~\ref{sparsecut:2} holds with respect to $A$, $B$, $L'$, and $V_1'$.

If instead $A'\setminus A_1$ contains at most $10\alpha n$ leaves of $T$ in $V_1$, then at least $(1/3-\eps)n$ leaves of $T$ in $V_1$ are in $\cup_{a\in A_1}R_a$, so $\cup_{a\in A_2}R_a$ contains a set $L_1$ of at least $(1/3-10\eps)n$ of leaves of $T$ in $V_1$, as $\cup_{a\in A_1\setminus A_2}R_a$ contains at most $\gamma n$ vertices in $V_1$. In particular, at least $1/3$ of the vertices in $\cup_{a\in A_2}R_a$ are in $L_1$. Let $B''=\cup_{a\in A_2}V(R_a-a)$ and $A''=V(G)\setminus B''$, then we can apply Claim~\ref{sparseclaim} to the $(2\eps,\sqrt n)$-sparse cut $V(T)=A''\cup B''$ to finish the proof.

\medskip

\noindent\textbf{Case II.} $\sum_{a\in A_2}|R_a-a|<(1/3+2\eps)n$, so $\sum_{a\in A_1\setminus A_2}|R_a-a|\geq97\eps n$. Then, $\sum_{a\in A_1\setminus A_2}(|V(R_a-a)\cap V_2|-|V(R_a-a)\cap V_1|)\geq\sum_{a\in A_1\setminus A_2}((1-2\gamma)|R_a|-1)\geq95\eps n$. Consider the subtree $T'$ obtained by removing $R_a-a$ from $T$ for each $a\in A_1\setminus A_2$, and note that at most $|A_1\setminus A_2|\ll\eps n$ leaves in $T'$ are not leaves in $T$. By Lemma~\ref{lemma:leaves:V1}, $T'$ contains at least $|V(T')\cap V_1|-|V(T')\cap V_2|\geq t_1-t_2+95\eps n\geq(1/3+92\eps)n$ leaves in $V_1$. Thus, $T$ contains at least $(1/3+91\eps)n$ leaves in $V_1$.

If there is a set $L$ of at least $\alpha n$ leaves of $T$ in $V_1$ whose parents are all adjacent to at most $\sqrt n$ leaves of $T$, then~\ref{sparsecut:1} holds and we are done, so suppose otherwise. By Lemma~\ref{lemma:treehalfcomponent}, there is a vertex $v$ in $T$ such that every component of $T-v$ has size at most $n/2$. View $T$ as being rooted at $v$. Then, there is a set $L_2$ of at least $(1/3+90\eps)n$ leaves in $V_1$, each of whose parent in $T$ is adjacent to at least $\sqrt{n}$ leaves in $T$, and no leaf in $L_2$ or parent of leaf in $L_2$ is equal to $v$. Let $p_1,\ldots,p_k$ be the parents of $L_2$ in $T$, and note that $k\leq\sqrt n$. Then, for every $i\in[k]$, let $P_i$ be the subtree of $T$ induced by $p_i$ and all of its descendants in $T$, and note that $|P_i|\leq n/2$ as $p_i\not=v$. Let $I\subset[k]$ be the set of indices $i\in[k]$, such that $p_i$ is not a descendant of any $p_j$ with $j\not=i$. Let $I'\subset I$ be minimal subject to $\sum_{i\in I'}|P_i|\geq(1/3+2\eps)n$. 

\medskip

\noindent\textbf{Case II.1.} $\sum_{i\in I'}|P_i|\leq(2/3-2\eps)n$. Let $\overline{B}=\cup_{i\in I'}V(P_i)$ and $\overline{A}=V(T)\setminus \overline{B}$. Note that $T[\overline{A}]$ is a tree, and $V(T)=\overline{A}\cup\overline{B}$ is a $(2\eps,\sqrt n)$-sparse cut. Let $\overline{A}_1=\{a\in \overline{A}:d_T(a,\overline{B})>0\}$. For each $a\in \overline{A}_1$, let $\overline{R}_a$ be the component of $T-(\overline{A}\setminus \overline{A}_1)$ containing $a$, pick any $p_i\in N_T(a,\overline{B})$, and then pick $r_a$ to be any children of $p_i$ in $L_2$.

If $|\overline{B}\cap L_2|\leq(1/3+3\eps)n$, then at least $\eps n$ leaves in $L_2$ are in $\overline{A}$, so we can pick a set $L'$ of $\alpha n$ such leaves. Then, $\overline{A}$, $\overline{B}$, $L'$, and $V_1'=\{r_a:a\in\overline{A}_1\}$ satisfy~\ref{sparsecut:2}.

If $|\overline{B}\cap L_2|>(1/3+3\eps)n$, then at least $1/3$ of the vertices in $\cup_{a\in\overline{A}_1}R_a$ are leaves in $L_2$, so we are done by Claim~\ref{sparseclaim}.

\medskip

\noindent\textbf{Case II.2.} $\sum_{i\in I'}|P_i|>(2/3-2\eps)n$. Then, by minimality, $|P_i|\geq(1/3-4\eps)n$ for each $i\in I$, so $|I|=2$ as $|P_i|\leq n/2$. Without loss of generality, say $I=\{1,2\}$, and note that $|P_1|,|P_2|\leq(1/3+2\eps)n$ by minimality. Since at most $(1/3+2\eps)n$ vertices in $T$ are outside of $P_1\cup P_2$, $P_1\cup P_2$ contains at least $80\eps n$ leaves in $L_2$, so we may assume that, say, $P_1$ contains at least $40\eps n$ such leaves. 

Let $\{q_1,\ldots,q_m\}\subset\{p_3,\ldots,p_k\}$ be the set of descendants of $p_1$ in $T$, and assume they are ordered so that if $q_i$ is a descendant of $q_j$ in $T$, then $i\leq j$. Let $Q_i$ be the subtree of $T$ induced by $q_i$ and all of its descendants in $T$, and let $m'\in[m]$ be minimal subject to $|L_2\cap(\cup_{i=1}^{m'}V(Q_i))|\geq20\eps n$. Note that by minimality and $\Delta(T)\leq cn$, $|L_2\cap(\cup_{i=1}^{m'}V(Q_i))|\leq21\eps n$, and so $|(L_2\cap V(P_1))\setminus(\cup_{i=1}^{m'}V(Q_i))|\geq19\eps n$. Let $\overline{B}=V(P_2)\cup(\cup_{i=1}^{m'}V(Q_i))$, and observe that $(1/3-4\eps)n+20\eps n\leq|\overline{B}|\leq(2/3+4\eps)n-19\eps n$. Let $\overline{A}=V(G)\setminus\overline{B}$, and note that $V(T)=\overline{A}\cup\overline{B}$ form a $(2\eps,\sqrt n)$-sparse cut. Thus, we are now in the same situation as in \textbf{Case II.1}, and can finish the proof in the same way.
\end{proof}

%%%%%%%%%%%%%%%%%%%%%%%%%%%%%%%%%%%%%%%%%%%%%%%%%%%%%%%%%%%%%%%%%%%%%%
%%%%%%%%%%%%%%%%%%%%%%%%%%%%%%%%%%%%%%%%%%%%%%%%%%%%%%%%%%%%%%%%%%%%%%
%%%%%%%%%%%%%%%%%%%%%%%%%%%%%%%%%%%%%%%%%%%%%%%%%%%%%%%%%%%%%%%%%%%%%%

Finally, we can put all the work of this section together to prove Theorem~\ref{theorem:extremal:case2}, following the outline in Section~\ref{sec:outline:extcase2}.

\begin{proof}[Proof of Theorem~\ref{theorem:extremal:case2}]
Let $1/n\ll c\ll\mu \ll 1$ and let $t_1,t_2\in\mathbb{N}$ satisfy $t_1+t_2=n$ and $t_1\geq(2-\mu)t_2$. Let $G$ be a Type II $(\mu,t_1,t_2)$-extremal graph on $\max\{2t_1,t_1+2t_2\}-1$ vertices, so from definition there are disjoint subsets $U_1,U_2\subset V(G)$ such that $|U_1|,|U_2|\geq (1-\mu)t_1$, and for each $i\in [2]$ and $u\in U_i$, $\red{d}(u,U_i)\leq \mu n$ and $\blue{d}(u,U_{3-i})\leq\mu n$. Let $T$ be an $n$-vertex tree with $\Delta(T)\leq cn$ and bipartition classes $V_1$ and $V_2$ with $|V_i|=t_i$ for each $i\in [2]$. We need to find a monochromatic copy of $T$ in $G$.

Let $\mu \ll \beta \ll 1$. Let $U_1^+,U_2^+\subset V(G)$ be maximal disjoint sets with $U_1\subset U_1^+$, $U_2\subset U_2^+$, and $\red{d}(u,U_{3-i})\geq \beta n$ for every $i\in [2]$ and $u\in U_i^+$. Note that $|U_i^+\setminus U_i|\leq2\mu n$ for each $i\in[2]$. By relabelling if necessary, we can assume that $|U_1^+|\geq |U_2^+|$.  

First (for \textbf{Case}~\ref{CaseIIY}), suppose that there are distinct vertices $v_1,v_2\in V(G)\setminus (U_1^+\cup U_2^+)$. By the maximality of $U_1^+$ and $U_2^+$, we have that $d_{\mathrm{blue}}(v_i,U_j)\geq |U_j|-\beta n$ for each $i,j\in [2]$. As $t_1\geq(2-\mu)t_2$, we have $|U_i|\geq t_1-\mu n\geq (2/3-10\mu)n$ for each $i\in [2]$.
Since $\delta(G_{\mathrm{blue}}[U_i])\geq |U_i|-\mu n$ for each $i\in [2]$, we can apply Lemma~\ref{lem:caseIIY} to find a copy of $T$ in $\blue{G}$.

Thus, we can assume that $|V(G)\setminus (U_1^+\cup U_2^+)|\leq 1$. It follows that $|U_1^+|\geq\ceil{(|G|-1)/2}\geq\ceil{2n/3}-1$, and $|U_1^+|\geq t_1-1$. Next (for \textbf{Case}~\ref{CaseIIA}), suppose there is some vertex $v\in V(G)$ with at least $\beta n$ red neighbours in both $U_1^+$ and $U_2^+$. Then, $v$ has at least $\beta n-2\mu n\geq\mu n$ red neighbour in both $U_1$ and $U_2$. Therefore, by Lemma~\ref{lem:caseIIA}, $G$ contains a red copy of $T$.

Hence, we can assume there is no such vertex $v$, from which we get $\delta(G_{\mathrm{blue}}[U_i^+])\geq|U_i^+|-\beta n$ for each $i\in [2]$. Next (for \textbf{Case}~\ref{CaseIIZ}), suppose $G[U_1^+]$ contains at most $10^6n$ red edges and there is exactly one vertex $w$ in $V(G)\setminus (U_1^+\cup U_2^+)$. Using the maximality like above, $\blue{d}(w,U_i^+)\geq|U_i^+|-2\beta n$ for each $i\in[2]$, so we can find a blue copy of $T$ in $G$ using Lemma~\ref{lem:caseIIZ}.

Thus, we can assume that either $G[U_1^+]$ has more than $10^6n$ red edges, or $V(G)\setminus (U_1^+\cup U_2^+)=\emptyset$. Let $\beta\ll\eps\ll1$. Let $V(T)=A\cup B$ be an $(\eps,\sqrt n)$-sparse cut given by Proposition~\ref{prop:splitwithsparsecut}.
Suppose (for \textbf{Case}~\ref{CaseIIB}) that $T[A,B]$ can be embedded into $G_{\mathrm{blue}}[U_1^+,U_2^+]$, either with $A$ embedded into $U_1^+$ and $B$ embedded into $U_2^+$, or the other way around, then $G$ contains a blue copy of $T$ by Lemma~\ref{lem:caseIIB}.

Finally, suppose (for \textbf{Case}~\ref{CaseIIC}) that $T[A,B]$ cannot be embedded into $G_{\mathrm{blue}}[U_1^+,U_2^+]$. Let $\ell=|\{v\in A:d_T(v,B)>0\}|$ and list the elements in $\{v\in A:d_T(v,B)>0\}$ as $a_1,\ldots,a_\ell$. For each $i\in [\ell]$, let $d_i=d_T(a_i,B)\leq\sqrt n$. 

If $|U_1^+|\geq t_1$, let $I\subset [\ell]$ be a maximal set for which there are distinct vertices $\{w_i:i\in I\}\subset U_1^+$ and disjoint subsets $\{W_i\subset U_2^+:i\in I\}$, such that $W_i\subset N_{\mathrm{blue}}(w_i)$ and $|W_i|=d_i$ for each $i\in I$. As we are not in \textbf{Case}~\ref{CaseIIB}, $|I|<\ell$. Let $U_A=\{w_i:i\in I\}$ and $U_B=\cup_{i\in I}W_i$. Then, the maximality of $I$ implies that every vertex in $U_1^+\setminus U_A$ has at most $\sqrt{n}$ blue neighbours in $U_2^+\setminus U_B$. Therefore, by Lemma~\ref{lem:caseIIC}, $G$ contains a copy of $T$ in red. 

If $|U_1^+|=t_1-1$, then we must have $|V(G)\setminus(U_1^+\cup U_2^+)|=1$ and $|U_2^+|=t_1-1$. Since we are not in \textbf{Case}~\ref{CaseIIZ}, $G[U_1^+]$ has at least $10^6n$ red edges. We now proceed as above but swapping the role of $U_1^+$ and $U_2^+$. Let $I\subset [\ell]$ be a maximal set for which there are distinct vertices $\{w_i:i\in I\}\subset U_2^+$ and disjoint subsets $\{W_i\subset U_1^+:i\in I\}$, such that $W_i\subset N_{\mathrm{blue}}(w_i)$ and $|W_i|=d_i$ for each $i\in I$. Let $U_A=\{w_i:i\in I\}$ and $U_B=\cup_{i\in I}W_i$. Then, $|I|<\ell$ and the maximality of $I$ implies that every vertex in $U_2^+\setminus U_A$ has at most $\sqrt{n}$ blue neighbours in $U_1^+\setminus U_B$. Therefore, by Lemma~\ref{lem:caseIIC}, $G$ contains a copy of $T$ in red. This completes the proof of Theorem~\ref{theorem:extremal:case2}.
\end{proof}

\bibliographystyle{abbrv}
\bibliography{Ramseytrees}

\begin{thebibliography}{10}

\bibitem{balister2024upper}
P.~Balister, B.~Bollob{\'a}s, M.~Campos, S.~Griffiths, E.~Hurley, R.~Morris,
  J.~Sahasrabudhe, and M.~Tiba.
\newblock Upper bounds for multicolour {R}amsey numbers.
\newblock {\em arXiv:2410.17197}, 2024.

\bibitem{besomi2019degree}
G.~Besomi, M.~Pavez-Sign{\'e}, and M.~Stein.
\newblock Degree conditions for embedding trees.
\newblock {\em SIAM Journal on Discrete Mathematics}, 33(3):1521--1555, 2019.

\bibitem{bollobás1998modern}
B.~Bollob{\'a}s.
\newblock {\em Modern Graph Theory}.
\newblock Springer, 1998.

\bibitem{BONDY197346}
J.~Bondy and P.~Erd\H{o}s.
\newblock Ramsey numbers for cycles in graphs.
\newblock {\em Journal of Combinatorial Theory, Series B}, 14(1):46--54, 1973.

\bibitem{Burr1974}
S.~A. Burr.
\newblock Generalized {R}amsey theory for graphs - a survey.
\newblock In {\em Graphs and Combinatorics}, pages 52--75. Springer, 1974.

\bibitem{Burr1973ONTM}
S.~A. Burr and P.~Erd{\H o}s.
\newblock On the magnitude of generalized {R}amsey numbers for graphs.
\newblock In {\em Infinite and finite sets}, pages 215--240. J{\'a}nos Bolyai
  Mathematical Society, 1975.

\bibitem{burr1976extremal}
S.~A. Burr and P.~Erd{\H o}s.
\newblock Extremal {R}amsey theory for graphs.
\newblock {\em Utilitas Mathematica}, 9:247--258, 1976.

\bibitem{campos2023}
M.~Campos, S.~Griffiths, R.~Morris, and J.~Sahasrabudhe.
\newblock An exponential improvement for diagonal {R}amsey.
\newblock {\em arXiv:2303.09521}, 2023.

\bibitem{chvatal1983ramsey}
V.~Chv{\'a}tal, V.~R{\"o}dl, E.~Szemer{\'e}di, and W.~T. Trotter~Jr.
\newblock The {R}amsey number of a graph with bounded maximum degree.
\newblock {\em Journal of Combinatorial Theory, Series B}, 34(3):239--243,
  1983.

\bibitem{dubo2025ramsey}
F.~F. Dub{\'o} and M.~Stein.
\newblock On the {R}amsey number of the double star.
\newblock {\em Discrete Mathematics}, 348(1):114227, 2025.

\bibitem{erdos1947some}
P.~Erd{\H{o}}s.
\newblock Some remarks on the theory of graphs.
\newblock {\em Bulletin of the {A}merican {M}athematical {S}ociety},
  53(4):292--294, 1947.

\bibitem{erdHos1982ramsey}
P.~Erd{\H{o}}s, R.~J. Faudree, C.~C. Rousseau, and R.~H. Schelp.
\newblock {R}amsey numbers for brooms.
\newblock {\em Congressus Numerantium}, 35:283--293, 1982.

\bibitem{erdHos1995discrepancy}
P.~Erd{\H{o}}s, Z.~F{\"u}redi, M.~Loebl, and V.~T.~S{\'o}s.
\newblock Discrepancy of trees.
\newblock {\em Studia Scientiarum Mathematicarum Hungarica}, 30(1-2):47--57,
  1995.

\bibitem{erdos1935combinatorial}
P.~Erd{\H{o}}s and G.~Szekeres.
\newblock A combinatorial problem in geometry.
\newblock {\em Compositio {M}athematica}, 2:463--470, 1935.

\bibitem{faudree1974all}
R.~J. Faudree and R.~H. Schelp.
\newblock All {R}amsey numbers for cycles in graphs.
\newblock {\em Discrete Mathematics}, 8(4):313--329, 1974.

\bibitem{gerencser1967ramsey}
L.~Gerencs\'er and A.~Gy\'arf\'as.
\newblock On {R}amsey-type problems.
\newblock {\em Annales Universitatis Scientiarium Budapestinensis de Rolando
  Eötvös Nominatae, Sectio Mathematica}, 10:167--170, 1967.

\bibitem{GROSSMAN1979247}
J.~W. Grossman, F.~Harary, and M.~Klawe.
\newblock Generalized {R}amsey theory for graphs, x: double stars.
\newblock {\em Discrete Mathematics}, 28(3):247--254, 1979.

\bibitem{gupta2024optimizing}
P.~Gupta, N.~Ndiaye, S.~Norin, and L.~Wei.
\newblock Optimizing the {CGMS} upper bound on {R}amsey numbers.
\newblock {\em arXiv:2407.19026}, 2024.

\bibitem{hall1935representatives}
P.~Hall.
\newblock On representatives of subsets.
\newblock {\em Journal of the London Mathematical Society}, s1-10(1):26--30,
  1935.

\bibitem{Harary1972}
F.~Harary.
\newblock Recent results on generalized {R}amsey theory for graphs.
\newblock In {\em Graph Theory and Applications}, pages 125--138. Springer,
  1972.

\bibitem{haxell2002ramsey}
P.~E. Haxell, T.~{\L}uczak, and P.~W. Tingley.
\newblock {R}amsey numbers for trees of small maximum degree.
\newblock {\em Combinatorica}, 22(2):287--320, 2002.

\bibitem{JLR2000}
S.~Janson, T.~{\L}uczak, and A.~Ruci{\'n}ski.
\newblock {\em Random graphs}.
\newblock Wiley-Interscience, 2000.

\bibitem{komlos2001spanning}
J.~Koml{\'o}s, G.~N. S{\'a}rk{\"o}zy, and E.~Szemer{\'e}di.
\newblock Spanning trees in dense graphs.
\newblock {\em Combinatorics, Probability and Computing}, 10(5):397--416, 2001.

\bibitem{komlos1995szemeredi}
J.~Koml{\'o}s and M.~Simonovits.
\newblock Szemer{\'e}di's {R}egularity {L}emma and its applications in graph
  theory.
\newblock In {\em Combinatorics, {P}aul {E}rd{\H o}s is eighty, {V}olume 2},
  pages 295--352. J{\'a}nos Bolyai Mathematical Society, 1996.

\bibitem{Krivelevichtrees}
M.~Krivelevich.
\newblock Embedding spanning trees in random graphs.
\newblock {\em SIAM Journal on Discrete Mathematics}, 24(4):1495--1500, 2010.

\bibitem{lee2017ramsey}
C.~Lee.
\newblock {R}amsey numbers of degenerate graphs.
\newblock {\em Annals of Mathematics}, 185(3):791--829, 2017.

\bibitem{McDiarmid_1989}
C.~McDiarmid.
\newblock On the method of bounded differences.
\newblock In {\em Surveys in Combinatorics, 1989}, pages 148--188. Cambridge
  University Press, 1989.

\bibitem{montgomery2019spanning}
R.~Montgomery.
\newblock Spanning trees in random graphs.
\newblock {\em Advances in Mathematics}, 356:106793, 2019.

\bibitem{norin2016asymptotics}
S.~Norin, Y.~R. Sun, and Y.~Zhao.
\newblock Asymptotics of {R}amsey numbers of double stars.
\newblock {\em arXiv:1605.03612}, 2016.

\bibitem{pokrovskiy2024hyperstability}
A.~Pokrovskiy.
\newblock Hyperstability in the {E}rd{\H o}s-{S}{\'o}s conjecture.
\newblock {\em arXiv:2409.15191}, 2024.

\bibitem{radziszowski2012small}
S.~Radziszowski.
\newblock Small {R}amsey numbers.
\newblock {\em The Electronic Journal of Combinatorics}, DS1, 2024.

\bibitem{RamseyOnAP}
F.~P. Ramsey.
\newblock On a problem of formal logic.
\newblock {\em Proceedings of The London Mathematical Society},
  s2-30(1):264--286, 1930.

\bibitem{rosta1973ramsey}
V.~Rosta.
\newblock On a {R}amsey-type problem of {J. A.} {B}ondy and {P}. {E}rd{\H{o}}s.
  {II}.
\newblock {\em Journal of Combinatorial Theory, Series B}, 15(1):105--120,
  1973.

\bibitem{stein2020tree}
M.~Stein.
\newblock Tree containment and degree conditions.
\newblock In {\em Discrete Mathematics and Applications}, pages 459--486.
  Springer, 2020.

\bibitem{wormald1999differential}
N.~C. Wormald.
\newblock The differential equation method for random graph processes and
  greedy algorithms.
\newblock In {\em Lectures on Approximation and Randomized Algorithms}, pages
  73--155. Polish Scientific Publishers, 1999.

\bibitem{zhao2011proof}
Y.~Zhao.
\newblock Proof of the $(n/2-n/2-n/2)$ conjecture for large $ n$.
\newblock {\em The Electronic Journal of Combinatorics}, 18(1):P27, 2011.

\end{thebibliography}

\end{document}